\newtheorem{theorem}{Theorem}[section]
\newtheorem{lemma}[theorem]{Lemma}
\newtheorem{corollary}[theorem]{Corollary}
\newtheorem{proposition}[theorem]{Proposition}
\newtheorem{main-theorem}[theorem]{Theorem}
\newtheorem*{main-theorem*}{Main Theorem}
\newtheorem*{triangulation-theorem*}{Triangulation Theorem}
\newtheorem{corollary*}{Corollary}
\newtheorem*{problem*}{Problem}
\theoremstyle{definition}
\newtheorem*{question*}{Question}
\newtheorem*{notation*}{Notation}
\newtheorem{notation}[theorem]{Notation}
\newtheorem{remark}[theorem]{Remark}
\newtheorem{definition}[theorem]{Definition}
\newtheorem{example}[theorem]{Example}
\renewcommand{\mod}{\operatorname{mod}}
\newcommand{\soc}{\operatorname{soc}}
\newcommand{\umod}{\operatorname{\underline{mod}}}
\newcommand{\Hom}{\operatorname{Hom}}
\newcommand{\Ker}{\operatorname{Ker}}
\newcommand{\T}{\operatorname{T}}
\newcommand{\Tr}{\operatorname{Tr}}
\newcommand{\op}{\operatorname{op}}
\newcommand{\id}{\operatorname{id}}
\newcommand{\CM}{\operatorname{CM}}
\newcommand{\uEnd}{\operatorname{\underline{End}}}
\newcommand{\intt}{\operatorname{int}}
\renewcommand{\Im}{\operatorname{Im}}
\newcommand{\bN}{\mathbb{N}}
\newcommand{\bP}{\mathbb{P}}
\newcommand{\bR}{\mathbb{R}}
\newcommand{\bS}{\mathbb{S}}
\newcommand{\bT}{\mathbb{T}}
\newcommand{\cF}{\mathcal{F}}
\newcommand{\cO}{\mathcal{O}}
\newcommand{\sC}{\mathscr{C}}
\newcommand{\overbar}[1]{\mkern 5mu\overline{\mkern-5mu#1\mkern-5mu}\mkern 5mu}
\newcommand{\ba}{\bar{\alpha}}
\newcommand{\vf}{\varphi}
\newcommand{\tikzAngleOfLine}{\tikz@AngleOfLine}
\def\tikz@AngleOfLine(#1)(#2)#3{%
\pgfmathanglebetweenpoints{%
\pgfpointanchor{#1}{center}}{%
\pgfpointanchor{#2}{center}}
\pgfmathsetmacro{#3}{\pgfmathresult}%
}
\begin{document}

\title{Algebras of generalized quaternion type}

{\def\thefootnote{}
\footnote{The research was supported by the research grant
DEC-2011/02/A/ST1/00216 of the National Science Center Poland.}
}

\author[K. Edrmann]{Karin Erdmann}
\address[Karin Erdmann]{Mathematical Institute,
   Oxford University,
   ROQ, Oxford OX2 6GG,
   United Kingdom}
\email{erdmann@maths.ox.ac.uk}

\author[A. Skowro\'nski]{Andrzej Skowro\'nski}
\address[Andrzej Skowro\'nski]{Faculty of Mathematics and Computer Science,
   Nicolaus Copernicus University,
   Chopina~12/18,
   87-100 Toru\'n,
   Poland}
\email{skowron@mat.uni.torun.pl}

\begin{abstract}
We introduce and study the algebras of generalized quaternion type,
being natural generalizations of algebras which occurred in the study
of blocks of group algebras with generalized quaternion defect groups.
We prove that all these algebras, with $2$-regular Gabriel quivers,
are periodic algebras of period $4$ and
very specific deformations of the weighted surface algebras
of triangulated surfaces with arbitrarily oriented triangles.
The main result of the paper forms an important step towards
the Morita equivalence classification of all periodic symmetric
tame algebras of non-polynomial growth.
Applying the main result, we establish existence of wild
periodic algebras of period 4, with arbitrary large number
(at least 4) of pairwise non-isomorphic simple modules.
These wild periodic algebras arise as stable endomorphism
rings of cluster tilting Cohen-Macaulay modules over
one-dimensional hypersurface singularities.

\bigskip

\noindent
\textit{Keywords:}
Symmetric algebra, 
Tame algebra, 
Periodic algebra, 
Triangulated surface, 
Weighted surface algebra, 
Triangulation quiver, 
Generalized quaternion type, 
Auslander-Reiten quiver, 
Hypersurface singularity, 
Cohen-Macaulay module

\noindent
\textit{2010 MSC:}
14H20, 16D50, 16E30, 16G20, 16G60, 16G70, 57M20

\subjclass[2010]{14H20, 16D50, 16E30, 16G20, 16G60, 16G70, 57M20}
\end{abstract}


\maketitle

\addtocounter{section}{-1}
\section{Introduction and the main result}\label{sec:intro}

Throughout this paper, $K$ will denote a fixed algebraically closed field.
By an algebra we mean an associative finite-dimensional $K$-algebra
with an identity.
For an algebra $A$, we denote by $\mod A$ the category of
finite-dimensional right $A$-modules and by $D$ the standard
duality $\Hom_K(-,K)$ on $\mod A$.
An algebra $A$ is called \emph{self-injective}
if $A_A$ is injective in $\mod A$, or equivalently,
the projective modules in $\mod A$ are injective.
A prominent class of self-injective algebras is formed
by the \emph{symmetric algebras} $A$ for which there exists
an associative, non-degenerate symmetric $K$-bilinear form
$(-,-): A \times A \to K$.
Classical examples of symmetric algebras are provided
by the blocks of group algebras of finite groups and
the Hecke algebras of finite Coxeter groups.
In fact, any algebra $A$ is the quotient algebra
of its trivial extension algebra $\T(A) = A \ltimes D(A)$,
which is a symmetric algebra.
Two self-injective algebras $A$ and $\Lambda$ are said
to be \emph{socle equivalent} if the quotient algebras
$A/\soc(A)$ and $\Lambda/\soc(\Lambda)$ are isomorphic.

From the remarkable Tame and Wild Theorem of Drozd
(see \cite{CB,Dr})
the class of algebras over $K$ may be divided
into two disjoint classes.
The first class consists of the \emph{tame algebras}
for which the indecomposable modules occur in each dimension $d$
in a finite number of discrete and a finite number of one-parameter
families.
The second class is formed by the \emph{wild algebras}
whose representation theory comprises
the representation theories of all
algebras over $K$.
Accordingly, we may realistically hope to classify
the indecomposable finite-dimensional modules only
for the tame algebras.
Among the tame algebras we may distinguish the
\emph{representation-finite algebras}, having
only finitely many isomorphism classes of indecomposable
modules, for which the representation theory is rather
well understood.
On the other hand, the representation theory of arbitrary
tame algebras is still only emerging.
The most accessible ones
amongst the tame algebras are
\emph{algebras of polynomial growth} \cite{Sk0} for which the number
of one-parameter families
of indecomposable modules in each dimension $d$ is bounded by $d^m$,
for some positive integer $m$ (depending only on the algebra).

Let $A$ be an algebra.
Given a module $M$ in  $\mod A$, its \emph{syzygy}
is defined to be the kernel $\Omega_A(M)$ of a minimal
projective cover of $M$ in $\mod A$.
The syzygy operator $\Omega_A$ is a very important tool
to construct modules in $\mod A$ and relate them.
For $A$ self-injective, it induces an equivalence
of the stable module category $\umod A$,
and its inverse is the shift of a triangulated structure
on $\umod A$ \cite{Ha1}.
A module $M$ in $\mod A$ is said to be \emph{periodic}
if $\Omega_A^n(M) \cong M$ for some $n \geq 1$, and if so
the minimal such $n$ is called the \emph{period} of $M$.
The action of $\Omega_A$ on $\mod A$ can effect
the algebra structure of $A$.
For example, if all simple modules in $\mod A$ are periodic,
then $A$ is a self-injective algebra.
Sometimes one can even recover the algebra $A$ and its
module category from the action of $\Omega_A$.
For example, the self-injective Nakayama algebras
are precisely the algebras $A$ for which $\Omega_A^2$ permutes
the isomorphism classes of simple modules in $\mod A$.
An algebra $A$ is defined to be \emph{periodic} if it is periodic
viewed as a module over the enveloping algebra
$A^e = A^{\op} \otimes_K A$, or equivalently,
as an $A$-$A$-bimodule.
It is known that if $A$ is a periodic algebra of period $n$ then
for any indecomposable non-projective module $M$
in $\mod A$  the syzygy $\Omega_A^n(M)$ is isomorphic to $M$.

Finding or possibly classifying periodic algebras
is an important problem. It is very interesting because of
connections
with group theory, topology, singularity theory
and cluster algebras.
Periodic algebras have periodic Hochschild cohomology.
Periodicity of an algebra, and its period,
are invariant under derived equivalences
\cite{Ric2} (see also \cite{ES2}).
Therefore, to study periodic algebras
we may assume that the algebras are basic
and indecomposable.

We are concerned with the classification of all periodic tame
symmetric algebras.
In \cite{Du1} Dugas proved that every representation-finite
self-injective algebra, without simple blocks, is a periodic algebra.
We note that, by general theory (see \cite[Section~3]{Sk}),
a basic, indecomposable, non-simple, symmetric algebra $A$
is representation-finite if and only if $A$ is socle equivalent
to an algebra $\T(B)^G$ of invariants of the trivial extension
algebra $\T(B)$ of a tilted algebra $B$ of Dynkin type
with respect to free action of a finite cyclic group $G$.
The representation-infinite, indecomposable,
periodic algebras of polynomial growth were classified
by Bia\l kowski, Erdmann and Skowro\'nski in \cite{BES2}
(see also \cite{Sk1,Sk}).
In particular, it follows from \cite{BES2}
that every basic, indecomposable,
representation-infinite periodic symmetric tame algebra
of polynomial growth is socle
equivalent to an algebra $\T(B)^G$
of invariants of the trivial extension algebra $\T(B)$
of a tubular algebra $B$ of tubular type
$(2,2,2,2)$, $(3,3,3)$, $(2,4,4)$, $(2,3,6)$
(introduced by Ringel \cite{R}) with respect to free
action of a finite cyclic group $G$.

We also mention that there are natural classes
of periodic wild algebras.
For example, with the exception of few small
cases (see \cite{ES1}),
the preprojective algebras of Dynkin type \cite{AR,Bu,ESn},
or more generally, the deformed preprojective algebras
of generalized Dynkin type \cite{BES1,ES2},
have this property.

It would be interesting to classify
the Morita equivalence classes of
all indecomposable
periodic symmetric tame algebras of non-polynomial growth.
Such algebras appeared naturally in the study of blocks
of group algebras with generalized quaternion defect groups.
Motivated by known properties of  these blocks,
Erdmann introduced and investigated in
\cite{E1,E2,E3,E4} the
\emph{algebras of quaternion type},
being the indecomposable, representation-infinite tame
symmetric algebras $A$ with non-singular Cartan matrix
for which every indecomposable non-projective
module in $\mod A$ is periodic of period dividing $4$.
In particular, Erdmann proved that every algebra $A$
of quaternion type has at most $3$ non-isomorphic
simple modules and its basic algebra is isomorphic
to an algebra belonging to 12 families of symmetric
representation-infinite algebras defined by quivers
and relations.
Subsequently it has been proved in \cite{Ho} (see also
\cite{NeS} for the polynomial growth cases) that all these
algebras are tame, and are in fact periodic of period $4$
(see \cite{BES2,ES1}).
In particular it shows that a  finite group $G$
is periodic with respect to the group cohomology
$H^*(G,\mathbb{Z})$ if and only if all blocks
with non-trivial defect groups of its group algebras
$K G$  over an arbitrary algebraically closed
field $K$ are periodic algebras
(see \cite{ES3}).
By the famous result of Swan \cite{Sw}
 periodic groups can be characterized as the
finite groups acting freely on finite CW-complexes
homotopically equivalent to spheres
(see \cite[Section~4]{ES2} for more details).
Some of the algebras
of quaternion type occur as  endomorphism
algebras of cluster tilting objects in the stable
categories of maximal Cohen-Macaulay modules over
odd-dimensional isolated hypersurface singularities
(see \cite[Section~7]{BIKR}).

In this paper,
we introduce and investigate algebras
of generalized quaternion type.
Let $A$ be an algebra.
We say that $A$ is of \emph{generalized quaternion type}
if it satisfies the following conditions:
\begin{enumerate}[(1)]
 \item
  $A$ is symmetric, indecomposable, representation-infinite
  and tame.
 \item
  All simple modules in $\mod A$ are periodic of period $4$.
\end{enumerate}

We note that every algebra $A$ of generalized quaternion type
with the Grothendieck group $K_0(A)$ of rank at most two is
an algebra of quaternion type, and hence its basic algebra
is known (see \cite{E4}).
An algebra $A$ is called $2$-regular if its Gabriel
quiver is $2$-regular (see Section~\ref{sec:boundquiver}).

In \cite{ES4} and \cite{ES5} we introduced and investigated
weighted surface algebras of triangulated surfaces with
arbitrarily oriented triangles, and their deformations
(see Sections \ref{sec:wsalg} and \ref{sec:tetralg} for details).

The following theorem is the main result of the paper.

\begin{main-theorem*}
Let $A$ be a basic, indecomposable, $2$-regular algebra
over an algebraically closed field $K$,
with the Grothendieck group $K_0(A)$ of rank at least $3$.
Then the following statements are equivalent:
\begin{enumerate}[(i)]
 \item
  $A$ is of generalized quaternion type.
 \item
  $A$ is a symmetric tame periodic algebra of period $4$.
 \item
  $A$ is socle equivalent to a weighted surface algebra,
  different from a singular tetrahedral algebra,
  or is isomorphic to a non-singular higher tetrahedral algebra.
\end{enumerate}
Moreover, if $K$ is of characteristic different from $2$,
we may replace in \emph{(iii)} ``socle equivalent'' by ``isomorphic''.
\end{main-theorem*}

We note that the implication (ii) $\Rightarrow$ (i) follows from
general theory, and the implication (iii) $\Rightarrow$ (ii)
is a consequence of results established in \cite{ES4} and \cite{ES5}.
Hence the main aim of the paper is to provide a highly
non-trivial proof of the implication  (i) $\Rightarrow$ (iii).

We note that the $2$-regularity assumption
imposed on the algebra $A$ is essential for the
validity of the above theorem.
Namely, there are symmetric tame periodic algebras
of period $4$ which are derived equivalent but not
isomorphic to algebras occurring in (iii).
For example, the trivial extension algebra
$\T(C)$ of a canonical tubular algebra $C$
of type $(2,2,2,2)$ is derived equivalent to
a non-singular tetrahedral algebra but not socle
equivalent to a weighted surface algebra
(see \cite{ES4,HR}).
We also stress that the class of weighted surface
algebras occurring in (iii) contains as a subclass
the class of Jacobian algebras of quivers with
potentials associated to orientable surfaces with punctures
and empty boundary, whose quivers have no loops nor $2$-cycles
(see \cite{DWZ1,DWZ2,FST,GLS,LF,Lad2,VD} for details and relevant
results).

We would like to mention that all algebras occurring in (iii),
except non-singular tetrahedral algebras, are of non-polynomial
growth.
Moreover, those algebras with the Grothendieck group of rank
at least $4$ have singular Cartan matrix.
Further, the orbit closures
of algebras in (iii) in the affine varieties of associative $K$-algebra
structures contain a wide class of symmetric tame algebras,
being natural generalizations of algebras of dihedral and
semidihedral type, which occurred in the study of blocks
of group algebras with dihedral and semidihedral defect groups.
We refer to
\cite{ES6} and \cite{ES7}
for a classification of algebras of generalized dihedral type
and a characterization of Brauer graph algebras, using
biserial weighted surface algebras.

A prominent role in the proof of implication
(i) $\Rightarrow$ (iii)
of the Main Theorem is played by the following
consequence of
Theorems \ref{th:2.1}, \ref{th:5.1} and \ref{th:6.1}.

\begin{triangulation-theorem*}
Let $A$ be a basic, indecomposable, $2$-regular algebra
of generalized quaternion type
with the Grothendieck group $K_0(A)$ of rank at least $3$.
Then there is a bound quiver presentation $A = K Q / I$ of $A$
and a permutation $f$ of arrows of $Q$ such that
the following statements hold.
\begin{enumerate}[(i)]
 \item
  $(Q,f)$ is the triangulation quiver
  $(Q(S,\vec{T}),f)$ associated to a directed
  triangulated surface $(S,\vec{T})$.
 \item
  For each arrow $\alpha$ of $Q$,
  $\alpha f(\alpha)$ occurs in a minimal relation of $I$.
\end{enumerate}
\end{triangulation-theorem*}

An important combinatorial and homological invariant
of the module category $\mod A$ of an algebra $A$
is its Auslander-Reiten quiver $\Gamma_A$.
Recall that $\Gamma_A$ is the translation quiver whose
vertices are the isomorphism classes of indecomposable
modules in $\mod A$, the arrows correspond
to irreducible homomorphisms, and the translation
is the Auslander-Reiten translation $\tau_A = D \Tr$.
For $A$ self-injective, we denote by $\Gamma_A^s$
the stable Auslander-Reiten quiver of $A$, obtained
from $\Gamma_A$ by removing the isomorphism classes
of projective modules and the arrows attached to them.
By a stable tube we mean a translation quiver $\Gamma$
of the form $\mathbb{Z} \mathbb{A}_{\infty}/(\tau^r)$,
for some $r \geq 1$, and call $r$ the rank of $\Gamma$.
By general theory, an infinite connected component $\sC$
of $\Gamma_A^s$ is a stable tube if and only if $\sC$
consists of $\tau_A$-periodic modules 
(see \cite[Section~IX.4]{SY2}).
We note also that, for a symmetric algebra $A$, we have
$\tau_A = \Omega_A^2$ (see \cite[Corollary~IV.8.6]{SY}).

We have the following direct consequence of the Main Theorem.

\begin{corollary*}
\label{cor:main1}
Let $A$ be a $2$-regular algebra of generalized quaternion type.
Then the stable Auslander-Reiten quiver $\Gamma_A^s$
of $A$ consists of stable tubes of ranks $1$ and $2$.
\end{corollary*}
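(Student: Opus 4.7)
The plan is to combine the Main Theorem with standard Auslander--Reiten theory; all of the non-trivial content is concentrated in the former, and the deduction below is short.

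First I would reduce to the statement that every indecomposable non-projective $M\in\mod A$ is $\tau_A$-periodic of period dividing $2$. If $K_0(A)$ has rank at least $3$, the implication (i) $\Rightarrow$ (ii) of the Main Theorem gives that $A$ is periodic of period $4$; if the rank is at most $2$, then $A$ is an algebra of quaternion type in the sense of Erdmann, and the same periodicity is classical (see \cite{Ho,BES2,ES1}). Either way, the general fact recalled in the introduction yields $\Omega_A^4(M)\cong M$ for every such $M$. Since $A$ is symmetric we have $\tau_A = \Omega_A^2$, so $\tau_A^2(M)\cong M$, i.e.\ $M$ is $\tau_A$-periodic of period $1$ or $2$.

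Next I would invoke the criterion recalled just before the corollary: an infinite connected component of $\Gamma_A^s$ is a stable tube if and only if it consists of $\tau_A$-periodic modules. To exclude finite components I would use the Happel--Preiser--Ringel theorem together with the standard fact that a connected, representation-infinite, self-injective algebra cannot have a finite $\tau$-periodic stable component; since $A$ is representation-infinite by hypothesis, every component of $\Gamma_A^s$ is therefore an infinite stable tube. In a stable tube $\mathbb{Z}\mathbb{A}_\infty/(\tau^r)$ every module has $\tau$-period exactly $r$, so the bound $\tau_A^2 = \id$ on $\umod A$ forces $r\in\{1,2\}$, which is precisely the assertion of the corollary.

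The main (indeed, essentially the only) obstacle in the argument is external to it: one needs to know that $A$ is periodic of period $4$, which in the rank-$\geq 3$ case is the content of the Main Theorem itself. Given that input, together with the two Auslander--Reiten-theoretic facts above, the corollary reduces to a one-line observation on ranks of tubes.
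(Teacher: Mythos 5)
Your argument is correct and is exactly the deduction the paper intends: the corollary is stated there as a direct consequence of the Main Theorem (periodicity of period $4$, hence $\tau_A^2 = \Omega_A^4 = \id$ on $\umod A$, combined with the stable-tube criterion recalled just before the statement), with the rank $\leq 2$ case covered by the quaternion-type classification. Your filling-in of the finite-component exclusion via the representation-infiniteness of $A$ is the only detail the paper leaves implicit, and it is handled correctly.
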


The next corollary is a consequence of results
proved in \cite{BIKR,Du2} and the Triangulation Theorem,
and solves the open problem concerning the existence
of wild periodic algebras of period $4$.

\begin{corollary*}
\label{cor:main2}
Let $n \geq 4$ be a natural number
and $K$ an algebraically closed field.
Then there exist a one-dimensional hypersurface singularity $R$
over $K$
and a cluster tilting module $T$ in the category $\CM(R)$
of maximal Cohen-Macaulay modules over $R$
such that for the stable endomorphism algebra $B = \uEnd_R(T)$
the following statements hold.
\begin{enumerate}[(i)]
  \item
    $B$ is a basic, indecomposable, finite-dimensional symmetric
    $2$-regular $K$-algebra, with the Grothendieck group $K_0(B)$
    of rank $n$.
  \item
    $B$ is a wild algebra.
  \item
    $B$ is a periodic algebra of period $4$.
  \item
    The stable Auslander-Reiten quiver $\Gamma_B^s$ of $B$
   consists of stable tubes of ranks $1$ and $2$.
\end{enumerate}
\end{corollary*}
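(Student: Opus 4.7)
The plan is to exhibit, for each $n \geq 4$, an explicit triple $(R, T, B)$ satisfying (i)--(iv) by combining the realization of weighted surface algebras as stable endomorphism algebras of cluster tilting objects in $\CM(R)$ (from \cite{BIKR}) with the periodicity theorem of \cite{Du2} and the structural results of the present paper.

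First I would choose a directed triangulated surface $(S, \vec{T})$ whose triangulation quiver $Q(S, \vec{T})$ has $n$ vertices and which is neither tetrahedral nor one of the small exceptional configurations appearing in the quaternion-type classification of \cite{E4}. For every $n \geq 4$ this should be possible: for $n = 4$ one must choose a triangulation of a closed orientable surface with four vertices that is combinatorially distinct from the tetrahedron, and for $n \geq 5$ suitable examples are abundant. Applying the construction of \cite[Section~7]{BIKR} to such a datum yields a one-dimensional hypersurface singularity $R$ and a cluster tilting module $T \in \CM(R)$ whose stable endomorphism algebra $B = \uEnd_R(T)$ has Gabriel quiver exactly $Q(S, \vec{T})$. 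By the Triangulation Theorem, $B$ is then socle equivalent to the weighted surface algebra of $(S, \vec{T})$.

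I would then verify (i)--(iv) as follows. Property (i) is immediate: symmetry of $B$ is a general feature of stable endomorphism algebras of cluster tilting objects in $\CM(R)$ for odd-dimensional isolated Gorenstein singularities (see \cite{BIKR}); $2$-regularity is built into the triangulation quiver; basicness, indecomposability, and the rank $n$ of $K_0(B)$ follow from the combinatorics of $(S,\vec{T})$. Property (iii) follows from \cite{Du2}, which provides periodicity of period dividing $4$ for such stable endomorphism algebras, combined with the fact that the weighted surface algebras of \cite{ES4,ES5} have period exactly $4$ (this is the implication (iii) $\Rightarrow$ (ii) of the Main Theorem). Property (iv) is a direct consequence of (iii) together with the identity $\tau_B = \Omega_B^2$ for symmetric $B$: every indecomposable non-projective $B$-module $M$ satisfies $\tau_B^2 M \cong \Omega_B^4 M \cong M$, so each connected component of $\Gamma_B^s$ is infinite and consists of $\tau_B$-periodic modules of $\tau_B$-period dividing $2$, hence is a stable tube of rank $1$ or $2$. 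For property (ii), note that if $B$ were tame it would be of generalized quaternion type, and the Main Theorem would then force $(S, \vec{T})$ to belong to the tetrahedral or small exceptional families, contradicting our choice; hence $B$ must be wild.

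The main obstacle is the first step: verifying that for each $n \geq 4$ one can indeed choose a directed triangulated surface $(S, \vec{T})$ whose associated algebra falls outside the tame classification of the Main Theorem. For large $n$ this is essentially automatic since the tame families are very restricted, but the borderline case $n = 4$ requires a careful quiver-combinatorial argument to exhibit a non-tetrahedral triangulation on four vertices whose weighted surface algebra is not coincidentally tame, and to check that the BIKR construction does apply to the chosen $(S,\vec{T})$.
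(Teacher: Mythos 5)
Your proposal diverges from the paper's proof in a way that is not merely a different route but actually fails on two essential points.

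First, the realization step is not available. The construction of \cite{BIKR} does not let you prescribe an arbitrary triangulation quiver $Q(S,\vec{T})$ as the Gabriel quiver of $\uEnd_R(T)$. What \cite[Theorem~4.1, Proposition~4.10]{BIKR} gives, for $R = S/(f_1\cdots f_{n+1})$ and $T = \oplus_i S/(f_1\cdots f_i)$, is a very specific family of $2$-regular quivers: a linear chain $1 - 2 - \cdots - n$ of double arrows with a loop at each end (the shape depending on which ideals $(f_i,f_{i+1})$ equal $\mathfrak{m}$). There is no mechanism in \cite{BIKR} producing a hypersurface singularity and cluster tilting module whose stable endomorphism algebra has an arbitrary prescribed surface-triangulation quiver, so the very first step of your plan has no support.

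Second, and more seriously, your wildness argument is inverted. By Propositions~\ref{prop:2.3} and \ref{prop:2.5} (i.e.\ the results of \cite{ES4,ES5}), every weighted surface algebra and every socle-deformed weighted surface algebra is \emph{tame}; the Main Theorem does not restrict the tame periodic algebras to ``tetrahedral or small exceptional families'' --- it identifies them with (socle deformations of) weighted surface algebras over \emph{all} directed triangulated surfaces, plus the non-singular higher tetrahedral algebras. So if your $B$ really had a triangulation quiver and were socle equivalent to the corresponding weighted surface algebra, it would be tame, contradicting (ii) rather than establishing it. The paper's proof works the other way around: the chain-with-end-loops quiver $Q_B$ coming from the \cite{BIKR} construction admits \emph{no} permutation $f$ making $(Q_B,f)$ a triangulation quiver when $n \geq 4$ (e.g.\ any $f$-orbit through an arrow $2 \to 3$ would require an arrow $3 \to 4 \to 2$ or a loop at $2$, neither of which exists), so by the contrapositive of Theorem~\ref{th:6.1} the algebra $B$ cannot be of generalized quaternion type, hence cannot be tame. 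Your verifications of (iii) via \cite{Du2} and of (iv) from $\tau_B = \Omega_B^2$ are fine and match the paper, but they do not rescue the construction or the wildness claim.
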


This paper is organized as follows.
In Sections
\ref{sec:boundquiver},
\ref{sec:wsalg},
\ref{sec:tetralg}
we present necessary background and results
needed for the proof of the main theorem.
Section~\ref{sec:pre} is devoted to preliminary results
on minimal relations of $2$-regular algebras of generalized
quaternion type.
In Section~\ref{sec:markov} we prove the implication
(i) $\Rightarrow$ (iii) for algebras whose quiver is the
Markov quiver.
In Section~\ref{sec:triangulation} we prove that the
quiver of a $2$-regular algebra of generalized
quaternion type admits a triangulation, and hence comes
from a directed triangulated surface.
Section~\ref{sec:almosttetrahedral} is devoted to
a local presentation of algebras with almost tetrahedral quiver,
while in Section~\ref{sec:tetrahedralquiver} we prove
the main theorem for algebras with the tetrahedral quiver.
Section~\ref{sec:localpresentation} is devoted to
local presentation of $2$-regular algebras of
generalized quaternion type, which are not treated in
Sections \ref{sec:markov} and \ref{sec:tetrahedralquiver}.
In Section~\ref{sec:proofth} we
complete  the proof of the implication
(i) $\Rightarrow$ (iii) of the main theorem.
The final Section~\ref{sec:proofcor} is devoted to
the proof of Corollary~ \ref{cor:main2}.

For general background on the relevant representation theory
we refer to the books
\cite{ASS,E4,SS2,SY,SY2}.
We assume throughout that $Q$ has at least three vertices, 
algebras of generalized quaternion type with at most two simple modules
are of quaternion type and can be found in \cite{E4}.

\section{Bound quiver algebras}\label{sec:boundquiver}

A quiver is a quadruple $Q = (Q_0,Q_1,s,t)$ consisting
of a finite set $Q_0$ of vertices,
a finite set $Q_1$ of arrows,
and two maps $s,t : Q_1 \to Q_0$
which associate to each arrow $\alpha \in Q_1$
its source $s(\alpha) \in Q_0$ and its target $t(\alpha) \in Q_0$.
We denote by $K Q$ the path algebra of $Q$ over $K$ whose
underlying $K$-vector space has as its basis the set of all paths
in $Q$ of length $\geq 0$, and by $R_Q$ the arrows ideal
of $K Q$ generated by all paths in $Q$ of length $\geq 1$.
An ideal $I$ of $K Q$ is said to be admissible
if there exists $m \geq 2$ such that
$R_Q^m \subseteq I \subseteq R_Q^2$.
If $I$ is an admissible ideal of $K Q$, then
the quotient algebra $K Q/I$ is called
a bound quiver algebra, and is a finite-dimensional
basic $K$-algebra.
The algebra $K Q/I$ is indecomposable if and only if
$Q$ is connected.
Moreover, $R_Q/I$ is the radical of $K Q/I$.
Every basic finite dimensional
$K$-algebra $A$ has a bound quiver presentation
$A \cong K Q/I$, where $Q = Q_A$ is the Gabriel quiver
of $A$ and $I$ is an admissible ideal of $K Q$.
For a bound quiver algebra $A = KQ/I$, we denote by $e_i$,
$i \in Q_0$, the associated complete set of pairwise
orthogonal primitive idempotents of $A$, and by
$S_i = e_i A/e_i J(A)$ (respectively, $P_i = e_i A$),
$i \in Q_0$, the associated complete family of pairwise
non-isomorphic simple modules (respectively, indecomposable
projective modules) in $\mod A$.
Here, $J(A)$ is the radical of $A$, which we will usually
write as $J$.
A relation in the path algebra $K Q$ of a quiver $Q$
is a $K$-linear combination
\[
  \varrho = \sum_{i=0}^m \lambda_i w_i,
\]
where all $\lambda_i$ are non-zero elements of $K$
and $w_i$ are pairwise different paths in $Q$
of length at least two having the same source
and the same target.
It is known that every admissible ideal $I$
of a path algebra $K Q$ is generated by a finite set
$\varrho_1,\dots,\varrho_n$ of relations.
Moreover, we may choose such relations
$\varrho_1,\dots,\varrho_n$ to be minimal relations
(not linear combinations of relations from $I$).
For a bound quiver algebra $A = K Q / I$ we will identify
an arrow $\alpha \in Q_1$ with its coset $\alpha + I$ in $A$.
Then, for a set $\varrho_1,\dots,\varrho_n$ of (minimal) relations
generating $I$, we have in $A$ the (minimal) equalities
$\varrho_1 = 0, \dots, \varrho_n = 0$,
called minimal relations of $A$.

A quiver $Q = (Q_0,Q_1,s,t)$ is said to be $2$-regular
if for every vertex $i \in Q_0$
there are exactly two arrows with source $i$
and exactly two arrows with target $i$.
For a $2$-regular quiver $Q$ and an arrow $\alpha \in Q_1$,
we denote by $\bar{\alpha}$ the arrow of $Q$ with
$\bar{\alpha} \neq {\alpha}$ and $s(\bar{\alpha}) = s({\alpha})$,
and by $\alpha^*$ the arrow of $Q$ with
$\alpha^* \neq \alpha$ and $t(\alpha^*) = t(\alpha)$.
A bound quiver algebra $A = K Q / I$ with $Q$
a $2$-regular quiver is said to be a $2$-regular algebra.

\section{Weighted surface algebras}\label{sec:wsalg}

In this  paper, by a \emph{surface}
we mean a connected, compact, $2$-dimensional real
manifold $S$, orientable or non-orientable,
with or without boundary.
It is well known that every surface $S$ admits
an additional structure of a finite
$2$-dimensional triangular cell complex,
and hence a triangulation by the deep Triangulation Theorem
(see for example \cite[Section~2.3]{Ca}).

For a natural number $n$, we denote by $D^n$ the unit disk
in the $n$-dimensional Euclidean space $\bR^n$,
formed by all points of distance $\leq 1$ from the origin.
The boundary $\partial D^n$ of $D^n$ is the unit sphere
$S^{n-1}$ in $\bR^n$, the points of distance $1$
from the origin.
Further, by an $n$-cell we mean a topological space
homeomorphic to the open disk $\intt D^n = D^n \setminus \partial D^n$.
In particular,
$D^0$ and $e^0$ consist of a single point,
and $S^0 = \partial D^1$ consists of two points.

We refer to \cite[Appendix]{H} for some basic topological
facts about cell complexes.

Let $S$ be a surface.
In this  paper, by a
\emph{finite $2$-dimensional triangular cell complex structure}
on $S$ we mean a finite family of continous maps
$\varphi_i^n : D_i^n \to S$, with $n \in \{0,1,2\}$
and $D_i^n = D^n$,
satisfying the following conditions:
\begin{enumerate}[(1)]
 \item
  Each $\varphi_i^n$ restricts to a homeomorphism from the interior\
  $\intt D_i^n$ to the $n$-cell $e_i^n = \varphi_i^n(\intt D_i^n)$ of $S$,
  and these cells are all disjoint and their union is $S$.
 \item
  For each $2$-dimensional cell $e_i^2$, the set $\varphi_i^2(\partial D_i^2)$
  is contained in the union of $k$ $1$-cells and $k$ $0$-cells,
  with $k \in \{2,3\}$.
\end{enumerate}
Then the closures  $\varphi_i^2(D_i^2)$ of all $2$-cells $e_i^2$
are called \emph{triangles} of $S$,
and the closures  $\varphi_i^1(D_i^1)$ of all $1$-cells $e_i^1$
are called \emph{edges} of $S$.
The collection $T$ of all triangles  $\varphi_i^2(D_i^2)$
is said to be a \emph{triangulation} of $S$.

We assume that such a triangulation $T$ of $S$
has at least three
pairwise
different edges,
or equivalently, there are at least three
pairwise
different
$1$-cells in the corresponding triangular cell complex structure on $S$.
Then $T$ is a finite collection $T_1,\dots,T_n$ of triangles
of the form
\begin{gather*}
\qquad
\begin{tikzpicture}[auto]
\coordinate (a) at (0,2);
\coordinate (b) at (-1,0);
\coordinate (c) at (1,0);
\draw (a) to node {$b$} (c)
(c) to node {$c$} (b);
\draw (b) to node {$a$} (a);
\node (a) at (0,2) {$\bullet$};
\node (b) at (-1,0) {$\bullet$};
\node (c) at (1,0) {$\bullet$};
\end{tikzpicture}
\qquad
\raisebox{7ex}{\mbox{or}}
\qquad
\begin{tikzpicture}[auto]
\coordinate (a) at (0,2);
\coordinate (b) at (-1,0);
\coordinate (c) at (1,0);
\draw (c) to node {$b$} (b)
(b) to node {$a$} (a);
\draw (a) to node {$a$} (c);
\node (a) at (0,2) {$\bullet$};
\node (b) at (-1,0) {$\bullet$};
\node (c) at (1,0) {$\bullet$};
\end{tikzpicture}
\raisebox{7ex}{\LARGE =}
\ \,
\begin{tikzpicture}[auto]
\coordinate (c) at (0,0);
\coordinate (a) at (1,0);
\coordinate (b) at (0,-1);
\draw (c) to node {$a$} (a);
\draw (b) arc (-90:270:1) node [below] {$b$};
\node (a) at (1,0) {$\bullet$};
\node (c) at (0,0) {$\bullet$};
\end{tikzpicture}
\\
\mbox{$a,b,c$ pairwise different}
\qquad
\quad
\mbox{$a,b$ different (\emph{self-folded triangle})}
\end{gather*}
such that every edge of such a triangle in $T$ is either
the edge of exactly two triangles, or is the self-folded
edge, or else lies on the boundary.
A  given surface $S$ admits many
finite $2$-dimensional triangular cell structures, and hence
triangulations.
We refer to \cite{Ca,KC,Ki} for
general background on surfaces and
constructions of surfaces from plane models.

Let $S$ be a surface and
$T$ a triangulation of $S$.
To each triangle $\Delta$ in $T$ we may associate an orientation
\[
\begin{tikzpicture}[auto]
\coordinate (a) at (0,2);
\coordinate (b) at (-1,0);
\coordinate (c) at (1,0);
\coordinate (d) at (-.08,.25);
\draw (a) to node {$b$} (c)
(c) to node {$c$} (b)
(b) to node {$a$} (a);
\draw[->] (d) arc (260:-80:.4);
\node (a) at (0,2) {$\bullet$};
\node (b) at (-1,0) {$\bullet$};
\node (c) at (1,0) {$\bullet$};
\end{tikzpicture}
\raisebox{7ex}{\!\!$=(abc)$}
\raisebox{7ex}{\quad or \ \ }
\begin{tikzpicture}[auto]
\coordinate (a) at (0,2);
\coordinate (b) at (-1,0);
\coordinate (c) at (1,0);
\coordinate (d) at (.08,.25);
\draw (a) to node {$b$} (c)
(c) to node {$c$} (b)
(b) to node {$a$} (a);
\draw[->] (d) arc (-80:260:.4);
\node (a) at (0,2) {$\bullet$};
\node (b) at (-1,0) {$\bullet$};
\node (c) at (1,0) {$\bullet$};
\end{tikzpicture}
\raisebox{7ex}{\!\!$=(cba)$,}
\]
if $\Delta$ has pairwise different edges $a,b,c$, and
\[
\begin{tikzpicture}[auto]
\coordinate (c) at (0,0);
\coordinate (a) at (1,0);
\coordinate (b) at (0,-1);
\coordinate (d) at (.38,-.08);
\draw (c) to node {$a$} (a);
\draw (b) arc (-90:270:1) node [below] {$b$};
\node (a) at (1,0) {$\bullet$};
\node (c) at (0,0) {$\bullet$};
\draw[->] (d) arc (-10:-350:.4);
\end{tikzpicture}
\raisebox{7ex}{$=(aab)=(aba)$,}
\]
if $\Delta$ is self-folded, with the self-folded edge $a$,
and the other edge $b$.
Fix an orientation of each triangle $\Delta$ of $T$,
and denote this choice by $\vec{T}$.
Then
the pair $(S,\vec{T})$ is said to be a
\emph{directed triangulated surface}.
To each directed triangulated surface $(S,\vec{T})$
we associate the quiver $Q(S,\vec{T})$ whose vertices
are the edges of $T$ and the arrows are defined as
follows:
\begin{enumerate}[(1)]
 \item
  for any oriented triangle $\Delta = (a b c)$ in $\vec{T}$
  with pairwise different edges $a,b,c$, we have the cycle
  \[
    \xymatrix@C=.8pc{a \ar[rr] && b \ar[ld] \\ & c \ar[lu]}
    \raisebox{-7ex}{,}
  \]
 \item
  for any self-folded triangle $\Delta = (a a b)$ in $\vec{T}$,
  we have the quiver
  \[
    \xymatrix{ a \ar@(dl,ul)[] \ar@/^1.5ex/[r] & b \ar@/^1.5ex/[l]} ,
  \]
 \item
  for any boundary edge $a$ in ${T}$,
  we have the loop
  \[
    {\xymatrix{ a \ar@(dl,ul)[]}} .
  \]
\end{enumerate}
Then $Q = Q(S,\vec{T})$ is a triangulation quiver in
the following sense (introduced independently by Ladkani
in \cite{La3}).

A \emph{triangulation quiver} is a pair $(Q,f)$,
where $Q = (Q_0,Q_1,s,t)$ is a finite connected quiver
and $f : Q_1 \to Q_1$ is a permutation
on the set $Q_1$ of arrows of $Q$ satisfying
the following conditions:
\begin{enumerate}[(a)]
 \item[(a)]
  every vertex $i \in Q_0$ is the source and target of exactly two
  arrows in $Q_1$,
 \item[(b)]
  for each arrow $\alpha \in Q_1$, we have $s(f(\alpha)) = t(\alpha)$,
 \item[(c)]
  $f^3$ is the identity on $Q_1$.
\end{enumerate}

For the quiver $Q = Q(S,\vec{T})$ of a
directed triangulated surface $(S,\vec{T})$,
the permutation $f$ on its set of arrows
is defined as follows:
\begin{enumerate}[(1)]
 \item
  \raisebox{3ex}%
  {\xymatrix@C=.8pc{a \ar[rr]^{\alpha} && b \ar[ld]^{\beta} \\ & c \ar[lu]^{\gamma}}}%
  \quad
  $f(\alpha) = \beta$,
  $f(\beta) = \gamma$,
  $f(\gamma) = \alpha$,

  for an oriented triangle $\Delta = (a b c)$ in $\vec{T}$,
  with pairwise different edges $a,b,c$,
 \item
  \raisebox{0ex}%
  {\xymatrix{ a \ar@(dl,ul)[]^{\alpha} \ar@/^1.5ex/[r]^{\beta} & b \ar@/^1.5ex/[l]^{\gamma}}}
  \quad
  $f(\alpha) = \beta$,
  $f(\beta) = \gamma$,
  $f(\gamma) = \alpha$,

  for a self-folded triangle $\Delta = (a a b)$ in $\vec{T}$,\vspace{1mm}
 \item
  \raisebox{0ex}%
  {\xymatrix{ a \ar@(dl,ul)[]^{\alpha}}}
  \quad
  $f(\alpha) = \alpha$,\vspace{1mm}

  for a boundary edge $a$ of ${T}$.
\end{enumerate}

We note that, for every
triangulation quiver $(Q,f)$,
$Q$ is a $2$-regular quiver.
\emph{We will  consider only the trangulation
quivers with at least three vertices.}

We would like to mention that different directed triangulated
surfaces (even of different genus) may lead to the same
triangulation quiver (see \cite[Example~4.3]{ES4}).

The following theorem and  its consequence
have been established in \cite[Section~4]{ES4}.

\begin{theorem}
\label{th:2.1}
Let $(Q,f)$ be a triangulation quiver.
Then there exists a directed triangulated surface
$(S,\vec{T})$ such that $(Q,f) = (Q(S,\vec{T}),f)$.
\end{theorem}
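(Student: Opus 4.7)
The plan is to reverse the construction preceding the theorem: given $(Q,f)$, I build a directed triangulated surface by pasting abstract triangles together along their edges according to the combinatorics of $f$. The relation $f^3=\id$ forces every $f$-orbit on $Q_1$ to have size $1$ or $3$. Since $s(f(\alpha))=t(\alpha)$, a fixed point of $f$ must be a loop, and such fixed loops will correspond to boundary edges. A size-$3$ orbit $\{\alpha,f(\alpha),f^2(\alpha)\}$ traces an oriented $3$-cycle in $Q$ and will correspond to an oriented triangle; using $2$-regularity one checks that two of the three sources $s(\alpha),s(f(\alpha)),s(f^2(\alpha))$ coincide if and only if the orbit contains a loop, and this is exactly the self-folded case.

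First I construct $S$ as a quotient of a disjoint union of closed triangles. For each size-$3$ orbit $O$, take a closed $2$-disk $\Delta_O$ whose boundary is subdivided into three arcs labelled cyclically by $s(\alpha),s(f(\alpha)),s(f^2(\alpha))$, and orient $\Delta_O$ so that the induced boundary orientation reads the labels in this order. By $2$-regularity of $Q$, each $i \in Q_0$ appears as a label on at most two arcs across all the $\Delta_O$; where it appears on two arcs, I identify them in a way compatible with the source/target structure at $i$, and where it appears on only one, I leave the arc unglued. The latter case happens precisely when the other outgoing arrow at $i$ is a fixed loop of $f$, and the unglued arc becomes a boundary edge. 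The resulting space $S$ carries a canonical triangulation $T$, and the chosen orientations on the $\Delta_O$ assemble into an orientation $\vec T$.

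The main step --- and the expected obstacle --- is to verify that $S$ is a compact surface (possibly with boundary, possibly non-orientable). The interiors of $2$-cells and $1$-cells have disk or half-disk neighbourhoods by construction, so everything reduces to the local picture at each $0$-cell $v$. At such a $v$ I would examine the incident triangle corners: each corner is flanked by two arcs meeting at $v$, and each arc is either identified with exactly one partner or is a boundary arc, so the corners at $v$ form a graph of maximum degree $2$ and hence a disjoint union of paths and cycles. Each connected component is, by construction of the quotient, precisely one $0$-cell of $S$, with cone neighbourhood a disk (cycle component, interior vertex) or a half-disk (path component with boundary-arc endpoints, boundary vertex). Compactness is immediate from finiteness, and connectedness of $S$ follows from connectedness of $Q$ via the triangle-adjacency graph.

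Finally, one compares $(Q(S,\vec T),f_{S,\vec T})$ with $(Q,f)$ by direct unwinding of definitions: the edges of $T$ are the vertices of $Q$ by construction, and the three cases \emph{(1)}--\emph{(3)} of the definition of $Q(S,\vec T)$ --- ordinary triangle, self-folded triangle, boundary loop --- match the three shapes of $f$-orbits used above, with the associated permutation in each case equal to the cyclic shift on the orbit, i.e.\ to $f$. The realisation $(S,\vec T)$ is generally far from unique, as noted in \cite[Example~4.3]{ES4}, but the construction above provides one.
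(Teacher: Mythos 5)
Your construction is correct and is essentially the proof of this result given in the cited source: the paper itself only quotes Theorem~\ref{th:2.1} from \cite[Section~4]{ES4}, where the surface is likewise built by taking one oriented triangle per $f$-orbit of length $3$ (self-folded when the orbit contains a loop), gluing edges in pairs according to the two arrows emanating from each vertex of $Q$, and leaving unmatched the edges corresponding to loops fixed by $f$, with the link condition at $0$-cells checked exactly as you do. No gaps; the only degenerate case (a single vertex carrying two $f$-fixed loops) is excluded by the paper's standing assumption that triangulation quivers have at least three vertices.
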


\begin{corollary}
\label{cor:2.2}
Let $(Q,f)$ be a triangulation quiver.
Then $Q$ contain a loop $\alpha$ with $f(\alpha) = \alpha$
is and only if
$(Q,f) = (Q(S,\vec{T}),f)$
for a directed triangulated surface $(S,\vec{T})$ with $S$
having non-empty boundary.
\end{corollary}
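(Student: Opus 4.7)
The plan is to invoke Theorem~\ref{th:2.1} and then read off the claim directly from the definition of the permutation $f$ in the three cases appearing in the construction of $Q(S,\vec{T})$.

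First, I would establish the forward direction. Suppose $(Q,f)$ contains a loop $\alpha$ with $f(\alpha) = \alpha$. By Theorem~\ref{th:2.1} we may write $(Q,f) = (Q(S,\vec{T}),f)$ for some directed triangulated surface $(S,\vec{T})$. Now I inspect the three types of arrows appearing in the construction. In case~(1), corresponding to an oriented triangle $(abc)$ with pairwise distinct edges, the three arrows $\alpha,\beta,\gamma$ are permuted cyclically, so none of them is fixed by $f$; moreover none is a loop, as the vertices $a,b,c$ are pairwise distinct. In case~(2), corresponding to a self-folded triangle $(aab)$, there is indeed a loop at $a$, but $f$ sends this loop to the arrow $\beta\colon a \to b$, which is not the loop itself; hence no loop in case~(2) is fixed by $f$. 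The only remaining source of arrows is case~(3), a boundary edge $a$, which contributes a loop $\alpha$ at $a$ with $f(\alpha) = \alpha$. Thus any loop fixed by $f$ must arise from case~(3), forcing $T$ (and hence $S$) to have a boundary edge, so that $\partial S \neq \emptyset$.

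For the converse, assume $(S,\vec{T})$ is a directed triangulated surface with non-empty boundary. Since the triangulation $T$ is finite and every boundary point of $S$ lies on the image of some $1$-cell that is not shared with a second triangle, there is at least one boundary edge $a$ of $T$. By construction~(3), this edge contributes a loop $\alpha$ in $Q(S,\vec{T})$ with $f(\alpha)=\alpha$, which establishes the claim.

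The proof is essentially bookkeeping once Theorem~\ref{th:2.1} is available; there is no real obstacle. The only point worth stressing is that one must carefully distinguish the loops coming from self-folded triangles (case~(2)) from those coming from boundary edges (case~(3)): both produce loops in $Q$, but only the latter are fixed by $f$, since in a self-folded triangle the loop sits inside a $3$-cycle of the permutation $f$.
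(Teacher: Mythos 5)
Your proof is correct and is essentially the argument the paper relies on: the corollary is stated as a direct consequence of Theorem~\ref{th:2.1} together with the case analysis of the construction of $f$ on $Q(S,\vec{T})$ (the paper defers the details to \cite[Section~4]{ES4}). Your key observation — that self-folded triangles also produce loops but these sit in a $3$-cycle of $f$, so only boundary edges yield $f$-fixed loops — is exactly the point that makes the equivalence work, and both directions are handled properly.
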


Let $(Q,f)$ be a triangulation quiver.
Then we have two permutations
$f : Q_1 \to Q_1$
and
$g : Q_1 \to Q_1$
on the set $Q_1$ of arrows of $Q$ such that $f^3$
is the identity on $Q_1$ and $g = \bar{f}$,
where $\bar{ } : Q_1 \to Q_1$
is the involution which assigns to an arrow
$\alpha \in Q_1$ the arrow $\bar{\alpha}$
with $s({\alpha}) = s(\bar{\alpha})$
and ${\alpha} \neq \bar{\alpha}$.
For each arrow $\alpha \in Q_1$, we denote by
$\cO(\alpha)$ the $g$-orbit of $\alpha$
in $Q_1$, and set
$n_{\alpha} = n_{\cO(\alpha)} = |\cO(\alpha)|$.
We denote by $\cO(g)$ the set
of all $g$-orbits in $Q_1$.
A function
\[
  m_{\bullet} : \cO(g) \to \bN^* = \bN \setminus \{0\}
\]
is said to be a \emph{weight function} of $(Q,f)$,
and a function
\[
  c_{\bullet} : \cO(g) \to K^* = K \setminus \{0\}
\]
is said to be a \emph{parameter function} of $(Q,f)$.
We write briefly $m_{\alpha} = m_{\cO(\alpha)}$
and $c_{\alpha} = c_{\cO(\alpha)}$ for $\alpha \in Q_1$.
\emph{In this paper, we will assume that $m_{\alpha} n_{\alpha} \geq 3$
for any arrow $\alpha \in Q_1$.}

\medskip

For any arrow $\alpha \in Q_1$, we consider the path
\begin{align*}
  A_{\alpha} &= \Big( \alpha g(\alpha) \dots g^{n_{\alpha}-1}(\alpha)\Big)^{m_{\alpha}-1}
             \alpha g(\alpha) \dots g^{n_{\alpha}-2}(\alpha) , \mbox{ if } n_{\alpha} \geq 2, \\
  A_{\alpha} &= \alpha^{m_{\alpha}-1} , \mbox{ if } n_{\alpha} = 1 ,
\end{align*}
in $Q$ of length $m_{\alpha} n_{\alpha} - 1$ from
$s(\alpha)$ to $t(g^{n_{\alpha}-2}(\alpha))$.

\medskip

Let $(Q,f)$ be a triangulation quiver with weight and parameter functions
$m_{\bullet}$ and $c_{\bullet}$.
Following  \cite{ES4},
we define  the bound quiver algebra
\[
  \Lambda(Q,f,m_{\bullet},c_{\bullet})
   = K Q / I (Q,f,m_{\bullet},c_{\bullet}),
\]
where $I (Q,f,m_{\bullet},c_{\bullet})$
is the admissible ideal in the path algebra $KQ$ of $Q$ over $K$
generated by:
\begin{enumerate}[(1)]
 \item
  ${\alpha} f({\alpha}) - c_{\bar{\alpha}} A_{\bar{\alpha}}$,
  for all arrows $\alpha \in Q_1$,
 \item
  $\beta f(\beta) g(f(\beta))$,
  for all arrows $\beta \in Q_1$.
\end{enumerate}
Then $\Lambda(Q,f,m_{\bullet},c_{\bullet})$ is called a
\emph{weighted triangulation algebra} of $(Q,f)$.

We note that
$Q$ is the quiver of the algebra $\Lambda(Q,f,m_{\bullet},c_{\bullet})$,
and
the ideal $I(Q,f,m_{\bullet},c_{\bullet})$
is an admissible ideal of $K Q$, by the
assumption that $m_{\alpha} n_{\alpha} \geq 3$
for all arrows $\alpha \in Q_1$.

We have the following proposition proved in \cite[Proposition~5.8]{ES4}.

\begin{proposition}
\label{prop:2.3}
Let $(Q,f)$ be a triangulation quiver,
$m_{\bullet}$ and $c_{\bullet}$
weight and parameter functions of $(Q,f)$.
Then $\Lambda(Q,f,m_{\bullet},c_{\bullet})$
is a finite-dimensional tame symmetric algebra
of dimension $\sum_{\cO \in \cO(g)} m_{\cO} n_{\cO}^2$.
\end{proposition}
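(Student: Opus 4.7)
The plan is to establish the three claims---finite-dimensionality with the stated dimension, symmetric structure, and tameness---in order, by systematic reduction modulo the two defining families of relations. First, I would construct a $K$-basis by rewriting. Relation (2), $\beta f(\beta) g(f(\beta)) = 0$, kills any path that takes an $f$-step immediately followed by a $g$-step; relation (1), $\alpha f(\alpha) = c_{\bar{\alpha}} A_{\bar{\alpha}}$, rewrites every two-step ``$f$-kink'' as a long monomial running along the $g$-orbit of $\bar{\alpha}$. An induction on length then shows that, modulo $I$, every nonzero path is equivalent to an initial subpath of some $A_{\alpha}$. The only nontrivial identifications come from applying relation (1) at the terminal step of a maximal monomial, which forces $A_{\alpha}$ and $c_{\alpha}^{-1} \bar{\alpha} f(\bar{\alpha})$ to represent the same socle element of $P_{s(\alpha)}$. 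Organising by $g$-orbits, an orbit $\cO$ with $n = n_{\cO}$ arrows and weight $m = m_{\cO}$ contributes $n(mn-1)$ proper initial monomials; adding the $|Q_0|$ idempotents and accounting for the socle identifications across orbits, together with the $2$-regularity identity $|Q_1| = 2|Q_0|$, yields the total $\sum_{\cO} m_{\cO} n_{\cO}^2$.

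Next, I would exhibit an associative nondegenerate symmetric form. Let $\pi : \Lambda \to K$ send each maximal monomial $A_{\alpha}$ to $c_{\alpha}$, with the normalisation dictated by the socle identification above, and send every other basis path to zero; set $(x,y) := \pi(xy)$. Associativity and bilinearity are immediate. The identity $\pi(uv) = \pi(vu)$ only needs checking when $uv$ is a maximal monomial, in which case $vu$ is a cyclic rotation of the same $g$-orbit path, and $\pi$ was calibrated precisely so that these scalars agree. Nondegeneracy follows because every basis monomial $p$ starting at a vertex $i$ admits a tail $q$ completing it to some $A_{\alpha}$, so $(p,q) \neq 0$.

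For tameness I would pass to a degeneration. Setting all parameters $c_{\alpha}$ to zero turns relation (1) into the monomial relation $\alpha f(\alpha) = 0$, so $\Lambda_0 := \Lambda(Q,f,m_{\bullet},0)$ is monomial special biserial: from each vertex the two outgoing arrows extend to unique maximal nonzero monomials along the $g$-orbits, while every $f$-step dies at length two. Hence $\Lambda_0$ is a Brauer graph algebra, in particular tame, and it differs from $\Lambda$ only in a socle deformation controlled by the parameters $c_{\alpha}$. Tameness of $\Lambda$ would then be transferred by lifting the classification of indecomposables across this deformation. The main obstacle is precisely this transfer: a naive appeal to Gei{\ss}' semicontinuity theorem runs in the wrong direction (from tame $\Lambda_0$ to $\Lambda$), so tameness of $\Lambda$ must be secured by a more careful structural argument---either by a direct biserial-style analysis of its projectives or by a functorial comparison with the module category of $\Lambda_0$---whereas the basis and symmetric form emerge comparatively cleanly once the rewriting system is shown to terminate in the claimed canonical forms.
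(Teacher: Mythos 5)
The paper does not actually prove this proposition: it is quoted from \cite[Proposition~5.8]{ES4}, so there is no internal proof to compare with, and I assess your argument against the standard one. Your first two steps are essentially right. The basis of $e_i\Lambda$ consisting of $e_i$, the proper initial submonomials of the full cycles $B_{\alpha}$ and $B_{\bar{\alpha}}$, and one socle element, combined with $|Q_1|=2|Q_0|$, does give the stated dimension; and the symmetrizing form is the usual one, except that it should be calibrated on the socle monomials $B_{\alpha}$ (of length $m_{\alpha}n_{\alpha}$) rather than on the second-socle elements $A_{\alpha}$. Note also that the intended degeneration target is not monomial: it is the Brauer graph algebra with relations $\alpha f(\alpha)=0$ and $c_{\alpha}B_{\alpha}=c_{\bar{\alpha}}B_{\bar{\alpha}}$, which is special biserial but not a monomial algebra.

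The tameness paragraph is where the proposal fails, in two ways. First, you have the direction of Gei{\ss}'s theorem backwards: it says precisely that if $\Lambda$ degenerates to a tame algebra $\Lambda_0$, i.e.\ $\Lambda_0\in\overline{\mathcal{O}(\Lambda)}$, then $\Lambda$ is tame. So ``from tame $\Lambda_0$ to $\Lambda$'' is the correct direction, and the strategy you set up and then abandon is exactly the one used in \cite{ES4}; your stated reason for declaring it insufficient is not valid. Second, the genuine difficulty is one you do not address: you must prove that $\Lambda_0$ really lies in the orbit closure of $\Lambda$. The standard device is a positive grading of $KQ$ with $\deg\big(\alpha f(\alpha)\big)<\deg\big(A_{\bar{\alpha}}\big)$ for every arrow $\alpha$, so that the induced one-parameter subgroup degenerates relation (1) to $\alpha f(\alpha)=0$. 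With all arrows in degree $1$ this needs $m_{\bar{\alpha}}n_{\bar{\alpha}}\geq 4$, and when every $g$-orbit satisfies $m_{\cO}n_{\cO}=3$ (the tetrahedral algebras, which are among the weighted surface algebras covered by the proposition), summing the required strict inequalities over all arrows yields $2\sum_{\alpha}\deg\alpha<2\sum_{\alpha}\deg\alpha$, since $f$, $g$ and $\alpha\mapsto\bar{\alpha}$ are bijections of $Q_1$; hence no such grading exists and the degeneration argument fails outright there. Those cases must be handled separately --- in \cite{ES4} via a derived equivalence with the trivial extension of a tubular algebra of type $(2,2,2,2)$, which is tame of polynomial growth. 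To complete your proof you should reinstate Gei{\ss}'s theorem in the correct direction and supply the degeneration (or an alternative tameness argument) in the small-orbit cases.
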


Let $(S,\vec{T})$ be a directed triangulated surface,
$(Q(S,\vec{T}),f)$ the associated triangulation quiver,
and let $m_{\bullet}$ and $c_{\bullet}$
be weight and parameter functions of $(Q(S,\vec{T}),f)$.
Then the weighted triangulation algebra
$\Lambda(Q(S,\vec{T}),f,m_{\bullet},c_{\bullet})$
is called a \emph{weighted surface algebra}
and denoted by
$\Lambda(S,\vec{T},m_{\bullet},c_{\bullet})$
\cite{ES4}.
We refer to the next section for definition of
tetrahedral algebras.

We have the following theorem proved in \cite[Theorem~1.2]{ES4}.

\begin{theorem}
\label{th:2.4}
Let $\Lambda = \Lambda(S,\vec{T},m_{\bullet},c_{\bullet})$
be a weighted surface algebra over an algebraically
closed field $K$.
Then the following statements are equivalent:
\begin{enumerate}[(i)]
 \item
  All simple modules in $\mod \Lambda$ are periodic of period $4$.
 \item
  $\Lambda$ is a periodic algebra of period $4$.
 \item
  $\Lambda$ is not a singular tetrahedral algebra.
\end{enumerate}
\end{theorem}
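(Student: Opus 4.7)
The plan is to prove the cycle (ii) $\Rightarrow$ (i) $\Rightarrow$ (iii) $\Rightarrow$ (ii). The implication (ii) $\Rightarrow$ (i) is immediate from general theory: periodicity of $\Lambda$ as a $\Lambda$-bimodule of period $4$ forces $\Omega_\Lambda^4 M \cong M$ for every non-projective indecomposable $M \in \mod \Lambda$, so in particular every simple module has period dividing $4$. To upgrade this to period exactly $4$, I would invoke $m_\alpha n_\alpha \geq 3$ together with $2$-regularity of $Q$: a simple of period $1$ would force $P_i \cong S_i$, and a simple of period $2$ combined with $\tau_\Lambda = \Omega_\Lambda^2$ (which holds since $\Lambda$ is symmetric) would give $\tau_\Lambda S_i \cong S_i$, both incompatible with the shape of the minimal projective cover dictated by the relations $\alpha f(\alpha) = c_{\bar{\alpha}} A_{\bar{\alpha}}$ and $\beta f(\beta) g(f(\beta)) = 0$.

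The core of the proof is (iii) $\Rightarrow$ (ii). I would construct an explicit projective $\Lambda^e$-resolution of $\Lambda$ of the form
\[
0 \to P_3 \to P_2 \to P_1 \to P_0 \to \Lambda \to 0,
\]
where $P_0 = \bigoplus_{i \in Q_0} \Lambda e_i \otimes_K e_i \Lambda$, $P_1 = \bigoplus_{\alpha \in Q_1} \Lambda e_{t(\alpha)} \otimes_K e_{s(\alpha)} \Lambda$, and $P_2$ is indexed by a minimal generating set of $I(Q,f,m_\bullet,c_\bullet)$ chosen so that exactly one relation is naturally attached to each arrow; by the symmetry of $\Lambda$ one then expects $P_3 \cong P_1$ and a closing isomorphism $P_4 \cong P_0$. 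The differentials would come from Koszul-type formal derivatives of the two families of defining relations, and verifying that the resulting complex is exact, and that its four-fold iteration yields $\Omega_{\Lambda^e}^4 \Lambda \cong \Lambda$, is where the bulk of the work lies. For (i) $\Rightarrow$ (iii) I would argue contrapositively: in each singular tetrahedral algebra the quiver has only four vertices and the relations are explicit, so one can compute $\Omega_\Lambda^k S_i$ directly for small $k$ and exhibit either a dimension mismatch with $S_i$ or a visibly non-$4$-periodic pattern. Equivalently, one can show that the candidate differential built in (iii) $\Rightarrow$ (ii) develops an extra kernel element precisely in the singular tetrahedral case, obstructing closure of the resolution and thereby destroying $4$-periodicity of every simple.

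The main obstacle, I expect, will be the exactness verification of the candidate bimodule resolution in (iii) $\Rightarrow$ (ii). The two families of relations interact nontrivially along both the $f$- and $g$-orbits, and the argument must treat uniformly the loop cases ($n_\alpha = 1$) and the boundary cases ($\alpha$ fixed by $f$), while simultaneously pinpointing the singular tetrahedral configurations as the unique place where the construction breaks down. Choosing the minimal generating set of relations so that the truncated complex $P_2 \to P_1 \to P_0$ is honestly minimal, and then identifying $\Omega_{\Lambda^e}^2 \Lambda$ with the bimodule predicted by symmetry, is likely the most delicate ingredient of the proof.
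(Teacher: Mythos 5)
The paper does not actually prove this statement: Theorem~\ref{th:2.4} is imported verbatim from \cite[Theorem~1.2]{ES4}, so there is no internal proof to compare against, and the relevant comparison is with the argument in \cite{ES4}. Your outline coincides with that argument in all essentials: (ii)~$\Rightarrow$~(i) is general theory plus the observation that $2$-regularity and $\tau_\Lambda=\Omega_\Lambda^2$ exclude periods $1$ and $2$ (your phrase ``period $1$ forces $P_i\cong S_i$'' is slightly off --- $\Omega_\Lambda S_i\cong S_i$ means $\operatorname{rad}P_i$ is simple, which contradicts $2$-regularity, not projectivity of $S_i$); (iii)~$\Rightarrow$~(ii) is proved in \cite{ES4} by exactly the bimodule resolution you describe, with $P_0\cong P_4$ indexed by vertices, $P_1\cong P_3$ indexed by arrows, $P_2$ indexed by one relation per arrow, and exactness verified by tensoring the complex with each simple module and matching it against the explicitly computed minimal projective resolutions of the $S_i$ (a dimension count using $\dim_K\Lambda=\sum_{\cO}m_{\cO}n_{\cO}^2$ from Proposition~\ref{prop:2.3}); and (i)~$\Rightarrow$~(iii) is the explicit syzygy computation in the singular tetrahedral algebra, where indeed the resolution fails to close up (in fact \cite{ES4} proves the stronger statement recorded here as Theorem~\ref{th:tetr2}, that the singular tetrahedral algebra admits no periodic simple module at all, though for the equivalence it suffices, as you say, to exhibit $\Omega_\Lambda^4 S_i\not\cong S_i$ for one vertex). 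Be aware that your proposal defers precisely the two places where all the content lives --- the exactness verification and the singular-case computation --- so as written it is a correct plan rather than a proof; the one technical ingredient you do not mention and would need is the mechanism for checking exactness of the bimodule complex (reduction to the one-sided resolutions of the simples), without which the ``Koszul-type differentials'' claim remains unverified.
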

We need also socle deformations of weighted surface algebras.

Let $(Q,f)$ be a triangulation quiver.
A vertex $i \in Q_0$ is said to be a \emph{border vertex}
of $(Q,f)$ if there is a loop $\alpha$ at $i$ with $f(\alpha) = \alpha$.
If so, then
$\bar{\alpha} = g(\alpha)$,
$\alpha = f^2(\alpha) = g^{n_{\bar{\alpha}} - 1} (\bar{\alpha})$,
and
$f^2(\bar{\alpha}) = g^{-1} (\alpha)$.
In particular, we have
$n_{\alpha} = n_{\bar{\alpha}} \geq 3$,
because $|Q_0| \geq 3$.
Hence the loop $\alpha$ is uniquely determined by the vertex $i$,
and we call it a \emph{border loop} of $(Q,f)$.
We also note that
the following equalities hold:
$
  \alpha A_{\bar{\alpha}}
    = B_{\alpha}
    = A_{\alpha} f^2(\bar{\alpha})$
and
$
  \bar{\alpha} A_{g(\bar{\alpha})}
   = B_{\bar{\alpha}}
    = A_{\bar{\alpha}} \alpha
$.
We denote by $\partial(Q,f)$
the set of all border vertices of $(Q,f)$,
and call it the \emph{border} of $(Q,f)$.
Observe that, if $(S,\vec{T})$
is a directed triangulated surface with
$(Q(S,\vec{T}), f) = (Q, f)$,
then the border vertices of $(Q,f)$
correspond bijectively to the boundary edges
of the triangulation $T$ of $S$.
Hence, the border $\partial(Q,f)$ of $(Q,f)$
is non-empty if and only if the boundary
$\partial S$ of $S$ is not empty.
A function
\[
  b_{\bullet} : \partial(Q,f) \to K
\]
is said to be a \emph{border function} of $(Q,f)$.
Assume that $\partial(Q,f)$ is not empty.
Then, for
a weight function
$m_{\bullet} : \cO(g) \to \bN^*$,
a parameter function
$c_{\bullet} : \cO(g) \to K^*$,
and
a border function
$b_{\bullet} : \partial(Q,f) \to K$,
we may consider the bound quiver algebra
\[
  \Lambda(Q,f,m_{\bullet},c_{\bullet},b_{\bullet})
   = K Q / I (Q,f,m_{\bullet},c_{\bullet},b_{\bullet}),
\]
where $I (Q,f,m_{\bullet},c_{\bullet},b_{\bullet})$
is the admissible ideal in the path algebra $KQ$ of $Q$ over $K$
generated by the elements:
\begin{enumerate}[(1)]
 \item
  ${\alpha} f({\alpha}) - c_{\bar{\alpha}} A_{\bar{\alpha}}$,
  for all arrows $\alpha \in Q_1$ which are not border loops,
 \item
  $\alpha^2 - c_{\bar{\alpha}} A_{\bar{\alpha}} - b_{s(\alpha)} B_{\bar{\alpha}}$,
  for all border loops $\alpha \in Q_1$,
 \item
  $\beta f(\beta) g(f(\beta))$,
  for all arrows $\beta \in Q_1$.
\end{enumerate}
Then $\Lambda(Q,f,m_{\bullet},c_{\bullet},b_{\bullet})$ is
said to be a
\emph{socle deformed weighted triangulation algebra} \cite{ES4}.
We note that if $b_{\bullet}$ is the zero border function
($b_i = 0$ for all $i \in \partial(Q,f)$) then
$\Lambda(Q,f,m_{\bullet},c_{\bullet},b_{\bullet})
  =
    \Lambda(Q,f,m_{\bullet},c_{\bullet})$.

We have the following proposition proved in \cite[Proposition~8.1]{ES4}.

\begin{proposition}
\label{prop:2.5}
Let $(Q,f)$ be a triangulation quiver
with $\partial(Q,f)$ not empty,
and let
$m_{\bullet}$,
$c_{\bullet}$,
$b_{\bullet}$
be
weight,
parameter,
border functions of $(Q,f)$.
Then
$\Lambda(Q,f,m_{\bullet},c_{\bullet},b_{\bullet})$
is a finite-dimensional tame symmetric algebra which
is socle equivalent to\linebreak $\Lambda(Q,f,m_{\bullet},c_{\bullet})$.
\end{proposition}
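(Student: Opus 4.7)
The plan is to establish the three properties of $\Lambda = \Lambda(Q,f,m_{\bullet},c_{\bullet},b_{\bullet})$ separately — that it is finite-dimensional of the same dimension as $\Lambda' = \Lambda(Q,f,m_{\bullet},c_{\bullet})$, socle equivalent to $\Lambda'$, and symmetric and tame — exploiting the fact that the only modification from $\Lambda'$ occurs in the border-loop relations, where the extra summand $b_{s(\alpha)} B_{\bar{\alpha}}$ lies in the socle of $P_{s(\alpha)}$. Indeed, using the identities $B_{\bar{\alpha}} = \bar{\alpha} A_{g(\bar{\alpha})} = A_{\bar{\alpha}} \alpha$ noted before Proposition~\ref{prop:2.5}, the element $B_{\bar{\alpha}}$ is a maximal path at the border vertex $i = s(\alpha)$, annihilated on both sides by the radical by virtue of the zero relations of type~(3). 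Thus the deformation adds only a socle-valued correction to the defining relations.

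For the socle equivalence, I would first fix a $K$-basis of $\Lambda'$ provided by Proposition~\ref{prop:2.3}, namely the initial subpaths of the $A_\alpha$'s together with the maximal paths $B_\alpha$, yielding total dimension $\sum_{\cO \in \cO(g)} m_{\cO} n_{\cO}^2$. Then I would argue that the same set of paths descends to a spanning set of $\Lambda$: relations~(1) and~(3) are identical to those of $\Lambda'$, while the modified relation~(2) differs only by $b_i B_{\bar{\alpha}}$, which is annihilated on both sides by the radical, so no further collapses occur when the relation is multiplied by additional arrows. Linear independence would be verified by constructing, via an assignment on arrows, an explicit algebra epimorphism $\Lambda \twoheadrightarrow \Lambda'/\soc(\Lambda')$ which kills the deformation term and maps the chosen paths to a basis; the resulting dimension inequalities force both $\dim \Lambda = \sum_\cO m_\cO n_\cO^2$ and a canonical isomorphism $\Lambda/\soc(\Lambda) \xrightarrow{\sim} \Lambda'/\soc(\Lambda')$, which is the desired socle equivalence.

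The symmetric structure would be obtained by defining a $K$-linear form $\varphi : \Lambda \to K$ that vanishes on all chosen basis paths except for the socle generators $B_\alpha$, to each of which it assigns a prescribed nonzero scalar compatible with the identifications among socle elements forced by relations~(1). Verifying $\varphi(xy) = \varphi(yx)$ reduces to checking cyclic invariance on pairs of basis paths, and this is built into the relations, with the modified border-loop relation $\alpha^2 = c_{\bar{\alpha}} A_{\bar{\alpha}} + b_{s(\alpha)} B_{\bar{\alpha}}$ causing no issue since $\alpha^2$ is manifestly cyclically symmetric and $B_{\bar\alpha}$ already sits in the socle. Non-degeneracy is immediate as $\varphi$ pairs each primitive idempotent with a socle generator. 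Finally, tameness of $\Lambda$ would be inferred from tameness of $\Lambda'$ (Proposition~\ref{prop:2.3}) using the socle equivalence, since indecomposable non-projective modules and their one-parameter families for a self-injective algebra are determined by its quotient modulo the socle, so the classification for $\Lambda$ parallels that for $\Lambda'$. The main obstacle I anticipate is the dimension bookkeeping: one must carefully rule out unintended identifications produced by iterated multiplications of the modified relation by $\alpha$ and $\bar{\alpha}$, and verify that every higher power of the loop $\alpha$ reduces, via the deformed relation together with relations~(3), to a controlled $K$-linear combination of paths in the chosen basis.
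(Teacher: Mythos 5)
This proposition is not proved in the paper itself; it is quoted from \cite[Proposition~8.1]{ES4}, and your plan is essentially the standard argument used there: the deformation term $b_{s(\alpha)}B_{\bar{\alpha}}$ lies in the socle, so the spanning set, the symmetrizing form, the socle equivalence, and tameness (via the fact that non-projective indecomposables over a self-injective algebra are modules over the quotient by the socle) all carry over from the undeformed case. One logical wrinkle: an epimorphism $\Lambda \twoheadrightarrow \Lambda'/\soc(\Lambda')$ only bounds $\dim\Lambda$ below by $\dim\Lambda' - |Q_0|$ and cannot by itself ``force'' $\dim\Lambda = \sum_{\cO} m_{\cO}n_{\cO}^2$; you need the non-degenerate form $\varphi$ (defined on the path algebra and checked to vanish on the deformed ideal) to certify that the elements $B_\alpha$ are non-zero and that the spanning set is linearly independent, so that step should precede, not follow, the dimension count.
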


Let $(S,\vec{T})$ be a directed triangulated surface
with non-empty boundary, and let
$m_{\bullet}$,
$c_{\bullet}$,
$b_{\bullet}$
be
weight,
parameter,
border functions of $(Q(S,\vec{T}),f)$.
Then the socle deformed weighted triangulation algebra
$\Lambda(Q(S,\vec{T}),f,m_{\bullet},c_{\bullet},b_{\bullet})$
is called a \emph{socle deformed weighted surface algebra}
and denoted by
$\Lambda(S,\vec{T},m_{\bullet},c_{\bullet},b_{\bullet})$
\cite{ES4}.

We have the following consequence of
\cite[Proposition~9.1 and Theorem~9.4]{ES4}.

\begin{theorem}
\label{th:2.6}
Let
$\bar{\Lambda} = \Lambda(S,\vec{T},m_{\bullet},c_{\bullet},b_{\bullet})$
be a socle deformed weighted surface algebra.
Then the following statements hold.
\begin{enumerate}[(i)]
 \item
  All simple modules in $\mod \bar{\Lambda}$ are periodic modules
  of period $4$.
 \item
  $\bar{\Lambda}$ is a periodic algebra of period $4$.
\end{enumerate}
\end{theorem}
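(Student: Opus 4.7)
The plan is to derive both statements from an explicit $4$-periodic projective bimodule resolution of $\bar{\Lambda}$ obtained by deforming the known resolution of the weighted surface algebra $\Lambda = \Lambda(S,\vec{T},m_{\bullet},c_{\bullet})$. Since $\partial(Q,f)$ is non-empty, the underlying surface $S$ has boundary, so $(Q(S,\vec{T}),f)$ is not the tetrahedron, and Theorem~\ref{th:2.4} provides a $4$-periodic bimodule resolution $(P_{\bullet}, d_{\bullet})$ of $\Lambda$ whose differentials are built from the arrows $\alpha$, the defining relations $\alpha f(\alpha) - c_{\bar{\alpha}}A_{\bar{\alpha}}$ and $\beta f(\beta) g(f(\beta))$, and a symmetrizing socle pairing.

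Granting (ii), statement (i) follows from the general principle recalled in the introduction: for any periodic algebra $A$ of period $n$ and any indecomposable non-projective $M$ in $\mod A$ one has $\Omega_A^n(M) \cong M$, so every simple $\bar{\Lambda}$-module has period dividing $4$. That the period is exactly $4$ can be verified using the isomorphism $\bar{\Lambda}/\soc(\bar{\Lambda}) \cong \Lambda/\soc(\Lambda)$ from Proposition~\ref{prop:2.5} and a dimension comparison: each $\Omega(S_i)$ and $\Omega^2(S_i)$ is visibly too large to be simple, so no smaller period is possible.

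For (ii), I would keep the same underlying projective bimodules $P_0, P_1, P_2, P_3$ as in the resolution of $\Lambda$ and modify each differential by a correction involving the border parameters $b_{\bullet}$: wherever the border-loop relation $\alpha^2 - c_{\bar{\alpha}}A_{\bar{\alpha}}$ appears as a component of $d_{\bullet}$, replace it by $\alpha^2 - c_{\bar{\alpha}}A_{\bar{\alpha}} - b_{s(\alpha)} B_{\bar{\alpha}}$. This perturbation propagates to additional summands in the higher differentials, each of which lies in $\soc(\bar{\Lambda})$. The identity $d \circ d = 0$ then reduces, after the deformation, to the vanishing of $\alpha \cdot B_{\bar{\alpha}}$ and $B_{\bar{\alpha}} \cdot \alpha$ in $\bar{\Lambda}$, which holds because $B_{\bar{\alpha}}$ is a maximal path already in the socle, combined with the defining relations of $\bar{\Lambda}$. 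Exactness is then obtained by reducing modulo $\soc(\bar{\Lambda})$, where the complex coincides with the exact resolution of $\Lambda/\soc(\Lambda)$, and lifting back by a dimension count together with the self-duality coming from the symmetry of $\bar{\Lambda}$.

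The main obstacle is the careful bookkeeping of how the $b_{\bullet}$-corrections propagate through the four levels of the resolution, and in particular ensuring that every newly introduced socle term is annihilated by the next differential. The consistency of this propagation relies on the identities $\alpha B_{\bar{\alpha}} = 0 = B_{\bar{\alpha}}\alpha$, together with the fact that $b_{\bullet}$ is supported only at border vertices, so that the deformation is localized precisely where the relations of $\bar{\Lambda}$ differ from those of $\Lambda$.
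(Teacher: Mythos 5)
The first thing to note is that this paper does not actually prove Theorem~\ref{th:2.6}: it is quoted verbatim as a consequence of \cite[Proposition~9.1 and Theorem~9.4]{ES4}, so there is no internal argument to match yours against. The route taken there is the reverse of yours: one first proves (i) directly, by writing down for each vertex $i$ the explicit exact sequence $0 \to S_i \to P_i \to P_x \oplus P_y \to P_j \oplus P_k \to P_i \to S_i \to 0$ whose maps are built from the arrows and the (deformed) relations, checking exactness by hand from the relations of $\bar{\Lambda}$; only then is the bimodule statement (ii) deduced, using the standard fact that a complex of projective bimodules with $d\circ d=0$ is exact as soon as its tensor product with every simple module is the (already established) minimal one-sided resolution. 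Your proposal inverts this order, and that inversion is where the trouble lies.

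The genuine gap is in your exactness argument for the deformed bimodule complex. Reducing modulo $\soc(\bar{\Lambda})$ does not produce ``the exact resolution of $\Lambda/\soc(\Lambda)$'': the quotient $\Lambda/\soc(\Lambda)$ is not self-injective and admits no periodic resolution, reduction modulo the socle does not preserve exactness of complexes of projectives, and a dimension count for a four-term complex with $d\circ d = 0$ does not force exactness in the middle. The only workable way to certify exactness here is via the one-sided resolutions of the simple modules --- which is precisely statement (i), so your derivation of (i) from (ii) is circular as written. A second, smaller underestimate: the identity $d\circ d = 0$ for the deformed differentials does not reduce merely to $\alpha B_{\bar{\alpha}} = 0 = B_{\bar{\alpha}}\alpha$; the bimodule differential of the correction term $b_{s(\alpha)}B_{\bar{\alpha}}$ is a Leibniz-type sum over all internal positions of the word $B_{\bar{\alpha}}$, and the cancellation of these middle terms against the other components of $d_3$ requires real computation. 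Your concluding step (ruling out periods $1$ and $2$ because $\Omega(S_i)$ and $\Omega^2(S_i)$ are not simple) is fine; I would recommend restructuring the whole argument so that the explicit computation of $\Omega^4(S_i)\cong S_i$ comes first and carries the weight of the proof.
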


Moreover, we have the following theorem established in
\cite[Theorem~1.4]{ES4}.

\begin{theorem}
\label{th:2.7}
Let $A$ be a basic, indecomposable, symmetric algebra
over an algebraically closed field $K$.
Assume that $A$ is socle equivalent but not isomorphic to
a weighted surface algebra
$\Lambda(S,\vec{T},m_{\bullet},c_{\bullet})$.
Then the following statements hold.
\begin{enumerate}[(i)]
 \item
  The surface $S$ has non-empty boundary.
 \item
  $K$ is of characteristic $2$.
 \item
  $A$ is isomorphic to a socle deformed weighted surface algebra
  $\Lambda(S,\vec{T},m_{\bullet},c_{\bullet},b_{\bullet})$.
\end{enumerate}
\end{theorem}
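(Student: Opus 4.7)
\bigskip
\noindent\textbf{Proof proposal for Theorem~\ref{th:2.7}.}
The plan is to use the socle equivalence to fix a presentation of $A$ that differs from the weighted surface algebra $\Lambda := \Lambda(S,\vec{T},m_{\bullet},c_{\bullet})$ only by socle correction terms in the defining relations, and then to show that almost all such corrections can be absorbed by an algebra isomorphism (a change of arrows). The obstruction to removing the remaining correction will be shown to live only at border loops and to vanish unless $\operatorname{char} K = 2$, yielding (i), (ii) and (iii) simultaneously.

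First, since $A/\soc(A) \cong \Lambda/\soc(\Lambda)$, the algebra $A$ has the same Gabriel quiver $Q = Q(S,\vec{T})$ as $\Lambda$. Lifting the arrows of $Q$ to $A$, one obtains a presentation $A = KQ/I_A$ in which every defining relation of $\Lambda$ is perturbed by an element of $\soc(A)$. Because $A$ is symmetric and each $e_i \soc(A)$ is one-dimensional and spanned by the class of $B_{\alpha}$ for either arrow $\alpha$ with $s(\alpha)=i$, the modified relations take the form
\begin{align*}
  \alpha f(\alpha) &= c_{\bar\alpha} A_{\bar\alpha} + \lambda_\alpha B_{\bar\alpha}, \\
  \beta f(\beta) g(f(\beta)) &= \mu_\beta B_{f(\beta)},
\end{align*}
for suitable scalars $\lambda_\alpha, \mu_\beta \in K$, indexed by the arrows of $Q$. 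The aim is to kill all these scalars except those of the form $\lambda_\alpha$ with $\alpha$ a border loop.

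The next step is the reduction by automorphisms. For each arrow $\alpha$ that is \emph{not} a border loop, replace $\alpha$ by $\alpha' := \alpha + \xi_\alpha \rho_\alpha$, where $\rho_\alpha$ is a path of length close to $m_\alpha n_\alpha - 1$ ending near $t(\alpha)$, chosen so that the leading socle contribution to $\alpha' f(\alpha')$ (or to $\alpha'$ composed with its predecessors) alters precisely the target $\lambda_\alpha$ or $\mu_\alpha$ by $\xi_\alpha$ times an invertible factor. Because the two arrows entering a vertex are distinct when no border loop is present, one can solve for the $\xi_\alpha$ one orbit at a time to annihilate all $\lambda_\alpha$ with $f(\alpha)\neq \alpha$ and all $\mu_\beta$. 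The commutativity relations and the associative structure of the $g$-orbits guarantee that these substitutions are compatible across the quiver.

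When $\alpha$ is a border loop (so $f(\alpha)=\alpha$), the relation has the shape $\alpha^2 = c_{\bar\alpha} A_{\bar\alpha} + \lambda_\alpha B_{\bar\alpha}$, and the substitution $\alpha \mapsto \alpha + \xi \rho$ modifies the left-hand side by $\xi(\alpha\rho + \rho\alpha) + \xi^2 \rho^2$. Any candidate $\rho$ ending in the socle gives $\alpha\rho = \rho\alpha$ modulo higher socle (since both products land in the one-dimensional space $e_{s(\alpha)}\soc A$), so the correction is $2\xi\alpha\rho$ modulo terms that are already zero. In $\operatorname{char} K \neq 2$ this suffices to solve $\lambda_\alpha = -2\xi \cdot (\text{unit})$, so the deformation is again removable and $A \cong \Lambda$; in $\operatorname{char} K = 2$ the obstruction $\lambda_\alpha$ cannot be cleared. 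This simultaneously forces (ii) and shows that a non-trivial deformation can only occur when $\partial(Q,f) \neq \emptyset$, which by Corollary~\ref{cor:2.2} means $\partial S \neq \emptyset$, giving (i). Setting $b_i := \lambda_\alpha$ for each border vertex $i = s(\alpha)$ produces a border function $b_\bullet$, and the resulting presentation is by definition $\Lambda(S,\vec T, m_\bullet, c_\bullet, b_\bullet)$, proving (iii).

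The main obstacle is the careful bookkeeping in the reduction step: one must verify that the changes of generators can be carried out \emph{simultaneously} along each $g$-orbit without reintroducing previously cancelled corrections, and that the leading socle coefficients produced by the substitutions are always invertible (this uses $m_\alpha n_\alpha \geq 3$ and $c_\bullet \in K^*$). Provided this combinatorial check is completed orbit by orbit, the proof proceeds cleanly to the conclusion.
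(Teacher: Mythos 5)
First, a point of comparison: the paper does not prove Theorem~\ref{th:2.7} at all --- it is quoted verbatim from \cite[Theorem~1.4]{ES4}, so there is no internal argument to measure your proposal against. Your overall strategy (lift the arrows through the socle equivalence, observe that the defining relations of $\Lambda$ can only be perturbed by socle elements, absorb the perturbations by substitutions $\alpha \mapsto \alpha + \xi\rho$, and locate the unremovable obstruction at border loops in characteristic $2$ via the cross-term $\xi(\alpha\rho+\rho\alpha)=2\xi\alpha\rho$) is indeed the natural one and is essentially how the result is established in \cite{ES4}. The characteristic-$2$ mechanism at a border loop is correctly identified, and for a non-loop arrow the substitution produces only one cross term $\xi\rho f(\alpha)$, so no factor of $2$ obstructs the cancellation there.

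That said, there are genuine gaps. The decisive structural fact --- which you do not use and which your setup actually contradicts --- is that for a symmetric algebra the Nakayama permutation is trivial, so $e_i\soc(A)e_j=0$ whenever $i\neq j$; hence a relation from $i$ to $j$ can acquire a socle correction only when $i=j$. In particular your perturbed zero-relation $\beta f(\beta)g(f(\beta))=\mu_\beta B_{f(\beta)}$ is not even dimensionally consistent: the left-hand side lies in $e_{s(\beta)}Ae_{t(g(f(\beta)))}$ while $B_{f(\beta)}$ is a cycle at $t(\beta)$, and in almost all cases the relevant socle space is zero, so $\mu_\beta$ does not exist. Using this observation both repairs the setup and drastically shrinks the list of relations that need attention (essentially to the cyclic ones, i.e.\ those at border loops and at self-folded configurations where $t(f(\alpha))=s(\alpha)$). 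Second, the ``reduction by automorphisms'' paragraph, which is the entire technical content of the theorem, is asserted rather than carried out: you must exhibit the paths $\rho_\alpha$ explicitly, check that the substitutions along a $g$-orbit do not reintroduce corrections already removed (the substitution for $\alpha$ also changes every monomial $A_\gamma$, $B_\gamma$ in which $\alpha$ occurs), and verify that the surviving data is exactly a border function $b_\bullet$ so that the result is one of the algebras $\Lambda(S,\vec{T},m_{\bullet},c_{\bullet},b_{\bullet})$. As written, the argument is a correct plan with the right obstruction analysis, but not yet a proof.
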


We end this section with two examples.

\begin{example}
\label{ex:2.8}
Let $S = T$ be the triangle
\[
\begin{tikzpicture}[auto]
\coordinate (a) at (0,2);
\coordinate (b) at (-1,0);
\coordinate (c) at (1,0);
\draw (a) to node {2} (c)
(c) to node {3} (b);
\draw (b) to node {1} (a);
\node (a) at (0,2) {$\bullet$};
\node (b) at (-1,0) {$\bullet$};
\node (c) at (1,0) {$\bullet$};
\end{tikzpicture}
\]
with the three pairwise different edges,
forming the boundary of $S$, and consider the orientation
$\vec{T}$ of $T$
\[
\begin{tikzpicture}[auto]
\coordinate (a) at (0,2);
\coordinate (b) at (-1,0);
\coordinate (c) at (1,0);
\coordinate (d) at (-.08,.25);
\draw (a) to node {2} (c)
(c) to node {3} (b)
(b) to node {1} (a);
\draw[->] (d) arc (260:-80:.4);
\node (a) at (0,2) {$\bullet$};
\node (b) at (-1,0) {$\bullet$};
\node (c) at (1,0) {$\bullet$};
\end{tikzpicture}
\]
of $T$.
Then the triangulation quiver
$(Q(S,\vec{T}),f)$ is the quiver
\[
  \xymatrix@C=.8pc{
     1 \ar@(dl,ul)[]^{\varepsilon} \ar[rr]^{\alpha} &&
     2 \ar@(ur,dr)[]^{\eta} \ar[ld]^{\beta} \\
     & 3 \ar@(dr,dl)[]^{\mu} \ar[lu]^{\gamma}}
\]
with the $f$-orbits
$(\alpha\, \beta\, \gamma)$,
$(\varepsilon)$,
$(\eta)$,
$(\mu)$.
Observe that we have only one $g$-orbit
$(\alpha\, \eta\, \beta\, \mu\, \gamma\, \varepsilon)$
of arrows in $Q(S,\vec{T})$.
Hence a weight function
$m_{\bullet} : \cO(g) \to \bN^*$
and a parameter function
$c_{\bullet} : \cO(g) \to K^*$
are given by a positive integer $m$ and
a parameter $c \in K^*$.
The associated weighted surface algebra
$\Lambda = \Lambda(Q(S,\vec{T}),f,m_{\bullet},c_{\bullet})$
is given by the above quiver and the relations
\begin{align*}
 \alpha\beta &= c(\varepsilon\alpha\eta\beta\mu\gamma)^{m-1} \varepsilon\alpha\eta\beta\mu ,
 &
 \varepsilon^2 &= c(\alpha\eta\beta\mu\gamma\varepsilon)^{m-1} \alpha\eta\beta\mu\gamma,
 &
 \alpha\beta\mu &= 0,
 &
 \varepsilon^2 \alpha &= 0,
\\
 \beta\gamma &= c(\eta\beta\mu\gamma\varepsilon\alpha)^{m-1} \eta\beta\mu\gamma\varepsilon ,
 &
 \eta^2 &= c(\beta\mu\gamma\varepsilon\alpha\eta)^{m-1} \beta\mu\gamma\varepsilon\alpha,
 &
 \beta\gamma\varepsilon &= 0,
 &
 \eta^2 \beta &= 0,
\\
 \gamma\alpha &= c(\mu\gamma\varepsilon\alpha\eta\beta)^{m-1} \mu\gamma\varepsilon\alpha\eta,
 &
 \mu^2 &= c(\gamma\varepsilon\alpha\eta\beta\mu)^{m-1} \gamma\varepsilon\alpha\eta\beta,
 &
 \gamma\alpha\eta &= 0,
 &
 \mu^2 \gamma &= 0.
\end{align*}
Observe that the border
$\partial(Q(S,\vec{T}),f)$
of $(Q(S,\vec{T}),f)$
is the set $Q_0 = \{1,2,3\}$ of vertices of $Q$,
and $\varepsilon$, $\eta$, $\mu$
are the border loops.
Take now a border function
$b_{\bullet} : \partial(Q(S,\vec{T}),f) \to K$.
Then the associated socle  deformed
weighted surface algebra
$\bar{\Lambda} = \Lambda(Q(S,\vec{T}),f,m_{\bullet},c_{\bullet},b_{\bullet})$
is given by the above quiver and the relations
\begin{gather*}
\alpha\beta = c(\varepsilon\alpha\eta\beta\mu\gamma)^{m-1} \varepsilon\alpha\eta\beta\mu ,
\ \,\quad
\varepsilon^2 = c(\alpha\eta\beta\mu\gamma\varepsilon)^{m-1} \alpha\eta\beta\mu\gamma
+ b_1 (\alpha\eta\beta\mu\gamma\varepsilon)^m,
\\
\beta\gamma = c(\eta\beta\mu\gamma\varepsilon\alpha)^{m-1} \eta\beta\mu\gamma\varepsilon ,
\ \,\quad
\eta^2 =c(\beta\mu\gamma\varepsilon\alpha\eta)^{m-1} \beta\mu\gamma\varepsilon\alpha
+ b_2 (\beta\mu\gamma\varepsilon\alpha\eta)^m ,
\\
\gamma\alpha = c(\mu\gamma\varepsilon\alpha\eta\beta)^{m-1} \mu\gamma\varepsilon\alpha\eta,
\ \,\quad
\mu^2 = c(\gamma\varepsilon\alpha\eta\beta\mu)^{m-1} \gamma\varepsilon\alpha\eta\beta
+ b_3 (\gamma\varepsilon\alpha\eta\beta\mu)^m ,
\\
\alpha\beta\mu = 0,
\ \ \quad
\varepsilon^2 \alpha = 0,
\ \ \quad
\beta\gamma\varepsilon = 0,
\ \ \quad
\eta^2 \beta = 0,
\ \ \quad
\gamma\alpha\eta = 0,
\ \ \quad
\mu^2 \gamma = 0.
\end{gather*}

Assume that $m=1$, $c=1$,  $K$ has characteristic $2$ and $b_{\bullet}$
is non-zero. Then it is shown in \cite[Example~8.4]{ES4}
that the algebras $\Lambda$ and $\bar{\Lambda}$
are not isomorphic.
\end{example}

\begin{example}
Let $S$ be the connected sum $\bT \# \bP$
of the torus $\bT$ and the projective plane $\bP$,
and $T$ the following triangulation of $S$
\[
\begin{tikzpicture}
[scale=1,auto]
\node (1) at (-2.0,0) {$\bullet$};
\node (2) at (-1,1.5) {$\bullet$};
\node (3) at (1,1.5) {$\bullet$};
\node (4) at (2.0,0) {$\bullet$};
\node (5) at (1,-1.5) {$\bullet$};
\node (6) at (-1,-1.5) {$\bullet$};
\coordinate (1) at (-2.0,0);
\coordinate (2) at (-1,1.5) ;
\coordinate (3) at (1,1.5) ;
\coordinate (4) at (2.0,0) ;
\coordinate (5) at (1,-1.5) ;
\coordinate (6) at (-1,-1.5) ;
\draw[thick]
(1) edge node {1} (2)
(1) edge node {4} (3)
(1) edge node {5} (4)
(1) edge node {6} (5)
(2) edge node {2} (3)
(3) edge node {1} (4)
(4) edge node {2} (5)
(5) edge node {3} (6)
(6) edge node {3} (1) ;
\end{tikzpicture}
\]
where  the edges $1,2$ correspond to $\bT$, and the edge $3$
corresponds to $\bP$.
We note that $S$ is non-orientable and without boundary.
Consider the following orientation $\vec{T}$ of triangles in $T$
\[
  (1\ 2\ 4),\ (1\ 4\ 5),\ (5\ 2\ 6),\ (3\ 3\ 6) .
\]
Then the associated triangulation quiver $(Q(S,\vec{T}),f)$
is of the form
\[
\begin{tikzpicture}
[->,scale=1.1,auto]
\coordinate (4) at (-1.25,0);
\coordinate (4l) at (-1.25,-0.15);
\coordinate (4p) at (-1.25,0.15);
\coordinate (5) at (-2,1.5);
\coordinate (1) at (-2.75,0);
\coordinate (1l) at (-2.75,-0.15);
\coordinate (1p) at (-2.75,0.15);
\coordinate (2) at (-2,-1.5);
\coordinate (6) at (0,0);
\fill[fill=gray!20] (0,.3) arc (20:88:2) -- (-1.8,1.39) arc (80:-80:1.4) -- (-1.8,-1.6) arc (-88:-20:2) -- cycle;
\fill[fill=gray!20] (4p) -- (5) -- (1p) -- cycle;
\fill[fill=gray!20] (4l) -- (2) -- (1l) -- cycle;
\fill[fill=gray!20] (.2,-.2) arc (225:315:0.7) -- (1.2,.2) arc (45:135:0.7) -- cycle;
\node [circle,minimum size=15](A) at (1.4,0) { };
\node [circle,minimum size=32](B) at (2.1,0) {};
\coordinate  (C) at (intersection 2 of A and B);
\coordinate  (D) at (intersection 1 of A and B);
 \tikzAngleOfLine(B)(D){\AngleStart}
 \tikzAngleOfLine(B)(C){\AngleEnd}
\fill[gray!20]%
   let \p1 = ($ (B) - (D) $), \n2 = {veclen(\x1,\y1)}
   in
     (D) arc (\AngleStart-360:\AngleEnd:\n2); 
\node [fill=white,circle,minimum size=1.5](A) at (1.4,0) { };
\draw[thick,->]%
   let \p1 = ($ (B) - (D) $), \n2 = {veclen(\x1,\y1)}
   in
     (B) ++(60:\n2) node[right]{\footnotesize\ \ \ \raisebox{-7ex}{$\alpha$}}
     (D) arc (\AngleStart-360:\AngleEnd:\n2); 
\node (4) at (-1.25,0) {4};
\node (4l) at (-1.25,0.15) {};
\node (4p) at (-1.25,-0.15) {};
\node (5) at (-2,-1.5) {5};
\node (1) at (-2.75,0) {1};
\node (1l) at (-2.75,0.15) {};
\node (1p) at (-2.75,-0.15) {};
\node (2) at (-2,1.5) {2};
\node (6) at (0,0) {6};
\node (3) at (1.4,0) {3};
\draw[thick,<-] (0,.3) arc (20:54:2) node[right]{\footnotesize\ \raisebox{0ex}{$\mu$}} arc (54:88:2);
\draw[thick,<-] (-1.8,1.39) arc (80:0:1.4) node[left]{\footnotesize \raisebox{0ex}{$\theta$}} arc (0:-80:1.4) ;
\draw[thick,<-] (-1.8,-1.6) arc (-88:-54:2) node[right]{\footnotesize\ \raisebox{0ex}{$\varrho$}} arc (-54:-20:2);
\draw[thick,->] (1l) to node[left]{\footnotesize$\nu$} (2);
\draw[thick,->] (2) to node[right]{\footnotesize$\xi$} (4l);
\draw[thick,->] (4l) to node[above]{\footnotesize$\delta$} (1l);
\draw[thick,<-] (1p) to node[left]{\footnotesize$\omega$} (5);
\draw[thick,<-] (5) to node[right]{\footnotesize$\eta$} (4p);
\draw[thick,<-] (4p) to node[below]{\footnotesize$\sigma$} (1p);
\draw[thick,->] (1.2,-.2) arc (315:270:0.7) node[below]{\footnotesize$\beta$} arc (270:225:0.7);
\draw[thick,->] (.2,.2) arc (135:90:0.7) node[above]{\footnotesize$\gamma$} arc (90:45:0.7);
\end{tikzpicture}
\]
with the shaded $f$-orbits
$(\alpha\ \beta\ \gamma)$,
$(\varrho\ \theta\ \mu)$,
$(\omega\ \sigma\ \eta)$,
$(\nu\ \xi\ \delta)$.
Then the set $\cO(g)$ consists of the four $g$-orbits
\begin{align*}
 \cO(\alpha) &= (\alpha) ,
 &
 \cO(\sigma) &= (\sigma\ \delta) ,
 &
 \cO(\eta) &= (\eta\ \theta\ \xi) ,
 &
 \cO(\beta) &= (\beta\ \varrho\ \omega\ \nu\ \mu\ \gamma)
\end{align*}
of lengths $1, 2, 3, 6$.
Thus a weight function
$m_{\bullet} : \cO(g) \to \bN^*$
is given by natural numbers
$p = m_{\cO(\alpha)} \geq 3$,
$q = m_{\cO(\sigma)} \geq 2$,
$r = m_{\cO(\eta)} \geq 1$,
$s = m_{\cO(\beta)} \geq 1$
(due to the imposed general assumption
$m_{\cO(\zeta)} n_{\cO(\zeta)} \geq 3$
for any arrow $\zeta$).
The parameter function
$c_{\bullet} : \cO(g) \to K^*$
is given by four non-zero scalars
$a = c_{\cO(\alpha)}$,
$b = c_{\cO(\sigma)}$,
$c = c_{\cO(\eta)}$,
$d = c_{\cO(\beta)}$.
Then the weighted surface algebra
$\Lambda(S,\vec{T},m_{\bullet},c_{\bullet})$
is given by the above quiver, the commutativity relations
\begin{align*}
 \gamma\alpha &=  d(\varrho\omega\nu\mu\gamma\beta)^{s-1} \varrho\omega\nu\mu\gamma,
 &
 \alpha\beta &= d(\beta\varrho\omega\nu\mu\gamma)^{s-1} \beta\varrho\omega\nu\mu ,
 &
 \beta\gamma &= a \alpha^{p-1} ,
\\
 \varrho\theta &= d(\gamma\beta\varrho\omega\nu\mu)^{s-1}\gamma\beta\varrho\omega\nu,
 &
 \theta\mu &= d(\omega\nu\mu\gamma\beta\varrho)^{s-1} \omega\nu\mu\gamma\beta,
 &
 \mu\varrho &= c(\xi\eta\theta)^{r-1} \xi\eta,
\\
 \omega \sigma &= c(\theta\xi\eta)^{r-1} \theta\xi,
 &
 \sigma\eta &= d(\nu\mu\gamma\beta\varrho\omega)^{s-1} \nu\mu\gamma\beta\varrho,
 &
 \eta\omega &= b(\delta\sigma)^{q-1} \delta,
\\
 \nu\xi &= b(\sigma \delta)^{q-1} \sigma,
 &
 \xi\delta &= d(\mu\gamma\beta\varrho\omega\nu)^{s-1}\mu\gamma\beta\varrho\omega,
 &
 \delta\nu &= c(\eta\theta\xi)^{r-1} \eta\theta,
\end{align*}
and the zero relations
\begin{align*}
 \alpha\beta \varrho &= 0 ,
 &
 \beta \gamma \beta &= 0 ,
 &
 \gamma \alpha^2 &= 0 ,
 &
 \varrho \theta \xi &= 0 ,
 &
 \theta \mu \gamma  &= 0 ,
 &
 \mu \varrho \omega &= 0 ,
 \\
 \omega \sigma \delta &= 0 ,
 &
 \sigma \eta \theta &= 0 ,
 &
 \eta \omega \nu &= 0 ,
 &
 \nu \xi \eta &= 0 ,
 &
 \xi \delta \sigma &= 0 ,
 &
 \delta \nu \mu &= 0 .
\end{align*}
\end{example}

\section{Tetrahedral algebras}\label{sec:tetralg}

The aim of this section is to introduce the
tetrahedral algebras of arbitrary degree
and describe their basic properties.
Consider the tetrahedron
\[
\begin{tikzpicture}
[scale=1]
\node (A) at (-2,0) {$\bullet$};
\node (B) at (2,0) {$\bullet$};
\node (C) at (0,.85) {$\bullet$};
\node (D) at (0,2.8) {$\bullet$};
\coordinate (A) at (-2,0) ;
\coordinate (B) at (2,0) ;
\coordinate (C) at (0,.85) ;
\coordinate (D) at (0,2.8) ;
\draw[thick]
(A) edge node [left] {3} (D)
(D) edge node [right] {6} (B)
(D) edge node [below right] {2} (C)
(A) edge node [above] {5} (C)
(C) edge node [above] {4} (B)
(A) edge node [below] {1} (B) ;
\end{tikzpicture}
\]
with the coherent orientation
of triangles:
$(1\ 5\ 4)$, $(2\ 5\ 3)$, $(2\ 6\ 4)$, $(1\ 6\ 3)$.
Then, following \cite[Example~6.1]{ES4}, we have the associated
triangulation quiver $(Q,f)$ of the form
\[
\begin{tikzpicture}
[scale=.85]
\node (1) at (0,1.72) {$1$};
\node (2) at (0,-1.72) {$2$};
\node (3) at (2,-1.72) {$3$};
\node (4) at (-1,0) {$4$};
\node (5) at (1,0) {$5$};
\node (6) at (-2,-1.72) {$6$};
\coordinate (1) at (0,1.72);
\coordinate (2) at (0,-1.72);
\coordinate (3) at (2,-1.72);
\coordinate (4) at (-1,0);
\coordinate (5) at (1,0);
\coordinate (6) at (-2,-1.72);
\fill[fill=gray!20]
      (0,2.22cm) arc [start angle=90, delta angle=-360, x radius=4cm, y radius=2.8cm]
 --  (0,1.72cm) arc [start angle=90, delta angle=360, radius=2.3cm]
     -- cycle;
\fill[fill=gray!20]
    (1) -- (4) -- (5) -- cycle;
\fill[fill=gray!20]
    (2) -- (4) -- (6) -- cycle;
\fill[fill=gray!20]
    (2) -- (3) -- (5) -- cycle;

\node (1) at (0,1.72) {$1$};
\node (2) at (0,-1.72) {$2$};
\node (3) at (2,-1.72) {$3$};
\node (4) at (-1,0) {$4$};
\node (5) at (1,0) {$5$};
\node (6) at (-2,-1.72) {$6$};
\draw[->,thick] (-.23,1.7) arc [start angle=96, delta angle=108, radius=2.3cm] node[midway,right] {$\nu$};
\draw[->,thick] (-1.87,-1.93) arc [start angle=-144, delta angle=108, radius=2.3cm] node[midway,above] {$\mu$};
\draw[->,thick] (2.11,-1.52) arc [start angle=-24, delta angle=108, radius=2.3cm] node[midway,left] {$\alpha$};
\draw[->,thick]
(1) edge node [right] {$\delta$} (5)
(2) edge node [left] {$\varepsilon$} (5)
(2) edge node [below] {$\varrho$} (6)
(3) edge node [below] {$\sigma$} (2)
(4) edge node [left] {$\gamma$} (1)
(4) edge node [right] {$\beta$} (2)
(5) edge node [right] {$\xi$} (3)
(5) edge node [below] {$\eta$} (4)
(6) edge node [left] {$\omega$} (4)
;
\end{tikzpicture}
\]
where
$f$ is the permutation of arrows of order $3$ described by
the shaded subquivers.
Then $g$ is the permutation on the set of arrows of $Q$
whose $g$-orbits are the four white $3$-cycles.

Let $m \geq 1$ be a natural number and $\lambda \in K$.
We denote by $\Lambda(m,\lambda)$ the algebra given
by the above quiver and the relations:
\begin{align*}
 \gamma\delta &= \beta\varepsilon + \lambda (\beta\varrho\omega)^{m-1} \beta\varepsilon,
 &
 \delta\eta &= \nu\omega,
 &
 \eta\gamma &= \xi\alpha,
 &
 \nu \mu &= \delta\xi ,
\\
 \varrho\omega &= \varepsilon\eta + \lambda (\varepsilon\xi\sigma)^{m-1} \varepsilon\eta,
 &
 \omega\beta &= \mu\sigma,
 &
 \beta\varrho &= \gamma\nu ,
 &
 \mu \alpha &= \omega \gamma ,
\\
 \xi\sigma &= \eta\beta + \lambda (\eta\gamma\delta)^{m-1} \eta\beta,
 &
 \sigma\varepsilon &= \alpha\delta,
 &
 \varepsilon\xi &= \varrho\mu,
 &
 \alpha\nu &= \sigma\varrho,
\\
\omit\rlap{\qquad\quad$\big(\theta f(\theta) f^2(\theta)\big)^{m-1} \theta f(\theta) g\big(f(\theta)\big) = 0$ for any arrow $\theta$ in $Q$.}
\end{align*}
We call $\Lambda(m,\lambda)$
a \emph{tetrahedral algebra of degree $m$}.
Moreover, an algebra $\Lambda(m,\lambda)$ with
$\lambda \in K^* = K \setminus \{0\}$ is said to be
a \emph{non-singular tetrahedral algebra of degree $m$}.

We note that for $m = 1$,
$\Lambda(1,\lambda)$ is the \emph{tetrahedral algebra}
$\Lambda(\bS,\lambda+1)$, investigated in
\cite[Section~6]{ES4}, and is a weighted
surface algebra.
On the other hand, if $m \geq 2$, then
$\Lambda(m,\lambda)$ is not a  weighted
surface algebra,
and is called in \cite{ES5} a \emph{higher tetrahedral algebra}.

The following facts are consequence of results
established in \cite{ES4} and \cite{ES5}.

\begin{proposition}
\label{prop:tetr1}
Let $\Lambda = \Lambda(m,\lambda)$
be a tetrahedral algebra of degree $m \geq 1$.
Then $\Lambda$ is a finite-dimensional tame symmetric algebra
of dimension $36 m$.
\end{proposition}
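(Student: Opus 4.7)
The plan is to split the argument by the two cases $m = 1$ and $m \geq 2$, each matching a framework already developed in the literature cited just above the statement.

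For $m = 1$ I would identify $\Lambda(1,\lambda)$ with the weighted surface algebra $\Lambda(\bS,\vec{T},m_\bullet,c_\bullet)$ attached to the tetrahedron $\bS$ with the coherent orientation of its four triangles, constant weight function $m_\bullet \equiv 1$ and constant parameter function $c_\bullet \equiv \lambda + 1$; this is the identification $\Lambda(1,\lambda) = \Lambda(\bS,\lambda+1)$ recorded above. Concretely, plugging $m_{\bar\alpha} = 1$ into the weighted triangulation relation $\alpha f(\alpha) - c_{\bar\alpha}A_{\bar\alpha}$ gives $\alpha f(\alpha) = (\lambda+1)\,\bar\alpha f(\bar\alpha) = \bar\alpha f(\bar\alpha) + \lambda\,\bar\alpha f(\bar\alpha)$, which recovers the listed relations (the three deformed ones $\gamma\delta = \beta\varepsilon + \lambda\beta\varepsilon$, $\varrho\omega = \varepsilon\eta + \lambda\varepsilon\eta$, $\xi\sigma = \eta\beta + \lambda\eta\beta$ and the remaining nine commutativity relations) together with the zero relations $\beta f(\beta) g(f(\beta)) = 0$. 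Proposition~\ref{prop:2.3} then delivers finite-dimensionality, tameness and symmetry at once, and yields $\dim_K \Lambda(1,\lambda) = \sum_{\cO \in \cO(g)} m_\cO n_\cO^2 = 4 \cdot 1 \cdot 3^2 = 36$, matching $36m$ at $m=1$.

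For $m \geq 2$ no such direct identification exists, because in a weighted surface algebra with weight $\geq 2$ the relation $\alpha f(\alpha) = c_{\bar\alpha}A_{\bar\alpha}$ equates a length-$2$ path with a length-$(3m-1)$ path, whereas in $\Lambda(m,\lambda)$ the relation $\gamma\delta = \beta\varepsilon + \lambda(\beta\varrho\omega)^{m-1}\beta\varepsilon$ still equates two length-$2$ paths up to a higher-order correction of length $3m-1$. So $\Lambda(m,\lambda)$ is a genuinely new algebra---a \emph{higher tetrahedral algebra} in the terminology of \cite{ES5}---and the three required properties are the content of the main results of that paper. For the dimension I would construct an explicit $K$-basis, using the twelve commutativity-type relations to rewrite every path as an initial subword of an $m$-fold iterate of one of the four white $g$-cycles $\theta\,f(\theta)\,g(f(\theta))$, and the zero relations $(\theta f(\theta) f^2(\theta))^{m-1} \theta f(\theta) g(f(\theta))$ to kill everything of length $\geq 3m$; a careful count contributes $9m$ basis elements per $g$-orbit, giving total dimension $4 \cdot 9m = 36m$.

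The only genuinely hard step is the tameness assertion for $m \geq 2$, since tameness is never automatic for self-injective algebras of non-polynomial growth. The strategy of \cite{ES5}, which I would follow, is to connect $\Lambda(m,\lambda)$ to a nearby algebra already known to be tame---most naturally a monomial degeneration obtained by letting $\lambda$ vary, or a direct comparison with the $m=1$ weighted surface algebra handled above---via a flat one-parameter family inside the affine variety of associative $K$-algebra structures, and to transport tameness along this family using semicontinuity of the representation type. Symmetry, by contrast, is comparatively soft: one exhibits a Nakayama form pairing each basis element $p$ with the unique continuation $q$ such that $pq$ generates the one-dimensional socle of $e_{s(p)}\Lambda$, the symmetry of the pairing being forced by the (deformed) commutativity relations.
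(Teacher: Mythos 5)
The paper itself offers no proof of this proposition beyond the sentence introducing it: the statement is imported from \cite{ES4} (for $m=1$, via the identification $\Lambda(1,\lambda)=\Lambda(\bS,\lambda+1)$ together with Proposition~\ref{prop:2.3}) and from \cite{ES5} (for $m\geq 2$). Your reduction to exactly those two sources is therefore essentially the paper's own route, and your supplementary sketches are the standard arguments used there: each local algebra $e_i\Lambda e_i$ is generated by a length-three cycle whose $m$-th power spans the socle, giving $\dim_K e_i\Lambda=6m$ over six vertices; tameness is obtained by degenerating to a special biserial algebra and invoking Geiss's semicontinuity theorem; symmetry comes from the Nakayama form you describe.

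Three corrections, the last of which is a genuine gap in the argument as written. First, $A_{\bar{\alpha}}$ runs along the $g$-orbit of $\bar{\alpha}$, so for $m_{\bar{\alpha}}=1$, $n_{\bar{\alpha}}=3$ the relation is $\alpha f(\alpha)=(1+\lambda)\,\bar{\alpha}g(\bar{\alpha})$, not $(1+\lambda)\,\bar{\alpha}f(\bar{\alpha})$; e.g.\ in $\gamma\delta=(1+\lambda)\beta\varepsilon$ one has $\varepsilon=g(\beta)$, while $f(\beta)=\varrho$. (Similarly, for $m\geq 2$ the cycles $(\beta\varrho\omega)^{m-1}$ occurring in the defining relations of $\Lambda(m,\lambda)$ are powers of $f$-triangles, not of the white $g$-cycles.) Second, the parameter function realizing $\Lambda(1,\lambda)$ is not constant equal to $\lambda+1$: the three deformed relations all have right-hand sides $A_{\beta},A_{\varepsilon},A_{\eta}$ attached to the single $g$-orbit $(\beta\,\varepsilon\,\eta)$, so that orbit carries $c=1+\lambda$ while the other three orbits carry $c=1$ (the nine undeformed commutativity relations have coefficient $1$). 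Third, and substantively: a parameter function is required to take values in $K^*=K\setminus\{0\}$, so the appeal to Proposition~\ref{prop:2.3} fails when $m=1$ and $\lambda=-1$, where $1+\lambda=0$ and the deformed relations collapse to $\gamma\delta=0$, $\varrho\omega=0$, $\xi\sigma=0$. This value is not excluded by the hypotheses (indeed $\Lambda(1,-1)$ is non-singular), so your case split does not cover it; one needs either a direct verification for this degenerate parameter or an isomorphism moving it back into the range where Proposition~\ref{prop:2.3} applies.
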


\begin{theorem}
\label{th:tetr2}
Let $\Lambda = \Lambda(m,\lambda)$
be a tetrahedral algebra of degree $m \geq 1$.
Then the following statements are equivalent:
\begin{enumerate}[(i)]
 \item
  $\mod \Lambda$ admits a periodic simple module.
 \item
  All simple modules in $\mod \Lambda$ are periodic of period $4$.
 \item
  $\Lambda$ is a periodic algebra of period $4$.
 \item
  $\Lambda$ is non-singular.
\end{enumerate}
\end{theorem}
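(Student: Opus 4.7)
The plan is to prove the cyclic chain (iii) $\Rightarrow$ (ii) $\Rightarrow$ (i) $\Rightarrow$ (iv) $\Rightarrow$ (iii), splitting according to whether $m = 1$ or $m \ge 2$. For $m = 1$ the algebra $\Lambda(1,\lambda)$ is the weighted surface algebra $\Lambda(\bS,\lambda+1)$ of the tetrahedron, and the singular case $\lambda = 0$ is by construction the singular tetrahedral algebra excluded in Theorem~\ref{th:2.4}; for $m \ge 2$ the algebras $\Lambda(m,\lambda)$ are the higher tetrahedral algebras of \cite{ES5}, which lie outside the weighted surface framework.

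The implications (iii) $\Rightarrow$ (ii) and (ii) $\Rightarrow$ (i) are formal: an $\Omega_{\Lambda^{e}}^{4}$-periodic bimodule resolution of $\Lambda$ yields $\Omega_{\Lambda}^{4}(M) \cong M$ in $\umod \Lambda$ for every non-projective indecomposable $M$ via $- \otimes_{\Lambda} M$, and the six simple modules $S_{1}, \dots, S_{6}$ are never projective since $\Lambda$ is a non-semisimple basic bound quiver algebra. For $m = 1$, the remaining equivalences follow from Theorem~\ref{th:2.4}: that theorem directly gives (ii) $\Leftrightarrow$ (iii) $\Leftrightarrow$ (iv) under the identification of the singular case $\lambda = 0$ with the singular tetrahedral algebra, and the gap (i) $\Rightarrow$ (ii) is bridged by the standard principle that a connected component of the stable Auslander--Reiten quiver consisting of $\tau$-periodic modules inherits common periodicity, together with a direct inspection that the six simple modules of $\Lambda(1,\lambda)$ share a single such component.

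For $m \ge 2$ the key work goes into (iv) $\Rightarrow$ (iii) and (i) $\Rightarrow$ (iv), and follows the strategy of \cite{ES5}. For (iv) $\Rightarrow$ (iii) I would construct an explicit minimal projective bimodule resolution
\begin{equation*}
0 \longrightarrow \Lambda \longrightarrow P^{3} \longrightarrow P^{2} \longrightarrow P^{1} \longrightarrow P^{0} \longrightarrow \Lambda \longrightarrow 0
\end{equation*}
with $P^{0} = P^{3} = \bigoplus_{i \in Q_{0}} \Lambda e_{i} \otimes_{K} e_{i} \Lambda$ and $P^{1} = P^{2} = \bigoplus_{\alpha \in Q_{1}} \Lambda e_{t(\alpha)} \otimes_{K} e_{s(\alpha)} \Lambda$, with differentials encoded by the cyclic derivatives of the defining relations, and then verify exactness: the closing bimodule isomorphism $\Omega^{4}(\Lambda) \cong \Lambda$ holds precisely when $\lambda \neq 0$. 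For the converse (i) $\Rightarrow$ (iv), assuming $\lambda = 0$, the initial stages of a minimal projective resolution of any simple $S_{i}$ can be read off from the relations and a scalar obstruction at stage four shows $\Omega^{4}(S_{i}) \not\cong S_{i}$, so no simple module is periodic. The main obstacle is the exactness verification in (iv) $\Rightarrow$ (iii), which requires a careful combinatorial analysis coupling the weighted triangulation relations with the deformation terms $\lambda(\beta\varrho\omega)^{m-1}\beta\varepsilon$ and their cyclic analogues across all four differentials of the resolution.
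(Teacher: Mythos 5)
First, note that the paper itself offers no proof of Theorem~\ref{th:tetr2}: it is introduced with the sentence that these facts are ``consequence of results established in \cite{ES4} and \cite{ES5}''. So you are reconstructing proofs that live in those references, and your overall architecture --- the split into $m=1$ (where Theorem~\ref{th:2.4} applies) versus $m\geq 2$, the explicit minimal projective bimodule resolution with $P^0=P^3=\bigoplus_i \Lambda e_i\otimes e_i\Lambda$ and $P^1=P^2=\bigoplus_\alpha \Lambda e_{t(\alpha)}\otimes e_{s(\alpha)}\Lambda$ for (iv)~$\Rightarrow$~(iii), and the identification of $\lambda\neq 0$ as the obstruction to closing the resolution --- is indeed the strategy of those papers.

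There are, however, two genuine gaps. The serious one is your bridge from (i) to (ii) when $m=1$: you assert that the six simple modules of $\Lambda(1,\lambda)$ ``share a single'' stable Auslander--Reiten component consisting of $\tau$-periodic modules. The Happel--Preiser--Ringel principle you invoke propagates $\tau$-periodicity only \emph{within} a component, and for these algebras the components in question are stable tubes of rank $1$ or $2$ (a simple module of $\Omega$-period $4$ has $\tau$-period at most $2$, since $\tau=\Omega^2$); there is no reason, and no offered inspection, showing that six distinct simple modules all lie in one such tube, and the claim is implausible on its face. The route actually taken in \cite{ES4} and \cite{ES5} is the contrapositive: for $\lambda=0$ one computes the syzygies of an arbitrary simple module directly and shows none is periodic. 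This connects to the second gap, present for both $m=1$ and $m\geq 2$: you deduce ``no simple module is periodic'' from ``$\Omega^4(S_i)\not\cong S_i$''. That only excludes period $4$; you still need an argument that a periodic simple module over such an algebra would necessarily have period exactly $4$ (for instance via the alternating $2$-generated/$1$-generated shape of the minimal resolution as in Proposition~\ref{prop:4.3}, or a growth estimate on $\dim_K\Omega^n(S_i)$). The same imprecision appears, more mildly, in your (iii)~$\Rightarrow$~(ii): a period-$4$ bimodule resolution only gives module periods \emph{dividing} $4$; periods $1$ and $2$ must be excluded by noting that $\Omega(S_i)=e_iJ$ and $\Omega^2(S_i)$ have two-generated tops and dimensions $6m-1$ and $6m+1$, hence are not simple. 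These last points are routine to repair; the Auslander--Reiten component argument is not and should be replaced by the explicit syzygy computation in the singular case.
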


\section{Preliminary results}\label{sec:pre}

Throughout this section $A = K Q/I$ is a $2$-regular algebra
and $J = J(A)$.
Moreover, let $B = A/J^3$.
Then
$B$ is a $2$-regular algebra with the bound quiver presentation
$B = K Q/(R_Q^3 + I)$.
We start with the following known lemma.

\begin{lemma}
\label{lem:4.1}
Let $A$ be a symmetric algebra, $i$ a vertex of $Q$,
$\alpha, \bar{\alpha}$ the arrows of $Q$ starting at $i$,
$\delta, \delta^*$ the arrows of $Q$ ending at $i$,
and $x = s(\delta)$, $y = s(\delta^*)$.
Then the following statements hold.
\begin{enumerate}[(i)]
 \item
  $e_i J = \alpha A + \bar{\alpha} A$.
 \item
  $P_i/S_i$ is isomorphic to the submodule
  $(\delta,\delta^*)A$ of $P_x \oplus P_y$.
\end{enumerate}
\end{lemma}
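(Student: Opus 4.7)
The plan is to handle (i) by a direct application of Nakayama's lemma and (ii) by producing an explicit right $A$-module homomorphism $\varphi\colon P_i \to P_x \oplus P_y$ whose image equals $(\delta,\delta^*)A$ and whose kernel equals $\soc(P_i) = S_i$.

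For (i), the cosets of the arrows starting at $i$ form a basis of $e_iJ/e_iJ^2$, so the top of the right $A$-module $e_iJ$ is generated by $\alpha$ and $\bar{\alpha}$. Nakayama's lemma then gives $e_iJ = \alpha A + \bar{\alpha}A$; no symmetry hypothesis is needed at this stage. I would record also the left-sided analogue at the vertex $i$, namely $Je_i = A\delta + A\delta^*$, proved identically by working with arrows ending at $i$; this will be used in (ii).

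For (ii), I define $\varphi(a) = (\delta a,\delta^*a)$, which is well-defined as a right $A$-module map because $\delta \in e_xAe_i$ and $\delta^* \in e_yAe_i$, and which sends $e_i$ to $(\delta,\delta^*)$, so $\Im\varphi = (\delta,\delta^*)A$. Using $Je_i = A\delta + A\delta^*$, one sees that for $a \in e_iA$ the kernel condition $\delta a = 0 = \delta^*a$ is equivalent to $(Je_i)a = 0$, i.e.\ (since $a = e_ia$) to $Ja = 0$. The decisive step, and the only place where the symmetric hypothesis really enters, is the identification of the left annihilator $\{a\in e_iA : Ja = 0\}$ with the right socle $\soc(P_i) = \{a\in e_iA : aJ = 0\} = S_i$. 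For a symmetric algebra this follows from the associated non-degenerate symmetric bilinear form $(-,-)$: one has $aJ = 0 \Leftrightarrow (a,J) = 0 \Leftrightarrow (J,a) = 0 \Leftrightarrow Ja = 0$, so the left and right annihilators of $J$ in $A$ coincide. With this in hand, $\Ker\varphi = S_i$, and the first isomorphism theorem yields $P_i/S_i \cong (\delta,\delta^*)A$. I do not anticipate a technical obstacle; the content of the lemma is really just the observation that the symmetric structure is precisely what aligns the left-annihilator description of $\Ker\varphi$ with the right-annihilator description of $\soc(P_i)$.
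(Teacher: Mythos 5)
Your proof is correct. The paper states Lemma~\ref{lem:4.1} as a known fact and gives no proof of its own, so there is nothing to compare against; your argument is the standard one. Both steps check out: (i) is the usual Nakayama argument from the basis of $e_iJ/e_iJ^2$ given by arrows, and in (ii) the map $a \mapsto (\delta a, \delta^* a)$ together with the left-sided version of (i) reduces the kernel to the left annihilator of $J$ in $e_iA$, which the symmetric form identifies with $\soc(P_i)$; the final identification $\soc(P_i)\cong S_i$ (implicit in the statement ``$P_i/S_i$'') is exactly the weak symmetry of $A$, which you correctly locate as the only place the symmetric hypothesis is used.
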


\begin{proposition}
\label{prop:4.2}
Let $A$ be a tame algebra, $i$ a vertex of $Q$,
$\alpha, \bar{\alpha}$ the arrows of $Q$ starting at $i$,
and
$\delta, \delta^*$ the arrows of $Q$ ending at $i$.
Then the following statements hold.
\begin{enumerate}[(i)]
 \item
  There are two different arrows $\beta'$, $\gamma'$
  in $Q$ such that the paths $\alpha \beta'$,
  $\bar{\alpha} \gamma'$ are defined and involved
  in two independent minimal relations of $I$.
 \item
  There are two different arrows $\sigma'$, $\xi'$
  in $Q$ such that the paths $\sigma' \delta$,
  $\xi' \delta^*$ are defined and involved
  in two independent minimal relations of $I$.
\end{enumerate}
\end{proposition}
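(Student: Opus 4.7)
The plan is to prove statement (i); statement (ii) then follows by applying (i) to the opposite algebra $A^{\op}$, which is also tame and $2$-regular and in which the arrows ending at $i$ become the arrows starting at $i$. So fix $i$ and its outgoing arrows $\alpha, \bar{\alpha}$, and denote by $\beta, \bar{\beta}$ (respectively $\gamma, \bar{\gamma}$) the two arrows starting at $t(\alpha)$ (respectively $t(\bar{\alpha})$).

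Since the paths $\alpha\beta'$ and $\bar{\alpha}\gamma'$ in the conclusion have length $2$, I would first reduce to the truncation $B = A/J^3$. A minimal relation of $I$ containing $\alpha\beta'$ projects to a minimal relation of $I_B = (I + R_Q^3)/R_Q^3$ of $KQ/R_Q^3$ having $\alpha\beta'$ as a term, and conversely every such relation in $B$ lifts back to $I$ up to higher order terms. Hence the problem becomes that of finding two linearly independent minimal relations of $I_B$ of the required shape. In $B$ the minimal relations supported at $i$ are $K$-linear combinations of only the four paths $\alpha\beta, \alpha\bar{\beta}, \bar{\alpha}\gamma, \bar{\alpha}\bar{\gamma}$, so the problem becomes a linear-algebra one, constrained by tameness.

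The next step is to argue by contradiction. If the conclusion fails, three obstructions can occur: (a) no minimal relation of $I$ involves a path starting with $\alpha$; (b) symmetrically, none starts with $\bar{\alpha}$; (c) every minimal relation through $\alpha$ and every one through $\bar{\alpha}$ forces the same second arrow, i.e.\ $\beta' = \gamma'$. In case (a), the paths $\alpha\beta$ and $\alpha\bar{\beta}$ are linearly independent in $J^2/J^3$, so $e_i B$ has top $S_i$, second radical layer $S_{t(\alpha)} \oplus S_{t(\bar{\alpha})}$, and a third layer containing both $S_{t(\beta)}$ and $S_{t(\bar{\beta})}$. Iterating this freedom together with $2$-regularity would produce an exponentially growing free structure in higher Loewy layers of the projectives, which is incompatible with tameness. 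Case (b) is handled dually. Case (c) forces a very rigid local configuration at $i$ (for instance $t(\alpha) = t(\bar{\alpha})$ with a common successor arrow) which one checks directly produces a wild local subalgebra, again contradicting tameness.

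Independence of the two relations, once they are produced, is automatic: they start at $i$ with different arrows $\alpha$ and $\bar{\alpha}$, so the length-$2$ terms lie in disjoint summands of $e_i(KQ)$ and cannot cancel in any nontrivial $K$-linear combination. The main obstacle will be the wildness deduction in cases (a)--(c): rigorously converting the absence of a minimal relation into an explicit wild subquotient of $A$ or an unbounded one-parameter family of indecomposables. This is the technical heart of the argument and would proceed by exhibiting a Kronecker-type family of representations built from the ``free'' length-$2$ paths, supplemented by classification results for low-dimensional tame algebras with $2$-regular quivers.
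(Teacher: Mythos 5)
Your overall strategy (pass to $B=A/J^3$, argue by contradiction, derive wildness from the absence of relations) is the same as the paper's, but there are two genuine gaps. The first is your claim that ``independence of the two relations, once they are produced, is automatic'' because the length-two terms through $\alpha$ and through $\bar{\alpha}$ ``lie in disjoint summands and cannot cancel.'' This is false: a single minimal relation can have the form $a\,\alpha\beta_1 + b\,\bar{\alpha}\gamma_1 \in J^3$ with $a,b\in K^*$ (what the paper later calls a relation of type C), and such a relation simultaneously involves a path starting with $\alpha$ and a path starting with $\bar{\alpha}$. If this is the only minimal relation supported on the length-two paths from $i$, you have exhibited $\alpha\beta'$ and $\bar{\alpha}\gamma'$ each ``involved in a minimal relation'' but not in two \emph{independent} ones. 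Extracting a second, independent relation in exactly this situation is where most of the paper's proof is spent: it splits into the cases $t(\alpha)\neq t(\bar{\alpha})$, $t(\alpha)=t(\bar{\alpha})$ with $t(\beta_1)\neq t(\beta_2)$, and $t(\alpha)=t(\bar{\alpha})$ with $t(\beta_1)=t(\beta_2)$, and in each case shows that if neither $\alpha\beta_2$ nor $\bar{\alpha}\gamma_2$ occurs in a minimal relation then a suitable quotient of $B$ is wild. Your case (c) ($\beta'=\gamma'$) does not capture this failure mode, so your case analysis is incomplete.

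The second gap is that the wildness deductions in your cases (a)--(c) are deferred rather than proved, and the sketch you do give does not work. Everything happens in $B=A/J^3$, so there are no ``higher Loewy layers'' to iterate into, and counting composition factors in the radical layers of a single projective does not by itself establish wildness (tame algebras can have projectives with arbitrarily many composition factors). The paper's actual mechanism is: form the quotient $D=KQ/L$ of $B$ killing all paths of length two except the allegedly relation-free ones, pass to a Galois covering $\tilde{D}\to\tilde{D}/G$ with $G$ free, locate inside $\tilde{D}$ a full convex subcategory isomorphic to a wild hereditary or wild concealed algebra (of type $\tilde{\tilde{E}}_7$, or a quiver with a double or triple arrow), and then invoke the Dowbor--Skowro\'nski results to transport wildness back to $D$, hence to $B$ and $A$. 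Some such covering-plus-subcategory argument (or an explicit wild exact functor) is unavoidable here; ``a Kronecker-type family supplemented by classification results'' is not yet a proof.
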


\begin{proof}
We will prove only (i), because the proof of (ii)
is dual.
Let $j = t(\alpha)$, $k = t(\bar{\alpha})$,
$\beta_1, \beta_2$ are the arrows of $Q$ starting at $j$,
and
$\gamma_1, \gamma_2$ are the arrows of $Q$ starting at $k$.

We first claim that one of the paths $\alpha \beta_1$
or $\alpha \beta_2$ is involved in a minimal relation of $I$.
Suppose that none of
$\alpha \beta_1$ and $\alpha \beta_2$ is involved
in a minimal relation of $I$.
Then $\alpha \beta_1 + J^3$ and $\alpha \beta_2 + J^3$ are
independent elements of $B = A/J^3$.
Consider the bound quiver algebra $D = K Q/L$, where
$L$ is the ideal of $K Q$ generated by all paths
of length two in $Q$ except $\alpha \beta_1$ and $\alpha \beta_2$.
Clearly, $D$ is a quotient algebra of $B$, and hence
a tame algebra. On the other hand, $D$ admits a Galois
covering $F : \tilde{D} \to \tilde{D}/G$,
with a finitely generated free group $G$, such that $\tilde{D}$
contains a full convex subcategory isomorphic to the path algebra
$K \Delta$ of the wild quiver $\Delta$
\[
 \xymatrix{
   &&& \save[] +<0pc,1pc> *+{\bullet} \ar[d] \restore \\
   & \bullet \ar[dl] \ar[dr] && \bullet \ar[dl] \ar[dr] &&
     \bullet \ar[dl] \ar[dr] && \bullet \ar[dl] \\
   \bullet && \bullet && \bullet && \bullet \\
 }
\]
of type $\tilde{\tilde{E}}_7$.
Hence $\tilde{D}$ is a wild locally bounded category
(see \cite[Theorem]{DS2}).
Then, applying \cite[Proposition~2]{DS1}, we conclude that
$D$ is a wild algebra.
Therefore, indeed one of the paths
$\alpha \beta_1$ and $\alpha \beta_2$ is involved
in a minimal relation of $I$.

Similarly, we prove that one of the paths
$\bar{\alpha} \gamma_1$ or $\bar{\alpha} \gamma_2$ is involved
in a minimal relation of $I$.

Assume that two paths $\alpha \beta_r$ and $\bar{\alpha} \gamma_s$,
for some $r,s \in \{1,2\}$, are involved in a minimal relation of $I$.
We may assume that $r = s = 1$.
Then $t(\beta_1) = t(\gamma_1)$ and there are elements
$a,b \in K^*$ such that
$a \alpha \beta_1 + b \bar{\alpha} \gamma_1 \in J^3$,
and clearly $\alpha \beta_1$ and $\bar{\alpha} \gamma_1$
are not in $J^3$.
We claim that one of the paths $\alpha \beta_2$ or
$\bar{\alpha} \gamma_2$ is involved in a minimal relation of $I$.
Suppose, for contradiction, that it is not the case.
In particular, $\alpha \beta_2$ and $\bar{\alpha} \gamma_2$
do not belong to $J^3$.
We have few cases to consider.

\smallskip

(1)
Assume that $j \neq k$.
Consider the bound quiver algebra $E = KQ / M$, where
$M$ is the ideal in $K Q$ generated by all paths
of length two in $Q$ different from
$\alpha \beta_1$, $\bar{\alpha} \gamma_1$,
$\alpha \beta_2$, $\bar{\alpha} \gamma_2$,
and $a \alpha \beta_1 + b \bar{\alpha} \gamma_1$.
Then $E$ is a quotient algebra of $B$ and there is
a Galois covering $F : \tilde{E} \to \tilde{E}/G$,
with a finitely generated free group $G$, such that $\tilde{E}$
admits a full convex subcategory  $\Lambda$ isomorphic to the
bound quiver algebra $K \Omega / N$, where the quiver $\Omega$
is of the form
\[
 \xymatrix{
   &&& \bullet \ar[dl]_{\varrho} \ar[dr]^{\sigma} \\
   \bullet \ar[dr] && \bullet \ar[dl] \ar[dr]_{\omega} &&
     \bullet \ar[dl]^{\nu} \ar[dr] && \bullet \ar[dl] \ar[dr] \\
   & \bullet && \bullet && \bullet && \bullet \\
 }
\]
and $N$ is the ideal in $K \Omega$ generated by
$a \sigma \nu + b \varrho \omega$.
We note the $\Lambda$ is isomorphic to the bound quiver
algebra $\Lambda' = K \Omega / N'$,
where $N'$ is the ideal in $K Q$ generated by
$\sigma \nu - \varrho \omega$.
Since $\Lambda'$ is a wild concealed algebra
of type $\tilde{\tilde{E}}_7$, applying
\cite[Theorem]{DS2}) and \cite[Proposition~2]{DS1}
again, we conclude that $E$ is a wild algebra.
Therefore, indeed one of the paths
$\alpha \beta_2$ or $\bar{\alpha} \gamma_2$ is involved
in a minimal relation of $I$.

\smallskip

(2)
Assume that $j = k$ but $t(\beta_1) \neq t(\beta_2)$.
Then $\beta_1 = \gamma_1$, $\beta_2 = \gamma_2$,
and clearly these arrows are not loops.
It follows from the imposed assumption that
$\alpha \beta_2$ and $\bar{\alpha} \gamma_2 = \bar{\alpha} \beta_2$
are independent non-zero elements of $B$.
Consider the bound quiver algebra $R = K Q / U$, where
$U$ is the ideal of $K Q$ generated by all paths
in $Q$
of length two
except $\alpha \beta_2$ and $\bar{\alpha} \beta_2$.
Then $R$ is a quotient algebra of $B$ and admits
a Galois covering $F : \tilde{R} \to \tilde{R}/G = R$,
with a finitely generated free group $G$, such that $\tilde{R}$
admits a full convex subcategory isomorphic to the
path algebra $K \Sigma$ of the wild quiver $\Sigma$
of the form
\[
 \xymatrix@C=3pc{
   \bullet \ar@<-.5ex>[r] \ar@<+.5ex>[r] &
     \bullet \ar[r] & \bullet \\
 } .
\]
Then it follows from
\cite[Theorem]{DS1}) and \cite[Proposition~2]{DS1}
that $R$ is a wild algebra.
Therefore, we obtain that one of the paths
$\alpha \beta_2$ and $\bar{\alpha} \beta_2$
is involved in a minimal relation of $I$.

\smallskip

(3)
Assume that $j = k$ and $t(\beta_1) = t(\beta_2)$.
Clearly, we have $\{\beta_1,\beta_2\} = \{\gamma_1,\gamma_2\}$.
If $\beta_1 = \gamma_1$ and $\beta_2 = \gamma_2$,
then considering the bound quiver algebra $R = K Q / U$
defined in (2) we conclude that $R$ is a wild algebra.
Suppose that $\beta_1 = \gamma_2$ and $\beta_2 = \gamma_1$.
Consider the bound quiver algebra $E = K Q / M$, where
$M$ is the ideal in $K Q$ generated by all paths
of length two in $Q$ different from
$\alpha \beta_1$, $\alpha \beta_2$,
$\bar{\alpha} \beta_1$, $\bar{\alpha} \beta_2$,
and
$a \alpha \beta_1 + b \bar{\alpha} \gamma_1
 = a \alpha \beta_1 + b \bar{\alpha} \beta_2$.
Then $E$ is a quotient algebra of $B$ and there is
a Galois covering $F : \tilde{E} \to \tilde{E}/G = E$,
with a finitely generated free group $G$, such that $\tilde{E}$
admits a full convex subcategory $\Gamma$ isomorphic to the
bound quiver algebra $K \Phi / W$, where $\Phi$
is the quiver
\[
 \xymatrix@C=3pc{
   \save[] +<0pc,-3mm> *{1} \restore
   \bullet \ar@<-.5ex>[r]_{\sigma} \ar@<+.5ex>[r]^{\varrho} &
   \save[] +<0pc,-3mm> *{2} \restore
   \bullet \ar@<-.5ex>[r]_{\nu} \ar@<+.5ex>[r]^{\omega} &
   \save[] +<0pc,-3mm> *{3} \restore
   \bullet \\
 }
\]
and $W$ is the ideal in $K \Phi$ generated by
the element $a \varrho \omega + b \sigma \nu$.
Let $\Gamma' = (e_1 + e_3) \Gamma (e_1 + e_3)$.
Then $\Gamma'$ is isomorphic to the
path algebra $K \Psi$ of the wild quiver
$\Psi$ of the form
\[
 \xymatrix@C=3pc{
   \bullet \ar@<-.9ex>[r] \ar[r] \ar@<+.9ex>[r] &
     \bullet \\
 }
\]
and consequently $\Gamma'$ is a wild algebra.
Then it follows that $\Gamma$ is a wild algebra
(see \cite[Theorem]{DS2}).
Finally, applying \cite[Proposition~2]{DS1}
we conclude that $E$, and hence $B$, is a wild algebra.

\smallskip

Summing up, we proved that there exist arrows
$\beta' = \beta_r$ and $\gamma' = \gamma_s$,
for some $r,s \in \{1,2\}$, such that the paths
$\alpha \beta'$ and $\bar{\alpha} \gamma'$
are defined and involved in two independent
minimal relations of $I$.
\end{proof}

\begin{proposition}
\label{prop:4.3}
Let $A = K Q / I$ be a symmetric algebra, $i$ a vertex of $Q$,
$\alpha$, $\bar{\alpha}$ the arrows of $Q$ starting at $i$,
$\delta$, $\delta^*$ the arrows of $Q$ ending at $i$,
and $j = t(\alpha)$, $k = t(\bar{\alpha})$,
$x = s(\delta)$, $y = s(\delta^*)$.
Assume that the simple module $S_i$ in $\mod A$
is periodic of period $4$.
Then there is an exact sequence in $\mod A$
\[
  0 \to
  S_i \to
  P_i \xrightarrow{d_3}
  P_x \oplus P_y \xrightarrow{d_2}
  P_j \oplus P_k \xrightarrow{d_1}
  P_i \xrightarrow{d_0}
  S_i \to
  0 ,
\]
which give rise to a minimal projective resolution of $S_i$ in $\mod A$,
and the following statements hold.
\begin{enumerate}[(i)]
 \item
  $d_1(u,v) = \alpha u + \bar{\alpha} v$
  for any $(u,v) \in P_j \oplus P_k$.
 \item
  There exist elements
  $\varphi_1 \in e_j J e_x$,
  $\varphi_2 \in e_k J e_x$,
  $\psi_1 \in e_j J e_y$,
  $\psi_2 \in e_k J e_y$
  and two independent
  minimal relations of $I$,
  one from $i$ to $x$ and the other
  from $i$ to $y$,
  inducing the equalities in $A$
  \[
   \alpha \varphi_1 + \bar{\alpha} \varphi_2 = 0
   \quad
   \mbox{and}
   \quad
   \alpha \psi_1 + \bar{\alpha} \psi_2 = 0
   .
  \]
 \item
  The pairs
  $(\varphi_1,\varphi_2)$,
  $(\psi_1,\psi_2)$
  in $P_j \oplus P_k$
  generate $\Omega_A^2(S_i)$
  and
  $d_2(p,q) = (\varphi_1 p + \psi_1 q, \varphi_2 p + \psi_2 q)$
  for any $(p,q) \in P_x \oplus P_y$.
 \item
  There exist elements
  $\delta_1 \in e_x J e_i \setminus e_x J^2 e_i$,
  $\delta_2 \in e_y J e_i \setminus e_y J^2 e_i$
  and two independent
  minimal relations in $I$,
  one from $j$ to $i$ and the other
  from $k$ to $i$,
  inducing the equalities in $A$
  \[
   \varphi_1 \delta_1 + \psi_1 \delta_2 = 0
   \quad
   \mbox{and}
   \quad
   \varphi_2 \delta_1 + \psi_2 \delta_2 = 0
   .
  \]
 \item
  The pair $(\delta_1,\delta_2)$ in $P_x \oplus P_y$
  generates $\Omega_A^3(S_i)$
  and
  $d_3(w) = (\delta_1 w, \delta_2 w)$
  for any $w \in P_i$.
\end{enumerate}
\end{proposition}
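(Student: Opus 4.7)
My plan is to construct the minimal projective resolution of $S_i$ in stages, combining Lemma~\ref{lem:4.1} with Proposition~\ref{prop:4.2} and the interplay between the symmetric structure and the period-$4$ hypothesis.

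First, for (i), the projective cover of $S_i$ is $d_0 : P_i \twoheadrightarrow S_i$ with $\Omega_A(S_i) = \rad P_i = e_iJ$. By Lemma~\ref{lem:4.1}(i), $e_iJ = \alpha A + \bar{\alpha}A$, and since $\alpha,\bar{\alpha}$ are distinct arrows they are linearly independent modulo $J^2$. Hence the top of $\Omega_A(S_i)$ is $S_j \oplus S_k$ and its projective cover is $d_1 : P_j \oplus P_k \to \Omega_A(S_i)$ with $d_1(u,v) = \alpha u + \bar{\alpha} v$.

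Next I would identify $\Omega_A^3(S_i)$ and fix $P_2, P_3$. Since $A$ is symmetric, $P_i$ is the injective envelope of $S_i$ (as $\soc P_i = S_i$), so $\Omega_A^{-1}(S_i) = P_i/S_i$; the period-$4$ hypothesis then yields $\Omega_A^3(S_i) \cong P_i/S_i$, whose top is $S_i$, so $P_3 = P_i$. Lemma~\ref{lem:4.1}(ii) realises this quotient as the submodule $(\delta,\delta^*)A \subseteq P_x \oplus P_y$, giving an embedding $P_i/S_i \hookrightarrow P_x \oplus P_y$. Using periodicity together with the Nakayama/socle duality for symmetric algebras (so that the projective cover of $\Omega_A^2(S_i)$ matches the injective envelope of $\Omega_A^{-2}(S_i) = \Omega_A^2(S_i)$), I would conclude that $P_2 = P_x \oplus P_y$. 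The map $d_3$ is then the composition $P_i \twoheadrightarrow P_i/S_i \hookrightarrow P_x \oplus P_y$; writing $d_3(e_i) = (\delta_1,\delta_2)$ gives (v), with $\delta_\ell \notin J^2$ because modulo $J^2$ the components coincide with $\delta$ and $\delta^*$.

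For (ii) and (iv), I would apply Proposition~\ref{prop:4.2}. Any minimal relation in $e_iI$ has the form $\alpha\varphi + \bar{\alpha}\psi = 0$ with $\varphi,\psi \in J$ (because $e_iI \subseteq e_iJ^2 = \alpha J + \bar{\alpha}J$); Proposition~\ref{prop:4.2}(i) supplies two independent such relations, and matching their endpoints against $P_2 = P_x \oplus P_y$ forces them to end at $x$ and $y$ respectively, giving the pairs $(\varphi_1,\varphi_2)$ and $(\psi_1,\psi_2)$ of (ii). Dually, Proposition~\ref{prop:4.2}(ii) produces two independent minimal relations ending at $i$, which rewritten in terms of $\delta_1,\delta_2$ become the identities of (iv). Setting $d_2(p,q) = (\varphi_1 p + \psi_1 q,\ \varphi_2 p + \psi_2 q)$, the relations in (ii) give $d_1 d_2 = 0$ while those in (iv) give $d_2 d_3 = 0$; the pairs $(\varphi_1,\varphi_2),(\psi_1,\psi_2)$ then top-generate $\ker d_1 = \Omega_A^2(S_i)$ (since $P_2 = P_x \oplus P_y$), establishing (iii), and exactness of the resulting sequence follows. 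The main obstacle throughout is pinning down $P_2 = P_x \oplus P_y$, equivalently computing $\soc(P_i/S_i) = S_x \oplus S_y$ via the symmetric/periodic duality: this is what governs the endpoints of the minimal relations produced by Proposition~\ref{prop:4.2} and hence locks the resolution into the shape claimed.
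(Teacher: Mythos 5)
Your overall architecture matches the paper's: identify $\Omega_A^3(S_i)\cong\Omega_A^{-1}(S_i)=P_i/S_i$ from periodicity and the symmetric structure, use Lemma~\ref{lem:4.1}(ii) to realise this as $(\delta,\delta^*)A\subseteq P_x\oplus P_y$, and read off $d_3(e_i)=(\delta_1,\delta_2)$ with $\delta_\ell\notin J^2$. You actually supply more detail than the paper on the one point it leaves implicit, namely why $P_2=P_x\oplus P_y$; your route (compute $\soc(P_i/S_i)=S_x\oplus S_y$ by dualizing $Je_i/J^2e_i$) is the right one, though the parenthetical justification is slightly off: the correct identity is $P(\Omega_A^2(S_i))=I(\Omega_A^3(S_i))=I(\Omega_A^{-1}(S_i))$, not a matching of the projective cover and injective envelope of $\Omega_A^{\pm 2}(S_i)$, and the needed input is the second term of the minimal injective copresentation of $S_i$, not the third.

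The one genuine divergence that needs repair is your derivation of (ii) and (iv) from Proposition~\ref{prop:4.2}. First, that proposition assumes $A$ tame, which is not a hypothesis of Proposition~\ref{prop:4.3}; as written your argument only proves the statement for tame algebras. Second, and more substantively for (iv): Proposition~\ref{prop:4.2}(ii) only guarantees the existence of two independent minimal relations ending at $i$, but the identities $\varphi_\ell\delta_1+\psi_\ell\delta_2=0$ involve the \emph{same} elements $\varphi_\ell,\psi_\ell$ that appear in (ii), and nothing in Proposition~\ref{prop:4.2} forces the relations it produces to have these particular coefficients. The paper avoids both problems by extracting the relations directly from the minimal resolution: set $\varphi=d_2(e_x)$, $\psi=d_2(e_y)$, so that $d_1d_2=0$ immediately gives the equalities of (ii), and $d_2d_3(e_i)=0$ gives exactly the two equalities of (iv); the assertion that these are induced by independent minimal relations from $i$ to $x$, $y$ (resp.\ from $j$, $k$ to $i$) then comes from the standard correspondence between $\topp(\Omega_A^2(S_i))$ and minimal relations, using $P_2=P_x\oplus P_y$ which you have already established. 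With that substitution your proof is complete and tameness is not needed.
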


\begin{proof}
Since $A$ is a symmetric $2$-regular algebra and $S_i$
is periodic of period four in $\mod A$, there is
an exact sequence
\[
  0 \to
  S_i \to
  P_i \xrightarrow{d_3}
  P_x \oplus P_y \xrightarrow{d_2}
  P_j \oplus P_k \xrightarrow{d_1}
  P_i \xrightarrow{d_0}
  S_i \to
  0 ,
\]
where $d_0$ is the canonical epimorphism $P_i \to S_i$
and $d_1(u,v) = \alpha u + \bar{\alpha} v$
for any $(u,v) \in P_j \oplus P_k$,
which give rise to a minimal projective resolution
of $S_i$ in $\mod A$.
Further, $\Omega_A^2(S_i) = \Ker d_1$
is generated by elements $\varphi$ and $\psi$
in $P_j \oplus P_k$ such that
$\varphi = d_2(e_x)$ and $\psi = d_2(e_y)$,
and we may define $d_2$ by
$d_2(p,q) = \varphi p + \psi q$
for any $(p,q) \in P_x \oplus P_y$.
Then we have
$\varphi = (\varphi_1,\varphi_2)$
and
$\psi = (\psi_1,\psi_2)$
for some elements
$\varphi_1 \in e_j J e_x$,
$\varphi_2 \in e_k J e_x$,
$\psi_1 \in e_j J e_y$,
$\psi_2 \in e_k J e_y$.
In particular, we have
$d_2(p,q) = (\varphi_1 p + \psi_1 q, \varphi_2 p + \psi_2 q)$
for $(p,q) \in P_x \oplus P_y$.
Therefore, we obtain the equalities in $A$
\[
  \alpha \varphi_1 + \bar{\alpha} \varphi_2
    = d_1(\varphi) =  d_1 \big( d_2 ( e_x ) \big) = 0,
\]
induced by a minimal relation $\varrho_1$ in $I$ from $i$ to $x$, and
\[
  \alpha \psi_1 + \bar{\alpha} \psi_2
    = d_1(\psi) =  d_1 \big( d_2 ( e_y ) \big) = 0,
\]
induced by a minimal relation $\varrho_2$ in $I$ from $i$ to $y$.
Obviously, the relations $\varrho_1$ and $\varrho_2$
are independent.
This proves (ii) and (iii).

The third syzygy
$\Omega_A^3(S_i) = \Ker d_2 = \Im d_3$
is isomorphic to $P_i/S_i$.
Hence, $\Omega_A^3(S_i)$
is generated by the element
$d_3(e_i) = (\delta_1,\delta_2) \in P_x \oplus P_y$.
Moreover, by
Lemma~\ref{lem:4.1},
$P_i/S_i$ is isomorphic to the submodule
$(\delta,\delta^*)A$ of $P_x \oplus P_y$.
This implies that
$\delta_1 \in e_x J e_i \setminus e_x J^2 e_i$
and
$\delta_2 \in e_y J e_i \setminus e_y J^2 e_i$.
Further, we obtain the equalities
\[
 (\varphi_1 \delta_1 + \psi_1 \delta_2,
  \varphi_2 \delta_1 + \psi_2 \delta_2)
   = \varphi \delta_1 + \psi \delta_2
   = d_2 d_3 (e_i)
   = 0,
\]
and hence the equalities in $A$
\[
  \varphi_1 \delta_1 + \psi_1 \delta_2 = 0,
\]
induced by a minimal relation $\omega_1$ in $I$ from $j$ to $i$, and
\[
  \varphi_2 \delta_1 + \psi_2 \delta_2 = 0,
\]
induced by a minimal relation $\omega_2$ in $I$ from $k$ to $i$.
Clearly, the relations $\omega_1$ and $\omega_2$
are independent.
This proves (iv) and (v).
\end{proof}

We note that if $A = K Q / I$ is a symmetric $2$-regular algebra
then the opposite algebra $A^{\op} = K Q^{\op} / I^{\op}$
is a symmetric $2$-regular algebra.
Moreover, if the simple module $S_i = e_i A / e_i J$
is periodic of period four in $\mod A$
then the simple module $S'_i = A e_i / J e_i$
in $\mod A^{\op}$ is periodic of period four.
Hence, we have the dual version of the above
proposition for the simple module $S'_i$.

We obtain the following direct consequence of
Proposition~\ref{prop:4.3} and its dual.

\begin{corollary}
\label{cor:4.4}
Let $A$ be a symmetric algebra, $i$ a vertex of $Q$,
$\alpha$, $\bar{\alpha}$ the arrows of $Q$ starting at $i$,
$\delta$, $\delta^*$ the arrows of $Q$ ending at $i$,
and $j = t(\alpha)$, $k = t(\bar{\alpha})$,
$x = s(\delta)$, $y = s(\delta^*)$.
Assume that the simple module $S_i$ in $\mod A$
is periodic of period four.
Then the following statements hold.
\begin{enumerate}[(i)]
 \item
  There are exactly two independent
  minimal relations in $I$ starting at $i$,
  one from $i$ to $x$ and the other
  from $i$ to $y$. 
 \item
  There are exactly two independent
  minimal relations in $I$ ending at $i$,
  one from $j$ to $i$ and the other
  from $k$ to $i$.
\end{enumerate}
\end{corollary}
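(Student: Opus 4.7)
The plan is to derive the corollary directly from Proposition~\ref{prop:4.3} and its opposite-algebra analogue. From Proposition~\ref{prop:4.3}(ii) we already possess the existence of two independent minimal relations in $I$ starting at $i$: the relation from $i$ to $x$ induced by $\alpha\varphi_1 + \bar{\alpha}\varphi_2 = 0$ and the relation from $i$ to $y$ induced by $\alpha\psi_1 + \bar{\alpha}\psi_2 = 0$. The remaining task for (i) is to rule out any further independent minimal relation starting at $i$.

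The key observation is the standard identification: the number of independent minimal relations of $A$ starting at $i$ and ending at a vertex $m$ equals $\dim_K \Ext_A^2(S_i,S_m)$, which in turn equals the multiplicity of $P_m$ as a direct summand in the projective cover of $\Omega_A^2(S_i)$. By Proposition~\ref{prop:4.3}(iii), the pairs $(\varphi_1,\varphi_2)$ and $(\psi_1,\psi_2)$ generate $\Omega_A^2(S_i)$ and its projective cover is $P_x \oplus P_y$, a module with exactly two indecomposable summands. Hence the total number of independent minimal relations in $I$ starting at $i$ is exactly $2$, distributed as one to $x$ and one to $y$ (or two ending at the common vertex when $x = y$), which gives (i).

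For (ii), I would apply the same argument to the opposite algebra $A^{\op} = KQ^{\op}/I^{\op}$, which is again a symmetric $2$-regular algebra; as noted in the text immediately preceding the corollary, the right $A^{\op}$-module $S'_i = Ae_i/Je_i$ is periodic of period $4$. Proposition~\ref{prop:4.3}, applied in $\mod A^{\op}$ at the vertex $i$, now produces exactly two independent minimal relations in $I^{\op}$ starting at $i$, one to $s^{\op}(\delta) = j$ and one to $s^{\op}(\delta^*) = k$. Reversing paths, these correspond to exactly two independent minimal relations in $I$ ending at $i$, one from $j$ and one from $k$, which is precisely (ii).

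There is no serious obstacle: Proposition~\ref{prop:4.3} has already performed all of the syzygy-level work. The only point requiring explicit care is the translation between the two counts — on one hand, minimal relations starting at $i$ (the content of the statement), and on the other hand, the rank of a minimal projective presentation of $\Omega_A^2(S_i)$ (the information extracted from the resolution). Once this bookkeeping is stated, both (i) and (ii) drop out immediately.
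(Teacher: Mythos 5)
Your proposal is correct and follows exactly the route the paper intends: the paper states Corollary~\ref{cor:4.4} as a direct consequence of Proposition~\ref{prop:4.3} and its dual, and your argument simply makes explicit the standard identification between the number of independent minimal relations from $i$ to $m$ and the multiplicity of $P_m$ in the third term $P_x \oplus P_y$ of the minimal projective resolution of $S_i$ (equivalently $\dim_K \Ext_A^2(S_i,S_m)$), together with the passage to $A^{\op}$ for part (ii). The bookkeeping you flag, including the case $x=y$, is handled correctly.
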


In fact, by Proposition~\ref{prop:4.2}, in (i) above we known that
there are paths of length two from $i$
to $x$ and from $i$ to $y$ which are independent modulo
$J^2$, and the dual version for (ii) holds as well. We make therefore the
following convention.


\begin{notation}
\label{arrows} 
Let $i$ be a vertex, and $\alpha: i\to j$ and $\ba: i\to k$ be the arrows
starting at $i$, and let $\delta: x\to i$ and $\delta^*: y\to i$ 
be the arrows ending at $i$. Then we denote by $\beta_1: j\to y$ 
and $\gamma_1: k\to x$ be 
arrows such that $\alpha\beta_1$ and $\ba\gamma_1$ are terms of
a minimal relation.  
\end{notation}

\begin{corollary}
\label{cor:4.6}
Let $A$ be a symmetric algebra, $i$ a vertex of $Q$,
$\alpha, \bar{\alpha}$ the arrows of $Q$ starting at $i$,
and $\delta, \delta^*$ the arrows of $Q$ ending at $i$.
Assume that the simple module $S_i$ in $\mod A$
is periodic of period four.
Then the following statements hold.
\begin{enumerate}[(i)]
 \item
  There are two arrows $\beta$, $\gamma$ in $Q$
  such that the cosets $\alpha \beta + e_i J^3$
  and $\bar{\alpha} \gamma + e_i J^3$
  form a basis of $e_i J^2 / e_i J^3$.
 \item
  There are two arrows $\sigma$, $\varrho$ in $Q$
  such that the cosets $\sigma \delta + J^3 e_i$
  and $\varrho \delta^* + J^3 e_i$
  form a basis of $J^2 e_i / J^3 e_i$.
\end{enumerate}
In particular, we have
$\dim_K e_i J^2 / e_i J^3 = 2$
and
$\dim_K J^2 e_i / J^3 e_i = 2$.
\end{corollary}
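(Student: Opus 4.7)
My approach to proving Corollary~\ref{cor:4.6} is to compute $e_i J^2/e_i J^3$ via the minimal projective resolution of $S_i$ from Proposition~\ref{prop:4.3}, combined with Corollary~\ref{cor:4.4} and Proposition~\ref{prop:4.2}. I treat only (i); statement (ii) follows by applying (i) to the opposite algebra $A^{\op}$, which is still symmetric and $2$-regular with the corresponding simple module periodic of period four. By Lemma~\ref{lem:4.1}(i) we have $e_i J = \alpha A + \bar{\alpha}A$ and hence $e_i J^2 = \alpha(e_j J) + \bar{\alpha}(e_k J)$, producing a surjection
\[
  \pi \colon (e_j J/e_j J^2) \oplus (e_k J/e_k J^2) \twoheadrightarrow e_i J^2/e_i J^3
\]
whose source is $4$-dimensional, with basis the cosets of the two arrows $\beta_1,\beta_2$ starting at $j$ and the two arrows $\gamma_1,\gamma_2$ starting at $k$. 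The kernel of $\pi$ coincides with the span of length-$2$ parts $L_2(\varrho)$ of minimal relations $\varrho\in e_i I$, since $r\in e_i KQ_2$ lies in $I+KQ_{\ge 3}$ exactly when $r = L_2(\varrho)$ for some such $\varrho$. By Corollary~\ref{cor:4.4}(i) there are exactly two independent minimal relations starting at $i$; by Proposition~\ref{prop:4.3}(ii) they may be written as $\varrho_1\colon \alpha\varphi_1 + \bar{\alpha}\varphi_2 = 0$ (ending at $x$) and $\varrho_2\colon \alpha\psi_1 + \bar{\alpha}\psi_2 = 0$ (ending at $y$). In particular $\dim_K \ker\pi \le 2$, so $\dim_K e_i J^2/e_i J^3 \ge 2$.

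The main step, and the main obstacle, is to prove the reverse inequality by showing that $L_2(\varrho_1)$ and $L_2(\varrho_2)$ are linearly independent. Suppose otherwise: a nonzero combination $\eta = c_1\varphi + c_2\psi$ in $\Omega_A^2(S_i)$ lies in $e_j J^2 \oplus e_k J^2$, equivalently $c_1\varrho_1 + c_2\varrho_2$ has vanishing length-$2$ part. I will combine the compatibility identities $\varphi_1\delta_1 + \psi_1\delta_2 = 0$ and $\varphi_2\delta_1 + \psi_2\delta_2 = 0$ from Proposition~\ref{prop:4.3}(iv) with the non-vanishing of the arrow parts of $\delta_1,\delta_2$ (Proposition~\ref{prop:4.3}(v)) to deduce, via a case analysis on whether $x = s(\delta)$ equals $y = s(\delta^*)$, that at least one of the two minimal relations ending at $i$ would then have to lie in $KQ_{\ge 3}$, contradicting Proposition~\ref{prop:4.2}(ii) applied to $\delta,\delta^*$. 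The case $x\ne y$ is direct because the arrow parts $L_1(\varphi_r)$ and $L_1(\psi_r)$ live in linearly disjoint arrow-spaces (arrows from $j,k$ ending at $x$ versus at $y$), forcing $c_1L_1(\varphi_r)=0$ and $c_2L_1(\psi_r)=0$ for $r=1,2$; the dichotomy $c_1\ne0$ or $c_2\ne0$ then yields $L_2(\varrho_1)=0$ or $L_2(\varrho_2)=0$, contradicting Proposition~\ref{prop:4.2}(i). The case $x=y$ is more delicate and will need an additional appeal to the minimality of $(\delta_1,\delta_2)$ as a generator of $\Omega_A^3(S_i)\cong P_i/S_i$.

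Having established $\dim_K e_i J^2/e_i J^3 = 2$, it remains to exhibit a basis of the prescribed shape. Proposition~\ref{prop:4.2}(i) supplies arrows $\beta',\gamma'$ such that $\alpha\beta'$ and $\bar{\alpha}\gamma'$ appear with non-zero coefficient in the length-$2$ parts of minimal relations starting at $i$, so $\ker\pi$ is contained neither in the $\alpha$-side $K\alpha\beta_1 \oplus K\alpha\beta_2$ nor in the $\bar{\alpha}$-side $K\bar{\alpha}\gamma_1 \oplus K\bar{\alpha}\gamma_2$. Hence one may choose $\beta\in\{\beta_1,\beta_2\}$ and $\gamma\in\{\gamma_1,\gamma_2\}$ for which the cosets $\alpha\beta + e_i J^3$ and $\bar{\alpha}\gamma + e_i J^3$ are linearly independent, giving the required basis. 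Part (ii) and the final equalities $\dim_K e_i J^2/e_i J^3 = \dim_K J^2 e_i/J^3 e_i = 2$ follow by applying the same reasoning to $A^{\op}$.
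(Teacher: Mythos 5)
Your framework --- the surjection $\pi$ from the four-dimensional space $(e_jJ/e_jJ^2)\oplus(e_kJ/e_kJ^2)$ onto $e_iJ^2/e_iJ^3$, whose kernel is spanned by the degree-two parts of the minimal relations starting at $i$ and is therefore at most two-dimensional by Corollary~\ref{cor:4.4} --- is the right one, and it is essentially what the paper's (very short) proof does. The lower bound $\dim_K e_iJ^2/e_iJ^3\ge 2$, the passage to $A^{\op}$ for part (ii), and the final selection of $\beta,\gamma$ are all fine.

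The gap is in your ``main step''. For $x\ne y$ your argument is complete, but for $x=y$ (double arrows $\delta,\delta^*$ ending at $i$, a case that genuinely occurs, e.g.\ for the Markov quiver and the quivers of Lemma~\ref{lem:5.2}) you only announce that ``an additional appeal to the minimality of $(\delta_1,\delta_2)$'' will be needed, without carrying it out. It is not clear that this tool suffices: minimality of $(\delta_1,\delta_2)$ as a generator of $\Omega_A^3(S_i)\cong P_i/S_i$ only yields $\delta_1\notin e_xJ^2e_i$ and $\delta_2\notin e_yJ^2e_i$, which constrains the relations \emph{ending} at $i$, whereas what you must rule out is that the degree-two parts of the two relations \emph{starting} at $i$ --- both of which now end at the same vertex $x=y$ --- are proportional. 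The paper sidesteps this entirely: the wildness arguments in the proof of Proposition~\ref{prop:4.2}(i) already produce two minimal relations with linearly independent degree-two parts, one involving a path $\alpha\beta_1$ and the other a path $\bar{\alpha}\gamma_1$ with $\beta_1\ne\gamma_1$, in all cases (including $j=k$ and $x=y$); Corollary~\ref{cor:4.4} then says these exhaust the minimal relations starting at $i$, so the kernel of your $\pi$ is exactly two-dimensional and the complementary paths $\alpha\bar{\beta}_1$, $\bar{\alpha}\bar{\gamma}_1$ descend to the required basis. In short, the independence you set out to re-prove is already part of the content of the two results you cite; your detour through the compatibility identities of Proposition~\ref{prop:4.3}(iv) is unnecessary where it works and incomplete precisely where it would be needed.
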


\begin{proof}
(i)
It follows from
Proposition~\ref{prop:4.2}
that there exist two different arrows
$\beta_1$, $\gamma_1$ in $Q$ such that the paths
$\alpha \beta_1$ and $\bar{\alpha} \gamma_1$
are defined and involved in two independent
minimal relations of $I$.
Moreover, by
Corollary~\ref{cor:4.4},
there are exactly two independent
minimal relations in $I$
starting at $i$.
Then, for the arrows $\beta = \bar{\beta}_1$
and $\gamma = \bar{\gamma}_1$,
the cosets $\alpha \beta + J^3$
and $\bar{\alpha} \gamma + J^3$
form a basis of $e_i J^2 / e_i J^3$.

The proof of (ii) is similar, and applies
Proposition~\ref{prop:4.3}
and
Corollary~\ref{cor:4.4}.
\end{proof}

Let $A$ be a tame algebra.
We say that two paths
$\alpha_1 \beta_1$ and $\alpha_2 \beta_2$
of length two in $Q$ belong to a
\emph{minimal relation of type C} 
if none of $\alpha_1 \beta_1$ and $\alpha_2 \beta_2$
belongs to $J^3$ but
$c_1 \alpha_1 \beta_1 + c_2 \alpha_2 \beta_2 \in J^3$
for some non-zero elements $c_1$ and $c_2$ in $K$.
We note that then
$c_1 \alpha_1 \beta_1 + c_2 \alpha_2 \beta_2$
is a summand of a minimal relation of $I$.
A path $\alpha_1 \beta_1$ of length two in $Q$ belongs to a
\emph{minimal relation of type Z} 
if $\alpha_1 \beta_1 \in J^3$.
Clearly, $\alpha_1 \beta_1$
is a summand of a minimal relation of $I$.

\medskip

We show first that some arrows related to a type C relation cannot
be double arrows, or loops.

\begin{lemma} \label{lem:4.7}Assume we have a type C relation $a_1(\alpha\beta) + a_2(\ba\gamma_1) \in J^3$
 starting
at vertex $i$, and either a type C relation from $i$ of the form
$b_1(\alpha\beta_1) + b_2(\ba \gamma) \in J^3
$
or $\ba\gamma_1\in J^3$. Then
\begin{enumerate}[(i)]
\item 
   There are no double arrows starting  or ending at $i$, unless $Q=Q^M$ 
(see Section~\ref{sec:markov}).
\item There are no loops starting or ending at $i$.
\end{enumerate}
\end{lemma}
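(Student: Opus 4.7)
The plan follows the template of the proof of Proposition~\ref{prop:4.2}: assume, for contradiction, that a loop or a double arrow occurs at $i$ (and, in the double-arrow case, that $Q\ne Q^M$), and produce a quotient of $B = A/J^3$ which admits a Galois covering $F : \tilde{E}\to \tilde{E}/G$ by a finitely generated free group $G$ such that $\tilde{E}$ contains a full convex wild subcategory. Then \cite[Theorem]{DS2} together with \cite[Proposition~2]{DS1} contradict tameness of $A$. First, I normalize the local data at $i$: by Corollary~\ref{cor:4.4} there are exactly two independent minimal relations of $I$ starting at $i$, and by hypothesis these are either two type~C relations $a_1\alpha\beta + a_2\bar\alpha\gamma_1$ and $b_1\alpha\beta_1 + b_2\bar\alpha\gamma$ in $J^3$, or one type~C relation together with the type~Z relation $\bar\alpha\gamma_1\in J^3$; moreover, by Corollary~\ref{cor:4.6}, the cosets $\alpha\beta + e_iJ^3$ and $\bar\alpha\gamma + e_iJ^3$ form a basis of $e_iJ^2/e_iJ^3$. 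This pins down exactly which length-two paths out of $i$ survive modulo $J^3$.

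For (ii), suppose $\alpha$ is a loop at $i$, so $j = i$ and $\{\beta,\beta_1\}\subseteq\{\alpha,\bar\alpha\}$. I would run through the subcases for $\beta,\beta_1,\gamma,\gamma_1$ (and for whether $\bar\alpha$ is itself a loop): in each, killing in $KQ$ all length-two paths outside the minimal set needed to preserve the two minimal relations at $i$ produces a quotient of $B$ whose universal Galois cover contains either the path algebra of the wild quiver $\tilde{\tilde E}_7$ used in the proof of Proposition~\ref{prop:4.2}, or the Kronecker-type wild extension considered there. Either outcome contradicts tameness of $A$. A loop ending at $i$ is excluded by applying the same result to $A^{\op}$, which is again symmetric with all simple modules periodic of period $4$.

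For (i), suppose $\alpha,\bar\alpha$ are parallel arrows $i\rightrightarrows j$. Then $\beta,\gamma,\beta_1,\gamma_1$ all start at $j$, hence belong to the two-element set $\{\mu,\bar\mu\}=s^{-1}(j)$. I would split by (a) whether $\mu,\bar\mu$ are themselves parallel, and (b) whether their common target is $i$ or a new vertex. In every configuration other than the one where the analysis iterates symmetrically to produce three pairs of parallel arrows between three vertices, the same cut-down-and-cover procedure exposes a wild full convex subcategory of type $\tilde{\tilde E}_7$ or of Kronecker type in a universal cover, again contradicting tameness. Iterating at each new vertex—whose simple module is also periodic of period $4$, hence satisfies the same local structure by Corollaries~\ref{cor:4.4} and~\ref{cor:4.6}—forces $Q$ to close up into $Q^M$. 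Double arrows ending at $i$ reduce to this case via $A^{\op}$. The main obstacle is the combinatorial case analysis in (i): correctly enumerating the possible configurations of $\{\mu,\bar\mu\}$ together with the arrows at each next vertex, and for every non-Markov configuration identifying the precise ideal in $KQ$ whose quotient, after Galois covering by a free group, yields a wild full convex subcategory.
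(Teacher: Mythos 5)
Your proposal takes a genuinely different route from the paper, and it has a gap that I do not think can be closed. The paper's proof of this lemma is purely combinatorial: it never invokes wildness. It uses only the $2$-regularity of $Q$, the standing assumption $|Q_0|\geq 3$, the definition of a type C relation, and Corollaries~\ref{cor:4.4} and \ref{cor:4.6}. For example, when $\alpha$ is a loop and two type C relations start at $i$, the configuration forces $\{\beta,\beta_1\}=\{\alpha,\bar{\alpha}\}$ and hence a $2$-regular subquiver on the two vertices $i,k$, which by connectivity would have to be all of $Q$ --- contradicting $|Q_0|\geq 3$. When double arrows $\delta,\delta^*$ end at $i$ and two type C relations start at $i$, the invertibility of the $2\times 2$ coefficient matrix forces both $\alpha\beta$ and $\bar{\alpha}\gamma$ into $J^3$, contradicting the very definition of type C. When double arrows start at $i$, a change of representatives of the arrows turns the alleged type C relations into type Z relations, the obstruction being Corollary~\ref{cor:4.6} (one cannot have an arrow $\alpha'$ with $\alpha' J\subseteq J^3$).

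This is where your plan fails: the configurations to be excluded are \emph{not} wild in general, so the cut-down-and-Galois-cover strategy of Proposition~\ref{prop:4.2} has nothing to detect. The two-vertex $2$-regular local picture that arises in the loop case is exactly the local structure of Erdmann's tame algebras of quaternion type with two simple modules; killing length-two paths there will never expose a $\tilde{\tilde{E}}_7$ or Kronecker-type wild subcategory, because the obstruction is the hypothesis $|Q_0|\geq 3$, not tameness. Likewise, the case of double arrows ending at $i$ with two type C relations is ruled out by linear algebra on the coefficient matrix, not by representation type. Your account of how the Markov exception arises (iterating parallel pairs around three vertices) also does not match what actually happens: in the paper $Q^M$ is forced in the subcase of double arrows ending at $i$ with a single type C relation, where $\beta_1=\gamma$ forces $j=k$. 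Finally, even in the subcases where a wildness argument might be made to work, your proposal only asserts that a wild full convex subcategory appears without exhibiting it, so those cases are not verified either. I would redo the proof along the paper's combinatorial lines, reserving the covering-theoretic wildness arguments for Proposition~\ref{prop:4.2} and Lemma~\ref{lem:4.9}, where they are actually needed.
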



\begin{proof} 
We use Notation~\ref{arrows}.

\medskip

(i) \ Assume $\alpha, \ba$ are double arrows, and $Q\neq Q^M$.

Assume first  we have
two independent relations of type $C$ from $i$.
The arrows from $j$ are $\{ \beta, \beta_1\}$ and the arrows from
$k$ are $\{ \gamma, \gamma_1\}$, and we have $j=k$. Since $Q\neq Q^M$, we must
have $x\neq y$, and  it follows that $\beta=\gamma_1$ and $\beta_1=\gamma$. 
Then we can rewrite the relations as
$$
  (a_1\alpha + a_2\ba)\beta \in J^3, \ \ (b_1\alpha + b_2\ba)\gamma \in J^3.
$$
If $(a_1\alpha + a_2\ba)$ and $(b_1\alpha + b_2\ba)$ are dependent
modulo $J^3$ then this is an arrow $\alpha': i\to j$ such that
$\alpha'J \subseteq  J^3$. 
This contradicts Corollary~\ref{cor:4.6},
since with $\alpha'J\subseteq  J^3$ there cannot be a basis of $e_iA/e_iJ^2$
as we established in Corollary~\ref{cor:4.6}.

Therefore the two elements must be independent arrows. Then we
may replace $\alpha, \ba$ and have now two independent
relations of type $Z$.

\smallskip

Now
assume that the second minimal relation from $i$ is
$\ba \gamma_1 \in J^3$.
Then
we can write the first relation as $(b_1\alpha + b_2\ba)\gamma \in J^3$.
We can take independent arrows $i\to j$ to be $\ba$ and $b_1\alpha + b_2\ba$,
and then we have two relations of type $Z$.

\smallskip

Suppose $\delta, \delta^*$ are double arrows, then $x=y$ and $j\neq k$.
Assume first we have two relations of type $C$, then  $\beta=\beta_1$,
and $\gamma = \gamma_1$. 
Since the two relations are independent, the
coefficient matrix is invertible. 
It follows that both
$\alpha\beta$ and $\ba\gamma$ are in $J^3$, and we do not have
type C relations, a contradiction.

Now suppose we have only one type C relation from $i$, and assume $x=y$.
 Here we have double arrows $\gamma, \gamma_1: k\to y$. The
arrow $\beta_1$ also ends at $y$, and it follows that $\beta_1=\gamma$.
This means that $j=k$ and then
also $\alpha, \ba$ are
double arrows and $Q$ is the Markov quiver $Q^M$, a contradiction.

\medskip

(ii) \ Assume we have two relations of type C from $i$. Suppose say $\alpha$ is
a loop, then $j=i$ and  $k\neq i$, and we have
$\{ \beta, \beta_1\} = \{\alpha, \ba\}$.
The situation is symmetric, so we may assume $\alpha=\beta$ and $\ba = \beta_1$. This implies $x=i$ and $y=k$, and the subquiver
with vertices $i, k$ is 2-regular, and hence is $Q$, and $|Q_0|=2$, a contradiction.

Now assume that we have only one type C relation from $i$, and $\ba\gamma_1\in J^3$.
Assume $\ba$ is a loop, so that $k=i$ and $\{ \gamma, \gamma_1\} = \{ \alpha, \ba\}$.
If $\gamma = \alpha$ and $\gamma_1=\ba$ then $y=j$ and $x=i$, and
we have again a 2-regular subquiver with two vertices which is all of $Q$, a
contradiction.
If $\gamma = \ba$ and $\gamma_1=\alpha$, then $y=i$ and $x=j$, and we
have the same contradiction.

\medskip
Suppose $\alpha$ is a loop, so that $j=i$ and $\{ \beta_1, \beta\} = 
\{ \alpha, \ba\}$.

Assume $\beta_1=\alpha$, then $y=i$. The  arrows ending at $i$
are $\{\delta, \delta^*, \gamma, \alpha\}$. It follows that
$\delta^*=\alpha$ and $\gamma=\delta$, and then we have
a 2-regular subquiver with two vertices $i, k$, a contradiction.
Otherwise $\beta_1=\ba$, then $k=y$ and $\gamma$ is a loop, and we
have a 2-regular subquiver with vertices $i, k$ and again a contradiction.

If there is a loop ending at $i$ then it also starts at $i$, and we have
excluded this.
\end{proof}


The following proposition explains how relations of type C propagate in the
quiver. Assuming part (i) of the following, then the previous lemma implies that also 
$\beta, \beta_1$ and $\gamma, \gamma_1$ 
are not double arrows or loops. 
Similarly, part (ii)
of the following and the previous lemma implies that $\gamma, \gamma_1$ are
not double arrows or loops. 


\begin{proposition}
\label{prop:4.8}
Let $A$ be a tame symmetric algebra and $i$ a vertex in $Q$
such that the simple module $S_i$ in $\mod A$ is periodic of period four.
Moreover, let
$\alpha, \bar{\alpha}$ be the arrows of $Q$ starting at $i$,
$\delta, \delta^*$ the arrows of $Q$ ending at $i$,
and $j = t(\alpha)$, $k = t(\bar{\alpha})$,
$x = s(\delta)$, $y = s(\delta^*)$.
Assume also that $Q$ is not the Markov quiver. Then the following hold.
\begin{enumerate}[(i)]
 \item
  Assume that there are two minimal relations in $A$ of type C
  \[
    a_1 \alpha \beta + a_2 \bar{\alpha} \gamma_1 \in J^3
     \quad
     \mbox{and}
     \quad
    b_1 \alpha \beta_1 + b_2 \bar{\alpha} \gamma \in J^3
    ,
  \]
  where $\beta$, $\gamma_1$ are arrows ending at $x$
  and $\beta_1$, $\gamma$ are arrows ending at $y$.
  Then there are two minimal relations in $A$ of type
  C
  of the form
  \[
    c_1 \beta \delta + c_2 \beta_1 \delta^* \in J^3
     \quad
     \mbox{and}
     \quad
    d_1 \gamma_1 \delta + d_2 \gamma \delta^* \in J^3
    .
  \]
 \item
  Assume that there is a unique minimal relation in $A$ of type C
  starting at $i$
  \[
    b_1 \alpha \beta_1 + b_2 \bar{\alpha} \gamma \in J^3
    ,
  \]
  where $\beta_1$, $\gamma$ are arrows ending at $y$,
  and $\bar{\alpha} \gamma_1 \in J^3$ is the relation
  in $A$ of type Z starting at $i$.
  Then there are in $A$ a minimal relation of type C
  \[
    d_1 \gamma_1 \delta + d_2 \gamma \delta^* \in J^3
    ,
  \]
  and the relation $\beta_1 \delta^* \in J^3$ of type Z.
 \item
  Assume that there are in $A$ two minimal relations of type Z
  \[
    \alpha \beta_1 \in J^3
     \quad
     \mbox{and}
     \quad
    \bar{\alpha} \gamma_1 \in J^3
    ,
  \]
  starting from $i$.
  Then there are in $A$ two minimal relations of type Z
  \[
    \beta_1 \delta^* \in J^3
     \quad
     \mbox{and}
     \quad
    \gamma_1 \delta \in J^3
    ,
  \]
  ending at $i$. 
\end{enumerate}
\end{proposition}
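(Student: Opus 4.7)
The plan rests on Proposition~\ref{prop:4.3}: periodicity of $S_i$ supplies elements $\varphi_1,\varphi_2,\psi_1,\psi_2,\delta_1,\delta_2$ and four identities in $A$,
\[
\alpha \varphi_1 + \ba \varphi_2 = 0, \quad \alpha \psi_1 + \ba \psi_2 = 0, \quad \varphi_1 \delta_1 + \psi_1 \delta_2 = 0, \quad \varphi_2 \delta_1 + \psi_2 \delta_2 = 0 .
\]
The first two encode the minimal relations starting at $i$, and thereby determine $\varphi_1,\varphi_2,\psi_1,\psi_2$ modulo $J^2$ from the hypothesis. The last two, reduced modulo $J^3$, will produce the sought ending relations, once we also describe $\delta_1,\delta_2$ modulo $J^2$.

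For the latter I would first apply Lemma~\ref{lem:4.7} (in cases (i), (ii)) and its obvious analogue for two type Z relations (in case (iii)), together with the hypothesis $Q \neq Q^M$, to exclude double arrows and loops at $i$, so that $j \neq k$ and $x \neq y$. Consequently $\delta$ is the unique arrow $x \to i$ and $\delta^*$ the unique arrow $y \to i$, and Proposition~\ref{prop:4.3}(iv)--(v) forces $\delta_1 \equiv c_1 \delta$ and $\delta_2 \equiv c_2 \delta^*$ modulo $J^2$, with $c_1, c_2 \in K^*$. Substituting the corresponding classes of $\varphi_1,\varphi_2,\psi_1,\psi_2$ modulo $J^2$ extracted from the starting hypothesis---namely $(a_1\beta, a_2\gamma_1, b_1\beta_1, b_2\gamma)$ in (i), $(0, \gamma_1, b_1\beta_1, b_2\gamma)$ in (ii), and $(0, \gamma_1, \beta_1, 0)$ in (iii)---into the third- and fourth-syzygy equations and reducing modulo $J^3$ yields directly the candidate ending relations with all listed coefficients in $K^*$.

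The principal obstacle is to verify, in cases (i) and (ii), that the two-term relations so obtained are genuinely of type C rather than collapsing to pairs of independent type Z relations. For this I would invoke Corollary~\ref{cor:4.4}(ii): there is a unique (up to scalar) minimal relation from $j$ to $i$, and a unique one from $k$ to $i$. If, for instance, $\beta\delta \in J^3$ in case (i), then the derived identity forces $\beta_1 \delta^* \in J^3$ as well, producing two independent minimal relations from $j$ to $i$, contradicting Corollary~\ref{cor:4.4}(ii); the symmetric argument at $k$, and the analogous dichotomy for the second relation in case (ii), then complete the verification that each ending relation has the type asserted in the statement.
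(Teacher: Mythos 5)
Your strategy for parts (i) and (ii) is essentially the paper's own: extract $\varphi_1,\varphi_2,\psi_1,\psi_2$ modulo $J^2$ from the starting relations, use Lemma~\ref{lem:4.7} plus $Q\neq Q^M$ to get $j\neq k$ and $x\neq y$, write $\delta_1\equiv\lambda_1\delta$, $\delta_2\equiv\lambda_2\delta^*$ modulo $J^2$, and substitute into $\varphi_1\delta_1+\psi_1\delta_2=0$ and $\varphi_2\delta_1+\psi_2\delta_2=0$. Your final step, ruling out that the derived two-term relations degenerate into pairs of type Z relations by counting minimal relations ending at $i$ against Corollary~\ref{cor:4.4}(ii), is a small addition the paper leaves implicit, and it is sound.

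The gap is in part (iii). You assert that an ``obvious analogue of Lemma~\ref{lem:4.7} for two type Z relations'' excludes double arrows at $i$, so that $x\neq y$ always. No such analogue exists: the proof of Lemma~\ref{lem:4.7} genuinely uses the presence of a type C relation starting at $i$ (it exploits the coefficient matrix of the C-relations, or rewrites a C-relation after a change of arrows), and none of that is available when both relations from $i$ are of type Z. Indeed the paper's own proof explicitly isolates the case ``$\delta,\delta^*$ are double arrows'' and observes, via Lemma~\ref{lem:4.7}, that this forces precisely the two-type-Z situation of part (iii) --- i.e.\ it is the one configuration where $x=y$ can occur. In that case $\delta_1$ and $\delta_2$ are only known to span $e_xJe_i/e_xJ^2e_i$ and need not be congruent to scalar multiples of $\delta$ and $\delta^*$ separately, so your substitution does not directly yield $\beta_1\delta^*\in J^3$ and $\gamma_1\delta\in J^3$. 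The paper closes this case by a different argument: it applies Corollaries~\ref{cor:4.4} and~\ref{cor:4.6} to the two arrows ending at $i$ to see that, up to relabelling $\delta$ and $\delta^*$, exactly the asserted pair of paths lies in $J^3$ while the complementary pair is independent modulo $J^3$. You would need to supply this (or an equivalent) argument to complete part (iii).
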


\begin{proof}
It follows from
Proposition~\ref{prop:4.3}
that there exist elements
$\varphi_1 \in e_j J e_x$,
$\varphi_2 \in e_k J e_x$,
$\psi_1 \in e_j J e_y$,
$\psi_2 \in e_k J e_y$
and two independent
minimal relations in $I$,
one from $i$ to $x$ and the other
from $i$ to $y$,
which induce the equalities in $A$
\[
   \alpha \varphi_1 + \bar{\alpha} \varphi_2 = 0
   \quad
   \mbox{and}
   \quad
   \alpha \psi_1 + \bar{\alpha} \psi_2 = 0
   .
\]
Further, there exist elements
$\delta_1 \in e_x J e_i \setminus e_x J^2 e_i$,
$\delta_2 \in e_y J e_i \setminus e_y J^2 e_i$,
and two independent
minimal relations in $I$,
one from $j$ to $i$ and the other
from $k$ to $i$,
which induce the equalities in $A$
\[
   \varphi_1 \delta_1 + \psi_1 \delta_2 = 0
   \quad
   \mbox{and}
   \quad
   \varphi_2 \delta_1 + \psi_2 \delta_2 = 0
   .
\]
Assume first that there are no double arrows ending at vertex $i$, that is
$x\neq y$. (This holds in (i) and (ii) anyway).  Then 
there exist non-zero elements $\lambda_1,\lambda_2$ in $K$
such that
$\delta_1 = \lambda_1 \delta + t_1$
and
$\delta_2 = \lambda_2 \delta^* + t_2$,
for some elements $t_1, t_2$ from $J^2$.

\smallskip

(i)
It follows from the  assumption that
$s(\beta) = j = s(\beta_1)$,
$s(\gamma) = k = s(\gamma_1)$,
and
$\varphi_1 = a_1 \beta + r_1$,
$\varphi_2 = a_2 \gamma_1 + r_2$,
$\psi_1 = b_1 \beta_1 + s_1$,
$\psi_2 = b_2 \gamma + s_2$,
for some elements
$r_1, r_2, s_1, s_2$. We claim that these
must be  in $J^2$: If (say) $r_1$ is not in $J^2$ then it is an
arrow $j\to x$ not equal to $\beta$ and we have
double arrows ending at $x$, and $\{ r_1, \beta\} = \{ \beta, \gamma_1\}$ and $j=k$
and then $Q$ is the Markov quiver, a contradiction. Similarly $r_2, s_1, s_2$ must be in $J^2$. 
Then $r_1\delta_1 + s_2\delta_2 \in J^3$
and $r_2\delta_1 + s_2\delta_2 \in J^3$. 
Clearly, $a_1, a_2, b_1, b_2$
are non-zero elements of $K$.
We obtain the equalities in $A$
\begin{align*}
 (a_1 \beta + r_1)  (\lambda_1 \delta + t_1)
 + (b_1 \beta + s_1)  (\lambda_2 \delta^* + t_2)
 &= 0 , \\
 (a_2 \gamma_1 + r_2)  (\lambda_1 \delta + t_1)
 + (b_2 \gamma + s_2)  (\lambda_2 \delta^* + t_2)
 &= 0 .
\end{align*}
Then we conclude that
\[
    c_1 \beta \delta + c_2 \beta_1 \delta^* \in J^3
     \quad
     \mbox{and}
     \quad
    d_1 \gamma_1 \delta + d_2 \gamma \delta^* \in J^3
    ,
\]
for
$c_1 = a_1 \lambda_1$,
$c_2 = b_1 \lambda_2$,
$d_1 = a_2 \lambda_1$,
$d_1 = b_2 \lambda_2$.

\smallskip

(ii)
The assumption on the relation of type C
implies that
$\psi_1 = b_1 \beta_1 + s_1$
and
$\psi_2 = b_2 \gamma + s_2$,
for some elements $s_1, s_2$ which are in $J^2$, as one sees by
the same argument as in (i). 
On the other hand, since there is a minimal relation
$\bar{\alpha} \gamma_1 \in J^3$ of type Z,
we conclude that $t(\gamma_1) = x$
(see Corollary~\ref{cor:4.4}).
Then we obtain
$\varphi_2 = a \gamma_1 + r$,
for a non-zero element $a \in K$
and some $r \in J^2$, and
$\varphi_1 \in e_j J^2 e_x$ such that $\alpha \vf_1 + \ba r \in J^3$. 
We have the equalities in $A$
\begin{align*}
 \varphi_1 (\lambda_1 \delta + t_1)
 + (b_1 \beta_1 + s_1) (\lambda_2 \delta^* + t_2)
 &= 0 , \\
 (a \gamma_1 + r) (\lambda_1 \delta + t_1)
 + (b_2 \gamma + s_2) (\lambda_2 \delta^* + t_2)
 &= 0 .
\end{align*}
From the first equality, we obtain
$b_1 \lambda_2 \beta_1 \delta^* \in J^3$, 
and hence the minimal relation
$\beta_1 \delta^* \in J^3$ of type Z in $A$,
because $b_1 \lambda_2 \neq 0$.
The second equality implies the minimal relation
$d_1 \gamma_1 \delta + d_2 \gamma \delta^* \in J^3$
of type C, for
$d_1 = a \lambda_1$
and
$d_2 = b_2 \lambda_2$.

\smallskip

(iii)
The imposed assumption implies that
$\varphi_2 = a \gamma_1 + r$,
$\psi_1 = b \beta_1 + s$,
for some elements $a,b$ in $K$
and some elements $r, s \in J^3$,
and
$\varphi_1 \in e_j J e_x$,
$\psi_2 \in e_k J e_y$.
We have the equalities in $A$
\begin{align*}
 \varphi_1 (\lambda_1 \delta + t_1)
 + (b \beta_1 + s) (\lambda_2 \delta^* + t_2)
 &= 0 , \\
 (a \gamma_1 + r) (\lambda_1 \delta + t_1)
 + \psi_2 (\lambda_2 \delta^* + t_2)
 &= 0 .
\end{align*}
Then we conclude that
$b \lambda_2 \beta \delta^* \in J^3$
and
$a \lambda_1 \gamma_1 \delta \in J^3$,
with
$b \lambda_2 \neq 0$
and
$a \lambda_1 \neq 0$.
Therefore, we obtain the minimal relations
$\beta_1 \delta^* \in J^3$
and
$\gamma_1 \delta \in J^3$
of type $Z$
ending at $i$.

Now we consider the possibility that $\delta, \delta^*$ are double arrows.
By the previous lemma,  we must have 
two type Z relations.
Assume $\alpha\beta_1\in J^3$ and $\ba\gamma_1\in J^3$. 
Then $\beta_1, \gamma_1$ are the arrows ending at $x=y$. 

By Corollaries \ref{cor:4.4} and \ref{cor:4.6} (applied to the two arrows ending at $i$)
we get: one of $\beta_1\delta, \gamma_1\delta$ is involved in a minimal relation. 
As well one of
$\beta_1\delta^*, \gamma_1\delta^*$ is involved in a minimal relation.
Furthermore, either $\gamma_1\delta^*$ and $\beta_1\delta$ 
are independent modulo $J^3$, or
$\gamma_1\delta$ and $\beta_1\delta^*$ are independent modulo $J^3$.
Combining these leave two possibilities:
\begin{enumerate}[(I)]
 \item
  $\beta_1\delta^*$ and $\gamma_1\delta$ are involved in minimal relations, 
  and $\gamma_1\delta^*$ and $\beta_1\delta$ are
independent modulo $J^3$, or
 \item
  $\beta_1\delta$ and $\gamma_1\delta^*$ are involved in minimal relations, 
  and $\gamma_1\delta$ and $\beta_1\delta^*$ are
  independent modulo $J^3$.
\end{enumerate}
The two possibilities differ only in the names of the arrows and we may assume (I) holds.
\end{proof}

We will need the following lemma.

\begin{lemma}
\label{lem:4.9}
Let $A = K Q / I$ be a $2$-regular algebra of generalized
quaternion type.
Let $i$ be a vertex of $Q$,
$\alpha, \bar{\alpha}$ the arrows of $Q$ starting at $i$,
$\beta_1, \beta_2$ the arrows of $Q$ starting at $j = t(\alpha)$,
$\gamma_1, \gamma_2$ the arrows of $Q$ starting at $k = t(\bar{\alpha})$,
and $\sigma = \alpha^*$, $\varrho = \bar{\alpha}^*$.
Suppose that $c_1 \alpha\beta_1 + c_2 \bar{\alpha}\gamma_1 \in J^3$
is a unique minimal relation of type C starting at $i$,
and a unique minimal relation of type C ending at
$t(\beta_1) = t(\gamma_1)$.
Then the following statements hold.
\begin{enumerate}[(i)]
 \item
  Precisely one of the paths
  $\alpha \beta_2$ and $\bar{\alpha} \gamma_2$
  is involved in a minimal relation of $A$.
 \item
  Precisely one of the paths
  $\sigma \beta_1$ and $\varrho \gamma_1$
  is involved in a minimal relation of $A$.
 \item
  At least one of the paths
  $\sigma \beta_1$ and $\bar{\alpha} \gamma_2$
  is involved in a minimal relation of $A$.
\end{enumerate}
Hence, $\varrho \gamma_1$
is involved in a minimal relation of $A$
if and only if $\bar{\alpha} \gamma_2$
is involved in a minimal relation of $A$.
\end{lemma}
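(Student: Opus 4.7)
The plan is to prove (i) using Proposition~\ref{prop:4.3} to describe the two independent minimal relations starting at $i$: one, $\alpha\psi_1 + \ba\psi_2 = 0$, corresponds to the given type C (going to $\ell$), and the other, $\alpha\vf_1 + \ba\vf_2 = 0$, goes to $x = s(\delta)$. The leading length-two terms of the latter are governed by the arrow $\beta_2$ (always ending at $x$) and possibly by $\gamma_2$ (only if $t(\gamma_2)=x$). I would carry out a case analysis on the leading arrows of $\vf_1$ and $\vf_2$, using the uniqueness of type C starting at $i$ to rule out the scenario in which both leading terms are arrows (which would produce a second independent type C between $\alpha\beta_2$ and $\ba\gamma_2$), and conclude that exactly one of $\alpha\beta_2,\ba\gamma_2$ lies in $J^3$ and hence in a type Z minimal relation; the other has zero coefficient in every element of the two-dimensional relation space at $i$ modulo $J^3$ (spanned by the type C and this type Z), and so lies in no minimal relation. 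Statement (ii) is proved by the strict dual argument using the dual of Proposition~\ref{prop:4.3}.

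For (iii) I argue by contradiction, assuming that neither $\sigma\beta_1$ nor $\ba\gamma_2$ is involved in a minimal relation, so that, by (i) and (ii), $\alpha\beta_2 \in J^3$ and $\varrho\gamma_1 \in J^3$. Applying Proposition~\ref{prop:4.8}(ii) at $i$ (after swapping $\alpha\leftrightarrow\ba$ to match the hypothesis) produces the type Z $\gamma_1\delta^* \in J^3$ ending at $i$ together with a type C $d_1\beta_2\delta + d_2\beta_1\delta^* \in J^3$ from $j$ to $i$, where $\delta^*\colon\ell\to i$ and $\delta\colon x\to i$. The analogous application in $A^{\op}$ at $\ell$ yields the type Z $\delta^*\alpha \in J^3$ together with a type C, both starting at $\ell$. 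Analyzing the two independent relations ending at $j$, one deduces that the second must be a type C with source $x$, of the form $f_1\delta\alpha + f_2\delta'\sigma \in J^3$: any other possibility collapses into two type Z ending at $j$, which by the dual of Proposition~\ref{prop:4.8}(iii) would force two type Z starting at $j$, incompatible with the type C $d_1\beta_2\delta + d_2\beta_1\delta^*$ already starting at $j$. A further application of Proposition~\ref{prop:4.8}(ii) in $A^{\op}$ at $j$ then yields $\beta_2\delta' \in J^3$. Pushing the reasoning one step further, at vertex $k$ the same machinery forces a specific length-two path (namely $\gamma_2\mu'$ in the notation developed along the way) to lie in a minimal relation; but the two-dimensional relation space already determined at $k$ starting is spanned by the type Z $\gamma_1\delta^* \in J^3$ and the type C $a_1\gamma_1\epsilon_2 + a_2\gamma_2\mu \in J^3$, and contains no element with nonzero coefficient on $\gamma_2\mu'$. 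This is the sought contradiction.

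The concluding biconditional follows from (iii) combined with its counterpart under the symmetry $\alpha\leftrightarrow\ba$ (which exchanges $\sigma\leftrightarrow\varrho$ and $\beta_\bullet\leftrightarrow\gamma_\bullet$, leaving the hypothesis of the lemma invariant); this gives the dual statement that at least one of $\varrho\gamma_1,\alpha\beta_2$ is involved in a minimal relation. Together with (i) and (ii), the two versions of (iii) eliminate precisely the two of the four possible cases in which ``$\varrho\gamma_1$ is in a minimal relation'' and ``$\ba\gamma_2$ is in a minimal relation'' would have different truth values. The chief difficulty in the proof is the propagation step of (iii): maintaining at each vertex the uniqueness hypotheses required in order to invoke Proposition~\ref{prop:4.8} and parts (i) and (ii) of the present lemma, and ensuring that the chain of consequences closes into a concrete contradiction at some vertex rather than merely transferring the problem further along the quiver.
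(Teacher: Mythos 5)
Your treatment of (i) and (ii) is sound and is essentially the paper's: by Corollary~\ref{cor:4.4} there are exactly two independent minimal relations starting at $i$, one of which is the given type C; the uniqueness hypothesis forces the second to be a type Z through exactly one of $\alpha\beta_2$, $\bar{\alpha}\gamma_2$, and the other path then appears in no minimal relation. Your derivation of the closing biconditional from (i)--(iii) together with the $\alpha\leftrightarrow\bar{\alpha}$--symmetric version of (iii) is also correct (and in fact more explicit than the paper, which cites only (ii) and (iii)).

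The problem is (iii). You try to derive it by propagating relations with Proposition~\ref{prop:4.8} around the quiver until a contradiction appears at $k$; but that contradiction is asserted, not established, and I do not believe it can be established by this route. Every tool you invoke --- Corollary~\ref{cor:4.4}, Proposition~\ref{prop:4.2} and its dual, Proposition~\ref{prop:4.8} --- only constrains \emph{which vertices} the two independent relations at each vertex connect and how their types (C versus Z) propagate; none of them ever singles out the specific paths $\sigma\beta_1$ or $\bar{\alpha}\gamma_2$. For instance, Proposition~\ref{prop:4.2}(i) at $i$ is already satisfied by $\alpha\beta_1$ and $\bar{\alpha}\gamma_1$, and the dual of Proposition~\ref{prop:4.2} at $t(\gamma_2)$ is satisfied by $\varrho\gamma_2$ (for $\varrho$ the other arrow ending at $k$), so the hypothesis that $\bar{\alpha}\gamma_2$ is uninvolved never re-enters the propagation: the type C/type Z data can be pushed around the quiver indefinitely without touching the two uninvolved paths again, and your own closing remark about ``merely transferring the problem further along the quiver'' is exactly what happens. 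The paper closes (iii) by a different mechanism entirely: assuming both $\sigma\beta_1$ and $\bar{\alpha}\gamma_2$ uninvolved, it forms the quotient $R$ of $B=A/J^3$ in which precisely the four paths $\alpha\beta_1$, $\sigma\beta_1$, $\bar{\alpha}\gamma_1$, $\bar{\alpha}\gamma_2$ survive, bound by the single relation $c_1\alpha\beta_1+c_2\bar{\alpha}\gamma_1$, and shows that a Galois covering of $R$ contains a wild concealed algebra of type $\tilde{\tilde{E}}_7$, contradicting tameness --- the same degeneration-to-wild argument that underlies Proposition~\ref{prop:4.2}. That representation-type input is the essential content of (iii), and your proposal is missing it.
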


\begin{proof}
The statements (i) and (ii) follow from the imposed assumption
and the statements (i) and (ii) of Proposition~\ref{prop:4.2}.
Suppose that $\sigma \beta_1$ and $\varrho \gamma_1$
are not involved in minimal relations.
In particular, $\sigma \beta_1$ and $\varrho \gamma_1$
do not belong to $J^3$.
Consider the bound quiver algebra $R = K Q / M$, where
$M$ is the ideal in $K Q$ generated by all paths
of length two in $Q$ different from
$\alpha \beta_1$, $\sigma \beta_1$,
$\bar{\alpha} \gamma_1$, $\bar{\alpha} \gamma_2$,
and
$c_1 \alpha \beta_1 + c_2 \bar{\alpha} \gamma_1$.
Then $R$ is a quotient algebra of $B = A/J^3$
and there is a Galois covering $F : \tilde{R} \to \tilde{R}/G$,
with a finitely generated free group $G$, such that $\tilde{R}$
admits a full convex subcategory  $\Lambda$ isomorphic to the
bound quiver algebra $K \Omega / N$, where $\Omega$ is the quiver
of the form
\[
 \xymatrix{
   &&& \bullet \ar[dl]_{\eta} \ar[dr]^{\omega} &&
      \bullet \ar[dl] \ar[dr] && \bullet \ar[dl] \\
   \bullet \ar[dr] && \bullet \ar[dl] \ar[dr]_{\xi} &&
     \bullet \ar[dl]^{\nu} && \bullet \\
   & \bullet && \bullet \\
 }
\]
and $N$ is the ideal in $K \Omega$ generated by
$c_1 \omega \nu + c_2 \eta \xi $.
We note the $\Lambda$ is isomorphic to the bound quiver
algebra $\Lambda' = K \Omega / N'$,
where $N'$ is the ideal in $K Q$ generated by
$\omega \nu - \eta \xi $.
Since $\Lambda'$ is a wild concealed algebra
of type $\tilde{\tilde{E}}_7$, applying
\cite[Proposition~2]{DS1} and \cite[Theorem]{DS2}
we conclude that $R$ is a wild algebra.
Therefore, indeed at least one of the paths
$\sigma \beta_1$ and $\bar{\alpha} \gamma_2$
is involved in a minimal relation of $A$.
The final statement follows from (ii) and (iii).
\end{proof}

\begin{corollary}\label{cor:4.10} 
Assume that at any vertex there is at most one type C relation. 
Then the type C relations come
in triples. 
Assume the  minimal relations from $i$ are $\alpha\beta_1 + \ba \gamma\in J^3$ 
of type $C$ and $\ba\gamma_1\in J^3$. 
Then,
with the notation as in Proposition~\ref{prop:4.8} 
and $\delta^*=\delta_2$ and $\delta = \delta_1$, we have
\begin{enumerate}[(i)]
\addtocounter{enumi}{1}
 \item
 The minimal relations from  $k$ are $\gamma_1\delta + \gamma\delta_2 \in J^3$ of type $C$ and $\gamma\bar{\delta}_2 \in J^3$.
 \item
 The minimal relations from $y$ are 
$\delta_2\ba + \bar{\delta}_2\mu\in J^3$ of type $C$  and $\delta_2\alpha \in J^3$ where $\mu: t(\bar{\delta}_2) \to k$. 
\end{enumerate}
\end{corollary}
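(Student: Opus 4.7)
The plan is to pass from the given relations at $i$ to those at $k$ and at $y$ by iteratively applying Proposition~\ref{prop:4.8}(ii) to propagate the structure, and then using Lemma~\ref{lem:4.9} to pin down the type Z relation at each new vertex.

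\emph{Step 1.} Apply Proposition~\ref{prop:4.8}(ii) directly at $i$, using the given type C $\alpha\beta_1+\bar{\alpha}\gamma\in J^3$ and type Z $\bar{\alpha}\gamma_1\in J^3$. Its conclusion produces the type C relation $\gamma_1\delta_1+\gamma\delta_2\in J^3$ ending at $i$ (this is a relation starting at $k$, and up to rescaling is the type C part of (ii)) together with an auxiliary type Z relation $\beta_1\delta_2\in J^3$ starting at $j$.

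\emph{Step 2.} To identify the type Z at $k$, apply Lemma~\ref{lem:4.9} at $k$ with $\gamma_1$ playing the role of $\alpha$ and $\gamma$ the role of $\bar{\alpha}$; under this relabeling the Lemma's $\beta_1,\gamma_1,\gamma_2$ correspond to $\delta_1,\delta_2,\bar{\delta}_2$, and the Lemma's $\varrho$ is $\beta_1$ (the second arrow into $y$). The uniqueness hypothesis holds: the type C starting at $k$ is unique by the overall assumption, and the type C ending at $i$ is unique because the other minimal relation ending at $i$ is the type Z $\beta_1\delta_2\in J^3$ from Step~1. By Lemma~\ref{lem:4.9}(ii) precisely one of the images of $\sigma\beta_1$ and $\varrho\gamma_1=\beta_1\delta_2$ lies in a minimal relation; since the latter does, the former does not. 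Then Lemma~\ref{lem:4.9}(iii) forces $\gamma\bar{\delta}_2$ into a minimal relation, and as only one type C exists at $k$ this must be a type Z relation $\gamma\bar{\delta}_2\in J^3$. This completes (ii).

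\emph{Step 3.} Apply Proposition~\ref{prop:4.8}(ii) now at $k$, using the type C from Step~1 and the type Z from Step~2. Its conclusion produces a type C relation ending at $k$ of the form $\bar{\delta}_2\mu+\delta_2\bar{\alpha}\in J^3$, where $\mu\colon t(\bar{\delta}_2)\to k$ is the arrow implicitly guaranteed by the setup of the Proposition. Both summands start at $y$, so this is the type C part of (iii). The Proposition also produces the auxiliary type Z relation $\delta_1\bar{\alpha}\in J^3$ starting at $x$.

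\emph{Step 4.} Apply Lemma~\ref{lem:4.9} at $y$ with $\bar{\delta}_2$ in the $\alpha$-role and $\delta_2$ in the $\bar{\alpha}$-role; then the Lemma's $\beta_1,\gamma_1,\gamma_2$ become $\mu,\bar{\alpha},\alpha$, and the Lemma's $\varrho$ becomes $\delta_1$ (the second arrow into $i$). The uniqueness hypothesis holds by the overall assumption at $y$ and because the other minimal relation ending at $k$ is the type Z $\delta_1\bar{\alpha}\in J^3$ from Step~3. Arguing exactly as in Step~2, $\delta_2\alpha$ is forced into a minimal relation, which must be the type Z $\delta_2\alpha\in J^3$, completing (iii). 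Statement (i) then follows, as one further iteration of the construction starting from the type C at $y$ returns to the original type C at $i$ by the same reasoning with the roles cyclically permuted, giving three type C relations at the vertices $i,k,y$. The main technical obstacle is the careful bookkeeping of the role relabelings when Proposition~\ref{prop:4.8}(ii) and Lemma~\ref{lem:4.9} are reapplied at each new vertex, and verifying at each stage that the uniqueness-of-type-C hypothesis can be traced back to the overall assumption together with the previously established type Z relations.
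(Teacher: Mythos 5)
Your proposal is correct and follows essentially the same route as the paper's own proof: apply Proposition~\ref{prop:4.8}(ii) to propagate the type C relation from $i$ to $k$ (obtaining also the type Z relation $\beta_1\delta_2\in J^3$), use the final statement of Lemma~\ref{lem:4.9} at $k$ to force $\gamma\bar{\delta}_2\in J^3$, and then repeat the two steps at $k$ and $y$; the paper compresses the second iteration into ``part (iii) follows similarly'' and (somewhat confusingly) cites Proposition~5.6(ii) where Proposition~4.8(ii) is clearly meant. Your explicit bookkeeping of the role relabelings and the verification that the uniqueness hypotheses of Lemma~\ref{lem:4.9} hold at each stage is exactly the content the paper leaves implicit.
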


\begin{proof} 
With the assumption on the relations from vertex $i$ we get from 
Proposition~\ref{prop:5.6} part (ii) that 
$\gamma_1\delta + \gamma\delta^* \in J^3$ is a type $C$ relation, and
also that $\beta_1\delta^*\in J^3$. We apply Lemma~\ref{lem:4.9} with relations
from $k$. 
It shows that since $\beta_1\delta_2$ is in $J^3$, 
it follows that $\gamma\bar{\delta}_2 \in J^3$. 
This proves (ii), and part (iii) follows similarly. 
[Note that we cannot have double arrows $i\to j$ with the assumption.]
\end{proof}

We describe this situation by the following  covering of the quiver.
We draw an arc to indicate that the product of the arrows is in $J^3$. 
Each square describes a type C relation.
\[
\xymatrix@!=.125pc@C=1.5pc@R=1.5pc{
	&&&&&& \ar[rd] && \ar[ld] \\
	&&& i \ar[lldd]_{\alpha} \ar[rrdd]^{\bar{\alpha}}
	&&&& z \ar[lldd]^{\mu} \ar[rd]\\
	\ar[rd] &&&& \ar@{-}@/_4ex/[rrdd] &&&& \\
	& j \ar[ld] \ar[rrdd]_{\beta}
	&&&& k \ar[lldd]^{\gamma} \ar[rrdd]^{\gamma_1}\\    	
	&&&& \ar@{-}@/_4ex/[lldd] &&&& \ar[ld] \\
	&&& y \ar[lldd]^{\overbar{\delta^*}} \ar[rrdd]^{\delta^*}
	&&&& x \ar[lldd]^{\delta} \ar[rd]\\
	\ar[rd] &&&& \ar@{-}@/_4ex/[rrdd] &&&& \\
	& z \ar[ld] \ar[rrdd]_{\mu}
	&&&& i \ar[lldd]^{\bar{\alpha}} \ar[rrdd]^{\alpha}\\    	
	&&&&&& \\
	&&& k &&&& j 
}
\]

\section{Algebras with the Markov quiver}\label{sec:markov}

The following quiver $Q^M$
\[
  \xymatrix@R=3.pc@C=1.8pc{
    1
    \ar@<.45ex>[rr]^{\alpha}
    \ar@<-.45ex>[rr]_{\sigma}
    && 2
    \ar@<.45ex>[ld]^{\gamma}
    \ar@<-.45ex>[ld]_{\beta}
    \\
    & 3
    \ar@<.45ex>[lu]^{\delta}
    \ar@<-.45ex>[lu]_{\varrho}
  }
\]
is said to be the \emph{Markov quiver}
(see \cite{L}, \cite{M}
for justification of this name).
The quiver $Q^M$ has a canonical extension
to a triangulation quiver $(Q^M,f)$
for the permutation $f$ of arrows of $Q^M$
having the $f$-orbits $(\alpha \, \gamma \, \delta)$
and $(\sigma \, \beta \, \varrho)$.
Then the associated permutation $g$ of arrows
of $Q^M$ has only one $g$-orbit
$(\alpha \, \beta \, \delta \, \sigma \, \gamma \, \varrho)$.
Hence a weight function
$m_{\bullet} : \cO(g) \to \bN^*$
and a parameter function
$c_{\bullet} : \cO(g) \to K^*$
are given by a positive integer $m$ and
a parameter $c \in K^*$.
The associated weighted triangulation algebra
$\Lambda(Q^M,f,m_{\bullet},c_{\bullet})$
is given by the Markov quiver $Q^M$ and the
relations
\begin{align*}
 \alpha\gamma &= c (\sigma\gamma\varrho\alpha\beta\delta)^{m-1} \sigma\gamma\varrho\alpha\beta ,
 &
 \sigma\beta &= c (\alpha\beta\delta\sigma\gamma\varrho)^{m-1} \alpha\beta\delta\sigma\gamma,
 &
 \alpha\gamma\varrho &= 0,
 &
 \sigma\beta\delta &= 0,
\\
 \gamma\delta &= c (\beta\delta\sigma\gamma\varrho\alpha)^{m-1} \beta\delta\sigma\gamma\varrho ,
 &
 \beta\varrho &= c(\gamma\varrho\alpha\beta\delta\sigma)^{m-1} \gamma\varrho\alpha\beta\delta,
 &
 \gamma\delta\sigma &= 0,
 &
 \beta\varrho\alpha &= 0,
\\
 \delta\alpha &= c (\varrho\alpha\beta\delta\sigma\gamma)^{m-1} \varrho\alpha\beta\delta\sigma,
 &
 \varrho\sigma &= c (\delta\sigma\gamma\varrho\alpha\beta)^{m-1} \delta\sigma\gamma\varrho\alpha,
 &
 \delta\alpha\beta &= 0,
 &
 \varrho\sigma\gamma &= 0.
\end{align*}
We also note that the Markov triangulation quiver $(Q^M,f)$
is a triangulation quiver $(Q(\bS,\vec{T}),f)$
associated to the triangulation $T$ of the sphere $\bS$
in $\bR^3$ given by two unfolded triangles,
having a coherent orientation $\vec{T}$
(see \cite[Example~4.4]{ES4}).

The aim of this section is to prove the following theorem.

\begin{theorem}
\label{th:5.1}
Let $A$ be an algebra of generalized quaternion
type whose quiver is the Markov quiver $Q^M$.
Then $A$ is isomorphic to a weighted triangulation algebra
$\Lambda(Q^M,f,m_{\bullet},c_{\bullet})$.
\end{theorem}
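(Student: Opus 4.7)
The plan is to determine the shape of the minimal relations of $A$ at each vertex by applying Proposition~\ref{prop:4.3}, and then to normalize the arrows by automorphisms of $KQ^M$ fixing the complete set of primitive idempotents (scalings of arrows and changes of basis among parallel arrows) until the relations match the canonical form in the definition of $\Lambda(Q^M,f,m_\bullet,c_\bullet)$. Note that Proposition~\ref{prop:4.8} explicitly excludes the Markov quiver, so the propagation of relations must be carried out by hand here, using only the local data from Proposition~\ref{prop:4.3} and Corollary~\ref{cor:4.6}.

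First I would analyze vertex $1$. Since both arrows $\delta,\varrho$ ending at $1$ start from vertex $3$, Proposition~\ref{prop:4.3} provides elements $\varphi_1,\varphi_2,\psi_1,\psi_2 \in e_2 A e_3$ and two independent minimal relations from $1$ to $3$ giving $\alpha\varphi_1+\sigma\varphi_2=0$ and $\alpha\psi_1+\sigma\psi_2=0$. By Corollary~\ref{cor:4.6}, $\dim_K e_1 J^2/e_1 J^3 = 2$, so the four paths $\alpha\beta,\alpha\gamma,\sigma\beta,\sigma\gamma$ span a $2$-dimensional subspace of $e_1 J^2/e_1 J^3$. After replacing $\beta,\gamma$ by suitable $K$-linear combinations and then rescaling $\alpha,\sigma$ (an automorphism of $KQ^M$ fixing vertices), I expect to achieve the normalization $\alpha\gamma,\sigma\beta \in J^3$ with $\alpha\beta$ and $\sigma\gamma$ linearly independent modulo $J^3$, matching the relations of $\Lambda(Q^M,f,m_\bullet,c_\bullet)$ modulo $J^3$.

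Next I would propagate this normalization to vertices $2$ and $3$ by applying Proposition~\ref{prop:4.3} at those vertices and using the dual version to recover the zero relations $\alpha\gamma\varrho=0$, $\sigma\beta\delta=0$ and their analogues. The cyclic symmetry of $Q^M$ and the fact that there is a single $g$-orbit $(\alpha\,\beta\,\delta\,\sigma\,\gamma\,\varrho)$ should make the normalizations at vertices $2$ and $3$ compatible with the one made at vertex $1$ up to rescalings of the remaining arrows. In particular the two commutativity relations at each vertex then take the form
\[
\alpha\gamma = c_1\,(\sigma\gamma\varrho\alpha\beta\delta)^{m_1-1}\sigma\gamma\varrho\alpha\beta,\qquad \sigma\beta = c_2\,(\alpha\beta\delta\sigma\gamma\varrho)^{m_2-1}\alpha\beta\delta\sigma\gamma,
\]
and similarly at vertices $2,3$; the exponent is forced by the requirement that each commutativity path have length $6m-1$ and lie in $\soc P_{s(-)} \setminus \{0\}$, which also determines the weight $m$.

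The main obstacle will be showing that all six scalars produced by the commutativity relations (two at each vertex) can be equalized to a single parameter $c$, and that a common weight $m$ arises. For this I would use that $A$ is symmetric, exploit the associated non-degenerate symmetric form to identify $\soc P_i$ with $S_i$ in each case, and compare the socle elements obtained by traversing the unique $g$-orbit in two different starting positions. A further delicate point, specific to the Markov quiver, is that the relations involve \emph{double} arrows at every vertex, so the change-of-basis freedom in the pair $(\beta,\gamma)$ (and analogous pairs at the other vertices) must be used carefully to avoid destroying the normalization already achieved upstream; this essentially reduces to a finite calculation with $2\times 2$ invertible matrices, constrained by the identifications forced by each application of Proposition~\ref{prop:4.3}. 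Once these compatibilities are established, the resulting presentation of $A$ is precisely that of $\Lambda(Q^M,f,m_\bullet,c_\bullet)$.
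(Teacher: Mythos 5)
Your overall strategy (normalize at vertex $1$ so that $\alpha\gamma,\sigma\beta\in J^3$, propagate via Proposition~\ref{prop:4.3}, then equalize scalars using symmetry) is the same skeleton as the paper's proof, but the proposal leaves out the step that actually carries the argument. You assert that the exponent and the form of the commutativity relations are ``forced by the requirement that each commutativity path have length $6m-1$ and lie in $\soc P_{s(-)}\setminus\{0\}$''. Nothing in Proposition~\ref{prop:4.3} or Corollary~\ref{cor:4.6} forces the normalized products $\alpha\gamma$ and $\sigma\beta$ to be non-zero, let alone to lie in $\soc_2\setminus\soc$ and equal scalar multiples of the specific monomials of length $6m-1$ along the single $g$-orbit: a priori both could be zero (giving a special biserial quotient), or they could sit at other Loewy depths. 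This is precisely the degenerate behaviour the target algebras exclude, and ruling it out is where all the work lies.

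The paper closes this gap in three steps that your proposal would need to reproduce. First, if $\alpha\gamma=0=\sigma\beta$ then $\Omega_A^2(S_1)$ is generated by $(\gamma,0)$ and $(0,\beta)$ and hence decomposes as $\gamma A\oplus\beta A$, contradicting indecomposability of $\Omega_A^2(S_1)$ (which follows from $S_1$ being periodic); so at least one product is non-zero. Second, one must determine the structure of the local algebra $R=e_1Ae_1$: Lemma~\ref{lem:5.4} shows $R$ is spanned by alternating words in $X=\alpha\beta\delta$ and $Y=\sigma\gamma\varrho$ with socle generated by $(XY)^m=(YX)^m$ --- and the exclusion of the alternative basis consisting of powers of $X$ and powers of $Y$ separately again uses indecomposability of $\Omega_A^2(S_1)$. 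Only this gives meaning to the weight $m$ and lets one expand $\alpha\gamma$, $\sigma\beta$ in a controlled normal form. Third, the generators $\alpha_1,\alpha_2$ of the map $P_1\to\Omega_A^3(S_1)$ need not coincide with $\alpha,\sigma$ modulo $J^2$; Lemma~\ref{lem:5.5} invokes the classification of local tame symmetric algebras (\cite[Theorem~III.1]{E4}) to show the transition matrix is triangular with $r=v=0$ when $m\geq 2$, while for $m=1$ the extra possibility $v=1$ survives and forces the separate argument of Proposition~\ref{prop:5.6}. Your ``finite calculation with $2\times 2$ invertible matrices'' gestures at this last point, but without the first two steps there are no constraints for that calculation to exploit, and the case split between $m=1$ and $m\geq 2$ (which the paper handles by quite different arguments) is invisible in the proposal.
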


We split the proof of the theorem into several steps.

We start with the following general lemma.

\begin{lemma}
\label{lem:5.2}
Let $A$ be a $2$-regular algebra of generalized quaternion
type whose quiver $Q$ has double arrows
\[
 \xymatrix@C=3pc{
   i \ar@<-.5ex>[r]_{\bar{\alpha}} \ar@<+.5ex>[r]^{\alpha} &
   j \ar@<-.5ex>[r]_{\beta} \ar@<+.5ex>[r]^{\gamma} &
   k \\
 } .
\]
Then $A$ admits a bound quiver presentation
$A = K Q/I$ such that, up to labelling,  
\begin{enumerate}[(i)]
 \item
  $\alpha\beta + e_i J^3$ and $\bar{\alpha}\gamma + e_i J^3$
  form a basis of $e_i J^2 / e_i J^3$.
 \item
  $\alpha\gamma \in J^3$ and $\bar{\alpha}\beta \in J^3$.
\end{enumerate}
Moreover, $Q$ is isomorphic to the Markov quiver $Q^M$.
\end{lemma}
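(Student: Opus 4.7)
Part (i) is immediate from Corollary~\ref{cor:4.6}(i) applied at the vertex $i$: that corollary produces two (necessarily distinct) arrows from $j$---which up to swapping their labels we call $\beta$ and $\gamma$---such that $\alpha\beta + e_iJ^3$ and $\bar{\alpha}\gamma + e_iJ^3$ form a basis of the two-dimensional space $V := e_iJ^2/e_iJ^3$. Once this is secured, the remaining two length-two products from $i$ can be expanded as
\[
 \alpha\gamma \equiv c_1\alpha\beta + c_2\bar{\alpha}\gamma \pmod{J^3},
 \qquad
 \bar{\alpha}\beta \equiv d_1\alpha\beta + d_2\bar{\alpha}\gamma \pmod{J^3},
\]
for some scalars $c_1,c_2,d_1,d_2 \in K$, and the remaining content of (ii) is to choose a new bound quiver presentation of $A$ (with the same underlying quiver $Q$) in which both right-hand sides vanish.

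To achieve this, I would replace $\alpha,\bar{\alpha},\beta,\gamma$ by invertible linear combinations modulo $J^2$---say $\alpha' = \alpha + p\bar{\alpha}$, $\bar{\alpha}' = \bar{\alpha} + q\alpha$, $\beta' = \beta + r\gamma$, $\gamma' = \gamma + s\beta$---and expand $\alpha'\gamma'$ and $\bar{\alpha}'\beta'$ in the basis $\{\alpha\beta,\bar{\alpha}\gamma\}$ using the two congruences above. Setting the resulting coefficients to zero produces two decoupled systems, one in $(p,s)$ and one in $(q,r)$, each of which reduces to a single quadratic equation over $K$. Since $K$ is algebraically closed, a root always exists, and the substitution is legitimate provided the chosen root keeps the transition matrix nonsingular.

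The delicate point, and what I expect to be the main obstacle, is to rule out the few degenerate value-constellations of $(c_1,c_2,d_1,d_2)$ where the only roots of the quadratic are the forbidden ones. I plan to eliminate each such configuration by applying Corollary~\ref{cor:4.6} to a different, explicitly constructed bound quiver presentation of $A$. As a typical example, in the case $c_1 = 0,\ c_2 d_1 = 1$, the element $\xi := d_1\alpha - \bar{\alpha}$ is an independent new arrow, and a direct computation using the two congruences shows that $\xi\beta$ and $\xi\gamma$ both lie in the one-dimensional subspace of $V$ spanned by $\bar{\alpha}\gamma$; consequently no pair of the form $(\alpha\beta_\ast,\xi\gamma_\ast)$ with $\beta_\ast\ne\gamma_\ast$ can span $V$, contradicting Corollary~\ref{cor:4.6}(i) applied to the $(\alpha,\xi,\beta,\gamma)$-presentation. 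Analogous explicit substitutions dispose of the remaining degenerate configurations and of the twin system controlling $(\bar{\alpha}',\beta')$, which completes (ii).

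For the concluding assertion $Q \cong Q^M$: by Corollary~\ref{cor:4.4}(i) there are exactly two independent minimal relations starting at $i$, targeting the sources $x,y$ of the two arrows ending at $i$, and by the dimension count of Corollary~\ref{cor:4.6} each of them must carry non-trivial length-two content (otherwise the four length-two paths $\alpha\beta,\alpha\gamma,\bar{\alpha}\beta,\bar{\alpha}\gamma$ would satisfy only one independent linear relation modulo $J^3$, leaving $V$ three-dimensional). But every length-two path from $i$ passes through $j$ and ends at $k$, so $x = y = k$, forcing double arrows $k \rightrightarrows i$. The resulting subquiver $i \rightrightarrows j \rightrightarrows k \rightrightarrows i$ already saturates the $2$-regularity at each of its vertices, so it equals all of $Q$; this is precisely the Markov quiver $Q^M$.
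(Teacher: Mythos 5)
Your outline for part (i) and for the final Markov-quiver step matches the paper, but the change-of-representatives argument for part (ii) has a genuine gap: the degenerate configurations of $(c_1,c_2,d_1,d_2)$ cannot all be eliminated by re-applying Corollary~\ref{cor:4.6}, because there is a configuration consistent with Corollaries~\ref{cor:4.4} and \ref{cor:4.6} for which the conclusion of the lemma is simply false as a statement of linear algebra. Concretely, suppose
\[
\alpha\beta \equiv v_1,\qquad \alpha\gamma \equiv v_2,\qquad \bar{\alpha}\beta \equiv v_2,\qquad \bar{\alpha}\gamma \equiv 0 \pmod{e_iJ^3},
\]
with $v_1,v_2$ a basis of $e_iJ^2/e_iJ^3$. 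Corollary~\ref{cor:4.6}(i) holds here (take the representatives $\beta$ and $\gamma+\beta$), and with that labelling one finds $c_1=c_2=1$, $d_1=0$, $d_2=1$, for which your quadratic degenerates to $1=0$. This is not an accident of the substitution: writing $u=u_1\alpha+u_2\bar{\alpha}$ and $w=w_1\beta+w_2\gamma$, one has $uw\equiv u_1w_1\,v_1+(u_1w_2+u_2w_1)\,v_2$, so $uw\in J^3$ with $u,w\neq 0$ forces $u\in\langle\bar{\alpha}\rangle$ and $w\in\langle\gamma\rangle$; hence no two zero pairs $(u_1,w_2)$, $(u_2,w_1)$ can have $u_1,u_2$ independent, and no choice of representatives achieves (i) and (ii) simultaneously.

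What excludes this configuration is not Corollary~\ref{cor:4.6} but tameness, and this is the input your plan never uses. The paper's proof shows that each of the two left-multiplication maps $e_jJ/e_jJ^2\to e_iJ^2/e_iJ^3$ induced by $\alpha$ and $\bar{\alpha}$ has rank exactly one: non-vanishing is forced by the basis of $e_iJ^2/e_iJ^3$, while non-bijectivity (i.e., the impossibility of $\alpha\beta,\alpha\gamma$ being independent modulo $J^3$) comes from the wildness argument in the proof of Proposition~\ref{prop:4.2}, where the quotient algebra keeping only these two paths admits a Galois covering containing a wild subcategory of type $\tilde{\tilde{E}}_7$. Once both maps have rank one (and their kernels are distinct), one simply picks $\gamma$ in the kernel of $\alpha(-)$ and $\beta$ in the kernel of $\bar{\alpha}(-)$, which yields (i) and (ii) at once with no case analysis and no quadratics. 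To repair your argument you would need to import exactly this rank-one statement, at which point the explicit substitutions become unnecessary.
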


\begin{proof}
Let $U = e_i J / e_i J^2$,
$V = e_i J^2 / e_i J^3$,
and
$W = e_j J / e_j J^2$.
Since the quiver $Q$ is $2$-regular,
we have $\dim_K U = 2$ and $\dim_K W = 2$.
Moreover, it follows from
Corollary~\ref{cor:4.6}
that $\dim_K V = 2$.
We choose representatives of $\alpha$ and $\bar{\alpha}$
in $A$ such that $\alpha+J^2$ and $\bar{\alpha}+J^2$
give a basis of $U$.
Consider the $K$-linear maps
$\alpha(-), \bar{\alpha}(-) : W \to V$
such that
$\alpha(\omega + e_j J^2) = \alpha \omega + e_i J^3$
and
$\bar{\alpha}(\omega + e_j J^2) = \bar{\alpha} \omega + e_i J^3$
for any $\omega \in e_j J$.
It follows from
Corollary~\ref{cor:4.6}(i)
that we may choose representatives of $\beta$ and $\gamma$
such that $\beta + e_jJ^2$ and $\gamma + e_j J^2$
form a basis of $W$, and $\alpha\beta + e_i J^3$
and $\bar{\alpha}\gamma + e_i J^3$
form a basis of $V$.
In particular, $\alpha(-)$ and $\bar{\alpha}(-)$
are non-zero homomorphisms from $W$ to $V$.
We also note that $V$ is the sum of the images of
$\alpha(-)$ and $\bar{\alpha}(-)$.
We claim that none of
$\alpha(-)$ and $\bar{\alpha}(-)$
is an isomorphism.
Indeed, suppose that $\alpha(-)$ is an isomorphism.
Then we conclude that
$\alpha\beta + e_i J^3$ and $\alpha\gamma + e_i J^3$
form a basis of $V$, which contradicts the first part of the proof of 
Proposition~\ref{prop:4.2}.
Similarly, if $\bar{\alpha}(-)$ is an isomorphism,
$\bar{\alpha}\beta + e_i J^3$ and $\bar{\alpha}\gamma + e_i J^3$
form a basis of $V$, which is again a contradiction to
Proposition~\ref{prop:4.2}.
Therefore, the both
$\alpha(-)$ and $\bar{\alpha}(-)$
have rank one.
Taking now representatives of $\beta$ and $\gamma$
such that
$\beta + e_jJ^2 \in \Ker \bar{\alpha}(-)$
and
$\gamma + e_j J^2 \in \Ker {\alpha}(-)$,
we conclude that
$\{\beta + e_jJ^2,\gamma + e_j J^2\}$
forms a basis of $W$,
$\{\alpha\beta + e_i J^3, \bar{\alpha}\gamma + e_i J^3\}$
forms a basis of $V$, and
$\alpha\gamma \in J^3$, $\bar{\alpha}\beta \in J^3$.
Hence $A$ admits a bound quiver presentation $A = K Q/I$
satisfying (i) and (ii).

Let $\delta, \delta^*$ be the arrows in $Q$ ending at $i$,
and $x = s(\delta)$, $y = s(\delta^*)$.
Then it follows from
Proposition~\ref{prop:4.3}
and
Corollary~\ref{cor:4.4}
that
$\alpha\gamma \in J^3$ and $\bar{\alpha}\beta \in J^3$
are consequences of two independent minimal relations in $I$,
one from $i$ to $x$ and the other from $i$ to $y$.
Hence we obtain $x = k = y$, and consequently $Q$
is isomorphic to the Markov quiver $Q^M$.
\end{proof}

From now on we assume that
$A = K Q / I$ is an algebra of generalized quaternion type
with $Q = Q^M$, and such that
$\alpha\beta + e_1 J^3$ and $\bar{\alpha}\gamma + e_1 J^3$
form a basis of
$e_1 J^2 / e_1 J^3$,
and $\alpha\gamma \in J^3$, $\bar{\alpha}\beta \in J^3$.
We will take  $\sigma: = \bar{\alpha}$.

We may write
\begin{align*}
  \alpha\gamma &= \alpha \bar{p} + \bar{\alpha} \bar{q} ,
 &
  \bar{\alpha}\beta &= \alpha p + \bar{\alpha} q ,
\end{align*}
for $p, q, \bar{p}, \bar{q} \in e_2 J^2 e_3 =  e_2 J^4 e_3$.
Replacing $\gamma$ by $\gamma - \bar{p}$
and $\beta$ by $\beta - q$,
we may assume that
\begin{align}
  \label{eq:1}
  \alpha\gamma &= \bar{\alpha} \bar{q} ,
 &
  \bar{\alpha}\beta &= \alpha p .
\end{align}
Since
$\dim_K \alpha J / \alpha J^2 = 1$
and
$\dim_K \bar{\alpha} J / \bar{\alpha} J^2 = 1$,
we have
$\alpha J = \alpha \beta J$
and
$\bar{\alpha} J = \bar{\alpha} \gamma J$.
Hence, we may assume that $p = \beta u$ and
$\bar{q} = \gamma v$ for some $u, v \in e_3 J^3 e_3$.
Then there is an exact sequence in $\mod A$
\[
  0 \to
  S_1 \to
  P_1 \xrightarrow{d_3}
  P_3 \oplus P_3 \xrightarrow{d_2}
  P_2 \oplus P_2 \xrightarrow{d_1}
  P_1 \xrightarrow{d_0}
  S_1 \to
  0 ,
\]
with $d_1(a,b) = \alpha a + \bar{\alpha} b$
for any $(a,b) \in P_2 \oplus P_2$,
which give rise to a minimal projective resolution
of $S_1$ in $\mod A$.
Consider the elements
$\varphi_1 = \gamma$,
$\varphi_2 = -\bar{q}$,
$\psi_1 = -p$,
$\psi_2 = \beta$
in $e_2 J e_3$.
Then we have the equalities
\begin{align*}
   \alpha \varphi_1 + \bar{\alpha} \varphi_2 &= 0 ,
 &
   \alpha \psi_1 + \bar{\alpha} \psi_2 &= 0 ,
\end{align*}
and
$\varphi = (\varphi_1,\varphi_2)$,
$\psi = (\psi_1,\psi_2)$
are generators of $\Omega_A^2(S_1) = \Ker d_1$.
We claim that
{\it at least one of $\alpha\gamma$ and $\ba\beta$ is non-zero:}
Otherwise $\varphi = (\gamma, 0)$ and $\psi = (0, \beta)$ gives a 
direct sum decomposition of $\Omega^2_A(S_1)$.
[We will see later that $\alpha\gamma$ and $\ba\beta$ are both non-zero.]
Further, applying
Proposition~\ref{prop:4.3},
we conclude that there exist elements
$\delta_1, \delta_2 \in e_3 J e_1 \setminus e_3 J^2 e_1$
such that
$\delta_1 + e_3 J^2 e_1$ and $\delta_2 + e_3 J^2 e_1$
give a basis of $e_3 J e_1 / e_3 J^2 e_1$, and
\begin{align*}
   \varphi_1 \delta_1 + \psi_1 \delta_2 &= 0 ,
 &
   \varphi_2 \delta_1 + \psi_2 \delta_2 &= 0 .
\end{align*}
Hence, we obtain the equalities
\begin{align}
  \label{eq:2}
  \gamma \delta_1 &= p \delta_2 ,
 &
  \beta \delta_2 &= \bar{q} \delta_1 .
\end{align}
We note that
$p \delta_2 = \beta u \delta_2 = \beta q_{\beta}$,
$\bar{q} \delta_1 = \gamma v \delta_1 = \gamma q_{\gamma}$,
and hence
\begin{align}
  \tag{2$^*$}
  \label{eq:2*}
  \gamma \delta_1 &= \beta q_{\beta} ,
 &
  \beta \delta_2 &= \gamma q_{\gamma} .
\end{align}
for some elements $q_{\beta}, q_{\gamma} \in e_3 J^4 e_1$.

There is an exact sequence in $\mod A$
\[
  0 \to
  S_2 \to
  P_2 \xrightarrow{\partial_3}
  P_1 \oplus P_1 \xrightarrow{\partial_2}
  P_3 \oplus P_3 \xrightarrow{\partial_1}
  P_2 \xrightarrow{\partial_0}
  S_2 \to
  0 ,
\]
with $\partial_1(a,b) = \beta a + \gamma b$
for any $(a,b) \in P_3 \oplus P_3$,
which give rise to a minimal projective resolution
of $S_2$ in $\mod A$.
We set
$\varphi'_1 = - q_{\beta}$,
$\varphi'_2 = \delta_1$,
$\psi'_1 = \delta_2$,
$\psi'_2 = - q_{\gamma}$.
Then we have the equalities
\begin{align*}
   \beta \varphi'_1 + \gamma \varphi'_2 &= 0 ,
 &
   \beta \psi'_1 + \gamma \psi'_2 &= 0 ,
\end{align*}
and
$\varphi' = (\varphi'_1,\varphi'_2)$,
$\psi' = (\psi'_1,\psi'_2)$
generate $\Omega_A^2(S_2) = \Ker \partial_1$.
Applying
Proposition~\ref{prop:4.3}
again,
we conclude that there exist elements
$\alpha_1, \alpha_2 \in e_1 J e_2 \setminus e_1 J^2 e_2$
such that
$\alpha_1 + e_1 J^2 e_2$ and $\alpha_2 + e_1 J^2 e_2$
give a basis of $e_1 J e_2 / e_1 J^2 e_2$, and
\begin{align*}
   \varphi'_1 \alpha_1 + \psi'_1 \alpha_2 &= 0 ,
 &
   \varphi'_2 \alpha_1 + \psi'_2 \alpha_2 &= 0 .
\end{align*}
Then we obtain the equalities
\begin{align}
  \label{eq:3}
  \delta_2 \alpha_2 &= q_{\beta} \alpha_1 ,
 &
  \delta_1 \alpha_1 &= q_{\gamma} \alpha_2 .
\end{align}
Further, since
$\{\alpha + e_1 J^2,\bar{\alpha} + e_1 J^2\}$
and
$\{\alpha_1 + e_1 J^2,\alpha_2 + e_1 J^2\}$
are two bases of $e_1 J / e_1 J^2$,
there exist scalars $u,v,r,s \in K$ with
$u s - r v \neq 0$ such that
\begin{align}
  \label{eq:4}
  \alpha_1 + e_1 J^2 &= (u \alpha + v \bar{\alpha}) + e_1 J^2 ,
 &
  \alpha_2 + e_1 J^2 &= (r \alpha + s \bar{\alpha}) + e_1 J^2 .
\end{align}
We take now a bound quiver presentation
$A = K Q / I'$ of $A$ such that
$\delta: = \delta_1$ and $\varrho: = \delta_2$.
Then we define
\begin{align*}
  f(\alpha) &= \gamma, &
  f(\gamma) &= \delta, &
  f(\sigma) &= \beta, &
  f(\beta) &= \varrho, \\
  g(\alpha) &= \beta, &
  g(\gamma) &= \varrho, &
  g(\sigma) &= \gamma, &
  g(\beta) &= \delta.
\end{align*}

We note the following consequence of the above equalities.

\begin{corollary}
\label{cor:5.3}
We have
$\alpha f(\alpha) f^2(\alpha) = \bar{\alpha} f(\bar{\alpha}) f^2(\bar{\alpha})$.
\end{corollary}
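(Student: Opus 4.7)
The plan is to unfold the definitions of $f$ on $\alpha$ and $\bar\alpha$ to translate the claim into a concrete equality in the bound quiver algebra $A$, and then derive it by chaining together the previously established equations (1) and (2).

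First, I would note that from the definition of $f$ we have $f(\alpha) = \gamma$, $f^2(\alpha) = f(\gamma) = \delta = \delta_1$, while $f(\bar\alpha) = f(\sigma) = \beta$ and $f^2(\bar\alpha) = f(\beta) = \varrho = \delta_2$. Thus the asserted identity is exactly
\[
  \alpha\,\gamma\,\delta_1 \;=\; \bar\alpha\,\beta\,\delta_2
\]
inside $A$.

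Next, using associativity in $A$ and the two equations collected in \eqref{eq:2}, namely $\gamma \delta_1 = p\delta_2$, I would compute
\[
  \alpha\,\gamma\,\delta_1 \;=\; \alpha\,(\gamma\,\delta_1) \;=\; \alpha\,(p\,\delta_2) \;=\; (\alpha\,p)\,\delta_2 .
\]
Applying now the second equation in \eqref{eq:1}, $\alpha p = \bar\alpha \beta$, gives
\[
  (\alpha\,p)\,\delta_2 \;=\; (\bar\alpha\,\beta)\,\delta_2 \;=\; \bar\alpha\,\beta\,\delta_2 ,
\]
which is exactly the required equality. So the whole proof is just a two-line chain of substitutions between relations \eqref{eq:1} and \eqref{eq:2}; no obstacle arises, as all the real work has already been done in producing the generators $\varphi,\psi$ of $\Omega_A^2(S_1)$ and the elements $\delta_1,\delta_2$ generating $\Omega_A^3(S_1)$ via Proposition~\ref{prop:4.3}. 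The only thing one might want to double-check is the consistency of the labelling, i.e.\ that the $\delta_1,\delta_2$ appearing in \eqref{eq:2} are the same as the arrows $\delta,\varrho$ chosen in the bound quiver presentation $A = KQ/I'$, which is true by construction.
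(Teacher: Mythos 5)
Your proposal is correct and is essentially identical to the paper's proof: both reduce the claim to $\alpha\gamma\delta_1 = \bar{\alpha}\beta\delta_2$ and verify it by the chain $\alpha\gamma\delta_1 = \alpha p\delta_2 = \bar{\alpha}\beta\delta_2$, using the first relation of \eqref{eq:2} and the second relation of \eqref{eq:1}. Your remark about the consistency of the labelling $\delta = \delta_1$, $\varrho = \delta_2$ matches the convention fixed in the paper just before the definition of $f$.
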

\begin{proof}
The following equalities hold
\[
  \alpha f(\alpha) f^2(\alpha)
   = \alpha \gamma \delta_1
   = \alpha p \delta_2
   = \bar{\alpha} \beta \delta_2
   = \bar{\alpha} f(\bar{\alpha}) f^2(\bar{\alpha})
   .
\]
\end{proof}

We investigate now the structure of the local tame symmetric
algebra $R = e_1 A e_1$.
We note that $R$ has two generators
\[
  X =\alpha\beta\delta = \alpha g(\alpha) g^2(\alpha)
     \quad
     \mbox{and}
     \quad
  Y =\bar{\alpha}\gamma\varrho = \bar{\alpha} g(\bar{\alpha}) g^2(\bar{\alpha})
  .
\]
We denote by $J_R$ the radical of $R$.

\begin{lemma}
\label{lem:5.4}
\begin{enumerate}[(i)]
 \item
  We have
  $u X^2 + v X Y \in J_R^3$
  and
  $r Y X + s Y^2 \in J_R^3$.
 \item
  There is a positive integer $m$ such that
  the socle of $R$ is generated by $(X Y)^m = (Y X)^m$,
  and $R$ has a basis consisting of $e_1$ and initial
  subwords of $(X Y)^m$, $(Y X)^m$.
\end{enumerate}
\end{lemma}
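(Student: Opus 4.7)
For part~(i), the plan is to substitute the expansions from~\eqref{eq:4} into the two relations in~\eqref{eq:3}, and to multiply each resulting equation on the left by $\alpha\beta$ and on the right by either $\beta\delta_1$ or $\gamma\delta_2$. Two combinatorial observations then make the computation collapse correctly. First, every directed cycle through vertex~$1$ in the Markov quiver has length divisible by~$3$, so $e_1 J^{3k} e_1 = J_R^k$ for every $k \geq 0$. Second, the identities $\bar{\alpha}\beta = \alpha p$ and $\alpha\gamma = \bar{\alpha}\bar{q}$ from~\eqref{eq:1}, with $p, \bar{q} \in e_2 J^4 e_3$, force $\bar{\alpha}\beta\delta_1$ and $\alpha\gamma\delta_2$ into $e_1 J^6 e_1 = J_R^2$; after left-multiplication by $X$, the terms they contribute absorb into $J_R^3$. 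The right-hand sides also lie in $J_R^3$ because $\alpha\beta q_\gamma \in J_R^2$. This yields $X\alpha_1\beta\delta_1 \equiv uX^2$ and $X\alpha_1\gamma\delta_2 \equiv vXY$ modulo $J_R^3$, and summing gives $uX^2 + vXY \in J_R^3$. The relation $rYX + sY^2 \in J_R^3$ is proved identically, starting from $\delta_2\alpha_2 = q_\beta\alpha_1$ and the expansion $\alpha_2 = r\alpha + s\bar{\alpha} + w_2$.

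For part~(ii), my plan is to combine part~(i), Corollary~\ref{cor:5.3}, and the local symmetric structure of $R = e_1 A e_1$ (which is local symmetric, hence has a one-dimensional socle, as a corner of the symmetric algebra~$A$). Part~(i) together with $us - rv \neq 0$ gives $\dim J_R^2/J_R^3 \leq 2$, spanned by the two alternating monomials $XY$ and $YX$. Propagating these relations inductively by multiplying by further factors of $X$ and $Y$, I obtain $\dim J_R^k/J_R^{k+1} \leq 2$ for every $k \geq 1$, with spanning set given by the two alternating monomials of length~$k$. Consequently there is a maximal $m \geq 1$ with $J_R^{2m} \neq 0$, and both $(XY)^m$ and $(YX)^m$ lie in the one-dimensional $\soc(R)$, hence are proportional. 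The equality $(XY)^m = (YX)^m$ is then forced by Corollary~\ref{cor:5.3}, whose identity $\alpha\gamma\delta_1 = \bar{\alpha}\beta\delta_2$, combined with \eqref{eq:2*} and the iterated relations, identifies both top words with the common element $\alpha\gamma\delta_1 = \bar{\alpha}\beta\delta_2 \in \soc(R)$. The basis statement then follows by the dimension count $\dim R = 1 + 2(2m-1) + 1 = 4m$, matching the number of initial subwords of $(XY)^m$ and $(YX)^m$ together with~$e_1$.

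The main obstacle will be the inductive propagation in part~(ii): proving that $\dim J_R^k/J_R^{k+1} \leq 2$ holds uniformly in $k$, and that the two alternating monomials remain linearly independent for all $k < 2m$ (collapsing to one only at $k = 2m$), requires delicate iterated application of the relations from~(i). This is where the symmetric structure of~$R$, forcing the palindromic Hilbert series $(1, 2, 2, \ldots, 2, 1)$, must be combined with Corollary~\ref{cor:5.3} to pin down both the exact length $2m$ of the top and the equality $(XY)^m = (YX)^m$.
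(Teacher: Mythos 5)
Part (i) of your proposal is essentially the paper's own computation, just organized term by term instead of via the single product $(u\alpha+v\bar{\alpha})(\beta+\gamma)(f(\beta)+f(\gamma))$: the key points — that $X\alpha_1=\alpha\beta q_{\gamma}\alpha_2\in e_1J^7e_2$, that $e_1J^{3k}e_1=J_R^k$, and that the identities (\ref{eq:1}) push the cross terms $X\bar{\alpha}\beta\delta_1$ and $X\alpha\gamma\delta_2$ into $J_R^3$ — are exactly the ones the paper uses, and your argument is correct.

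Part (ii), however, has a genuine gap. You try to derive the alternating-word basis purely from part (i) by ``inductive propagation,'' but part (i) is simply not strong enough for this. The two relations $uX^2+vXY\in J_R^3$ and $rYX+sY^2\in J_R^3$, with only $us-rv\neq 0$ known, are perfectly consistent with $R$ having a basis consisting of $e_1$ and \emph{powers of $X$ together with powers of $Y$} (the local structure of dihedral type, e.g.\ $XY=YX=0$ with $X^n=Y^n$ spanning the socle): take $u=s=0$, $v=r=1$, and part (i) only asserts $XY,YX\in J_R^3$, which holds there. So no amount of multiplying the relations of (i) by further factors of $X$ and $Y$ can force $\dim J_R^k/J_R^{k+1}\leq 2$ with the alternating monomials spanning; in particular your claimed bound already fails to follow at $k=2$, where $X^2$ and $Y^2$ need not be expressible through $XY$ and $YX$. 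The paper closes this gap with two inputs that are absent from your proposal: (a) the classification of local symmetric \emph{tame} algebras \cite[Theorem~III.1]{E4} — this is where the tameness hypothesis on $A$ actually enters and where the trichotomy (powers of one generator / powers of two generators / alternating words) comes from; and (b) an argument ruling out the ``powers of $X$ and powers of $Y$'' alternative: if $e_1A$ had such a basis, one could expand $\alpha\gamma=\sum c_jY^j\bar{\alpha}\gamma$ and $\bar{\alpha}\beta=\sum d_iX^i\alpha\beta$ and replace $\alpha,\bar{\alpha}$ by $\alpha'=\alpha-\sum c_jY^j\bar{\alpha}$, $\bar{\alpha}'=\bar{\alpha}-\sum d_iX^i\alpha$, making $\alpha'\gamma=0=\bar{\alpha}'\beta$ and hence $\Omega_A^2(S_1)$ decomposable — a contradiction. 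The paper also needs the earlier observation that $\alpha\gamma$ and $\bar{\alpha}\beta$ are not both zero to get $J_R^2\neq 0$, so that the socle generator is $(XY)^m$ rather than a power of a single generator. Your appeal to the symmetric structure of $R$ and to Corollary~\ref{cor:5.3} cannot substitute for these: symmetry of $R$ does yield $(XY)^m=(YX)^m$ once the socle is known to be spanned by $(XY)^m$, but it does not by itself exclude the dihedral-type local structure, nor does Corollary~\ref{cor:5.3}.
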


\begin{proof}
(i)
We have the equalities
\begin{align*}
  (u \alpha + v \bar{\alpha})
  (\beta+\gamma)
  \big(f(\beta)+f(\gamma)\big)
   &=
  (u \alpha\beta + u \alpha\gamma + v \bar{\alpha}\beta + v \bar{\alpha}\gamma)
  (\delta_2+\delta_1)
  \\&= u X + v Y +w ,
\end{align*}
with $w \in e_1 J^6 e_1$,
because
$\alpha\beta\delta_2,
 \alpha\gamma\delta_2,
 \alpha\gamma\delta_1,
 \bar{\alpha}\beta\delta_2,
 \bar{\alpha}\beta\delta_1,
 \bar{\alpha}\gamma\delta_1 \in e_1 J^6 e_1$.
We note that
$X \alpha_1 = \alpha\beta\delta_1\alpha_1
 = \alpha\beta q_{\gamma}\alpha_2 \in e_1 J^7 e_2$,
because
$q_{\gamma} \in e_3 J^4 e_1$.
Moreover,
$(u\alpha + v\bar{\alpha}) - \alpha_1 \in e_1 J^2 e_2$.
Hence we obtain that
\[
  u X^2 + v X Y = X (u X + v Y)
  = X (u\alpha + v\bar{\alpha})
    (\beta+\gamma)
    \big(f(\beta)+f(\gamma)\big)
    - X w
\]
belongs to $e_1 J^9 e_1 = J_R^3$.
Similarly, one proves that
$r Y X + s Y^2$ belongs to $J_R^3$.

\smallskip

(ii)
Assume for a contradiction that $e_1 A$ has a
basis consisting of elements of the form
\begin{align*}
 &&
  X^i,
 &&
  Y^j,
 &&
  X^i \alpha,
 &&
  Y^j \bar{\alpha},
 &&
  X^i \alpha\beta,
 &&
  Y^j \bar{\alpha}\gamma.
 &&
\end{align*}
It follows from the identity (\ref{eq:1}) that
$\alpha\gamma = \bar{\alpha}\bar{q} \in e_1 J^5 e_3$.
We expand $\bar{\alpha}\bar{q}$ in terms of the above basis,
so we have
\[
  \alpha\gamma = \sum_{j \geq 1} c_j Y^j \bar{\alpha}\gamma
\]
for some $c_j \in K$ (possibly all $c_j=0$ in case $\alpha\gamma = 0$).
We note that the monomials of length $\geq 3$
starting with $\bar{\alpha}$ must start with $Y$.
Similarly, since
$\bar{\alpha}\beta = \alpha p \in e_1 J^5 e_3$,
there are $d_i \in K$ (which might be zero) such that
\[
  \bar{\alpha}\beta = \sum_{i \geq 1} d_i X^i \alpha\beta .
\]
Consider the elements in $e_1 J e_2\setminus e_1 J^2 e_2$
\begin{align*}
  \alpha' &= \alpha - \sum_{j \geq 1} c_j Y^j \bar{\alpha},
 &
  \bar{\alpha}' &= \bar{\alpha} - \sum_{i \geq 1} d_i X^i \alpha.
\end{align*}
Then
$\alpha' + e_1 J^2 e_2$,
$\bar{\alpha}' + e_1 J^2 e_2$
form a basis of $e_1 J e_2 / e_1 J^2 e_2$,
and
$\alpha' \gamma = 0$,
$\bar{\alpha}' \beta = 0$.
There exist an exact sequence in $\mod A$
\[
  0 \to
  S_1 \to
  P_1 \xrightarrow{d_3}
  P_3 \oplus P_3 \xrightarrow{d_2}
  P_2 \oplus P_2 \xrightarrow{d_1}
  P_1 \xrightarrow{d_0}
  S_1 \to
  0 ,
\]
with $d_1(a,b) = \alpha' a + \bar{\alpha}' b$
for $(a,b) \in P_2 \oplus P_2$,
which give rise to a minimal projective resolution
of $S_1$ in $\mod A$.
Then the elements
$\varphi = (\gamma,0)$
and
$\psi = (0,\beta)$
in $P_2 \oplus P_2$
generate $\Omega_A^2(S_1) = \Ker d_1$,
and hence
$\Omega_A^2(S_1) = \varphi A + \psi A = \varphi A \oplus \psi A$.
Since $\varphi A$ and $\psi A$
are non-zero,
we obtain a contradiction with the indecomposability
of $\Omega_A^2(S_1)$.
Therefore, the algebra $R$ cannot be spanned by powers of $X$
together with powers of $Y$,
and consequently is spanned by elements
of the form
$e_1, X, Y, X Y, Y X, \dots$

Finally, recall that $\alpha\gamma$ and $\ba \beta$ 
are not both zero; it 
follows that $e_1J^5\neq 0$ and therefore
$J_R^2 \neq 0$.
This shows that the socle of $R$ cannot be spanned by $X$ or $Y$.
Then the socle of $R$ is spanned by an element of the form
$(X Y)^m$ for some $m \geq 1$
(see \cite[Theorem~III.1]{E4}).
Moreover, then $(X Y)^m = (Y X)^m$
because $R$ is a local symmetric algebra.
\end{proof}

\begin{lemma}
\label{lem:5.5} Assume the notation as in \ref{eq:4}. 
\begin{enumerate}[(i)]
 \item If $m \geq 2$, then $r = 0$ and $v = 0$.
 \item
  If $m = 1$, then we may assume that $r = 0$
  and $v = 0$ or $v = 1$.
\end{enumerate}
\end{lemma}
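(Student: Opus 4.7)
The plan is to reduce modulo $J_R^3$, convert the two relations from Lemma~\ref{lem:5.4}(i) into linear constraints on $(u,v,r,s)$, and then, for part (i), probe one higher radical power to extract a decisive numerical identity.

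First, by Lemma~\ref{lem:5.4}(ii), for $m \geq 2$ the classes $\{XY + J_R^3,\, YX + J_R^3\}$ form a basis of $J_R^2/J_R^3$, while for $m = 1$ the socle coincidence $XY = YX$ makes this quotient one-dimensional. Writing $X^2 \equiv aXY + bYX$ and $Y^2 \equiv cXY + dYX$ modulo $J_R^3$ and substituting into the two relations of Lemma~\ref{lem:5.4}(i), I would read off $ua + v = 0$, $ub = 0$, $sc = 0$, $r + sd = 0$. The non-degeneracy $us - rv \neq 0$ rules out $u = 0$ and $s = 0$, and for $m \geq 2$ additionally forces $b = 0$ and $c = 0$; thus $X^2 \equiv -(v/u)XY$ and $Y^2 \equiv -(r/s)YX$ modulo $J_R^3$.

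The crucial step, and the main obstacle in part (i), is to extract a further identity from the multiplication in $J_R^3/J_R^4$. Computing $X^3$ two ways modulo $J_R^4$, one associates $X^3 = X \cdot X^2$ and iterates the congruences for $X^2$ and $Y^2$ to get $X^3 \equiv a \cdot X^2 Y \equiv a^2 \cdot XY^2 \equiv a^2 d \cdot XYX$; associating $X^3 = X^2 \cdot X$ instead gives $X^3 \equiv a \cdot XYX$. Since $XYX$ remains a basis element of $J_R^3/J_R^4$ for $m \geq 2$, equating yields $a(ad - 1) = 0$. The alternative $ad = 1$ rewrites as $vr = us$, contradicting $us - rv \neq 0$; hence $a = 0$, so $v = 0$. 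An entirely parallel computation with $Y^3$ forces $d = 0$ and hence $r = 0$.

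For part (ii) with $m = 1$, the relations collapse to the exact equalities $(ua + v)XY = 0$ and $(r + sd)XY = 0$, forcing $v = -ua$ and $r = -sd$ with $u, s \neq 0$. The freedom to rescale the generator $\partial_3(e_2) = (\alpha_1, \alpha_2)$ of the cyclic module $\Omega_A^3(S_2)$ by a unit of $\End(\Omega_A^3(S_2))$ rescales $(u,v,r,s)$ by a common nonzero scalar, and the additional freedom---coming from reselecting the generators of $\Omega_A^2(S_2)$---left-multiplies the matrix $\bigl(\begin{smallmatrix} u & v \\ r & s\end{smallmatrix}\bigr)$ by an element of $\GL_2(K)$. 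Using this, one may normalize so that $r = 0$, and then a final scaling sends the remaining entry $v$ to $0$ when $a = 0$ or to $1$ when $a \neq 0$, distinguishing the two cases in the statement.
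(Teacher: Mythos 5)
Your part (i) is correct, and it takes a genuinely different route from the paper. After reducing the two relations of Lemma~\ref{lem:5.4}(i) modulo $J_R^3$ you obtain $v=-ua$, $r=-sd$, $b=c=0$ with $u,s\neq 0$, exactly as the paper implicitly does; but where the paper first shows that at least one of $v,r$ vanishes by the excluded-basis (indecomposability of $\Omega_A^2(S_1)$) argument from the proof of Lemma~\ref{lem:5.4}(ii) and then invokes \cite[Lemma~III.6]{E4} to force $X^2\in J_R^3$, you extract the identity $a^2d=a$ (and dually $d^2a=d$) by computing $X^3$ in $J_R^3/J_R^4$ via the two associations $X\cdot X^2$ and $X^2\cdot X$, and rule out $ad=1$ using $us-rv\neq 0$. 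Since $XYX$ and $YXY$ are honest basis elements of $J_R^3/J_R^4$ precisely when $m\geq 2$, this is a clean, self-contained replacement for the appeal to the classification of local algebras in \cite{E4}, at the cost of working only when $m\geq 2$ --- which is all part (i) needs.

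Part (ii) has a genuine gap. For $m=1$ the relations force $v=-ua$ and $r=-sd$ where $X^2=aXY$ and $Y^2=dXY$; thus the lines spanned by $\alpha_1+e_1J^2e_2$ and $\alpha_2+e_1J^2e_2$ are completely determined by the structure constants $a$ and $d$ of the algebra, and ``$r=0$'' is equivalent to the assertion $Y^2=0$ (after the symmetric relabelling of $X$ and $Y$). This is a statement about $A$, not a normalization, so it cannot be achieved by re-choosing resolution data. Concretely, the $\GL_2(K)$ freedom you invoke does not exist: re-selecting the generators of $\Omega_A^2(S_2)$ means precomposing $\partial_2$ with an automorphism of $P_1\oplus P_1$, but the componentwise identities \eqref{eq:3} (which pair $\delta_1$ with $\alpha_1$ and $\delta_2$ with $\alpha_2$, and which are exactly what makes $X\alpha_1\in e_1J^7e_2$ in the proof of Lemma~\ref{lem:5.4}(i)) are destroyed by any off-diagonal mixing, so only the common-scalar freedom survives. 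Note also the internal inconsistency: if a full $\GL_2(K)$ left action on $\bigl(\begin{smallmatrix} u & v\\ r & s\end{smallmatrix}\bigr)$ were available, you could normalize it to the identity and part (i) would be vacuous, contrary to the effort you (correctly) expend there. What is actually needed for $m=1$ is the statement that $X^2$ and $Y^2$ cannot both be non-zero; this follows from Lemma~\ref{lem:5.4}(ii), since otherwise $XY$ is a non-zero multiple of $X^2$ and of $Y^2$ and $R$ (hence $e_1A$) acquires a basis of the excluded ``powers of $X$ and powers of $Y$'' form, contradicting the indecomposability of $\Omega_A^2(S_1)$. Only after that does the relabelling give $r=0$, and a rescaling of $\bar{\alpha}$ (propagated into $p$, $\bar{q}$ and $\alpha_2$) then normalizes a non-zero $v$ to $1$.
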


\begin{proof}
We exploit the relations in part (i) of Lemma~\ref{lem:5.4}.
Suppose $r \neq 0$.
Since $Y X \notin J_R^3$ it follows that also $s \neq 0$,
and we may replace $Y X$ by $Y^2$ in the basis of $R$
described in part (ii) of Lemma~\ref{lem:5.4}.
Similarly, if $v \neq 0$ then $X Y \notin J_R^3$
implies $u \neq 0$, and we can replace $X Y$ by $X^2$.
If both $r,v$ are non-zero, then we get a basis of $R$
excluded in part (ii) of Lemma~\ref{lem:5.4}.
This shows that at least one of $v,r$ must be zero.
Assume $r = 0$.
Then $u s \neq 0$ and we may assume $u = 1 = s$.
The relations in part (i) of Lemma~\ref{lem:5.4}
become
\begin{align*}
  X^2 + v X Y &\in J_R^3 ,
 &
  Y^2 &\in J_R^3 .
\end{align*}
Suppose first that $m \geq 2$.
Then $R / J_R^3$ is $5$-dimensional,
and applying \cite[Lemma~III.6]{E4})
we conclude that $X^2 \in J_R^3$.
Since $X Y \notin J_R^3$, it follows
that in this case $v = 0$ as well.

Assume now that $m = 1$.
Then $J_R^3 = 0$,
$R$ is $4$-dimensional,
and $Y^2 = 0$.
In this case we may assume that
$v = 0$ or $v = 1$.
\end{proof}

We get the following information from the above lemma.
\begin{itemize}
 \item
  If $m \geq 2$, then
  $\alpha_1 - \alpha$
  and
  $\alpha_2 - \bar{\alpha}$
  belong to
  $e_1 J^2 e_2 = e_1 J^4 e_2$.
 \item
  If $m = 1$, then
  $\alpha_1 - \alpha - v \bar{\alpha}$
  and
  $\alpha_2 - \bar{\alpha}$
  belong to
  $e_1 J^2 e_2 = e_1 J^4 e_2$,
  with $v = 0$ or $v = 1$.
\end{itemize}

We keep the bound quiver presentation $A = K Q/I$ of $A$
for which the arrows
$\alpha$,
$\bar{\alpha} = \sigma$,
$\beta$,
$\gamma$, 
$\delta$,
$\varrho$,
$\alpha_1$,
$\alpha_2$
satisfy the equalities and relations established above.
We define
$f(\delta) = \alpha$
and
$f(\varrho) = \sigma =  \bar{\alpha}$.
Then we obtain the permutation $f$ of arrows of $Q$
having two orbits
\begin{align*}
  (\alpha \, \gamma \, \delta),
  &&
  (\bar{\alpha} \, \beta \, \varrho),
\end{align*}
so $(Q,f)$ is the Markov triangulation quiver.
We note that the associated permutation $g$
has only one orbit
\[
 (\alpha \, \beta \, \delta \, \sigma \, \gamma \, \varrho) .
\]
Since $A$ is a symmetric algebra,
we may choose a $K$-linear form
$\varphi : A \to K$ such that
$\varphi(x y) = \varphi(y x)$
for all elements $x,y \in A$,
and $\Ker \varphi$ does not contain
a non-zero one-sided ideal of $A$.
We note that $\varphi(z) \neq 0$
for any non-zero element $z$ in $\soc(A)$,
because $z$ generates a one-dimensional
right ideal of $A$.

We consider also the local tame symmetric algebras
$R_2 = e_2 A e_2$
and
$R_3 = e_3 A e_3$.
Then $R_2$ is generated by the elements
\[
 X_2 = \beta f(\gamma) \bar{\alpha}
 \qquad
 \qquad
 \quad
   \mbox{and}
 \qquad
 \qquad
 \quad
 Y_2 = \gamma f(\beta) {\alpha},
\]
and $R_3$ is generated by the elements
\[
 X_3 = f(\beta) {\alpha} \beta
 \qquad
 \qquad
 \quad
   \mbox{and}
 \qquad
 \qquad
 \quad
 Y_3 = f(\gamma) \bar{\alpha} \gamma.
\]
We note the following equalities in $A$
\begin{align*}
 X Y &= \alpha g(\alpha) g^2(\alpha) g^3(\alpha) g^4(\alpha) g^5(\alpha) ,
 &
 Y X &= \bar{\alpha} g(\bar{\alpha}) g^2(\bar{\alpha}) g^3(\bar{\alpha})
        g^4(\bar{\alpha}) g^5(\bar{\alpha}) ,
\\
 X_2 Y_2 &= \beta g(\beta) g^2(\beta) g^3(\beta) g^4(\beta) g^5(\beta) ,
 &
 Y_2 X_2 &= \bar{\beta} g(\bar{\beta}) g^2(\bar{\beta}) g^3(\bar{\beta})
        g^4(\bar{\beta}) g^5(\bar{\beta}) ,
\\
 X_3 Y_3 &= \varrho g(\varrho) g^2(\varrho) g^3(\varrho) g^4(\varrho) g^5(\varrho) ,
 &
 Y_3 X_3 &= \bar{\varrho} g(\bar{\varrho}) g^2(\bar{\varrho}) g^3(\bar{\varrho})
        g^4(\bar{\varrho}) g^5(\bar{\varrho}) .
\end{align*}

The following proposition proves 
Theorem~\ref{th:5.1} in the case $m = 1$.

\begin{proposition}
\label{prop:5.6}
Assume $m = 1$.
Then $A$ is isomorphic to the algebra\linebreak
$\Lambda(Q^M,f,m_{\bullet},c_{\bullet})$
with $m_{\bullet}$ given by $m = 1$
and $c_{\bullet}$ given by some $c \in K^*$.
\end{proposition}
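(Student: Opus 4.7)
The plan is to exploit the strong structural constraints imposed by $m=1$. In this case $J_R^3 = 0$, $R = e_1 A e_1$ is four-dimensional with basis $\{e_1, X, Y, XY\}$ and one-dimensional socle. I would begin by extending Lemmas~\ref{lem:5.4} and \ref{lem:5.5} to the remaining two vertices by the same argument, establishing that $R_2 = e_2 A e_2$ and $R_3 = e_3 A e_3$ are also four-dimensional local symmetric algebras with socles generated by $X_2 Y_2$ and $X_3 Y_3$ respectively. This yields the uniform vanishing $e_i J^7 e_j = 0$ for all $i,j$, which will be used repeatedly below.

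The key step is to determine the precise form of the elements $p, \bar{q} \in e_2 J^4 e_3$ appearing in (\ref{eq:1}). Writing $p = \beta u$ and $\bar{q} = \gamma v$ with $u, v \in e_3 J^3 e_3$, I would expand $u$ and $v$ in a basis of the small space $e_3 J^3 e_3 / e_3 J^6 e_3$, which is spanned by the length-three cycles at vertex $3$ that survive modulo the relations already established. The equations~(\ref{eq:2}) and~(\ref{eq:3}) coming from the periodic projective resolution, combined with the observation that $\alpha\gamma$ and $\ba\beta$ are not both zero (otherwise $\Omega_A^2(S_1)$ would split as a direct sum, contradicting its indecomposability), force $v$ to be a non-zero scalar multiple of $\varrho\alpha\beta$ modulo higher-order corrections, and similarly $u$ to be a non-zero scalar multiple of $\delta\ba\gamma$. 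This transforms (\ref{eq:1}) into commutativity relations of the form $\alpha\gamma = c_1\sigma\gamma\varrho\alpha\beta$ and $\sigma\beta = c_2\alpha\beta\delta\sigma\gamma$, matching the target algebra with $m=1$. Repeating the argument cyclically at vertices $2$ and $3$ yields the remaining four commutativity relations with non-zero scalars $c_3,\dots,c_6$.

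Next, I would rescale the six arrows $\alpha, \sigma, \beta, \gamma, \delta, \varrho$ to equalize $c_1, \ldots, c_6$ to a single parameter $c \in K^*$. Since all six arrows lie in a single $g$-orbit, the induced action on the six scalars is sufficiently transitive, leaving a one-parameter family corresponding to the final choice of $c$. The twelve zero relations $\alpha\gamma\varrho = 0$, $\sigma\beta\delta = 0$, and their four rotations now follow for free: for instance $\alpha\gamma\varrho = \sigma\gamma v \varrho \in e_1 J^7 e_1 = 0$. The natural surjection $\Lambda(Q^M, f, m_\bullet, c_\bullet) \twoheadrightarrow A$ induced by identifying the generators is then an isomorphism by comparison of dimensions, both equal to $36$ by Proposition~\ref{prop:2.3}.

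The principal obstacle is managing the subcase $v = 1$ of the scalar $v$ from Lemma~\ref{lem:5.5}. Since the Markov quiver has empty border $\partial(Q^M, f)$, no genuine socle deformation is available in the target algebra, so this subcase must reduce to $v = 0$. I would achieve this via a further change of generators in $e_1 J e_2$ that absorbs the unwanted $\ba$ term in $\alpha_1 \equiv \alpha + \ba \pmod{e_1 J^4 e_2}$ into a redefinition of $\alpha$, then re-derive the identities above with respect to the new generators. Verifying that this substitution remains compatible with the commutativity relations at the other two vertices is the most delicate part of the argument.
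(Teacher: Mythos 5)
Your overall skeleton (pinning down $p$ and $\bar{q}$, turning the identities (\ref{eq:1})--(\ref{eq:3}) into commutativity relations, and killing the length-three zero relations because they land in $e_1J^7e_1=0$) is compatible with the paper's argument, but two of your key steps fail as stated. The rescaling step is the serious one. Write the six commutativity relations as $\alpha\gamma=c_1\sigma\gamma\varrho\alpha\beta$, $\gamma\delta=c_2\beta\delta\sigma\gamma\varrho$, $\delta\alpha=c_3\varrho\alpha\beta\delta\sigma$, $\sigma\beta=c_4\alpha\beta\delta\sigma\gamma$, $\beta\varrho=c_5\gamma\varrho\alpha\beta\delta$, $\varrho\sigma=c_6\delta\sigma\gamma\varrho\alpha$. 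Under a rescaling $\alpha\mapsto\lambda_\alpha\alpha,\ \dots,\ \varrho\mapsto\lambda_\varrho\varrho$, each right-hand monomial contains every arrow of its left-hand side, and the three surplus arrows are $\{\beta,\sigma,\varrho\}$ for the first three relations and $\{\alpha,\gamma,\delta\}$ for the last three; hence $c_1,c_2,c_3$ are all multiplied by the same factor $(\lambda_\beta\lambda_\sigma\lambda_\varrho)^{-1}$ and $c_4,c_5,c_6$ all by $(\lambda_\alpha\lambda_\gamma\lambda_\delta)^{-1}$. The ratios $c_1:c_2:c_3$ and $c_4:c_5:c_6$ are therefore invariant under \emph{any} rescaling of arrows: the torus acts through only two characters, and is nowhere near transitive. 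The equalities $c_1=\dots=c_6$ are genuine identities that must be proved, not normalized away. The paper obtains them from Corollary~\ref{cor:5.3} ($\alpha\gamma\delta=\sigma\beta\varrho$) together with the fact that, $A$ being symmetric, the rotations $XY=YX$, $X_2Y_2=Y_2X_2$, etc.\ of the socle generator coincide; for instance postmultiplying $\alpha\gamma=cY\alpha\beta$ by $\delta$ and premultiplying $\gamma\delta=c_2X_2\gamma\varrho$ by $\alpha$ both compute $\alpha\gamma\delta$ and force $c_2=c$.

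Your treatment of the subcase $v=1$ also cannot work as described. If $\alpha_1\equiv\alpha+\ba\pmod{e_1J^4e_2}$ and you redefine $\alpha$ to absorb the $\ba$ term, the new generator satisfies $\alpha_1\gamma\equiv\ba\gamma\not\equiv 0\pmod{J^3}$, destroying the type Z relation $\alpha\gamma\in J^3$ from Lemma~\ref{lem:5.2} on which the whole presentation (and the definitions of $X$, $Y$, $p$, $\bar{q}$) rests; you would have to rebuild the setup from scratch, not merely "re-derive the identities with respect to the new generators". The paper instead shows the subcase is vacuous: assuming $\alpha_1-\alpha-\ba\in J^4$, one computes $\delta\alpha_1\gamma=\delta(\alpha+\ba)\gamma$ in two ways and arrives at $cX_3Y_3\equiv cY_3X_3+Y_3\pmod{J^6}$; since $X_3Y_3=Y_3X_3$ spans $\soc(R_3)$ this would force $Y_3\in J^6$, absurd for an element of degree $3$. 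Hence $\alpha_1-\alpha\in J^4$ always holds and no redefinition is needed.
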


\begin{proof}
We have $\soc(A) = J^6$ and $R = e_1 A e_1$ is commutative.
The elements occurring in the identities
(\ref{eq:1}),
(\ref{eq:2}),
(\ref{eq:3})
belong to $J^5$, and hence they are in $\soc_2(A)$.

\smallskip

(a)
We prove the required commutativity relations
for $\alpha \gamma$
and $\sigma \beta = \bar{\alpha} \beta$.
By identity (\ref{eq:1}), we have
$\alpha \gamma \in e_1 J^5 e_2 \cap \bar{\alpha} A = K Y \alpha \beta$
and
$\bar{\alpha} \beta \in e_1 J^5 e_2 \cap \alpha A = K Y \bar{\alpha} \gamma$.
There exist $c,d \in K$ such that
\begin{align*}
   \alpha \gamma
    &= c Y \alpha \beta
    = c \sigma \gamma \varrho \alpha \beta ,
  \\
   \sigma \beta
    &= d X \bar{\alpha} \gamma
    = d \alpha \beta \delta \sigma \gamma .
\end{align*}
Recall that $\alpha\gamma, \ba\beta$ are not both zero. 
By Corollary~\ref{cor:5.3} we have
$\alpha \gamma \delta = \sigma \beta \varrho$,
and hence
$c Y X = d X Y = d Y X$,
and then $c = d$ and it must be $\neq 0$.

\smallskip

(b)
We prove the required commutativity relations
for $\gamma \delta = \gamma \delta_1$
and $\beta \varrho = \beta \delta_2$.
By identity (\ref{eq:2}), we have
$\gamma \delta \in e_2 J^5 e_3 \cap A \varrho = K X_2 \gamma \varrho$
and $\beta \varrho  \in e_2 J^5 e_3 \cap A \delta = K Y_2 \beta \delta$.
Similarly as in (a), 
we get
\begin{align*}
   \gamma \delta
    &= c_2 X_2 \gamma \varrho
    = c_2  \beta \delta \sigma \gamma \varrho ,
  \\
   \beta \varrho
    &= d_2 Y_2 \beta \delta
    = d_2 \gamma \varrho \alpha \beta \delta  ,
\end{align*}
for some $c_2,d_2 \in K$, not both zero.
Using now part (a), we obtain
\begin{align*}
   c Y X
    &= c \sigma \gamma \varrho \alpha \beta \delta
      =  \alpha \gamma \delta
      = c_2  \alpha \beta \delta \sigma \gamma \varrho
      = c_2 X Y
      = c_2 Y X
     ,
   \\
   c X Y
    &= c \alpha \beta \delta \gamma \varrho \sigma
      = \sigma \beta \varrho
      = d_2 \sigma  Y_2 \beta \delta
      = d_2 Y X
      = d_2 X Y
     ,
\end{align*}
and hence $c_2 = c$ and $d_2 = c$ (and both are non-zero).

\smallskip

(c)
We claim that $\alpha_1 - \alpha \in J^4$.
Assume this is false.
Then, by
Lemma~\ref{lem:5.5}~(ii)
and (\ref{eq:4}),
we have $\alpha_1 - \alpha - \bar{\alpha} \in J^4$.
Using part (b) and (\ref{eq:2*}),
we may assume that
$q_{\gamma} = c \varrho \alpha \beta \delta$.
Then, using (\ref{eq:3}),
we obtain
$\delta \alpha_1 = q_{\gamma} \alpha_2
 = c \varrho \alpha \beta \delta \alpha_2$,
and this lies in $\soc_2(e_3 A)$.
But then
$\delta \alpha_1 + J^5 = \delta (\alpha + \bar{\alpha}) + J^5
 = c \varrho \alpha \beta \delta \bar{\alpha} + J^5$.
By part (a) we have
$\alpha \gamma = c \sigma \gamma \varrho \alpha \beta$,
and hence
$\delta \alpha \gamma = c Y_3 X_3$.
We get now
\begin{align*}
   c  X_3 Y_3 + J^6
    &= c \varrho \alpha \beta \delta \bar{\alpha} \gamma + J^6
      =  \delta (\alpha + \bar{\alpha}) \gamma + J^6
      =  (\delta \alpha \gamma + \delta \bar{\alpha} \gamma) + J^6
\\&
      =  (c Y_3 X_3 + Y_3) + J^6
     ,
\end{align*}
which is a contradiction.
Therefore, indeed $\alpha_1 - \alpha \in J^4$.

\smallskip

(d)
We prove the required commutativity relations
for $\delta \alpha$
and $\varrho \bar{\alpha} = \varrho \sigma$.
Using part (b), we may assume that
$q_{\beta} = c  \delta \sigma \gamma \varrho$.
Then, by (\ref{eq:3}), we have
$\varrho \alpha_2 = q_{\beta} \alpha_1
 = c \delta \sigma \gamma \varrho \alpha_1$.
Hence we have
$\varrho \alpha_2 \in \delta A \cap J^5$
and
$\varrho \alpha_2 + J^5 = \varrho \bar{\alpha} + J^5
 = \varrho \sigma + J^5$.
Similarly, we have
$\delta \alpha_1 = q_{\gamma} \alpha_2
 = c \varrho \alpha \beta \delta \alpha_2$,
and hence
$\delta \alpha_1 \in \varrho A \cap J^5$
and
$\delta \alpha_1 + J^5 = \delta \alpha + J^5$.
Therefore, we obtain that
\begin{align*}
   \delta \alpha
    &= c_3 X_3 \delta \bar{\alpha}
    = c_3 \varrho \alpha \beta \delta \sigma,
  \\
   \varrho \sigma
    &= d_3 Y_3 \varrho \alpha
    = d_3 \delta \sigma \gamma \varrho \alpha  ,
\end{align*}
for some $c_3,d_3 \in K^*$.
Recall from (b) that
$\gamma \delta = c X_2 \gamma \varrho$
and
$\beta \varrho = c Y_2 \beta \delta$.
Then we conclude that
\begin{align*}
   c X_2 Y_2
    &= \gamma \delta \alpha
      = c_3 \gamma X_3 \delta \bar{\alpha}
      = c_3 Y_2 X_2
      = c_3 X_2 Y_2
      ,
   \\
   c Y_2 X_2
    &= \beta \varrho \sigma
      = d_3 \beta Y_3 \varrho \alpha
      = d_3 X_2 Y_2
      = d_3 Y_2 X_2
     ,
\end{align*}
and hence $c_3 = c$ and $d_3 = c$.

\smallskip

(e)
The required zero relations  of length three
\begin{align*}
 \alpha\gamma\varrho &= 0, &
 \sigma\beta\delta &= 0, &
 \gamma\delta\sigma &= 0, &
 \beta\varrho\alpha &= 0, &
 \delta\alpha\beta &= 0, &
 \varrho\sigma\gamma &= 0
\end{align*}
follow easily.
For example, we have
\[
  \alpha\gamma\varrho
  = c (Y \alpha \beta) \varrho
  = c (Y \alpha) \beta \varrho
  = 0,
\]
since $\beta \varrho \in \soc_2 (A)$.
\end{proof}

From now we assume that $m \geq 2$.
We will prove
Theorem~\ref{th:5.1}
in few steps.
We first note the following equality.

\begin{corollary}
\label{cor:5.7}
We have
$\gamma\delta\alpha_1 = \beta\varrho\alpha_2$.
\end{corollary}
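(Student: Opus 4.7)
The plan is to chain two previously derived identities. Starting from the left-hand side $\gamma\delta\alpha_1$, I would first rewrite $\gamma\delta = \gamma\delta_1$ using the second form of equation (\ref{eq:2*}), which gives $\gamma\delta_1 = \beta q_\beta$. This reduces the expression to $\beta q_\beta \alpha_1$. Then I would apply the first identity in (\ref{eq:3}), namely $\delta_2 \alpha_2 = q_\beta \alpha_1$, to replace $q_\beta\alpha_1$ by $\delta_2\alpha_2 = \varrho\alpha_2$. The result is exactly $\beta\varrho\alpha_2$.

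More explicitly, I expect the proof to consist of the single chain of equalities
\[
  \gamma\delta\alpha_1
    = (\gamma\delta_1)\alpha_1
    = (\beta q_\beta)\alpha_1
    = \beta(q_\beta \alpha_1)
    = \beta(\delta_2\alpha_2)
    = \beta\varrho\alpha_2,
\]
where the second equality is the left identity of (\ref{eq:2*}), the fourth is the left identity of (\ref{eq:3}), and the remaining ones are just associativity together with the conventions $\delta = \delta_1$ and $\varrho = \delta_2$ fixed earlier.

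There is no genuine obstacle here: the identities needed were already set up in the analysis of the minimal projective resolutions of $S_1$ and $S_2$, and the corollary is really a bookkeeping consequence of those. If anything, the only subtlety is to make sure one invokes $(\ref{eq:2*})$ rather than $(\ref{eq:2})$ so that the element $q_\beta$ appears, matching exactly the factor that is eliminated by $(\ref{eq:3})$; using $(\ref{eq:2})$ would instead give $p\delta_2\alpha_1$, which is not directly in the form we want.
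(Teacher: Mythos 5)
Your proof is correct and is essentially the paper's argument run in mirror image: the paper computes $\gamma\delta\alpha_1 = \gamma(\delta_1\alpha_1) = \gamma(q_{\gamma}\alpha_2) = (\gamma q_{\gamma})\alpha_2 = (\beta\varrho)\alpha_2$, using the second identities of (\ref{eq:2*}) and (\ref{eq:3}), whereas you use the first identities of each pair and bracket on the other side. Both are one-line consequences of associativity together with (\ref{eq:2*}) and (\ref{eq:3}), so this is the same approach.
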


\begin{proof}
Using equalities (\ref{eq:2*}) and (\ref{eq:3}) we obtain
\[
  \gamma\delta\alpha_1
   = \gamma(\delta\alpha_1)
   = \gamma(q_{\gamma}\alpha_2)
   = (\gamma q_{\gamma})\alpha_2
   = (\beta\varrho)\alpha_2
   = \beta\varrho\alpha_2
   .
\]
\end{proof}

The following lemma fixes the required relations
starting at the vertex $2$.

\begin{lemma}
\label{lem:5.8}
The following statements hold.
\begin{enumerate}[(i)]
 \item
  $\beta f(\beta) = c(Y_2 X_2)^{m-1} Y_2 \beta \delta$
  and
  $\gamma f(\gamma) = c(X_2 Y_2)^{m-1} X_2 \gamma \varrho$,
  for some $c \in K^*$, and are non-zero elements
  of $\soc_2(e_2 A)$.
 \item
  $\beta f(\beta) g(f(\beta)) = 0$
  and
  $\gamma f(\gamma) g(f(\gamma)) = 0$.
\end{enumerate}
\end{lemma}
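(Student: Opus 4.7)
The plan is to bootstrap from the analysis of $R = e_1Ae_1$ in Lemma~\ref{lem:5.4} to the sibling local algebra $R_2 = e_2Ae_2$, exploiting the symmetry of the Markov quiver under cyclic relabelling of vertices. Since $S_2$ is periodic of period four and $A$ is symmetric, the same argument that established the basis and socle of $R$ applies verbatim to $R_2$, so that $\soc(R_2)$ is one-dimensional, generated by $(X_2Y_2)^{m_2} = (Y_2X_2)^{m_2}$ for some $m_2 \geq 1$. Because $(Q^M,f)$ has a single $g$-orbit through which every generator cycles, the canonical socle generators of $R$ and $R_2$ are cyclic rotations of the same underlying word in $A$, forcing $m_2 = m$ and $\soc(A)$ to be one-dimensional.

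For (i), equations (\ref{eq:2*}) tell us that $\gamma\delta = \beta q_\beta \in \beta A \cap e_2J^5e_3$ and $\beta\varrho = \gamma q_\gamma \in \gamma A \cap e_2J^5e_3$, where $q_\beta, q_\gamma \in e_3 J^4 e_1$. Using the $R_2$-basis, the layer of $\beta A \cap e_2Ae_3$ above the socle at this radical degree is one-dimensional, spanned by $(X_2Y_2)^{m-1}X_2\gamma\varrho$, and similarly the corresponding layer of $\gamma A \cap e_2Ae_3$ is spanned by $(Y_2X_2)^{m-1}Y_2\beta\delta$. Hence there exist $c,c' \in K$ with
\[
  \gamma\delta = c(X_2Y_2)^{m-1}X_2\gamma\varrho, \qquad \beta\varrho = c'(Y_2X_2)^{m-1}Y_2\beta\delta,
\]
after absorbing socle remainders into the chosen representatives of $\beta, \gamma, \delta, \varrho$. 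Corollary~\ref{cor:5.3} gives $\alpha\gamma\delta = \sigma\beta\varrho$; evaluating both sides in $\soc(A)$ using the displayed formulas, and using the equality $(XY)^m = (YX)^m$ in the commutative local algebra $R$, forces $c = c'$.

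The main obstacle will be to show $c \neq 0$, equivalently that $\gamma\delta \neq 0$. The plan is to argue by contradiction: if $\gamma\delta = 0$, then reading off generators of $\Omega_A^2(S_2)$ from the data produced by the period-four resolution of $S_2$ (as in Proposition~\ref{prop:4.3} combined with (\ref{eq:2})--(\ref{eq:3})), one obtains a splitting of $\Omega_A^2(S_2)$ into a direct sum of two proper submodules, precisely by the mechanism of Lemma~\ref{lem:5.4}(ii); this contradicts the indecomposability of $\Omega_A^2(S_2)$ forced by $S_2$ having period four. For part (ii), both $\beta f(\beta)g(f(\beta)) = \beta\varrho\alpha$ and $\gamma f(\gamma)g(f(\gamma)) = \gamma\delta\sigma$ are paths $2 \to 2$ whose expressions via (i) append $\alpha = g(f(\beta))$ (respectively $\sigma = g(f(\gamma))$) to the right of a representative already exhausting the radical length of the socle generator of $R_2$; since $\soc(R_2)$ is already reached and is one-dimensional with a cyclic representative distinct from the extended word, both elements must vanish.
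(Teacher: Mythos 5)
Your overall strategy is the same as the paper's (rotate the socle generator of $R=e_1Ae_1$ to get the socle and basis of $R_2=e_2Ae_2$, expand $\gamma\delta$ and $\beta\varrho$ in that basis, and use a rotation identity to match the two expansions), but two essential steps are missing. First, the heart of part (i) is showing that $\gamma\delta$ and $\beta\varrho$ lie in the \emph{second} socle of $e_2A$. From $(2^*)$ one only knows $\gamma\delta=\beta q_\beta\in\beta A\cap e_2J^5e_1\cap A\varrho$, and for $m\geq 2$ the span of basis monomials starting with $\beta$ and ending in $\varrho$ is $m$-dimensional, namely $\sum_{i=0}^{m-1}K\,(X_2'Y_2')^iX_2'\gamma\varrho$ --- not one-dimensional ``at this radical degree'' as you assert. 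You must prove that the coefficients $c_i$ with $i<m-1$ vanish; the paper does this by postmultiplying with $\alpha_1,\alpha_2$, invoking $\gamma\delta\alpha_1=\beta\varrho\alpha_2$ (Corollary~\ref{cor:5.7}, not \ref{cor:5.3}), and equating coefficients in the basis of $R_2$, where $(X_2'Y_2')^{i+1}$ and $(Y_2'X_2')^{j+1}$ are independent below the socle. Your substitute --- ``absorbing socle remainders into the chosen representatives of $\beta,\gamma,\delta,\varrho$'' --- does not work: the offending terms $(X_2'Y_2')^iX_2'\gamma\varrho$ with $i<m-1$ lie strictly above $\soc_2(e_2A)$, and a term of $\gamma\delta$ beginning with $\beta$ cannot be removed by adjusting $\gamma$ or $\delta$. (A smaller issue: $\gamma\delta=0$ alone does not trigger the splitting mechanism of Lemma~\ref{lem:5.4}(ii); that argument needs both $\gamma\delta$ and $\beta\varrho$ to vanish, which is why the paper first proves $c_{m-1}=d_{m-1}$ and only then rules out $c=0$.)

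Second, your proof of (ii) is not valid as stated. The claim that $\beta\varrho\alpha$ vanishes because the extended word is ``distinct from'' the cyclic socle representative proves nothing: an element of the one-dimensional socle can be a nonzero multiple of the generator even when written as a non-canonical word --- indeed $\alpha\gamma\delta=c(XY)^m\neq 0$ is exactly such an element, and $w_2\alpha$ for $w_2=(Y_2X_2)^{m-1}Y_2\beta\delta$ ends in $\delta\alpha=\delta f(\delta)$, which is a priori a nonzero element of $J^2$, not zero. The paper's proof uses the identity $f(\gamma)\alpha_1=q_\gamma\alpha_2$ from (3) together with an explicit admissible choice of $q_\beta$ (namely $q_\beta=\delta\bar\alpha(Y_2X_2)^{m-1}\gamma\varrho$), so that $\gamma q_\beta$ contains the factor $\gamma\delta\bar\alpha\in\soc(A)\cdot J=0$; some argument of this kind, using the resolution data rather than word-counting, is needed to close the gap.
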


\begin{proof}
Recall that the local algebra $R = e_1 A e_1$
has socle generated by $(X Y)^m$, and
$(Y X)^m = (X Y)^m$.
Since $A$ is a symmetric algebra, rotating $(X Y)^m$,
we get that the socle of $R_2 = e_2 A e_2$
is generated by $(X_2 Y_2)^m$, and
$(Y_2 X_2)^m = (X_2 Y_2)^m$.
We will temporarily use a basis where $\alpha, \bar{\alpha}$
are replaced by $\alpha_1,\alpha_2$.
Let
$X'_2 = \beta\delta\alpha_2$
and
$Y'_2 = \gamma\varrho\alpha_1$.
Then
$(X'_2 Y'_2)^m = (X_2 Y_2)^m$
and hence it also generates the socle of $R_2$.
Therefore, we have a basis of $R_2$ consisting of initial
subwords of $(X'_2 Y'_2)^m$ and $(Y'_2 X'_2)^m$.
We may then get a basis of $e_2 A$ by taking initial
subwords of these.
Clearly, we have
$(Y'_2 X'_2)^m = (Y_2 X_2)^m$,
and hence
$(X'_2 Y'_2)^m = (Y'_2 X'_2)^m$.

\smallskip

(i)
From (\ref{eq:2}) and (\ref{eq:2*}), we have
$p \varrho = \gamma \delta = \beta q_{\beta}$,
and this is a linear combination of monomials
starting with $\beta$ and ending with $\varrho$, or possibly is zero.
Therefore, there are elements $c_i \in K$ such that
\[
  \gamma \delta
    = \sum_{i \geq 0} c_i (X'_2 Y'_2)^i X'_2 \gamma \varrho .
\]
Similarly, by (\ref{eq:2}) and (\ref{eq:2*}), we have
$\gamma q_{\gamma} = \beta \varrho = \bar{q} \delta$,
and this is a linear combination of paths
starting with $\gamma$ and ending with $\delta$.
Hence, there are elements $d_i \in K$ such that
\[
  \beta \varrho
    = \sum_{i \geq 0} d_i (Y'_2 X'_2)^i Y'_2 \beta \delta .
\]
The argument we used
for $\alpha\gamma$ and $\ba\beta$ applies here as
well and shows  that $\gamma\delta$ and $\beta\rho$ cannot be both equal
to zero. By Corollary~\ref{cor:5.7}
we get
\[
  \sum_{i \geq 0} c_i (X'_2 Y'_2)^{i+1}
   = \sum_{i \geq 0} d_i (Y'_2 X'_2)^{i+1}
   .
\]
These are expressions in the basis of $R_2$,
and we can equate coefficients.
We get  $c_i = d_i = 0$ for $i < m - 1$,
and $c_{m-1} = d_{m-1}$.
We set $c = c_{m-1}$.
Since one of $\gamma \delta$ or $\beta\rho$ is non-zero
we conclude that $c \neq 0$.
As well, we obtain that
$\gamma \delta$ and $\beta \varrho$
are in the second socle of $e_2 A$.
Therefore, we may replace
$\alpha_1,\alpha_2$
by
$\alpha, \bar{\alpha}$
in these expressions, and hence also
$X'_2,Y'_2$ by $X_2,Y_2$.
Hence we obtain the required equalities
\[
  \beta f(\beta)
    = \beta \varrho
    = c(Y_2 X_2)^{m-1} Y_2 \beta \delta
  \qquad
  \quad
  \mbox{and}
  \qquad
  \quad
  \gamma f(\gamma)
    = \gamma \delta
    = c(X_2 Y_2)^{m-1} X_2 \gamma \varrho .
\]

\smallskip

(ii)
We claim that
$\gamma f(\gamma) g(f(\gamma)) = 0$,
that is,
$\gamma \delta \bar{\alpha} = 0$.
By the previous facts and (\ref{eq:3}),
we have
\[
 \gamma \delta  \bar{\alpha}
    = c(X_2 Y_2)^{m-1} X_2 \gamma \varrho \alpha_2
    = c(X_2 Y_2)^{m-1} X_2 \gamma q_{\beta} \alpha_1
    .
\]
For the equality $\gamma \delta = \beta q_{\beta}$ in
(\ref{eq:2*}),
we may take
\[
  q_{\beta} = \delta \bar{\alpha} (Y_2 X_2)^{m-1} \gamma \varrho,
\]
and then
$\gamma q_{\beta}
 = \gamma \delta \bar{\alpha} (Y_2 X_2)^{m-1} \gamma \varrho = 0$,
because $\gamma \delta$ is in $\soc_2(e_2 A)$.
It follows that $\gamma \delta \bar{\alpha} = 0$.
Similarly, one shows that
$\beta f(\beta) g(f(\beta)) = \beta \varrho \alpha = 0$.
\end{proof}

The following lemma fixes the required relations
starting at the vertex $1$.

\begin{lemma}
\label{lem:5.9}
The following statements hold.
\begin{enumerate}[(i)]
 \item
  $\alpha f(\alpha) = c (Y X)^{m-1} Y \alpha \beta$
  and
  $\bar{\alpha} f(\bar{\alpha}) = c (X Y)^{m-1} X \bar{\alpha} \gamma$,
  with $c \in K^*$ fixed above, and are non-zero elements
  of $\soc_2(e_1 A)$.
 \item
  $\alpha f(\alpha) g(f(\alpha)) = 0$
  and
  $\bar{\alpha} f(\bar{\alpha}) g(f(\bar{\alpha})) = 0$.
\end{enumerate}
\end{lemma}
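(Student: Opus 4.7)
The proof will follow the template of Lemma~\ref{lem:5.8}, now working at vertex~$1$ with the local algebra $R = e_1 A e_1$ instead of $R_2 = e_2 A e_2$, and chaining with the identities we have just verified in Lemma~\ref{lem:5.8}.

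For~(i), I would start from identity~\eqref{eq:1}, where $\alpha\gamma = \bar{\alpha}\gamma v$ and $\bar{\alpha}\beta = \alpha\beta u$ with $u, v \in e_3 J^3 e_3$, so $\alpha\gamma$ lies in the right ideal $\bar{\alpha}\gamma A$ and $\bar{\alpha}\beta$ in $\alpha\beta A$. Using the basis of $e_1 A$ induced from the basis of $R$ in Lemma~\ref{lem:5.4}(ii), one expands
\begin{align*}
  \alpha\gamma &= \sum_{i\geq 0} a_i (YX)^i Y\alpha\beta ,
 &
  \bar{\alpha}\beta &= \sum_{i\geq 0} b_i (XY)^i X\bar{\alpha}\gamma ,
\end{align*}
for some scalars $a_i, b_i \in K$. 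Multiplying the first on the right by $\delta$ yields $\alpha\gamma\delta = \sum a_i (YX)^{i+1}$ (using $\alpha\beta\delta = X$) and similarly $\bar{\alpha}\beta\varrho = \sum b_i (XY)^{i+1}$. Corollary~\ref{cor:5.3} asserts $\alpha\gamma\delta = \bar{\alpha}\beta\varrho$; since $(XY)^k$ and $(YX)^k$ are linearly independent for $k<m$ while $(XY)^m = (YX)^m$, this forces $a_i = b_i = 0$ for $i<m-1$ and $a_{m-1} = b_{m-1}$. To identify this common constant with the $c$ fixed in Lemma~\ref{lem:5.8}, I would multiply $\gamma\delta = c(X_2Y_2)^{m-1}X_2\gamma\varrho$ on the left by $\alpha$ and use the bookkeeping identity $\alpha(X_2 Y_2)^k = (XY)^k\alpha$, immediate from $X_2 Y_2 = \beta\delta\bar{\alpha}\gamma\varrho\alpha$ and $XY = \alpha\beta\delta\bar{\alpha}\gamma\varrho$, obtaining $\alpha\gamma\delta = c(XY)^m$ and hence $a_{m-1} = c$.

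For~(ii), note that~(i) implies $\alpha\gamma$ and $\bar{\alpha}\beta$ lie in $e_1 J^{6m-1}$, so $\alpha\gamma\varrho$ and $\bar{\alpha}\beta\delta$ lie in the one-dimensional $\soc(e_1 A)$; the task is to show the corresponding scalars vanish. The plan is a two-step reduction: using~(i) and $\beta\varrho\alpha = 0$ (from Lemma~\ref{lem:5.8}(ii)) we obtain $\alpha Y_2 = \alpha\gamma\varrho\alpha = c(YX)^{m-1}Y\alpha\cdot\beta\varrho\alpha = 0$. Substituting $\beta\varrho = c(Y_2X_2)^{m-1}Y_2\beta\delta$ from Lemma~\ref{lem:5.8}(i) then gives $\alpha\beta\varrho = c(\alpha Y_2)(X_2Y_2)^{m-1}\beta\delta = 0$, and hence $\alpha\gamma\varrho = c(YX)^{m-1}Y\alpha\beta\varrho = 0$. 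The dual computation, with $\gamma\delta\bar{\alpha} = 0$ replacing $\beta\varrho\alpha = 0$, yields $\bar{\alpha}\beta\delta = 0$. The main obstacle, beyond the routine basis bookkeeping, will be matching the constant in~(i) with the specific $c$ from Lemma~\ref{lem:5.8}; the commutation identity $\alpha(X_2Y_2)^k = (XY)^k\alpha$ is precisely what ties the vertex-$1$ and vertex-$2$ formulas together.
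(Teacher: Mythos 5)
Your computation of the coefficients $a_i$ follows the paper's own route (right-multiply by $\delta$, compare with $\bar{\alpha}\beta\varrho$ via Corollary~\ref{cor:5.3}, and identify the constant by premultiplying the Lemma~\ref{lem:5.8} formula for $\gamma\delta$ by $\alpha$, using $\alpha X_2 = X\bar{\alpha}$). But the opening expansion is incomplete, and this is a genuine gap. From $\alpha\gamma = \bar{\alpha}\gamma v$ with $v \in e_3 J^3 e_3$ one gets contributions $\bar{\alpha}\gamma\,(X_3Y_3)^i X_3 = (YX)^i Y\alpha\beta$ \emph{and} $\bar{\alpha}\gamma\,(X_3Y_3)^i = (YX)^i\bar{\alpha}\gamma$; so membership in $\bar{\alpha}\gamma A$ does not exclude the basis monomials $(YX)^i\bar{\alpha}\gamma$, $i \geq 1$, and the correct ansatz is $\alpha\gamma = \sum_{i\geq 0} a_i (YX)^i Y\alpha\beta + \sum_{i\geq 1} b_i (YX)^i\bar{\alpha}\gamma$ (dually, $\bar{\alpha}\beta$ may involve $(XY)^i\alpha\beta$). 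Your subsequent manipulations are blind to the $b_i$: since $\gamma\delta \in e_2J^{6m-1}e_3$ by Lemma~\ref{lem:5.8}, we have $\bar{\alpha}\gamma\delta \in e_1J^{6m}e_3 = \soc(e_1A)e_3 = 0$, so every $(YX)^i\bar{\alpha}\gamma$ dies upon right multiplication by $\delta$ and the comparison with $c(XY)^m$ determines only the $a_i$. Thus the exact equality in (i) is not established, and the defect is not cosmetic: it propagates to (ii), because $(YX)^i\bar{\alpha}\gamma\varrho = (YX)^iY$ is a non-socle basis element of $R$ for $1 \leq i \leq m-1$, so any surviving $b_i$ would make $\alpha\gamma\varrho \neq 0$.

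The paper closes exactly this hole with the other rotation: since $\gamma\delta\alpha$ is a non-zero socle element and $A$ is symmetric, $\delta\alpha\gamma \in \soc(e_3A)$; using $\delta Y = Y_3\varrho$ and $\varrho X = X_3\delta$ one rewrites $\delta\alpha\gamma = \sum a_i(Y_3X_3)^{i+1} + \sum b_i(Y_3X_3)^iY_3$ in the basis of $R_3$, and since no $(Y_3X_3)^iY_3$ is the socle monomial $(Y_3X_3)^m$, all $b_i$ vanish (and $a_i = 0$ for $i < m-1$). You need this step, or an equivalent one, before (i) is complete. Granting (i), your argument for (ii) — $\alpha Y_2 = c(YX)^{m-1}Y\alpha(\beta\varrho\alpha) = 0$, hence $\alpha\beta\varrho = c\,\alpha Y_2(X_2Y_2)^{m-1}\beta\delta = 0$, hence $\alpha\gamma\varrho = c(YX)^{m-1}Y\alpha\beta\varrho = 0$ — is correct and is a clean alternative to the paper's, which instead substitutes an explicit representative for $q_\beta$ and uses $\gamma\delta \in \soc_2(e_2A)$.
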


\begin{proof}
By Lemma~\ref{lem:5.8} we have  $\gamma \delta \alpha$ is a non-zero element of $\soc(A)$,
and so is every rotation of $\gamma \delta \alpha$,
therefore 
$\alpha \gamma \delta$
and
$\delta \alpha \gamma$
are non-zero elements of $\soc(A)$.
In particular  $\alpha \gamma \neq 0.$
By (\ref{eq:1}) we have also
$\alpha \gamma = \bar{\alpha} \bar{q}$.
Then using the basis of $R$ we conclude that
there are elements $a_i, b_i \in K$ such that
\[
  \alpha \gamma
    = \sum_{i \geq 0} a_i (Y X)^i Y \alpha \beta
      + \sum_{i \geq 1} b_i (Y X)^i \bar{\alpha} \gamma  .
\]
Observe also that
$\delta Y = \delta \bar{\alpha} \gamma \varrho = Y_3 \varrho$
and
$\varrho X = \varrho \alpha \beta \delta = X_3 \delta$.
Hence we obtain
\[
  \delta \alpha \gamma
    = \sum_{i \geq 0} a_i (Y_3 X_3)^{i+1}
      + \sum_{i \geq 1} b_i (Y_3 X_3)^{i} Y_3  .
\]
This is in terms of the basis of $R_3$, and
it is in the socle.
It follows that all $b_i = 0$ and that $a_i = 0$
for $i < m-1$.
We claim that $a_{m-1} = c$.
We postmultiply the above expression
for $\alpha \gamma$ with $\delta$ and obtain
\[
  \alpha \gamma \delta
    = a_{m-1} (Y X)^{m} .
\]
On the other hand, if we premultiply the known
identity for $\gamma \delta$ obtained in
Lemma~\ref{lem:5.8} with $\alpha$,
then we get
\[
  \alpha \gamma \delta
    = c (X Y)^{m}
    = c (Y X)^{m}
    \neq 0
    ,
\]
because
$\alpha X_2 = \alpha \beta \delta \bar{\alpha} = X \bar{\alpha}$
and
$\bar{\alpha} Y_2 = \bar{\alpha} \gamma\varrho \alpha = Y \alpha$.
Hence $a_{m-1} = c$.
Therefore, we obtain that
\[
 \alpha f(\alpha) = c (Y X)^{m-1} Y \alpha \beta .
\]
The proof of the equality
\[
 \bar{\alpha} f(\bar{\alpha}) = c (X Y)^{m-1} X \bar{\alpha} \gamma
\]
is similar.
The zero relations
$\alpha f(\alpha) g(f(\alpha)) = \alpha \gamma \varrho = 0$
and
$\bar{\alpha} f(\bar{\alpha}) g(f(\bar{\alpha}))
 = \bar{\alpha} \beta \delta = 0$
follow by arguments similar to those applied in the
proof of (ii) of
Lemma~\ref{lem:5.8}.
\end{proof}

\begin{lemma}
\label{lem:5.10}

The following statements hold.
\begin{enumerate}[(i)]
 \item
  $\delta f(\delta) = c (X_3 Y_3)^{m-1} X_3 f(\gamma) \bar{\alpha}$
  and
  $\bar{\delta} f(\bar{\delta}) = c (Y_3 X_3)^{m-1} Y_3 f(\beta) \alpha$,
  with $c \in K^*$ fixed above,
  and are non-zero elements
  of $\soc_2(e_3 A)$.
 \item
  $\delta f(\delta) g(f(\delta)) = 0$
  and
  $\bar{\delta} f(\bar{\delta}) g(f(\bar{\delta})) = 0$.
\end{enumerate}
\end{lemma}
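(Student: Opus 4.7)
My plan is to follow exactly the strategy used in the proofs of Lemmas~\ref{lem:5.8} and~\ref{lem:5.9}, now transplanted to the vertex $3$. As a preparation, note that since $A$ is symmetric and $(XY)^m$ generates $\soc(R)$, a cyclic rotation shows that $(X_3 Y_3)^m = (Y_3 X_3)^m$ generates $\soc(R_3)$. Hence $R_3$ has a basis consisting of $e_3$ together with initial subwords of $(X_3 Y_3)^m$ and $(Y_3 X_3)^m$, and post-multiplication by arrows extends this to a basis of $e_3 A$, and in particular gives a natural basis of $e_3 A e_2$.

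First, I would use equation~(\ref{eq:3}), which after the identifications $\delta = \delta_1$ and $\varrho = \delta_2$ reads $\delta \alpha_1 = q_\gamma \alpha_2$ and $\varrho \alpha_2 = q_\beta \alpha_1$. For $m \geq 2$, Lemma~\ref{lem:5.5}(i) gives $\alpha_1 - \alpha \in e_1 J^4 e_2$ and $\alpha_2 - \bar\alpha \in e_1 J^4 e_2$, so $\delta \alpha$ and $q_\gamma \bar\alpha$ (respectively $\varrho \bar\alpha$ and $q_\beta \alpha$) agree modulo elements that lie in a suitably high power of $J$. Combining this with the identities $\gamma q_\gamma = \beta\varrho$ and $\beta q_\beta = \gamma \delta$ from~(\ref{eq:2*}) and with the explicit forms of $\beta \varrho$ and $\gamma \delta$ established in Lemma~\ref{lem:5.8}(i), I would pin down $q_\gamma$ and $q_\beta$ up to terms annihilated on the left by $\gamma$ or $\beta$, producing candidate expressions of the claimed shape for $\delta \alpha$ and $\varrho \bar\alpha$.

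Next, imitating the expansion in the proof of Lemma~\ref{lem:5.9}, I would write $\delta \alpha$ in the above basis of $e_3 A e_2$ as a linear combination of $(X_3 Y_3)^i X_3 \delta \bar\alpha$ and $(Y_3 X_3)^i Y_3 \varrho \alpha$. Post-multiplying by $\beta$ (respectively by $\gamma$) moves the equation into the one-dimensional space $\soc(R_3)\cdot\delta\alpha\beta$-part, where I can compare with the socle identity $\gamma \delta \alpha = c (X_2 Y_2)^m = c(Y_2 X_2)^m$, obtained by rotating the identity $\alpha\gamma\delta = c(XY)^m$ from the end of the proof of Lemma~\ref{lem:5.9}. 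This comparison shows that all but one coefficient vanish and the surviving one equals $c$, yielding the two equalities in (i) and placing $\delta\alpha$ and $\varrho\bar\alpha$ in $\soc_2(e_3 A)$. For part (ii), the zero relations $\delta \alpha \beta = 0$ and $\varrho \bar\alpha \gamma = 0$ follow by the argument closing the proofs of Lemmas~\ref{lem:5.8}(ii) and~\ref{lem:5.9}(ii): substituting the formula from (i) exposes a final factor $\bar\alpha\beta = \alpha p \in J^3$ (respectively $\alpha \gamma = \bar\alpha \bar q \in J^3$) which, together with the socle-position of the preceding prefix, forces the product into $J^{6m+1} = 0$.

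The main obstacle is confirming that the scalar appearing in (i) is the same $c \in K^*$ as in Lemmas~\ref{lem:5.8} and~\ref{lem:5.9}, rather than some new scalar $c_3$; this is the reason I would anchor the computation on the equality $\alpha\gamma\delta = c(XY)^m$, whose cyclic rotations fix a single constant $c$ across all three vertices and force the identities at vertex $3$ to use exactly that $c$. Beyond this coefficient-tracking, the argument is entirely parallel to the vertex-$1$ and vertex-$2$ cases already handled.
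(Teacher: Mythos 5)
Your setup is the same as the paper's (the rotation argument giving that $(X_3Y_3)^m=(Y_3X_3)^m$ spans $\soc(R_3)$, the basis of $e_3A$ by initial subwords, and the use of (\ref{eq:2*}), (\ref{eq:3}), Lemma~\ref{lem:5.5} and Lemma~\ref{lem:5.8}), and your treatment of part (ii) and of the scalar $c$ via the rotated socle identity is correct. The gap is in the coefficient-determination step of part (i). The basis of $e_3 J^5 e_2$ consists of two families of monomials, $(X_3Y_3)^iX_3\,\delta\bar\alpha$ (beginning with $\varrho$) and $(Y_3X_3)^iY_3\varrho\alpha$ (beginning with $\delta$), and the claimed formula for $\delta f(\delta)=\delta\alpha$ asserts that the second family does not occur. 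Multiplication by a single arrow cannot see both families at once: left multiplication by $\gamma$ annihilates every $(Y_3X_3)^iY_3\varrho\alpha$ (because $\gamma\delta=\gamma f(\gamma)\in\soc_2(A)$ by Lemma~\ref{lem:5.8}, and $\soc_2(A)J^2=0$) while sending $(X_3Y_3)^iX_3\delta\bar\alpha$ to $(Y_2X_2)^{i+1}$; so the identity $\gamma\cdot(\delta\alpha)=(\gamma\delta)\alpha=c(X_2Y_2)^m$ fixes the first family's coefficients but gives no information about the second. To control the second family you would need $\beta\cdot(\delta\alpha)=\beta\delta\alpha$, and there is no independent evaluation of this: $\beta\delta=\beta g(\beta)$ is just a basis monomial (the relations (\ref{eq:2*}) give $\gamma\delta_1$ and $\beta\delta_2$, not $\beta\delta_1$), so evaluating $\beta\delta\alpha$ again requires knowing $\delta\alpha$. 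Your stated plan of \emph{post}-multiplying by $\beta$ is worse still: $\delta\alpha\cdot\beta=\delta f(\delta)g(f(\delta))$ is precisely the element that part (ii) declares to be zero, and you derive (ii) from (i), so this is circular. You in fact flag the problem yourself when you say $q_\gamma$ and $q_\beta$ are pinned down only ``up to terms annihilated on the left by $\gamma$ or $\beta$'' --- those are exactly the terms that would contribute the unwanted $(Y_3X_3)^iY_3\varrho\alpha$ components, and nothing in your argument shows they vanish.

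The paper avoids this by never expanding $\delta\alpha$ directly. It uses the syzygy identities $f(\gamma)\alpha_1=q_\gamma\alpha_2$ and $f(\beta)\alpha_2=q_\beta\alpha_1$ from (\ref{eq:3}), evaluates $\beta q_\beta\alpha_1=\gamma f(\gamma)\alpha_1$ and $\gamma q_\gamma\alpha_2=\beta f(\beta)\alpha_2$ as explicit socle monomials at vertex $2$ using Lemma~\ref{lem:5.8}, cancels the leading arrow to identify $q_\beta\alpha_1$ and $q_\gamma\alpha_2$ as single monomials in $\soc_2(e_3A)$, and only then rewrites them at vertex $3$ via $f(\gamma)\bar\alpha Y_2=Y_3 f(\beta)\alpha$ and $f(\beta)\alpha X_2=X_3 f(\gamma)\bar\alpha$, replacing $\alpha_1,\alpha_2$ by $\alpha,\bar\alpha$ at the end. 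If you want to keep your basis-expansion framework, you must supply the missing input that kills the $\delta$-branch of $\delta f(\delta)$ (and the $\varrho$-branch of $\bar\delta f(\bar\delta)$); as written, your argument establishes the formulas of (i) only modulo an undetermined multiple of the other second-socle monomial.
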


\begin{proof}
(i)
Since $X_3 Y_3$ and $Y_3 X_3$ are cycles of length six
consisting of arrows of the whole $g$-orbit
$(\alpha \beta \delta \sigma \gamma \varrho)$,
using the symmetric $K$-linear form $\varphi : A \to K$
and the fact that $(X Y)^m = (Y X)^m$
generates the socle of $R$,
we conclude that $(X_3 Y_3)^m = (Y_3 X_3)^m$
generates the socle of $R_3$.
Then $R_3$ has a basis consisting of $e_3$ and
initial subwords of
$(X_3 Y_3)^m$ and $(Y_3 X_3)^m$.
It follows from
Lemma~\ref{lem:5.8}
that
\begin{align*}
  \beta f(\beta)
   &= c (Y_2 X_2)^{m-1} Y_2 \beta f(\gamma) ,
  &
  \gamma f(\gamma)
   &= c (X_2 Y_2)^{m-1} X_2 \gamma f(\beta) .
\end{align*}
Further, from the equalities
(\ref{eq:3}) we have
\begin{align*}
  f(\gamma) \alpha_1
   &= \delta_1 \alpha_1
    = q_{\gamma} \alpha_2 ,
  &
  f(\beta) \alpha_2
   &= \delta_2 \alpha_2
    = q_{\beta} \alpha_1 .
\end{align*}
Then we obtain
\begin{align*}
  \beta q_{\beta} \alpha_1
   &= \gamma f(\gamma) \alpha_1
    = c (X_2 Y_2)^{m-1} X_2 \gamma f(\beta) \alpha_1 ,
\\
  \gamma q_{\gamma} \alpha_2
   &= \beta f(\beta) \alpha_2
    = c (Y_2 X_2)^{m-1} Y_2 \beta f(\gamma) \alpha_2 ,
\end{align*}
which are elements of $\soc(e_2A)$.
Since
$\alpha_1 - \alpha$ and $\alpha_2 - \bar{\alpha}$
belong to $e_1 J^2 e_2 = e_1 J^5 e_2$,
we conclude that
\begin{align*}
  \beta q_{\beta} \alpha_1
   &= c (X_2 Y_2)^{m-1} X_2 \gamma f(\beta) \alpha
    = c (X_2 Y_2)^{m} ,
\\
  \gamma q_{\gamma} \alpha_2
   &= c (Y_2 X_2)^{m-1} Y_2 \beta f(\gamma) \bar{\alpha}
    = c (Y_2 X_2)^{m} ,
\end{align*}
Then we infer that
\begin{align*}
  q_{\beta} \alpha_1
   &= c f(\gamma) \bar{\alpha} Y_2 (X_2 Y_2)^{m-1}
    ,
  &
  q_{\gamma} \alpha_2
   &= c f(\beta) \alpha X_2 (Y_2 X_2)^{m-1}
    ,
\end{align*}
and are non-zero elements in $\soc_2(e_3 A)$.
Hence
\begin{align*}
  f(\beta) \alpha_2
   &= c f(\gamma) \bar{\alpha} Y_2 (X_2 Y_2)^{m-1}
    ,
  &
  f(\gamma) \alpha_1
   &= c f(\beta) \alpha X_2 (Y_2 X_2)^{m-1}
    .
\end{align*}
Since
$f(\beta) \alpha_2, f(\gamma) \alpha_1 \in \soc_2(e_3 A)$
and
$\alpha_1 - \alpha$, $\alpha_2 - \bar{\alpha}$
belong to $e_1 J^2 e_2 = e_1 J^5 e_2$,
we have
$f(\beta) \alpha_2 = f(\beta) \bar{\alpha}$
and
$f(\gamma) \alpha_1 = f(\gamma) \alpha$.
We note also that
$f(\gamma) \bar{\alpha} Y_2 = Y_3 f(\beta) \alpha$
and
$f(\beta) \alpha X_2 = X_3 f(\gamma) \bar{\alpha}$.
Therefore, we obtain the required equalities
\begin{align*}
  \delta f(\delta)
   &= f(\gamma) \alpha
    = c (X_3 Y_3)^{m-1} X_3 f(\gamma) \bar{\alpha}
    ,
  &
  \bar{\delta} f(\bar{\delta})
   &= f(\beta) \bar{\alpha}
    = c (Y_3 X_3)^{m-1} Y_3 f(\beta) \alpha
    ,
\end{align*}
which are non-zero elements of $\soc_2(e_3 A)$.

\smallskip

(ii)
We note that
$g(f(\delta)) = g(\alpha) = \beta$
and
$g(f(\bar{\delta})) = g(\bar{\alpha}) = \gamma$.
Then we obtain
\[
  \delta f(\delta) g\big(f(\delta)\big)
   = f(\gamma) \alpha \beta
    = c (X_3 Y_3)^{m-1} X_3 f(\gamma) \bar{\alpha} \beta
    = 0
\]
and
\[
  \bar{\delta} f(\bar{\delta}) g\big(f(\bar{\delta})\big)
   = f(\beta) \bar{\alpha} \gamma
    = c (Y_3 X_3)^{m-1} Y_3 f(\beta) \alpha \gamma
    = 0
    ,
\]
because
$f(\gamma) \bar{\alpha} \beta = f(\gamma) \alpha p$,
$f(\beta) \alpha \gamma = f(\beta) \bar{\alpha} \bar{q}\in e_3 J^6 e_3$.
\end{proof}

Summing up, we obtain that $A$ is isomorphic to
the weighted triangulation algebra
$\Lambda(Q^M,f,m_{\bullet},c_{\bullet})$,
with the weight function
$m_{\bullet} : \cO(g) \to \bN^*$
given by the integer $m \geq 2$ and
the parameter function
$c_{\bullet} : \cO(g) \to K^*$
given by the chosen scalar $c \in K^*$.

\section{Triangulation of the quiver}\label{sec:triangulation}

The aim of this section is to prove the following
theorem on the triangulation of the quiver
of a $2$-regular algebra of generalized
quaternion type, which completes the proof of the Triangulation Theorem.

\begin{theorem}
\label{th:6.1}
Let $A = K Q/I$ be a $2$-regular algebra of generalized quaternion
type whose quiver $Q$ is not the Markov quiver.
Then there is a permutation $f$  of arrows of $Q$
such that the following statements hold.
\begin{enumerate}[(i)]
 \item
  $(Q,f)$ is a triangulation quiver.
 \item
  For each arrow $\alpha$ of $Q$,
  $\alpha f(\alpha)$ occurs in a minimal
  relation of $I$.
\end{enumerate}
\end{theorem}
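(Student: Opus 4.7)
The plan is to construct $f$ by reading it off the minimal relations at each vertex, then verify the three axioms of a triangulation quiver. For a vertex $i$ with outgoing arrows $\alpha, \bar{\alpha}$, Corollary~\ref{cor:4.6} gives a basis $\{\alpha\beta + e_iJ^3,\, \bar{\alpha}\gamma + e_iJ^3\}$ of $e_iJ^2/e_iJ^3$ for uniquely determined (up to the caveat below) arrows $\beta, \gamma$ at $j = t(\alpha)$ and $k = t(\bar{\alpha})$. Define $f(\alpha) = \beta$ and $f(\bar{\alpha}) = \gamma$; the existence of the minimal relations with $\alpha f(\alpha)$ and $\bar{\alpha} f(\bar{\alpha})$ as distinguished terms comes from Propositions \ref{prop:4.2} and \ref{prop:4.3}, and by construction $s(f(\alpha)) = t(\alpha)$, giving axiom (b). The only ambiguity occurs when there is a choice of labelling of parallel arrows (loops or double arrows), and by Lemma~\ref{lem:4.7} these are heavily restricted once $Q \neq Q^M$; the border-loop case is exactly what forces $f(\alpha) = \alpha$, matching the triangulation-quiver prescription for boundary edges.

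Next I would show $f$ is a permutation. By the dual of Corollary~\ref{cor:4.6}, $\dim_K J^2 e_i/J^3 e_i = 2$, so for every arrow $\delta$ ending at $i$ there is a unique arrow $\sigma$ with $t(\sigma) = s(\delta)$ whose composition $\sigma\delta$ is a distinguished basis element of $J^2 e_i/J^3 e_i$. The symmetry of $A$ (which exchanges the roles of the second and third syzygies in Proposition~\ref{prop:4.3}) identifies $\sigma$ with $f^{-1}(\delta)$, so $f$ is injective, hence bijective.

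The heart of the proof is axiom (c), $f^3 = \mathrm{id}$, and this is where I would invest the most work. The idea is to use Proposition~\ref{prop:4.8} to track how a minimal relation propagates: starting from a relation at $i$ involving $\alpha f(\alpha)$, the proposition produces a companion minimal relation at $t(f(\alpha))$ involving $f(\alpha) \cdot (\text{some arrow})$, and by the uniqueness established in step~1 that arrow must be $f(f(\alpha)) = f^2(\alpha)$. Iterating once more produces a relation at $t(f^2(\alpha))$ whose distinguished term is $f^2(\alpha) f^3(\alpha)$, and Proposition~\ref{prop:4.8} together with Corollary~\ref{cor:4.10} (whose ``cycle of three vertices'' picture is literally the triangulation triangle) closes the loop by forcing $f^3(\alpha) = \alpha$. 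This case analysis must be carried out separately for each configuration of relations at $i$: both type C, one type C and one type Z, or both type Z; the analogue of Corollary~\ref{cor:4.10} in the mixed and pure-Z cases needs to be spelled out but follows the same scheme. Loops also need to be treated: a non-border loop $\alpha$ at $i$ will satisfy $f(\alpha) = \bar{\alpha}$ (giving a self-folded triangle $\alpha\bar{\alpha}\cdots$), while a border loop satisfies $f(\alpha) = \alpha$, in accordance with the boundary-edge rule.

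The main obstacle is the last step: verifying $f^3 = \mathrm{id}$ uniformly across all configurations. Two subtleties stand out. First, when a type Z relation $\alpha f(\alpha) \in J^3$ is present, Proposition~\ref{prop:4.8}(iii) only guarantees $f(\alpha) f^2(\alpha) \in J^3$ at the next vertex, and one must track carefully that the propagated Z-relation is the one picked out by the basis of Corollary~\ref{cor:4.6} on that vertex (and not the other independent Z-relation there); this is where the exclusion of the Markov quiver is essential, since in $Q^M$ the double arrows would allow the propagation to rotate into a different orbit. Second, one must verify that the triangles closing after three steps do not instead close after one or two steps in an inconsistent way, and that the treatment of loops and of the boundary meshes with the general case; this is controlled by Lemma~\ref{lem:4.7} and the fact, noted after Corollary~\ref{cor:2.2}, that border loops force $n_{\bar\alpha} \geq 3$.
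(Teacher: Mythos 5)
Your overall strategy — read $f$ off the minimal relations at each vertex, then verify the triangulation-quiver axioms by propagating relations with Proposition~\ref{prop:4.8} — is the same as the paper's, but there is a concrete error at the very first step that invalidates the construction as written. You define $f(\alpha)=\beta$ where $\alpha\beta+e_iJ^3$ is a basis element of $e_iJ^2/e_iJ^3$. That arrow is $g(\alpha)$, not $f(\alpha)$: in the proof of Corollary~\ref{cor:4.6} the arrows $\beta_1,\gamma_1$ \emph{involved in minimal relations} are found first (Proposition~\ref{prop:4.2}), and the basis of $e_iJ^2/e_iJ^3$ is then given by the \emph{complementary} arrows $\beta=\bar{\beta}_1$, $\gamma=\bar{\gamma}_1$. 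With your definition, in the pure type~Z configuration ($\alpha\beta_1\in J^3$, $\bar\alpha\gamma_1\in J^3$) the product $\alpha f(\alpha)=\alpha\beta$ is a surviving basis element and occurs in no minimal relation, so conclusion~(ii) of the theorem fails outright; and for a border loop your recipe would give $f(\alpha)=\bar\alpha$ rather than $f(\alpha)=\alpha$. The correct local definition is $f(\alpha)=\beta_1=\bar{\beta}$, i.e.\ the arrow whose composition with $\alpha$ is forced into a minimal relation, and the basis property of $e_iJ^2/e_iJ^3$ is the defining property of $g=\bar f$, exactly as recorded at the start of Section~\ref{sec:localpresentation}. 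Your later paragraphs silently switch back to the correct convention (e.g.\ border loops forcing $f(\alpha)=\alpha$), so the write-up is internally inconsistent.

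A second, independent gap is the claim that $\beta,\gamma$ are ``uniquely determined'' up to relabelling of parallel arrows. When two minimal relations of type~C start at $i$ (configuration (d) in Section~\ref{sec:triangulation}), all four products $\alpha\beta$, $\alpha\beta_1$, $\bar\alpha\gamma$, $\bar\alpha\gamma_1$ occur in minimal relations and either pair gives a basis of $e_iJ^2/e_iJ^3$; the local data at $i$ does \emph{not} determine $f(\alpha)$. This is precisely why the paper needs Propositions~\ref{prop:6.5} and~\ref{prop:6.6}: one must propagate the relations around the whole configuration (which turns out to be the almost tetrahedral or tetrahedral quiver), show which compositions are forced into $J^3$ versus independent, and only then exhibit a \emph{choice} of $f$ that is simultaneously a product of $3$-cycles and satisfies~(ii). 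Your plan of ``define $f$ locally, then check $f^3=\mathrm{id}$'' cannot succeed without this step, because in these cases no canonical local definition exists. The remaining outline of $f^3=\mathrm{id}$ via Proposition~\ref{prop:4.8} and Corollary~\ref{cor:4.10} is the right idea and matches Lemma~\ref{lem:6.4}, but it is exactly the part you leave as ``to be spelled out,'' and it occupies most of the paper's Section~\ref{sec:triangulation}.
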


We divide the proof into three lemmas
and two propositions.
In order to simplify the notation we will
make the following convention for
minimal relations of type C.
Namely, if
$c_1 \alpha_1 \beta_1 + c_2 \alpha_2 \beta_2 \in J^3$
is a minimal relation of type C in $A$,
we will write simply
$\alpha_1 \beta_1 + \alpha_2 \beta_2 \in J^3$.
In the combinatorics below
the scalars $c_1$ and $c_2$ of such relation
do not play any role.

We fix a vertex $i$ of $Q$,
denote by
$\alpha, \bar{\alpha}$ the arrows in $Q$ starting at $i$,
and set $j = t(\alpha)$, $k = t(\bar{\alpha})$.
It follows from
Corollary~\ref{cor:4.6}
that there are two different arrows
$\beta$, $\gamma$ in $Q$ with
$s(\beta) = j$ and $s(\gamma) = k$
such that
$\alpha \beta + e_i J^3$,
$\bar{\alpha} \gamma + e_i J^3$
form a basis of $e_i J^2/e_i J^3$.
We define
$\beta_1 = \bar{\beta}$
and
$\gamma_1 = \bar{\gamma}$,
and $x = t(\gamma_1)$, $y = t(\beta_1)$.
It follows from
Corollary~\ref{cor:4.4}
and
Proposition~\ref{prop:4.8}
that we have the following possibilities
for the minimal relations starting at $i$:
\begin{enumerate}[(a)]
 \item
  $\alpha \beta_1 \in J^3$
  and
  $\bar{\alpha} \gamma_1 \in J^3$;
 \item
  $\alpha \beta_1 + \bar{\alpha} \gamma \in J^3$
  and
  $\bar{\alpha} \gamma_1 \in J^3$;
 \item
  $\alpha \beta + \bar{\alpha} \gamma_1 \in J^3$
  and
  $\alpha \beta_1 \in J^3$;
 \item
  $\alpha \beta_1 + \bar{\alpha} \gamma \in J^3$
  and
  $\alpha \beta + \bar{\alpha} \gamma_1 \in J^3$.
\end{enumerate}

Recall that  there are two different arrows
$\delta_1 (=\delta), \delta_2 (=\delta^*)$ ending at $i$,
and with $x = s(\delta_1)$, $y = s(\delta_2)$.
We first analyse the cases when $\alpha$ is a loop.

\begin{lemma}
\label{lem:6.2}
Assume that $\alpha$ is a loop with $\alpha^2 \in J^3$.
Then $Q$ contains a subquiver of the form
\[
  \xymatrix@R=3.pc@C=1.8pc{
    & i
     \ar[rd]^{\bar{\alpha}}
     \ar@(lu,ur)[]^{\alpha}
     \\
    x \ar[ru]^{\delta_1}
    && k \ar[ll]^{\gamma_1}
  }
\]
and
$\bar{\alpha} \gamma_1, \gamma_1 \delta_1 \in J^3$, and 
$\delta_1\ba$ is either in $J^3$ or part of a type $C$ relation. In both
cases we may define
\begin{align*}
 f(\alpha) &= \alpha, &
 f(\bar{\alpha}) &= \gamma_1, &
 f(\gamma_1) &= \delta_1, &
 f(\delta_1) &= \bar{\alpha}.
\end{align*}
\end{lemma}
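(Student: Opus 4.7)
The strategy is to identify the minimal relations starting at $i$, deduce the claimed subquiver structure, propagate to obtain the relations ending at $i$, and finally prove that $\delta_1\bar{\alpha}$ lies in a minimal relation so that the prescribed permutation $f$ satisfies part~(ii) of the Triangulation Theorem.

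\emph{Relations at $i$ and the subquiver.} With $\alpha$ a loop at $i$, $j = t(\alpha) = i$. In the basis of $e_iJ^2/e_iJ^3$ provided by Corollary~\ref{cor:4.6}(i), the arrow $\beta$ cannot equal $\alpha$ since $\alpha^2 \in J^3$, so $\beta = \bar{\alpha}$ and $\beta_1 = \alpha$. Because $\alpha$ is a loop at $i$ and $Q \neq Q^M$, Lemma~\ref{lem:4.7}(ii) rules out any type~C relation at $i$, and Corollary~\ref{cor:4.4}(i) then forces the two minimal relations starting at $i$ to be of type~Z: $\alpha^2 \in J^3$ (by hypothesis) and $\bar{\alpha}\gamma_1 \in J^3$. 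Since $\beta_1 = \alpha$ is a loop, $y = t(\beta_1) = i$, so $\delta^* = \delta_2$ starts and ends at $i$, forcing $\delta_2 = \alpha$ (the only loop at $i$ compatible with $\alpha^2 \in J^3$); the remaining arrow $\delta_1$ ending at $i$ starts at some vertex $x$. This yields the claimed subquiver. Proposition~\ref{prop:4.8}(iii) applied to the pair of type~Z relations at $i$ propagates them to type~Z relations ending at $i$: $\beta_1\delta_2 = \alpha^2 \in J^3$ (already known) and $\gamma_1\delta_1 \in J^3$.

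\emph{Analysis of $\delta_1\bar{\alpha}$.} The dual form of Corollary~\ref{cor:4.6}(ii) at $i$ yields arrows $\sigma, \varrho$ such that $\{\sigma\delta_1 + J^3e_i,\ \varrho\alpha + J^3e_i\}$ is a basis of $J^2e_i/J^3e_i$; because $\gamma_1\delta_1 \in J^3$ and $\alpha^2 \in J^3$, we must have $\sigma \neq \gamma_1$ and $\varrho \neq \alpha$, so $\varrho = \delta_1$ and therefore $\delta_1\alpha \notin J^3$. Proposition~\ref{prop:4.2}(i) applied at $x$ gives an arrow $\beta' \in \{\alpha, \bar{\alpha}\}$ such that $\delta_1\beta'$ is involved in a minimal relation; we claim $\beta' = \bar{\alpha}$. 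Indeed, suppose $\beta' = \alpha$: since $\delta_1\alpha \notin J^3$, the relation involving $\delta_1\alpha$ must be of type~C, of the form $c_1\delta_1\alpha + c_2\bar{\delta}_1\gamma' \in J^3$ with $t(\gamma') = t(\alpha) = i$. The arrows ending at $i$ are $\delta_1$ and $\alpha$: the choice $\gamma' = \delta_1$ forces $\bar{\delta}_1$ to be a loop at $x$, which is then prohibited by Lemma~\ref{lem:4.7}(ii) applied at $x$, while the choice $\gamma' = \alpha$ forces $\bar{\delta}_1 : x \to i$, contradicting $\bar{\delta}_1 \neq \delta_1$ and the fact that $\alpha$ is a loop rather than an arrow from $x$. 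Hence $\beta' = \bar{\alpha}$, so $\delta_1\bar{\alpha}$ is involved in a minimal relation, either of type~Z ($\delta_1\bar{\alpha} \in J^3$) or as a summand of a type~C relation.

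With these facts in hand, the permutation $f(\alpha) = \alpha$, $f(\bar{\alpha}) = \gamma_1$, $f(\gamma_1) = \delta_1$, $f(\delta_1) = \bar{\alpha}$ satisfies $f^3 = \mathrm{id}$ on the $3$-cycle orbit $(\bar{\alpha}, \gamma_1, \delta_1)$ together with the fixed loop $\alpha$, and each $\theta f(\theta)$ occurs in a minimal relation: $\alpha f(\alpha) = \alpha^2$, $\bar{\alpha} f(\bar{\alpha}) = \bar{\alpha}\gamma_1$, $\gamma_1 f(\gamma_1) = \gamma_1\delta_1$ are type~Z, and $\delta_1 f(\delta_1) = \delta_1\bar{\alpha}$ is type~Z or type~C. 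The main obstacle is the second paragraph: extracting $\delta_1\alpha \notin J^3$ from the dual basis at $i$ and then excluding the case $\beta' = \alpha$ via a combination of Lemma~\ref{lem:4.7} and the rigidity imposed by $2$-regularity on the arrows ending at $i$.
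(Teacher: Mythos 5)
Your proof is correct, and its first half coincides with the paper's argument: Lemma~\ref{lem:4.7}(ii) excludes type C relations at $i$, so both minimal relations starting at $i$ are of type Z ($\alpha^2\in J^3$ and $\bar{\alpha}\gamma_1\in J^3$), and Proposition~\ref{prop:4.8}(iii) propagates them to $\gamma_1\delta_1\in J^3$. For the remaining claim about $\delta_1\bar{\alpha}$ you take a genuinely different route. The paper invokes Corollary~\ref{cor:4.4}/Proposition~\ref{prop:4.3} at the vertex $x$: since $\gamma_1$ ends at $x$, one of the two minimal relations starting at $x$ goes from $x$ to $k$, and the only length-two path from $x$ to $k$ beginning with $\delta_1$ is $\delta_1\bar{\alpha}$. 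You instead extract $\delta_1\alpha\notin J^3$ from the basis of $J^2e_i/J^3e_i$ given by Corollary~\ref{cor:4.6}(ii), and then rule out $\delta_1\alpha$ from occurring in any type C relation by analysing the possible second summand $\bar{\delta}_1\gamma'$ (either $\bar{\delta}_1$ is a loop at $x$, which Lemma~\ref{lem:4.7}(ii) forbids in the presence of a type C relation at $x$, or $\bar{\delta}_1$ would have to be an arrow $x\to i$, which $2$-regularity excludes); this forces the path $\delta_1\beta'$ of Proposition~\ref{prop:4.2}(i) to be $\delta_1\bar{\alpha}$. Your version is more detailed and in fact closes a point the paper leaves implicit, namely why the relation from $x$ to $k$ cannot consist solely of $\bar{\delta}_1$-paths. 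The only omission is the one-line check that $i$, $k$, $x$ are pairwise distinct (otherwise the $2$-regular subquiver on $\{i,k\}$ would be all of $Q$, contradicting $|Q_0|\geq 3$), which is needed to justify the picture as drawn.
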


\begin{proof}
We note that $i = j$ and $k \neq i$, because $Q$ is $2$-regular
with at least three vertices.
Moreover, $\alpha = \beta_1$ and $\bar{\alpha} = \beta$.
By Lemma~\ref{lem:4.7} there is no type C relation starting at $i$.
Hence $\bar{\alpha} \gamma_1 \in J^3$.
Then it follows from
Proposition~\ref{prop:4.8}
that $\gamma_1 \delta_1 \in J^3$.
Observe also that $x \neq k$, because $Q$ is $2$-regular
with at least three vertices.
Finally, by
Proposition~\ref{prop:4.3},
we have also a minimal relation from $x$ to $k$ invoking
a path of length $2$ starting from $\delta_1$,
and so either $\delta_1 \bar{\alpha} \in J^3$, or
there is a type $C$ relation from $x$ to $k$.
Therefore, we may define in both cases
$f(\alpha) = \alpha$,
$f(\bar{\alpha}) = \gamma_1$,
$f(\gamma_1) = \delta_1$,
$f(\delta_1) = \bar{\alpha}$.
\end{proof}

\begin{lemma}
\label{lem:6.3}
Assume
$\alpha$ is a loop with $\alpha^2 \notin J^3$.
Then $Q$ contains a subquiver
\[
   \xymatrix{
      i \ar@(dl,ul)[]^{\alpha} \ar@/^1.5ex/[r]^{\bar{\alpha}}
       & k \ar@/^1.5ex/[l]^{\gamma_1}
    }
\]
and
$\alpha \bar{\alpha}, \bar{\alpha} \gamma_1, \gamma_1 \alpha$
are in $J^3$.
Therefore, we may define
\begin{align*}
 f(\alpha) &= \bar{\alpha}, &
 f(\bar{\alpha}) &= \gamma_1, &
 f(\gamma_1) &= \alpha.
\end{align*}
\end{lemma}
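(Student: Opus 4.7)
The plan is to deduce the structure of the minimal relations at $i$ from Corollary~\ref{cor:4.4}, Lemma~\ref{lem:4.7} and Proposition~\ref{prop:4.8}, and then to identify arrows using $2$-regularity together with the standing assumption $|Q_0| \geq 3$. Since $\alpha$ is a loop, $j = t(\alpha) = i$, so the arrows starting at $j$ are $\{\alpha, \bar{\alpha}\}$; in particular $\{\beta, \beta_1\} = \{\alpha, \bar{\alpha}\}$.

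First I would rule out relations of type C at $i$. By Corollary~\ref{cor:4.4} there are exactly two independent minimal relations starting at $i$, so if one of them were of type C we could invoke Lemma~\ref{lem:4.7}; but part (ii) of that lemma forbids loops at $i$, contradicting that $\alpha$ is a loop. Hence both relations are of type Z, of the form $\alpha \beta_1 \in J^3$ and $\bar{\alpha} \gamma_1 \in J^3$. The assumption $\alpha^2 \notin J^3$ then forces $\beta_1 = \bar{\alpha}$ (and thus $\beta = \alpha$), giving $\alpha \bar{\alpha} \in J^3$ and $y = t(\beta_1) = k$.

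Next I would apply Proposition~\ref{prop:4.8}(iii) to obtain the two type Z relations ending at $i$: $\beta_1 \delta^* \in J^3$ and $\gamma_1 \delta \in J^3$. Since $\alpha$ is a loop, it is also one of the two arrows ending at $i$. I claim $\alpha = \delta$: otherwise $\alpha = \delta^*$ gives $i = s(\delta^*) = y = k$, which would make $\bar{\alpha}$ a second loop at $i$ and force $|Q_0| = 1$ by $2$-regularity, contradicting $|Q_0| \geq 3$. Thus $x = s(\delta) = i$ and the arrow $\gamma_1 : k \to x$ ends at $i$. Since $\delta^* : y = k \to i$ also ends at $i$ and is distinct from $\alpha$, $2$-regularity yields $\gamma_1 = \delta^*$. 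Consequently $\gamma_1 \delta = \gamma_1 \alpha \in J^3$.

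Combining, the relations $\alpha \bar{\alpha}, \bar{\alpha} \gamma_1, \gamma_1 \alpha \in J^3$ establish the claimed subquiver and make the definition $f(\alpha) = \bar{\alpha}$, $f(\bar{\alpha}) = \gamma_1$, $f(\gamma_1) = \alpha$ consistent (in particular $f^3 = \mathrm{id}$ on this orbit). The main obstacle is the chain of arrow identifications, especially $\gamma_1 = \delta^*$; this rests on carefully tracking sources and targets of the arrows produced by Proposition~\ref{prop:4.8}(iii) and on excluding the degenerate configuration $k = i$ via the hypothesis $|Q_0| \geq 3$.
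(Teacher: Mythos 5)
Your proposal is correct and follows essentially the same route as the paper: Lemma~\ref{lem:4.7} excludes type C relations at $i$, the hypothesis $\alpha^2 \notin J^3$ forces $\beta_1 = \bar{\alpha}$ (hence $\alpha\bar{\alpha} \in J^3$ and $y = k$), the standing assumption $|Q_0| \geq 3$ rules out $k = i$, and Proposition~\ref{prop:4.8}(iii) together with $2$-regularity identifies $\gamma_1$ as the second arrow ending at $i$ and gives $\gamma_1\alpha \in J^3$. Your extra bookkeeping (deciding whether $\alpha$ equals $\delta$ or $\delta^*$) is just a slightly more explicit version of the paper's observation that $x = t(\gamma_1) = i$.
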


\begin{proof}
It follows from the assumption that $j = i$,
$\alpha = \beta$ and $\bar{\alpha} = \beta_1$.
We know from Lemma~\ref{lem:4.7} that no relation of type C can 
start or end at vertex $i$, hence  $\alpha\bar{\alpha} \in J^3$ and 
$\bar{\alpha} \gamma_1 \in J^3$ and $k=y$. Then $k\neq i$ since otherwise we
would have a loop at $i$. Hence $x=t(\gamma_1)= i$ and we have
a type Z relation
$\gamma_1 \alpha \in J^3$.
Therefore, we may define
$f(\alpha) = \bar{\alpha}$,
$f(\bar{\alpha}) = \gamma_1$,
$f(\gamma_1) = \alpha$.
\end{proof}

From now we will assume that $\alpha$ and $\bar{\alpha}$
are not loops.

\begin{lemma}
\label{lem:6.4}
Assume that at most one minimal relation of type C
starts at any of the vertices $i,j,k,x,y$.
Then we may define
\begin{align*}
 f(\alpha) &= \beta_1, &
 f(\beta_1) &= \delta_2, &
 f(\delta_2) &= \alpha, &
 f(\bar{\alpha}) &= \gamma_1, &
 f(\gamma_1) &= \delta_1, &
 f(\delta_1) &= \bar{\alpha}.
\end{align*}
\end{lemma}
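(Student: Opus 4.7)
The plan is to use the classification of the minimal relations starting at $i$ given in cases (a)--(d) above, to show that the hypothesis excludes case (d), and then to propagate these relations around using Proposition~\ref{prop:4.8} in order to verify both that the six named arrows are distinct and that each proposed pair $\alpha f(\alpha)$ appears in a minimal relation of $I$.

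First I would observe that case (d) contains two independent type $C$ relations starting at $i$, which is forbidden by the hypothesis. Hence exactly one of (a), (b), (c) occurs at $i$, and in each case both of the paths $\alpha\beta_1$ and $\bar{\alpha}\gamma_1$ appear in a minimal relation of $I$ (either as a type $Z$ monomial or as a type $C$ summand). This already settles property (ii) of the Triangulation Theorem for the two arrows $\alpha,\bar{\alpha}$ starting at $i$.

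Next I would apply Proposition~\ref{prop:4.8} to propagate these relations into relations ending at $i$. In case (a), part (iii) gives $\beta_1\delta_2,\gamma_1\delta_1\in J^3$; in case (b), part (ii) gives $\beta_1\delta_2\in J^3$ together with a type $C$ relation of the form $c_1\gamma_1\delta_1+c_2\gamma\delta_2\in J^3$; case (c) is handled symmetrically by interchanging $\alpha$ and $\bar{\alpha}$. In all three cases $\beta_1\delta_2$ and $\gamma_1\delta_1$ are summands of minimal relations, which gives (ii) for the arrows $\beta_1$ and $\gamma_1$. Repeating the same analysis at the vertices $x$ and $y$ (using Corollary~\ref{cor:4.4} and Proposition~\ref{prop:4.8} at those vertices, noting that the hypothesis of the lemma was imposed exactly so that this second application is again in one of the three allowed configurations) yields minimal relations making $\delta_2\alpha$ and $\delta_1\bar{\alpha}$ summands of minimal relations as well, covering the last two of the six arrows.

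Finally I would verify that the six arrows $\alpha,\bar{\alpha},\beta_1,\gamma_1,\delta_1,\delta_2$ are pairwise distinct, so that the prescribed formulas unambiguously define $f$ as a permutation on them with $f^3$ equal to the identity; condition $s(f(\theta))=t(\theta)$ being then automatic from how $\beta_1,\gamma_1,\delta_1,\delta_2$ were chosen. Since $\alpha$ and $\bar{\alpha}$ are assumed not to be loops, any coincidence such as $\delta_2=\alpha$, $\beta_1=\delta_1$, or $\gamma_1=\delta_2$ would either force one of the remaining arrows to be a loop or would produce a $2$-regular subquiver on at most two vertices, contradicting $|Q_0|\ge 3$; the remaining potential identifications are excluded in the same spirit. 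The main obstacle I anticipate is the bookkeeping in cases (b) and (c): one must check that the additional type $C$ relation produced by Proposition~\ref{prop:4.8} at $i$ does not secretly create a second type $C$ relation at one of $j,k,x,y$ that would contradict the standing hypothesis; confirming this consistency around the full triangle is where the argument is most delicate.
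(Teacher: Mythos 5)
Your reduction to cases (a)--(c) and the propagation via Proposition~\ref{prop:4.8} correctly establishes that $\alpha\beta_1$, $\bar{\alpha}\gamma_1$, $\beta_1\delta_2$ and $\gamma_1\delta_1$ occur in minimal relations, and this matches the paper's first step. The gap is in the last step, where you claim that ``repeating the same analysis at the vertices $x$ and $y$'' yields that $\delta_2\alpha$ and $\delta_1\bar{\alpha}$ occur in minimal relations. Running the case analysis at $y$ only tells you that $\delta_2$ followed by \emph{one} of the two arrows $\alpha,\bar{\alpha}$ leaving $i$ is involved in a minimal relation starting at $y$; nothing in the local data at $y$ decides whether that arrow is $\alpha$ or $\bar{\alpha}$, and if it were $\bar{\alpha}$ you would be forced to set $f(\delta_2)=\bar{\alpha}$, clashing with $f(\delta_1)=\bar{\alpha}$. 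To pin this down the paper has to bring in the \emph{second} minimal relation starting at $j$ (respectively at $k$): it introduces the arrow $\mu$ at $t(\beta)$ and the second arrow $\varrho$ ending at $j$, distinguishes the subcases $\beta\mu\in J^3$ versus $\beta\mu+\beta_1\bar{\delta}_2\in J^3$, and propagates each by Proposition~\ref{prop:4.8} to the relations ending at $j$; the relation from $y$ to $j$ obtained this way explicitly contains $\delta_2\alpha$. In the type~C subcase one further needs the final statement of Lemma~\ref{lem:4.9}, applied at the vertex $y$, to conclude $\bar{\delta}_2\bar{\varrho}\in J^3$ and hence that the type~Z relation at $y$ does not involve $\delta_2$ --- only then is $f(\delta_2)=\alpha$ forced. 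None of this is visible in your sketch, and your anticipated ``main obstacle'' (a hidden second type~C relation at $j,k,x,y$) is not where the difficulty actually lies, since that is excluded outright by the hypothesis.

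A second, smaller omission: you assume throughout that the six arrows are determined by the geometry at $i$ alone, but the paper must treat separately the case where $\alpha,\bar{\alpha}$ are double arrows (so $j=k$); there the argument is transported to the vertex $j$, using that $Q$ is not the Markov quiver so that the arrows out of $j$ are not double arrows. Your distinctness discussion shows the six arrows are pairwise different when $\alpha,\bar{\alpha}$ are not loops, which is fine, but it does not substitute for this reduction.
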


\begin{proof}
We consider two cases.
We assume first that $\alpha, \bar{\alpha}$
are not double arrows.

\smallskip

(A)
Assume that the minimal relations from $i$ are of type Z,
that is, we have
$\alpha \beta_1 \in J^3$ and $\bar{\alpha} \gamma_1 \in J^3$.
Then by
Proposition~\ref{prop:4.8},
we have minimal relations
$\beta_1 \delta_2 \in J^3$ and $\gamma_1 \delta_1 \in J^3$
of type Z.
We must therefore define
\begin{align*}
 f(\alpha) &= \beta_1, &
 f(\bar{\alpha}) &= \gamma_1, &
 f(\beta_1) &= \delta_2, &
 f(\gamma_1) &= \delta_1.
\end{align*}
[In the case when double arrows end at $i$ this may have involved
a choice for $\delta_1, \delta_2$.] It remains to show that
$f(\delta_2) = \alpha$ and $f(\delta_1) = \bar{\alpha}$.

\smallskip

(i)
Consider the minimal relations from vertex $j$,
one of them is $\beta_1 \delta_2 \in J^3$.
Let $\mu$ be the arrow starting at $t(\beta)$
such that the other minimal relation from $j$
is either
\[
 \beta \mu \in J^3
 \qquad
 \qquad
 \quad
   \mbox{or}
 \qquad
 \qquad
 \quad
 \beta \mu + \beta_1 \bar{\delta}_2 \in J^3
 .
\]
The arrows  ending at $j$ are $\alpha$ together with $\varrho$
starting at $t(\mu)$.
We note that $t(\mu) \neq i$, because there are no double
arrows from $i$.

\smallskip

(i.1)
Suppose the minimal relations from $j$ are
$\beta \mu \in J^3$ and $\beta_1 \delta_2 \in J^3$.
These imply minimal relations
$\mu \varrho \in J^3$ and $\delta_2 \alpha \in J^3$.
So we must define $f(\delta_2) = \alpha$ as required.

\smallskip

(i.2)
Suppose the minimal relations from $j$ are
\[
 \beta \mu + \beta_1 \bar{\delta}_2 \in J^3
 \qquad
 \qquad
 \quad
   \mbox{and}
 \qquad
 \qquad
 \quad
 \beta_1 \delta_2 \in J^3
 .
\]
Then it follows from
Proposition~\ref{prop:4.8}
that the minimal relations ending at $j$ are
\[
 \bar{\delta}_2 \varrho + \delta_2 \alpha \in J^3
 \qquad
 \qquad
 \quad
   \mbox{and}
 \qquad
 \qquad
 \quad
 \mu \varrho \in J^3
 .
\]
We have the relation $\mu \varrho \in J^3$ ending at $j$
and one relation of type $C$ from $y$ to $j$.
Then it follows from the assumption and the final part of
Lemma~\ref{lem:4.9}
(for the vertex $y$) that $\bar{\delta}_2 \bar{\varrho} \in J^3$.
This means that we must define $f(\delta_2) = \alpha$.

\smallskip

(ii)
Using the same arguments and the minimal relations starting from $k$
one shows that $f(\delta_1) = \bar{\alpha}$.

\smallskip

(B)
Assume that one of the minimal relations from $i$ is of type C.
Without loss of generality we may assume that
\[
 \alpha \beta_1 + \bar{\alpha} \gamma \in J^3
 \qquad
 \qquad
 \quad
   \mbox{and}
 \qquad
 \qquad
 \quad
 \bar{\alpha} \gamma_1 \in J^3
 .
\]
Then it follows from
Proposition~\ref{prop:4.8}
that we have minimal relations
\[
  \beta_1 \delta_2  \in J^3
 \qquad
 \qquad
 \quad
   \mbox{and}
 \qquad
 \qquad
 \quad
  \gamma \delta_2 + \gamma_1 \delta_1 \in J^3
 .
\]
We must therefore define
\begin{align*}
 f(\alpha) &= \beta_1, &
 f(\beta_1) &= \delta_2, &
 f(\bar{\alpha}) &= \gamma_1.
\end{align*}
It remains to show that
$f(\gamma_1) = \delta_1$,
$f(\delta_2) = \alpha$,
$f(\delta_1) = \bar{\alpha}$.

\smallskip

(1)
The minimal relations from $j$
are $\beta_1 \delta_2 \in J^3$
and either
\[
 \beta \mu \in J^3
 \qquad
 \qquad
 \quad
   \mbox{or}
 \qquad
 \qquad
 \quad
 \beta \mu + \beta_1 \bar{\delta}_2 \in J^3
 ,
\]
where $\mu$ is an arrow starting at $t(\beta)$.
The arrows  ending at $j$ are $\alpha$ and
an arrow $\varrho$ with $s(\varrho) = t(\mu)$.
Since $\alpha, \bar{\alpha}$ are not double arrows
we have $t(\mu) \neq i$.
Then it follows from
Proposition~\ref{prop:4.8}
that the minimal relations ending at $j$ are
either
\begin{align*}
 \tag{a}
 \label{eq:a}
 \delta_2 \alpha \in J^3
 \qquad
 \qquad
 \quad
   \mbox{and}
 \qquad
 \qquad
 \quad
 \mu \varrho \in J^3
 ,
\end{align*}
or
\begin{align*}
 \tag{b}
 \label{eq:b}
 \delta_2 \alpha + \bar{\delta}_2 \varrho \in J^3
 \qquad
 \qquad
 \quad
   \mbox{and}
 \qquad
 \qquad
 \quad
 \mu \varrho \in J^3
 .
\end{align*}
If (\ref{eq:a}) holds, we have to define
$f(\delta_2) = \alpha$.
Assume that (\ref{eq:b}) holds.
In this case, we have one minimal relation of type C
from $y$ to $j$.
We know that $\mu \varrho \in J^3$.
Then, by the final part of
Lemma~\ref{lem:4.9},
we obtain
$\bar{\delta}_2 \bar{\varrho} \in J^3$.
Therefore, we must define $f(\delta_2) = \alpha$.

\smallskip

(2)
The minimal relations from $k$ are the known
relation of type C and, by assumption, a
relation of type Z.
So we have
\[
  \gamma \bar{\delta}_2  \in J^3
 \qquad
 \qquad
 \quad
   \mbox{and}
 \qquad
 \qquad
 \quad
  \gamma \delta_2 + \gamma_1 \delta_1 \in J^3
 .
\]
Therefore, we must then define
$f(\gamma_1) = \delta_1$ as required.
As well the above relations imply
that $\delta_1 \bar{\alpha} \in J^3$,
using
Proposition~\ref{prop:4.8}.
Hence we must define $f(\delta_1) = \bar{\alpha}$.

\smallskip

Finally, if $\alpha, \bar{\alpha}$
are double arrows, then the arrows
starting from $j = k$
are not double arrows,
because the quiver $Q$ is not the Markov quiver.
We replace $i$ by $j$ and use the proof of the
previous case.
\end{proof}

\begin{proposition}
\label{prop:6.5}
Assume that two minimal relations of type C
start at the vertex $i$ but from any of the vertices
$j,k,x,y$ at most one minimal relation of type C starts.
Then the following statements hold.
\begin{enumerate}[(i)]
 \item
  One of $\beta \bar{\delta}_1$ and $\beta_1 {\delta}_2$
  is in $J^3$.
 \item
  We may assume $\beta \bar{\delta}_1 \in J^3$.
 \item
  There are arrows $\xi$ from $w = t(\bar{\delta}_1)$ to $j$
  and $\omega$ from $t = t(\bar{\delta}_2)$ to $k$,
  and the following elements belong to $J^3$
  \begin{align*}
   \beta \bar{\delta}_1,
   &&
   \delta_2 \alpha,
   &&
   \delta_1 \bar{\alpha},
   &&
   \gamma \bar{\delta}_2
   &&
   \omega \gamma,
   &&
   \xi \beta .
  \end{align*}
 \item
  The part of $f$ is defined as product of $3$-cycles
  \[
    (\beta_1 \, \delta_2 \, \alpha)
    (\gamma_1 \, \delta_1 \, \bar{\alpha})
    (\beta \, \bar{\delta}_1 \, \xi)
    (\gamma \, \bar{\delta}_2 \, \omega)
    .
  \]
\end{enumerate}
\end{proposition}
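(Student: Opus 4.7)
The plan is to apply Proposition~\ref{prop:4.8} iteratively to propagate the two type-$C$ relations at $i$ outward, using the at-most-one-type-$C$ hypothesis at each neighbouring vertex to eliminate alternative configurations and thereby pin down the permutation $f$. Since two type-$C$ relations start at $i$ we are in case (d) of the classification, so the minimal relations at $i$ may be written as $\alpha\beta_1 + \bar{\alpha}\gamma \in J^3$ and $\alpha\beta + \bar{\alpha}\gamma_1 \in J^3$. Proposition~\ref{prop:4.8}(i) immediately supplies type-$C$ relations $\beta\delta_1 + \beta_1\delta_2 \in J^3$ at $j$ and $\gamma_1\delta_1 + \gamma\delta_2 \in J^3$ at $k$, both ending at $i$; by our hypothesis these are the only type-$C$ relations starting at $j$ or at $k$.

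For (i) and (ii), Corollary~\ref{cor:4.4}(i) forces a second independent minimal relation from $j$, which must then be of type $Z$; the only length-two paths from $j$ not already occurring in the above $C$-relation are $\beta\bar{\delta}_1$ and $\beta_1\bar{\delta}_2$, so one of these lies in $J^3$. The relabelling $\beta\leftrightarrow\beta_1$, $\gamma\leftrightarrow\gamma_1$, $\delta_1\leftrightarrow\delta_2$ leaves the pair of type-$C$ relations at $i$ invariant (it merely permutes the summands of each), so I may assume $\beta\bar{\delta}_1 \in J^3$.

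For (iii) I apply Proposition~\ref{prop:4.8}(ii) at $j$ using the pair $(\beta\delta_1+\beta_1\delta_2,\ \beta\bar{\delta}_1)$ to obtain a type-$C$ relation $\bar{\delta}_1\xi + \delta_1\alpha \in J^3$ at $x$ (where $\xi$ is an arrow $w\to j$ with $w=t(\bar{\delta}_1)$) together with the type-$Z$ relation $\delta_2\alpha \in J^3$ at $y$. The main obstacle is then to show that the second relation at $k$ is $\gamma\bar{\delta}_2$ rather than the a priori possible $\gamma_1\bar{\delta}_1$: the latter alternative would, via a second application of Proposition~\ref{prop:4.8}(ii) at $k$, yield a further type-$C$ relation starting at $x$, contradicting the at-most-one-$C$ hypothesis at that vertex. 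Hence $\gamma\bar{\delta}_2 \in J^3$, and the corresponding application of Proposition~\ref{prop:4.8}(ii) at $k$ produces $\bar{\delta}_2\omega + \delta_2\bar{\alpha}\in J^3$ at $y$ (with $\omega\colon t\to k$, $t=t(\bar{\delta}_2)$) together with the type-$Z$ relation $\delta_1\bar{\alpha}\in J^3$ at $x$. Final applications of Proposition~\ref{prop:4.8}(ii) at $x$ and at $y$, using the newly-obtained $C$/$Z$ pairs there, deliver $\xi\beta\in J^3$ and $\omega\gamma\in J^3$, completing the list in (iii).

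For (iv) each of these relations directly forces an $f$-value: the six type-$C$ relations yield $f(\alpha)=\beta_1$, $f(\bar{\alpha})=\gamma_1$, $f(\beta_1)=\delta_2$, $f(\gamma_1)=\delta_1$, $f(\bar{\delta}_1)=\xi$, $f(\bar{\delta}_2)=\omega$, while the six type-$Z$ relations yield $f(\beta)=\bar{\delta}_1$, $f(\gamma)=\bar{\delta}_2$, $f(\delta_1)=\bar{\alpha}$, $f(\delta_2)=\alpha$, $f(\xi)=\beta$, $f(\omega)=\gamma$. These twelve assignments are precisely the four disjoint $3$-cycles listed in the statement, and the critical structural point is that the elimination step in (iii) couples the $j$- and $k$-sides of the analysis through the shared vertex $x$; without the uniqueness-of-$C$ hypothesis there this coupling would fail and the $f$-values on the $\gamma$-side could not be read off consistently.
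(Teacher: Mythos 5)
Your proof is correct and follows the same overall route as the paper's: obtain the type C relations at $j$ and $k$ from Proposition~\ref{prop:4.8}(i), use Corollary~\ref{cor:4.4} to force the second relation at $j$ to be of type Z (note that item (i) of the statement should read $\beta_1\bar{\delta}_2$ rather than $\beta_1\delta_2$, which is what you implicitly and correctly assumed), and then propagate around the vertices $j,x,k,y$ by repeated applications of Proposition~\ref{prop:4.8}(ii). The one place where you genuinely diverge is the determination of the type Z relation at $k$: the paper invokes the final statement of Lemma~\ref{lem:4.9}, whereas you suppose $\gamma_1\bar{\delta}_1\in J^3$, feed this into Proposition~\ref{prop:4.8}(ii) at $k$ to manufacture a second type C relation $\bar{\delta}_1\omega'+\delta_1\bar{\alpha}\in J^3$ starting at $x$ and ending at $k$, and observe that together with the already established $\delta_1\alpha+\bar{\delta}_1\xi\in J^3$ (ending at $j$, and $j\neq k$ since $Q$ is not the Markov quiver) this contradicts the at-most-one-type-C hypothesis at $x$. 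This variant is self-contained and arguably cleaner: it uses only the proposition's own hypotheses plus Proposition~\ref{prop:4.8}(ii), and it sidesteps the verification of the hypotheses of Lemma~\ref{lem:4.9}, which require the relevant type C relation to be the unique one ending at its target -- not immediate here, since two type C relations end at $i$. The only loose phrasing is in (iv): $f(\alpha)=\beta_1$ and $f(\bar{\alpha})=\gamma_1$ are not forced by the relations alone (both $\alpha\beta$ and $\alpha\beta_1$ occur in minimal relations); they are the remaining free choice, fixed so that $f$ closes up into $3$-cycles, exactly as the paper notes at the end of its proof.
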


\begin{proof}
The two minimal relations of type C starting at $i$ are
\begin{align*}
 \tag{s.i}
 \label{eq:s.i}
 \alpha \beta_1 + \bar{\alpha} \gamma \in J^3
 \qquad
 \qquad
 \quad
   \mbox{and}
 \qquad
 \qquad
 \quad
 \alpha \beta + \bar{\alpha} \gamma_1 \in J^3
 .
\end{align*}
Then it follows from
Proposition~\ref{prop:4.8}(i)
that there are two minimal relations
of type C starting at $j$ and $k$ respectively
\begin{align*}
 \tag{s.j}
 \label{eq:s.j}
  \beta \delta_1 + \beta_1 \delta_2 \in J^3 ,
 \\
 \tag{s.k}
 \label{eq:s.k}
   \gamma \delta_2 +  \gamma_1 \delta_1 \in J^3
 .
\end{align*}

\smallskip

(1)
It follows from the assumption that the other minimal relation
starting at $j$ is of type Z.
We may assume that $\beta \bar{\delta}_1 \in J^3$,
after relabeling if necessary.
Then $\beta_1 \bar{\delta}_2$ is independent.
Moreover, there must be an arrow $\xi$ from
$w = t(\bar{\delta}_1)$ to $j$.
Then we must define
$f(\beta) = \bar{\delta}_1$
and
$f(\beta_1) = \delta_2$.
The minimal relations (\ref{eq:s.j})
and $\beta \bar{\delta}_1 \in J^3$ imply, by
Proposition~\ref{prop:4.8},
the minimal relations ending at $j$
\begin{align*}
 \tag{e.j}
 \label{eq:e.j}
 \delta_1 \alpha  + \bar{\delta}_1 \xi \in J^3
 \qquad
 \qquad
 \quad
   \mbox{and}
 \qquad
 \qquad
 \quad
 \delta_2 \alpha \in J^3
 .
\end{align*}
Hence we must set
$f(\delta_2) = \alpha$.

\smallskip

(2)
It follows from the assumption that the minimal relations
starting from $k$ are (\ref{eq:s.k})
and either
$\gamma_1 \bar{\delta}_1 \in J^3$
or
$\gamma \bar{\delta}_2 \in J^3$.
Since $\beta \bar{\delta}_1 \notin J^3$,
we conclude from
Lemma~\ref{lem:4.9}
that
$\gamma \bar{\delta}_2 \in J^3$,
and then
$\gamma_1 \bar{\delta}_1$
is independent.
Therefore, we must define $f(\gamma) = \bar{\delta}_2$.
Moreover, there is an arrow
$\omega$ from $t = t(\bar{\delta}_2)$ to $k$.
As well the minimal relations starting from $k$
show that we must define
$f(\gamma_1) = \delta_1$.
Furthermore, the minimal relations starting from $k$ imply, by
Proposition~\ref{prop:4.8},
the minimal relations ending at $k$
\begin{align*}
 \tag{e.k}
 \label{eq:e.k}
 \delta_2 \bar{\alpha} + \bar{\delta}_2 \omega \in J^3
 \qquad
 \qquad
 \quad
   \mbox{and}
 \qquad
 \qquad
 \quad
 \delta_1 \bar{\alpha} \in J^3
 .
\end{align*}
So we must define
$f(\delta_1) = \bar{\alpha}$.

\smallskip

(3)
The minimal relations starting at $y$ are
\begin{align*}
 \tag{s.y}
 \label{eq:s.y}
 \delta_2 \bar{\alpha} + \bar{\delta}_2 \omega \in J^3
 \qquad
 \qquad
 \quad
   \mbox{and}
 \qquad
 \qquad
 \quad
 \delta_2 \alpha \in J^3
 .
\end{align*}
From these, applying
Proposition~\ref{prop:4.8},
we deduce that $\omega \gamma \in J^3$.
Hence we must define
$f(\bar{\delta}_2) = \omega$
and
$f(\omega) = \gamma$.

\smallskip

(4)
Similarly, the minimal relations starting from $x$ are
\begin{align*}
 \tag{s.x}
 \label{eq:s.x}
 \delta_1 \alpha  + \bar{\delta}_1 \xi \in J^3
 \qquad
 \qquad
 \quad
   \mbox{and}
 \qquad
 \qquad
 \quad
 \delta_1 \bar{\alpha} \in J^3
 .
\end{align*}
Then, applying
Proposition~\ref{prop:4.8},
we conclude that $\xi \beta \in J^3$.
Hence we must define
$f(\bar{\delta}_1) = \xi$
and
$f(\xi) = \beta$.

\smallskip

Summing up, we have defined $f$ on the all relevant
arrows except for $f(\alpha)$ and $f(\bar{\alpha})$,
and for these we may chose
$f(\alpha) = \beta_1$ and $f(\bar{\alpha}) = \gamma_1$.
Then $f$ has the required part
\[
    (\beta_1 \, \delta_2 \, \alpha)
    (\gamma_1 \, \delta_1 \, \bar{\alpha})
    (\beta \, \bar{\delta}_1 \, \xi)
    (\gamma \, \bar{\delta}_2 \, \omega)
    .\vspace*{-3mm}
\]
\end{proof}

We may visualize the situation described in the above
proposition as follows
\[
\begin{tikzpicture}
[scale=.85]
\coordinate (1) at (0,1.72);
\coordinate (2) at (0,-1.72);
\coordinate (3) at (2,-1.72);
\coordinate (4) at (-1,0);
\coordinate (5) at (1,0);
\coordinate (6) at (-2,-1.72);
\coordinate (3p) at (3,-2.29);
\coordinate (3a) at (2.2,-4);
\coordinate (3b) at (4.5,-1.5);
\coordinate (3pa) at (3.1,-4);
\coordinate (3pb) at (4.5,-2.2);
\fill[fill=gray!20]
      (0,2.22cm) arc [start angle=90, delta angle=-360, x radius=4cm, y radius=3.1cm]
 --
  (0,1.72cm) arc [start angle=90, delta angle=120, radius=2.3cm]
    arc [start angle=203.4, delta angle=120, radius=2.9cm]
    arc [start angle=-23.4, delta angle=120, radius=2.9cm]
     -- cycle;
\fill[fill=gray!20]
    (1) -- (4) -- (5) -- cycle;
\fill[fill=gray!20]
    (2) -- (4) -- (6) -- cycle;
\fill[fill=gray!20]
    (2) -- (3) -- (5) -- cycle;

\node (1) at (0,1.72) {$k$};
\node (2) at (0,-1.72) {$j$};
\node (3) at (2,-1.72) {$w$};
\node (4) at (-1,0) {$i$};
\node (5) at (1,0) {$x$};
\node (6) at (-2,-1.72) {$y$};
\node (3p) at (3,-2.29) {$t$};
\draw[->,thick] (-.23,1.7) arc [start angle=96, delta angle=108, radius=2.3cm] node[midway,right] {$\gamma$};
\draw[->,thick] (-1.89,-1.953) arc [start angle=-150.6, delta angle=108, radius=2.95cm] node[midway,above] {$\bar{\delta}_2$};
\draw[->,thick] (3.11,-2.09) arc [start angle=-17.4, delta angle=109, radius=2.95cm] node[midway,left] {$\omega$};
\draw[->,thick]
 (1) edge node [right] {$\gamma_1$} (5)
(2) edge node [left] {$\beta$} (5)
(2) edge node [below] {$\beta_1$} (6)
(3) edge node [below] {$\xi$} (2)
 (4) edge node [left] {$\bar{\alpha}$} (1)
(4) edge node [right] {$\alpha$} (2)
(5) edge node [right] {$\bar{\delta}_1$} (3)
 (5) edge node [below] {$\delta_1$} (4)
(6) edge node [left] {$\delta_2$} (4)
;
\draw[->,thick] (3a) to (3);
\draw[->,thick] (3) to (3b);
\draw[->,thick] (3pa) to (3p);
\draw[->,thick] (3p) to (3pb);
\end{tikzpicture}
\]
where the shaded triangles denote $f$-orbits, and
possibly $w = t$.

\begin{proposition}
\label{prop:6.6}
Assume that the minimal relations starting from $i$ and $j$
are of type C.
Then the following statements hold.
\begin{enumerate}[(i)]
 \item
  The quiver $Q$ is the tetrahedral quiver
\[
\begin{tikzpicture}
[scale=.85]
\coordinate (1) at (0,1.72);
\coordinate (2) at (0,-1.72);
\coordinate (3) at (2,-1.72);
\coordinate (4) at (-1,0);
\coordinate (5) at (1,0);
\coordinate (6) at (-2,-1.72);
\fill[fill=gray!20]
      (0,2.22cm) arc [start angle=90, delta angle=-360, x radius=4cm, y radius=2.8cm]
 --  (0,1.72cm) arc [start angle=90, delta angle=360, radius=2.3cm]
     -- cycle;
\fill[fill=gray!20]
    (1) -- (4) -- (5) -- cycle;
\fill[fill=gray!20]
    (2) -- (4) -- (6) -- cycle;
\fill[fill=gray!20]
    (2) -- (3) -- (5) -- cycle;

\node (1) at (0,1.72) {$k$};
\node (2) at (0,-1.72) {$j$};
\node (3) at (2,-1.72) {$w$};
\node (4) at (-1,0) {$i$};
\node (5) at (1,0) {$x$};
\node (6) at (-2,-1.72) {$y$};
\draw[->,thick] (-.23,1.7) arc [start angle=96, delta angle=108, radius=2.3cm] node[midway,right] {$\gamma$};
\draw[->,thick] (-1.87,-1.93) arc [start angle=-144, delta angle=108, radius=2.3cm] node[midway,above] {$\bar{\delta}_2$};
\draw[->,thick] (2.11,-1.52) arc [start angle=-24, delta angle=108, radius=2.3cm] node[midway,left] {$\omega$};
\draw[->,thick]
 (1) edge node [right] {$\gamma_1$} (5)
(2) edge node [left] {$\beta$} (5)
(2) edge node [below] {$\beta_1$} (6)
(3) edge node [below] {$\xi$} (2)
 (4) edge node [left] {$\bar{\alpha}$} (1)
(4) edge node [right] {$\alpha$} (2)
(5) edge node [right] {$\bar{\delta}_1$} (3)
 (5) edge node [below] {$\delta_1$} (4)
(6) edge node [left] {$\delta_2$} (4)
;
\end{tikzpicture}
\]
  and, up to labelling of arrows, the shaded triangles denote $f$-orbits.

 \item
  If the minimal relations starting from $k$
  are of type C, then all minimal relations
  are of type C.
 \item
  If there is a  minimal relation
   of type Z starting at $k$,
  then we may assume:
  \begin{enumerate}[(a)]
   \item
    Precisely the paths
    $\gamma \bar{\delta}_2$,
    $\delta_1 \bar{\alpha}$,
    $\xi \beta$
    are in $J^3$.
   \item
    Precisely the paths
    $\bar{\delta}_1 \omega$,
    $\omega \gamma_1$,
    $\gamma_1 \bar{\delta}_1$
    are independent.
   \item
    The remaining paths of length $2$ occur in minimal relations
    of type C.
  \end{enumerate}
 \end{enumerate}
\end{proposition}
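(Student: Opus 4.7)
The plan is to argue in three stages via iterated applications of Proposition~\ref{prop:4.8}.

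For part (i), I would first apply Proposition~\ref{prop:4.8}(i) at $i$ to obtain the type $C$ relation $\beta\delta_1+\beta_1\delta_2\in J^3$ at $j$ together with the companion $\gamma_1\delta_1+\gamma\delta_2\in J^3$ at $k$. Since by hypothesis the other minimal relation from $j$ is also of type $C$ and, by Corollary~\ref{cor:4.4}, independent of the first, it is forced to involve the remaining arrows $\bar\delta_1$ and $\bar\delta_2$ and thus has the form $c\beta\bar\delta_1+d\beta_1\bar\delta_2\in J^3$, requiring $t(\bar\delta_1)=t(\bar\delta_2)=:w$ for some vertex $w$. Lemma~\ref{lem:4.7} rules out double arrows (two type $C$ relations start at $i$), and the standing assumption of this section excludes loops, so $w$ is distinct from $i,j,k,x,y$. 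A degree count then forces the existence of two further arrows $\xi\colon w\to j$ and $\omega\colon w\to k$ to restore $2$-regularity, yielding exactly the tetrahedral quiver.

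For part (ii), assume both minimal relations at $k$ are of type $C$. An argument symmetric to the one at $j$ produces a second type $C$ relation $\gamma\bar\delta_2+\gamma_1\bar\delta_1\in J^3$ at $k$. Proposition~\ref{prop:4.8}(i) applied to $k$ then yields the type $C$ relations $\delta_1\bar\alpha+\bar\delta_1\omega\in J^3$ at $x$ and $\delta_2\bar\alpha+\bar\delta_2\omega\in J^3$ at $y$; combined with the relations $\delta_1\alpha+\bar\delta_1\xi\in J^3$ and $\delta_2\alpha+\bar\delta_2\xi\in J^3$ produced by Proposition~\ref{prop:4.8}(i) applied to $j$, both $x$ and $y$ support two type $C$ relations. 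A final application of Proposition~\ref{prop:4.8}(i) to $x$ then produces two type $C$ relations at $w$, namely $\xi\beta+\omega\gamma_1\in J^3$ and $\xi\beta_1+\omega\gamma\in J^3$. Every vertex now carries two type $C$ relations, and since these twelve relations consume all $24$ length-two paths, every minimal relation is of type $C$.

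For part (iii), up to the obvious symmetry interchanging $\alpha\leftrightarrow\bar\alpha$ (with the induced swaps of the other arrows), I may assume the type $Z$ relation at $k$ is $\gamma\bar\delta_2\in J^3$. Proposition~\ref{prop:4.8}(ii) at $k$ yields $\delta_1\bar\alpha\in J^3$ and the type $C$ relation $\delta_2\bar\alpha+\bar\delta_2\omega\in J^3$ at $y$; Proposition~\ref{prop:4.8}(ii) at $x$ then produces $\xi\beta\in J^3$, and Proposition~\ref{prop:4.8}(i) at $y$ (which carries two type $C$ relations) produces the type $C$ relation $\xi\beta_1+\omega\gamma\in J^3$ at $w$. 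These identify the three type $Z$ relations claimed in (a) and, together with the previously obtained type $C$ relations, the nine type $C$ relations required by (c). For (b), Corollary~\ref{cor:4.4} permits exactly two minimal relations at each vertex; since two relations have already been identified at each of $k,x,w$, the three residual paths $\gamma_1\bar\delta_1,\bar\delta_1\omega,\omega\gamma_1$ cannot lie in $J^3$ (which would furnish a third independent relation) nor appear in any type $C$ relation, proving they are independent. The principal technical hurdle is part (i), in particular the forcing of the new vertex $w$ and verifying it is genuinely distinct from the other five; once the tetrahedral quiver is pinned down, parts (ii) and (iii) are systematic propagations, but each application of Proposition~\ref{prop:4.8} requires careful matching of the abstract symbols in its statement to the concrete arrows and vertices at hand.
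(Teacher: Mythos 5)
Your overall strategy coincides with the paper's: pin down the tetrahedral quiver, then propagate relations by iterating Proposition~\ref{prop:4.8} around the vertices $j,k,x,y,w$, using Corollary~\ref{cor:4.4} to certify that the three residual paths in (iii)(b) are independent. Parts (ii) and (iii) are carried out essentially as in the paper (modulo the harmless imprecision that the two type C relations at $w$ in (ii) arise from applying Proposition~\ref{prop:4.8}(i) once at $x$ and once at $y$, rather than twice at $x$).

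The one step that does not hold up as written is the justification in (i) for the arrows $\xi\colon w\to j$ and $\omega\colon w\to k$. A degree count cannot ``restore $2$-regularity'': before you know that $Q_0=\{i,j,k,x,y,w\}$, nothing prevents the two arrows leaving $w$ from pointing to further vertices, with $j$ and $k$ receiving their second incoming arrows from elsewhere; and you cannot assume the vertex set is these six vertices until the subquiver on them is $2$-regular, which is precisely what is being proved. The correct argument is the paper's: by Proposition~\ref{prop:4.3} and Corollary~\ref{cor:4.4}, the two independent minimal relations starting at $j$ end exactly at the sources of the two arrows ending at $j$; since one of them is the relation $\beta\bar\delta_1+\beta_1\bar\delta_2$ from $j$ to $w$, there must be an arrow $\xi\colon w\to j$. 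Likewise the second minimal relation starting at $k$ involves $\gamma\bar\delta_2$ and/or $\gamma_1\bar\delta_1$ and hence ends at $w$, forcing an arrow $\omega\colon w\to k$. Only at this point is the subquiver on $i,j,k,x,y,w$ $2$-regular, and connectedness of $Q$ then gives $Q_0=\{i,j,k,x,y,w\}$. With this repair the rest of your argument goes through as in the paper.
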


\setcounter{equation}{0}

\begin{proof}
It follows from the assumption that we have
two minimal relations of type C starting from $i$
\begin{align}
 \label{eq:6.1}
 \alpha \beta_1 + \bar{\alpha} \gamma \in J^3
 \qquad
 \qquad
 \quad
   \mbox{and}
 \qquad
 \qquad
 \quad
 \alpha \beta + \bar{\alpha} \gamma_1 \in J^3
 ,
\end{align}
and two minimal relations of type C starting from $j$
\begin{align}
 \label{eq:6.2}
 \beta \delta_1 + \beta_1 \delta_2 \in J^3
 \qquad
 \qquad
 \quad
   \mbox{and}
 \qquad
 \qquad
 \quad
 \beta \bar{\delta}_1 + \beta_1 \bar{\delta}_2 \in J^3
 .
\end{align}
In particular, $\bar{\delta}_1$ and $\bar{\delta}_2$
end at the same vertex $w$.
Further, it follows from
Proposition~\ref{prop:4.8}
that there are an arrow $\xi$ from $w$ to $j$
and two minimal relations of type C ending at $j$
\begin{align}
 \label{eq:6.3}
 \delta_1 \alpha + \bar{\delta}_1 \xi \in J^3
 \qquad
 \qquad
 \quad
   \mbox{and}
 \qquad
 \qquad
 \quad
 \delta_2 \alpha + \bar{\delta}_2 \xi \in J^3
 .
\end{align}

We consider now the minimal relations starting from $k$.
It follows from (\ref{eq:6.1}) and
Proposition~\ref{prop:4.8}
that we have a minimal relation of type C starting from $k$
and ending at $i$
\[
  \gamma \delta_2 + \gamma_1 \delta_1 \in J^3 .
\]
The other two paths of length two starting at $k$ are
$\gamma \bar{\delta}_2$ and $\gamma_1 \bar{\delta}_1$,
and both end at $w$.
Either they give a relation of type C,
or one of them is in $J^3$.
In both cases, there is an arrow $\omega$ from $w$ to $k$.
We note that the subquiver of $Q$ given by the vertices
$i,j,k,x,y,w$
is $2$-regular, and hence it must be all of $Q$.
We see that $Q$ is the tetrahedral quiver presented in (i).
We will show below that we may define permutation $f$
which is the product of four $3$-cycles given by the shaded
triangles.

\smallskip

(ii)
Assume that there are two minimal relations of type C
starting from $k$.
Then, applying
Proposition~\ref{prop:4.8},
repeatedly, we conclude that all minimal relations
are of type C. In this case, we may define 
$f$ as shown in the diagram, and get a consistent choice so that
it is a product of 3-cycles. 

\smallskip

(iii)
Assume now that there is no second minimal relation of type C
starting from $k$.
Then either
$\gamma \bar{\delta}_2 \in J^3$
or
$\gamma_1 \bar{\delta}_1 \in J^3$.
We may assume that
$\gamma \bar{\delta}_2 \in J^3$,
and then
$\gamma_1 \bar{\delta}_1$
is independent.
Applying
Proposition~\ref{prop:4.8} again,
we conclude that
\begin{align}
 \label{eq:6.4}
 \delta_2 \bar{\alpha} + \bar{\delta}_2 \omega \in J^3
 \qquad
 \qquad
 \quad
   \mbox{and}
 \qquad
 \qquad
 \quad
 \delta_1 \bar{\alpha} \in J^3
 .
\end{align}
In particular, it follows from
(\ref{eq:6.3}),
(\ref{eq:6.4})
and
Proposition~\ref{prop:4.8}
that we have
two minimal relations of type C ending at $y$
\begin{align}
 \label{eq:6.5}
 \alpha \beta_1 + \bar{\alpha} \gamma \in J^3
 \qquad
 \qquad
 \quad
   \mbox{and}
 \qquad
 \qquad
 \quad
 \xi \beta_1 + \omega \gamma \in J^3
 .
\end{align}
Consider now the minimal relations
starting from the vertex $x$.
We have one of type C from
(\ref{eq:6.3}),
and we also know that
$\delta_1 \bar{\alpha} \in J^3$.
Hence, by
Corollary~\ref{cor:4.4},
the path $\bar{\delta}_1 \omega$
must be independent.
Then, applying
Proposition~\ref{prop:4.8},
we deduce that $\xi \beta \in J^3$.

Finally consider the minimal relations
starting from  $w$.
We have from
(\ref{eq:6.5})
a relation of type C,
and we know that
$\xi \beta \in J^3$.
So there cannot be another
minimal relation of type C
starting from  $w$,
and $\omega \gamma_1$
is independent.
This has proved all details for the statement (iii).

Part (a) shows that we must define
\begin{align*}
 f(\gamma) &= \bar{\delta}_2, &
 f(\delta_1) &= \bar{\alpha}, &
 f(\xi) &= \beta.
\end{align*}
Moreover, by part (b), we have
$f(\bar{\delta}_1)  \neq \omega$,
$f(\omega) \neq \gamma_1$
$f(\gamma_1) \neq \bar{\delta}_1$,
so we must define
\begin{align*}
 f(\bar{\delta}_1) &= \xi, &
 f(\omega) &= \gamma, &
 f(\gamma_1) &= \delta_1.
\end{align*}

There are no further constraints.
We want to define $f$ so that is a product of $3$-cycles
and this has now a unique solution, which is
\[
    f=
    (\alpha \, \beta_1 \, \delta_2)
    (\bar{\delta}_1 \, \xi \, \beta)
    (\omega \, \gamma \, \bar{\delta}_2)
    (\gamma_1 \, \delta_1 \, \bar{\alpha})
    .
\]
This completes the proof.
\end{proof}

Let $A = K Q/I$ be a $2$-regular algebra of generalized quaternion
type whose quiver $Q$ is not the Markov quiver.
We fixed in the proof of Theorem~\ref{th:6.1}
a permutation $f$ of arrows of $Q$ such that $(Q,f)$
is a triangulation quiver.
Then we have the associated permutation $g$ of arrows of $Q$
such that $g(\alpha) = \overbar{f(\alpha)}$
for any arrow $\alpha$ of $Q$.
Moreover, the following properties of the permutations
$f$ an $g$ were established:
\begin{itemize}
 \item
  For each arrow $\alpha$ of $Q$,
  $\alpha f(\alpha)$ occurs in a minimal
  relation of $I$.
 \item
  For each vertex $i$ of $Q$ and the
  arrows $\alpha, \bar{\alpha}$ of $Q$
  starting at $i$, the cosets
  $\alpha g(\alpha) + e_i J^3$
  and
  $\bar{\alpha} g(\bar{\alpha}) + e_i J^3$
  form a basis of $e_i J^2 / e_i J^3$.
\end{itemize}

Our next aim is to describe an explicit basis
for the indecomposable projective module $e_i A$
using the $g$-orbits of the arrows
$\alpha, \bar{\alpha}$ starting at $i$.
We start with some observations.

\begin{lemma}
\label{lem:6.7}
Let $\alpha$ be an arrow in $Q$ and $n_{\alpha} = | \cO(\alpha) |$.
Then $g^{n_{\alpha}-1}(\alpha) = f^2(\bar{\alpha})$.
\end{lemma}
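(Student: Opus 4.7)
The plan is to rewrite the claimed equality in a form that is easier to verify directly from the definitions. Since $g$ has order $n_{\alpha}$ on the orbit $\cO(\alpha)$, the equality $g^{n_{\alpha}-1}(\alpha) = f^2(\bar{\alpha})$ is equivalent to $g^{-1}(\alpha) = f^2(\bar{\alpha})$, and hence equivalent to
\[
  g\bigl(f^2(\bar{\alpha})\bigr) = \alpha.
\]
So the proposed proof will consist of this single reformulation followed by the verification of the last identity.

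For the verification I would unwind the definition $g = \bar{f}$, that is, $g(\zeta) = \overline{f(\zeta)}$ for every arrow $\zeta$. Applied to $\zeta = f^2(\bar{\alpha})$, this gives
\[
  g\bigl(f^2(\bar{\alpha})\bigr) = \overline{f\bigl(f^2(\bar{\alpha})\bigr)} = \overline{f^3(\bar{\alpha})}.
\]
Now I would invoke the two defining properties of a triangulation quiver that are available here: $f^3$ is the identity on $Q_1$, so $f^3(\bar{\alpha}) = \bar{\alpha}$; and the map $\zeta \mapsto \bar{\zeta}$ is an involution, so $\overline{\bar{\alpha}} = \alpha$. Combining these two facts yields $g(f^2(\bar{\alpha})) = \alpha$, which is exactly what was needed.

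I do not expect any real obstacle here: the statement is a purely combinatorial consequence of the definitions $g = \bar{f}$, $f^3 = \id_{Q_1}$ and $\bar{\bar{\alpha}} = \alpha$, together with the trivial observation that $g^{-1}(\alpha) = g^{n_{\alpha}-1}(\alpha)$ on a cycle of length $n_{\alpha}$. No properties of the algebra $A$, no properties of the minimal relations, and no periodicity input are required; this is a lemma about the triangulation quiver $(Q,f)$ alone. The only thing worth flagging is that $n_{\alpha} \geq 1$ is automatic (an arrow lies in its own $g$-orbit), so the exponent $n_{\alpha}-1$ is meaningful and the identification $g^{-1} = g^{n_{\alpha}-1}$ on $\cO(\alpha)$ is legitimate.
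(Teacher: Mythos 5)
Your proof is correct and takes essentially the same route as the paper: both reduce the claim to the identity $g^{-1}(\alpha)=f^{2}(\bar{\alpha})$ and derive it from $g=\bar{f}$, $f^{3}=\id$ and the involutivity of $\bar{\ }$. The only cosmetic difference is that you verify $g\bigl(f^{2}(\bar{\alpha})\bigr)=\alpha$ by direct computation, whereas the paper identifies $g^{-1}(\alpha)$ as the unique arrow ending at $s(\alpha)$ other than $f^{2}(\alpha)$; your version is, if anything, slightly more explicit.
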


\begin{proof}
Let $i = s(\alpha)$.
Then $g^{n_{\alpha}-1}(\alpha) = g^{-1}(\alpha)$
is a unique arrows in $Q$ ending at $i$ which is not
in the $f$-orbit of $\alpha$.
Then the equality $g^{n_{\alpha}-1}(\alpha) = f^2(\bar{\alpha})$
follows.
\end{proof}

\begin{lemma}
\label{lem:6.8}
Assume there is a minimal relation
$\alpha f(\alpha) + \bar{\alpha} \gamma \in J^3$
of type C starting at vertex $i$.
Then
\[
  \cO(\bar{\alpha}) = \big(\bar{\alpha} \, \gamma \, f^2(\alpha)\big).
\]
\end{lemma}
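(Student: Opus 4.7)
The plan is to show that $\cO(\bar{\alpha})$ consists of exactly the three arrows $\bar{\alpha},\gamma,f^2(\alpha)$ in cyclic order under $g$. I would combine three ingredients: (a) by construction in Lemmas~\ref{lem:6.2}--\ref{lem:6.4} and Propositions~\ref{prop:6.5}--\ref{prop:6.6}, $f$ is a product of fixed points and $3$-cycles, hence $f^3 = \id$ on all arrows; (b) the hypothesis that $\alpha f(\alpha) + \bar{\alpha}\gamma \in J^3$ is a minimal relation of type~C forces $\gamma = g(\bar{\alpha})$; and (c) by Lemma~\ref{lem:4.7}, the presence of a type~C relation at $i$ together with the standing assumption $Q \neq Q^M$ excludes double arrows at $i$.

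Granting (b), the first two entries of $\cO(\bar{\alpha})$ are $\bar{\alpha}$ and $g(\bar{\alpha}) = \gamma$. To identify the third, I would compute $g(\gamma) = \overline{f(\gamma)}$ and show that it equals $f^2(\alpha)$. Since the minimal relation has a common target, $t(\gamma) = t(f(\alpha))$, and so both $f(\gamma)$ and $f^2(\alpha)$ start at this vertex. As $Q$ is $2$-regular, either $f(\gamma) = f^2(\alpha)$ or $\overline{f(\gamma)} = f^2(\alpha)$. The first alternative, after applying $f^{-1}$, gives $\gamma = f(\alpha)$; but $\gamma$ starts at $k = t(\bar{\alpha})$ while $f(\alpha)$ starts at $j = t(\alpha)$, forcing $j = k$, i.e., $\alpha$ and $\bar{\alpha}$ are double arrows at $i$, contradicting (c). Hence $g(\gamma) = f^2(\alpha)$, and one further application of $g$ together with (a) yields $g(f^2(\alpha)) = \overline{f^3(\alpha)} = \overline{\alpha} = \bar{\alpha}$, closing the orbit at length three.

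The main obstacle I expect is step (b). While it may look tautological, it is really a bookkeeping claim about how $f$ was defined: in each of the cases treated in Sections~\ref{sec:markov} and~\ref{sec:triangulation}, $f(\alpha)$ is chosen so that $\alpha f(\alpha)$ is the factor entering a minimal relation at $i$ (either as a type~Z term or as part of a type~C term), while $\alpha g(\alpha)$ plays the role of the basis vector of $e_i J^2/e_i J^3$ provided by Corollary~\ref{cor:4.6}. Applying the same convention to $\bar{\alpha}$ pins down the complementary arrow in $\alpha f(\alpha) + \bar{\alpha}\gamma \in J^3$ as $\gamma = g(\bar{\alpha})$. Once (b) is in hand, the remainder of the argument is the short $2$-regular, $f^3 = \id$ calculation above.
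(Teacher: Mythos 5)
Your proof is correct and follows essentially the same route as the paper: both arguments reduce to the convention $\gamma = g(\bar{\alpha})$ and then track where $g(\gamma)$ can start, ruling out the alternative that would force $\gamma = f(\alpha)$ (hence double arrows at $i$), and closing the orbit at length three via $g(f^2(\alpha)) = \overline{f^3(\alpha)} = \bar{\alpha}$, which is exactly the content of Lemma~\ref{lem:6.7}. Your explicit justification of step (b) and your appeal to Lemma~\ref{lem:4.7} are only cosmetic variations on what the paper leaves implicit.
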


\begin{proof}
The arrows ending at vertex $y$ are
$f(\alpha)$
and
$g(\bar{\alpha}) = \gamma$.
The arrows starting at $y$ are
$f^2(\alpha)$
and
$g(f(\alpha))$.
Clearly, we have
$g(f^2(\alpha)) = \bar{\alpha}$.
Therefore, we must have
$f^2(\alpha) = g^2(\bar{\alpha})$.
By Lemma~\ref{lem:6.7},
we have
$f^2(\alpha) = g^{n_{\bar{\alpha}}-1}(\bar{\alpha})$.
Hence $n_{\bar{\alpha}} = 3$ and
$\cO(\bar{\alpha}) = (\bar{\alpha} \, \gamma \, f^2(\alpha))$.
\end{proof}

\begin{corollary}
\label{cor:6.9}
Let $\cO$ be a $g$-orbit of length $> 3$.
Then, for any arrow $\delta$ in $\cO$, we have
$\bar{\delta} f(\bar{\delta}) \in J^3$.
\end{corollary}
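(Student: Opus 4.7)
The plan is to argue by contradiction using Lemma~\ref{lem:6.8} as the main tool. Fix an arrow $\delta$ lying in a $g$-orbit $\cO$ of length strictly greater than $3$, and consider the companion arrow $\bar{\delta}$ (the other arrow of $Q$ starting at $s(\delta)$). I want to show that the path $\bar{\delta} f(\bar{\delta})$ belongs to $J^3$.

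First I would note that, by the properties of the permutation $f$ established in the proof of Theorem~\ref{th:6.1}, the product $\bar{\delta} f(\bar{\delta})$ always occurs in some minimal relation of $I$. Such a minimal relation must be either of type Z or of type C. If it is of type Z, then $\bar{\delta} f(\bar{\delta}) \in J^3$ directly, and we are done.

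So the remaining case is when $\bar{\delta} f(\bar{\delta})$ appears in a minimal relation of type C at the vertex $s(\delta) = s(\bar{\delta})$. By Corollary~\ref{cor:4.4}, the other term of such a relation must be a non-trivial multiple of $\delta \gamma$ for some arrow $\gamma$ starting at $t(\delta)$, and after adjusting scalars this is a relation of the form
\[
  \bar{\delta} f(\bar{\delta}) + \delta \gamma \in J^3 .
\]
Applying Lemma~\ref{lem:6.8} with $\alpha$ replaced by $\bar{\delta}$ (so that $\bar{\alpha}$ becomes $\delta$), we conclude that
\[
  \cO(\delta) = \big( \delta\, \gamma\, f^2(\bar{\delta}) \big) ,
\]
which is a $g$-orbit of length $3$. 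This contradicts the hypothesis that $|\cO| = |\cO(\delta)| > 3$.

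Therefore the type C case cannot occur, and we are left with $\bar{\delta} f(\bar{\delta}) \in J^3$, as required. The argument is essentially a one-step deduction from Lemma~\ref{lem:6.8}, with no real obstacle; the only care needed is to verify that whenever $\bar{\delta} f(\bar{\delta})$ fails to lie in $J^3$, the relation produced by $2$-regularity and Corollary~\ref{cor:4.4} is precisely in the shape required by Lemma~\ref{lem:6.8}.
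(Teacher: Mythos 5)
Your proof is correct and is exactly the argument the paper intends: Corollary~\ref{cor:6.9} is stated without proof as an immediate consequence of Lemma~\ref{lem:6.8}, and your contradiction argument (if $\bar{\delta} f(\bar{\delta})\notin J^3$, the minimal relation it occurs in must be of type C with second term $\delta\gamma$, whence Lemma~\ref{lem:6.8} forces $|\cO(\delta)|=3$) is the intended one-step deduction. The point you flag at the end — that the other term of the type C relation must start with $\delta$ rather than $\bar{\delta}$ — is covered by Remark~\ref{rem:6.12} and the exclusion of the Markov quiver, so there is no gap.
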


\begin{proposition}
\label{prop:6.10}
Let $i$ be a vertex of $Q$, and
$\alpha, \bar{\alpha}$ the arrows starting at $i$.
Assume that there is at most one minimal relation of type C
starting from $i$.
Then $e_iA$ is spanned by monomials along the $g$-cycles
of $\alpha$ and $\bar{\alpha}$.
In particular, we have $\dim_K e_i J^k / e_i J^{k+1} \leq 2$
for any $k \geq 1$.
\end{proposition}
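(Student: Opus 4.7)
The plan is to prove by induction on $k \geq 1$ that $e_iJ^k/e_iJ^{k+1}$ is spanned by the two length-$k$ monomials $M_k(\alpha) := \alpha g(\alpha) g^2(\alpha) \cdots g^{k-1}(\alpha)$ and $M_k(\bar\alpha) := \bar\alpha g(\bar\alpha) g^2(\bar\alpha) \cdots g^{k-1}(\bar\alpha)$ along the two $g$-cycles at $i$; both the dimensional bound $\dim_K e_iJ^k/e_iJ^{k+1} \leq 2$ and the $K$-space spanning of $e_iA$ then follow by summing over $k$. The cases $k \in \{1,2\}$ are direct: $k=1$ by $2$-regularity of $Q$, and $k=2$ by Corollary~\ref{cor:4.6} after the standard choice $\beta = g(\alpha)$ and $\gamma = g(\bar\alpha)$.

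For the inductive step the hypothesis enters crucially. By Corollary~\ref{cor:4.4} there are exactly two independent minimal relations starting at $i$, and the case analysis (a)--(c) preceding Lemma~\ref{lem:6.4} combined with ``at most one type $C$ starting at $i$'' forces these two relations, after rescaling, to take the form
\[
  \alpha f(\alpha) + \lambda\,\bar\alpha g(\bar\alpha) \in e_iJ^3,
  \qquad
  \bar\alpha f(\bar\alpha) + \mu\,\alpha g(\alpha) \in e_iJ^3,
\]
with scalars $\lambda,\mu \in K$ subject to $\lambda\mu = 0$ (the excluded case $\lambda\mu \neq 0$ being precisely case (d), i.e.\ two type $C$ relations at $i$). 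Right-multiplying each congruence by an arbitrary $v \in J^{k-2}$ and using $e_iJ^3 \cdot J^{k-2} \subseteq e_iJ^{k+1}$ yields the absorption formulas
\[
  \alpha f(\alpha)\cdot J^{k-2} \subseteq K\,\bar\alpha g(\bar\alpha)\cdot J^{k-2} + e_iJ^{k+1},
\]
and symmetrically for $\bar\alpha f(\bar\alpha)\cdot J^{k-2}$. Since $e_iJ^k$ decomposes as $\alpha g(\alpha) J^{k-2} + \alpha f(\alpha) J^{k-2} + \bar\alpha g(\bar\alpha) J^{k-2} + \bar\alpha f(\bar\alpha) J^{k-2}$, the two off-cycle summands are absorbed, giving
\[
  e_iJ^k \subseteq \alpha g(\alpha)\cdot J^{k-2} + \bar\alpha g(\bar\alpha)\cdot J^{k-2} + e_iJ^{k+1}.
\]

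To close the induction one must iterate the same absorption along each of the two $g$-cycles, establishing $\alpha g(\alpha)\cdot J^{k-2} \subseteq K\cdot M_k(\alpha) + e_iJ^{k+1}$ and the symmetric statement. The main obstacle is that the proposition's hypothesis is only local at $i$, so the analogous absorption congruences at the successive vertices $j = t(\alpha)$, $t(g(\alpha))$, $\ldots$ along the $g$-cycle of $\alpha$ require separate justification. I would handle this by invoking Proposition~\ref{prop:4.8}: the starting relations at $i$ determine, via the propagation through the $2$-regular structure, the relations ending at $i$---and these ending relations at $i$ are precisely starting relations at the neighbours $j$ and $k_1 = t(\bar\alpha)$. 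Together with Corollary~\ref{cor:4.4}, which fixes the two starting relations at each intermediate vertex, this yields absorption congruences of the same form at each vertex visited by the two $g$-cycles. Iterating and accumulating the absorbed terms into $e_iJ^{k+1}$ completes the induction and the stated dimensional bound.
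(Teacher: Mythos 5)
Your overall strategy coincides with the paper's: both proofs absorb the ``$f$-extensions'' $\theta f(\theta)$ of the $g$-monomials into higher powers of $J$ or into the on-cycle generators, using the dichotomy that each $\theta f(\theta)$ is either in $J^3$ (type Z) or part of a type C relation $\theta f(\theta) + c\,\bar{\theta} g(\bar{\theta}) \in J^3$. Your base cases and the first absorption step at the vertex $i$ are correct. The gap is in how you propose to justify the absorption at the intermediate vertices. Proposition~\ref{prop:4.8} propagates relations \emph{starting at $i$} to relations \emph{ending at $i$}; viewed as relations starting at the neighbour $j = t(\alpha)$, these are the relations involving the path $f(\alpha) f^2(\alpha)$ (the one from $j$ back to $i$). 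But the absorption of $\alpha\, g(\alpha) f(g(\alpha))\, J^{k-3}$ requires the \emph{other} minimal relation at $j$, the one involving $g(\alpha) f(g(\alpha))$, which ends at the source of the second arrow into $j$ and is not determined by the relations at $i$ at all. So the step as you describe it would fail to produce the congruence you need. The correct source is not propagation from $i$ but the globally valid facts already fixed when $f$ and $g$ were constructed: for \emph{every} arrow $\theta$, $\theta f(\theta)$ occurs in a minimal relation (Theorem~\ref{th:6.1}(ii)), and $\theta g(\theta) + J^3$, $\bar{\theta} g(\bar{\theta}) + J^3$ form a basis of $e_{s(\theta)} J^2 / e_{s(\theta)} J^3$ (Corollary~\ref{cor:4.6}); note these need no hypothesis on the number of type C relations at the intermediate vertices, so your worry that the hypothesis is ``only local at $i$'' is resolved differently than you suggest.

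There is a second, related omission: when a type C relation is used at an intermediate vertex $s(g^{m-1}(\alpha))$, the reduction replaces $g^{m-1}(\alpha) f(g^{m-1}(\alpha))$ by a scalar multiple of $\overline{g^{m-1}(\alpha)}\, g\bigl(\overline{g^{m-1}(\alpha)}\bigr) = f(g^{m-2}(\alpha))\, g\bigl(f(g^{m-2}(\alpha))\bigr)$, i.e.\ it pushes the deviation from the $g$-cycle one step \emph{back} along the monomial rather than forward into $J^{k+1}$. One must chase this recursion to the front, where the relation at $i$ either kills the term or throws it onto the $g$-cycle of $\bar{\alpha}$, and then verify that the resulting tail is again either an on-cycle monomial or a term already shown absorbable. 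This is exactly the two-stage bookkeeping (``(i) then (ii)'') in the paper's proof of the one-type-C case, where the crucial dichotomy $\delta' = f(\gamma)$ versus $\delta' = g(\gamma)$ is handled explicitly. Your phrase ``iterating and accumulating the absorbed terms'' elides this termination argument, which is the actual content of the inductive step. Both defects are repairable with the tools the paper provides, but as written the plan does not close.
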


\begin{proof}
(1)
Assume first that there is no relation of type C
starting from $i$.
Then
$\alpha f(\alpha)$
and
$\bar{\alpha} f(\bar{\alpha})$
belong to $J^3$,
and $e_i J^2 / e_i J^3$ has basis consisting of the cosets
$\alpha \beta + e_i J^3$
and
$\bar{\alpha} \gamma + e_i J^3$,
where
$\beta = g(\alpha)$
and
$\gamma = g(\bar{\alpha})$.
The module $e_i J^3$ is generated by the elements
$\alpha \beta g(\beta)$,
$\bar{\alpha} \gamma g(\gamma)$,
$\alpha \beta f(\beta)$,
$\bar{\alpha} \gamma f(\gamma)$.
We must show that the last two may be omitted.
Consider the element $\alpha \beta f(\beta)$.
If $\beta f(\beta) \in J^3$
then $\alpha \beta f(\beta) \in J^4$,
and so this element is not needed.
Otherwise, there is a minimal relation of type C of the form
\[
   \beta f(\beta) + c f(\alpha) \delta \in J^3,
\]
for some non-zero $c \in K$ and some arrow $\delta$.
Then we conclude that
$\alpha \beta f(\beta) + J^4 = - c \alpha f(\alpha) \delta + J^4$,
and hence we do not need $\alpha \beta f(\beta)$.
Similarly, we show that
$\bar{\alpha} \gamma f(\gamma)$
is not needed.
Inductively, repeating these arguments, we prove the claim
in this case.

\smallskip

(2)
Now assume there is a minimal relation of type C
starting at vertex $i$, so we have (say) minimal relations
\[
 \alpha f(\alpha) + a \bar{\alpha} \gamma \in J^3
 \qquad
 \qquad
 \quad
   \mbox{and}
 \qquad
 \qquad
 \quad
 \bar{\alpha} f(\bar{\alpha}) \in J^3
 .
\]
for some non-zero $a \in K$.
Then $e_i J^2$ is generated by
$\alpha \beta$ and $\bar{\alpha} \gamma$,
and $\beta = g(\alpha)$, $\gamma = g(\bar{\alpha})$.
Hence the module  $e_i J^3$ is generated by
the elements
$\alpha \beta g(\beta)$,
$\bar{\alpha} \gamma g(\gamma)$,
$\alpha \beta f(\beta)$,
$\bar{\alpha} \gamma f(\gamma)$.
We must show that the last two elements may be omitted.

\smallskip

(i)
If $\gamma f(\gamma) \in J^3$
then $\bar{\alpha} \gamma f(\gamma) \in J^4$,
and so this element is not needed.
Otherwise we have a minimal relation of type C of the form
\[
   \gamma f(\gamma) + b f(\bar{\alpha}) \delta \in J^3,
\]
for some non-zero $b \in K$ and some arrow $\delta$.
Then we conclude that
$\bar{\alpha} \gamma f(\gamma) + J^4 = - b \bar{\alpha} f(\bar{\alpha}) \delta + J^4$,
and hence we do not need $\bar{\alpha} \gamma f(\gamma)$.

\smallskip

(ii)
If $\beta f(\beta) \in J^3$
then $\alpha \beta f(\beta) \in J^4$,
and thus this element is not needed.
Otherwise, we have a minimal relation of type C of the form
\[
   \beta f(\beta) + c f(\alpha) \delta' \in J^3,
\]
for some non-zero $c \in K$ and some arrow $\delta'$.
Then we obtain the equalities
\[
   \alpha \beta f(\beta) + J^4
   = - c \alpha f(\alpha) \delta' + J^4
   = - c a \bar{\alpha} \gamma \delta' + J^4
   .
\]
If $\delta' = f(\gamma)$,
we do not need $\alpha \beta f(\beta)$, by (i).
Otherwise, $\delta' = g(\gamma)$,
and $\bar{\alpha} \gamma g(\gamma)$
is in our list of required generators of $e_i J^3$.
By continuing with the same arguments the claim follows.
\end{proof}

With the assumptions as in Proposition \ref{prop:6.10} we want to identify
the second socle of $e_iA$.
We introduce notation: write $\alpha_i = g^{i-1}(\alpha)$ and $\ba_i = g^{i-1}(\ba)$.
Then let $\eta_r:= \alpha_1\ldots \alpha_r$ and $\bar{\eta}_r= \ba\ldots \ba_r$, for $r\geq 1$ (allowing repetitions of arrows). We know there
are generators $\zeta, \rho$ of the second socle of $e_iA$
such that $\zeta = \zeta e_x$ and $\rho = \rho e_y$ and $\zeta g^{-1}(\alpha)$
and $\rho g^{-1}(\ba)$ are non-zero in the socle.


\begin{lemma} \label{lem:6.11}
 Assume  $s\geq 1$ is such that $e_iJ^s/e_iJ^{s+1}$ is 2-dimensional
and $\dim e_iJ^{s+k}/e_iJ^{s+k+1} \leq 1$ for $k\geq 1$. Then 
\begin{enumerate}[(i)]
 \item
 If $e_iJ^{s+1} = {\rm soc}(e_iA)$ then  $e_iJ^s$ is the second socle of $e_iA$.
 \item
 Otherwise, let $e_iJ^{s+t}= {\rm soc} e_iA = \langle \bar{\eta}_{s+t} \rangle$ (say) and $t > 1$. Then the second socle of $e_iA$ is generated
by $\eta_s$ ending at $y$ and $\bar{\eta}_{s+t-1}$ ending at $x$ (unless
possibly $x=y$ and we have double arrows ending at $i$).
 \item
 If $\cO(\alpha) = \cO(\ba)$ then we must have \emph{(i)}.
\end{enumerate}
\end{lemma}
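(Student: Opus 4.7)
The strategy is to work with the monomial basis of $e_iA$ provided by Proposition~\ref{prop:6.10}, namely $\{e_i\} \cup \{\eta_r\}_{r \geq 1} \cup \{\bar{\eta}_r\}_{r \geq 1}$ (allowing zeros or repetitions), and to analyze the radical filtration layer by layer. First I would record the preliminary fact that $\dim_K e_iJ^k/e_iJ^{k+1} = 2$ for every $1 \leq k \leq s$: the $k=1$ case is 2-regularity, the $k=2$ case is Corollary~\ref{cor:4.6}, the upper bound $\dim \leq 2$ for all $k$ is Proposition~\ref{prop:6.10}, and a monomial count along the two $g$-cycles combined with the hypothesis $\dim_K e_iJ^s/e_iJ^{s+1} = 2$ excludes an intermediate drop.

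For part (i), the inclusion $e_iJ^s \subseteq \soc_2(e_iA)$ is immediate because $(e_iJ^s)J = e_iJ^{s+1} = \soc(e_iA)$. For the reverse inclusion I use the characterization $\soc_2(e_iA) = \{x \in e_iA : xJ \subseteq \soc(e_iA)\}$. Given $x \notin e_iJ^s$, extract its lowest-degree component $x_k \in e_iJ^k \setminus e_iJ^{k+1}$ for some $k < s$, write $x_k \equiv a\eta_k + b\bar{\eta}_k \pmod{e_iJ^{k+1}}$ with $(a,b) \neq (0,0)$, and use 2-dimensionality of $e_iJ^{k+1}/e_iJ^{k+2},\ldots,e_iJ^s/e_iJ^{s+1}$ to produce $\zeta \in J$ such that $x\zeta$ has nonzero image in $e_iJ^{k+1}/e_iJ^{s+1}$, contradicting $xJ \subseteq \soc$.

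For part (ii), the asymmetry in the hypothesis $\soc(e_iA) = \langle \bar{\eta}_{s+t}\rangle$ with $t>1$ is the essential input. Since $\dim_K \soc(e_iA) = 1$ but $\dim_K e_iJ^s/e_iJ^{s+1} = 2$, the two-dimensionality must collapse asymmetrically at level $s+1$; using the type Z or type C relation at the vertex $y = t(\eta_s)$ (via Corollary~\ref{cor:4.4} and Proposition~\ref{prop:4.8}) one shows $\eta_{s+1} = 0$. This forces $\eta_s J \subseteq \soc(e_iA)$, because the other arrow from $y$ produces a longer word that, by the $1$-dimensional socle structure, must be a scalar multiple of $\bar{\eta}_{s+t}$. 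Hence $\eta_s \in \soc_2(e_iA)$. Symmetrically $\bar{\eta}_{s+t-1} J \subseteq e_iJ^{s+t} = \soc(e_iA)$, so $\bar{\eta}_{s+t-1} \in \soc_2(e_iA)$. A dimension count against $\soc_2/\soc \cong \soc(e_iA/\soc)$ yields equality of the generated submodule with the second socle. The parenthetical caveat for $x=y$ with double arrows ending at $i$ accounts for the degenerate case where an additional linear dependence between $\eta_s$ and $\bar{\eta}_{s+t-1}$ modulo $\soc$ is possible.

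For part (iii), if $\cO(\alpha) = \cO(\bar{\alpha})$ then $\bar{\alpha} = g^m(\alpha)$ for some $m \geq 1$, so the two monomial sequences $(\eta_r)$ and $(\bar{\eta}_r)$ are shifts of one another along the same cyclic word. Consequently, if $\bar{\eta}_{s+t}$ is a nonzero generator of the socle, then $\eta_{s+t}$ is a nonzero scalar multiple of it; by the $2$-dimensionality of $e_iJ^s/e_iJ^{s+1}$ and the absence of drops before the socle, this forces $t=1$, placing us in case~(i). The main obstacle I anticipate is part~(ii): disentangling the possible interaction of the two $g$-cycles when they share endpoints (the $x=y$ or double-arrow configurations) and ruling out hidden off-cycle contributions to $\soc_2(e_iA)$ requires a careful combinatorial analysis using the minimal relation information from Section~\ref{sec:pre}.
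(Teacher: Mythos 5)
Your parts (i) and (iii) are essentially sound and close in spirit to the paper's argument (for (i) the paper is even more direct: $e_iJ^s/e_iJ^{s+1}$ is a $2$-dimensional semisimple submodule of $e_iA/\soc(e_iA)$, whose socle has dimension at most $2$ because $P_i/S_i$ embeds into $P_x\oplus P_y$ by Lemma~\ref{lem:4.1}). The problem is part (ii), which is the heart of the lemma, and there your argument has a genuine gap. You try to verify \emph{directly} that $\eta_sJ\subseteq\soc(e_iA)$, and both halves of that verification fail as stated. First, the claim ``$\eta_{s+1}=0$'' is not justified and is in fact false in the target algebras: the hypothesis only gives $\eta_{s+1}\in e_iJ^{s+2}$ (since the layer at level $s+1$ is spanned by $\bar{\eta}_{s+1}$), and $\eta_{s+1}$ is typically a \emph{non-zero} element deep in the radical (it is the analogue of $B_{\alpha}$, which spans the socle in the weighted surface algebras). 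For $t>2$, membership in $e_iJ^{s+2}$ does not put $\eta_{s+1}$ in $\soc(e_iA)=e_iJ^{s+t}$, so this step proves nothing. Second, for the other arrow $\beta=f(g^{s-1}(\alpha))$ out of $y$, you assert that $\eta_s\beta$ ``must be a scalar multiple of $\bar{\eta}_{s+t}$ by the $1$-dimensional socle structure''; but $\eta_s\beta$ only lies in $e_iJ^{s+1}$, whose quotient by the socle is $(t-1)$-dimensional, so there is no reason it lands in the top layer. In effect you assume the conclusion $\eta_s\in\soc_2(e_iA)$.

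The missing idea is the one the paper uses: do not verify membership directly, but start from the \emph{existence} of a second generator of $\soc_2(e_iA)$ with prescribed properties, coming from the minimal projective resolution of $S_i$ (Proposition~\ref{prop:4.3} and the symmetry of $A$). Concretely, besides $\bar{\eta}_{s+t-1}$ there must be an element $\zeta\in\soc_2(e_iA)e_x$ with $\zeta g^{-1}(\alpha)$ non-zero in the socle; since no $\bar{\eta}_{s+v}$ with $0\le v\le t-2$ lies in $\soc_2$, expanding $\zeta$ in the monomial basis forces $\zeta=d\eta_s+\sum_v b_v\bar{\eta}_{s+v}$ with $d\neq 0$. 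One then strips off the $\bar{\eta}$-terms by a case analysis on whether the cycle $\cO(\bar{\alpha})$ passes through $x$ (postmultiplying by the arrow of $\cO(\bar{\alpha})$ leaving $x$ and using that the result must vanish), concluding that $\eta_s$ itself lies in $\soc_2(e_iA)$ modulo the socle. This ``existence first, then identification in the basis'' step is what your proposal lacks, and without it part (ii) does not go through.
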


Note that with the notation of Section~\ref{sec:wsalg}, $\eta_{s+1} = B_{\alpha}$ and
$\bar{\eta}_{s+t} = B_{\bar{\alpha}}$, and similarly we can identify 
$A_{\alpha}$ and $A_{\bar{\alpha}}$.


\begin{proof} 
(i) \ By Proposition~\ref{prop:6.10}, $e_iJ^s/e_iJ^{s+1}$ has a basis 
consisting of the cosets of $\eta_s, \bar{\eta}_s$, and
$e_iJ^{s+1}/e_iJ^{s+2}$ is spanned by (say) the coset of $\bar{\eta}_{s+1}$, 
suppose it ends at vertex $x$.
If $e_iJ^{s+1}$ is the socle then clearly $e_iJ^s$ is the second socle, 
this must hold since $A$ is symmetric and since we have
two arrows ending at $i$.
Part (i) follows.

\medskip

(ii) \ 
Assume now that
the socle of $e_iA$ is $e_iJ^{s+t}$ for $t>1$, then with the choice above 
it is spanned by $\bar{\eta}_{s+t}$. Then the second socle
contains $\bar{\eta}_{s+t-1},$ but there must be another element $\zeta$ 
where $\zeta g^{-1}(\alpha)$ is non-zero
in the socle.
As well we know that
$\bar{\eta}_{s+v}$ for $0\leq v\leq t-2$ are not in the second socle. 
Then the required element of the second socle must be of the form
$$\zeta = d\eta_s +  \sum_v b_v\bar{\eta}_{s+v} \ \in e_iJ^se_x \setminus e_iJ^{s+1}e_x
$$
for $d, b_v\in K$ and $d\neq 0$.

\medskip

{\sc Case 1}. The cycle $\cO(\ba)$ does not pass through vertex $x$, in particular $\cO(\alpha)\neq \cO(\ba)$ and there are no double arrows ending at $i$. 
Then it
follows directly that $\zeta = d\eta_s$ and hence $\eta_s$ is in the second socle (and it ends at $x$).

\medskip

{\sc Case 2}. Assume the cycle $\cO(\ba)$ passes through vertex $x$ but that there are no double arrows ending at $i$.
Say $\ba_{r}$ is the arrow in the cycle starting at $x$, this ends at a vertex $w\neq i$.
Consider $\zeta\ba_{r}$, this must lie in the socle of $e_iA$ but it is annihilated by $e_i$ and therefore it is zero.
Suppose a monomial $\bar{\eta}_u$ ending at $y$ occurs in the expression for $\zeta$. 
Then
$\bar{\eta}_u\ba_r$ is only zero if $\bar{\eta}_u$ is in the socle (since otherwise it is
a basis element).
It follows that $\zeta = d\eta_s$ modulo the socle and $\eta_s$ is in the second socle.

\medskip

(iii) \ Assume $\cO(\alpha) = \cO(\ba)$. We assume 
that 
$\eta_s\alpha_s = \lambda \bar{\eta}_{s+t}$ where $\lambda \in K.$
Then $\eta_{s+t}$ is a rotation of $\bar{\eta}_{s+t}$ and is also a non-zero
element in the socle. So $\eta_{s+t}\neq 0$ and $\eta_{s+t+1}=0$, and it follows that $t=1$.
\end{proof}


\begin{remark}
\label{rem:6.12}
We cannot have
$\alpha f(\alpha) = \alpha p + \bar{\alpha} q \in J^3$
with $\alpha p$, $\bar{\alpha} q$ not in $J^3$.
Namely, if this were the case, then  $p$ and $q$ would be 
arrows with $p: j\to y$ and $q: k\to y$ where $y=t(f(\alpha))$. 
If $k\neq j$ then $Q$ is not 2-regular, so $j=k$, but then 
$\alpha, \ba$ are double arrows and also $f(\alpha), p$ are
double arrows. 
But then, by
Lemma~\ref{lem:5.2},
$Q$ is the Markov quiver, which is excluded.
\end{remark}

\section{Algebras with almost tetrahedral subquiver}\label{sec:almosttetrahedral}

The aim of this section is to describe a local
presentation of algebras discussed in
Proposition \ref{prop:6.5} and \ref{prop:6.6}(iii).

\begin{lemma}
\label{lem:7.1}
Assume that the both minimal relations of $A$ starting at $i$
are of type C.
With the notation of the diagrams following Proposition~\ref{prop:6.5}
and in Proposition~\ref{prop:6.6} the following statements hold.
\begin{enumerate}[(i)]
 \item
We may assume\\
$$\alpha f(\alpha) = \ba \gamma \ \ \mbox{ and } \ 
\ba f(\ba) = \alpha\beta
$$
$$f(\alpha) f^2(\alpha) = \beta f^2(\ba) \ \ \mbox{ and }
\ f(\ba)f^2(\ba) = \gamma f^2(\alpha).
$$
Furthermore, $\alpha f(\alpha) f^2(\alpha) = \ba f(\ba)f^2(\ba)$.

\item
 The local algebra $e_iAe_i$ is of finite representation type, generated by
$X_i:= \alpha\beta f^2(\ba)$. 

\item 
  Assume the setting of Proposition~\ref{prop:6.5}, or of Proposition~\ref{prop:6.6}~(iii). 
  Then  we have
$\soc(e_iA) = e_iJ^3$, spanned by $X_i$. The module $e_iA$ is 6-dimensional
with basis $\{ e_i, \alpha, \ba, \alpha\beta, \ba\gamma, X_i\}$.

\item
We have $\dim e_iAe_u=\dim e_uAe_i$ for $u\in \{j, k, x, y\}$ and
$0 = e_iAe_v=e_vAe_i$ for any other $v$ and $i\neq v$.
\end{enumerate}
\end{lemma}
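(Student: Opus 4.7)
The plan is to normalize the type~C minimal relations at the vertices $i,j,k$ into exact equalities by rescaling arrows, and then to exploit Proposition~\ref{prop:6.10} together with the resulting identities to control all products starting at $e_i$.

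For part (i), I would start from the two type~C minimal relations at $i$, written as $a_1\alpha f(\alpha)+a_2\bar{\alpha}g(\bar{\alpha})\in J^3$ and $b_1\alpha g(\alpha)+b_2\bar{\alpha}f(\bar{\alpha})\in J^3$ with all $a_r,b_r\in K^*$. First I would rescale $\gamma=g(\bar{\alpha})$ and $\beta=g(\alpha)$ to absorb $a_2/a_1$ and $b_2/b_1$, obtaining $\alpha f(\alpha)=\bar{\alpha}\gamma$ and $\bar{\alpha}f(\bar{\alpha})=\alpha\beta$ modulo $J^3$. To promote these to equalities in $A$, I would adjust the representatives of $f(\alpha)$ and $f(\bar{\alpha})$ by elements of $J^2$, using Proposition~\ref{prop:6.10} to see that $e_iJ^{\geq 3}e_y$ and $e_iJ^{\geq 3}e_x$ are contained in $\alpha J+\bar{\alpha}J$, so that the residues lie in $\alpha\cdot e_jJ^2e_y+\bar{\alpha}\cdot e_kJ^2e_x$ and can be eliminated. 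I then apply the same normalization at $j$ and $k$ using the type~C relations there given by Propositions~\ref{prop:6.5} and~\ref{prop:6.6}\,(iii); the freedom to rescale $\delta_1,\delta_2$ and the adjustment of $f^2(\alpha),f^2(\bar{\alpha})$ modulo $J^2$ must be compatible with the choices already made at $i$, which is possible because each arrow sits in a $g$-orbit of length three by Lemma~\ref{lem:6.8}, leaving only finitely many constraints. The resulting identities $f(\alpha)f^2(\alpha)=\beta f^2(\bar{\alpha})$ and $f(\bar{\alpha})f^2(\bar{\alpha})=\gamma f^2(\alpha)$ then combine with those from the first step to yield $\alpha f(\alpha)f^2(\alpha)=\alpha\beta_1\delta_2=\bar{\alpha}\gamma\delta_2=\bar{\alpha}\gamma_1\delta_1=\bar{\alpha}f(\bar{\alpha})f^2(\bar{\alpha})$.

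For (ii) and (iii), Proposition~\ref{prop:6.10} shows that $e_iA$ is spanned by monomials along the two $g$-cycles starting with $\alpha$ and $\bar{\alpha}$, and Lemma~\ref{lem:6.8} forces both cycles to have length three. Thus the only candidate monomials are $e_i,\alpha,\bar{\alpha},\alpha\beta,\bar{\alpha}\gamma$ and the two length-three monomials $\alpha\beta f^2(\bar{\alpha})$, $\bar{\alpha}\gamma f^2(\alpha)$. Part~(i) identifies the latter two as a common element $X_i$, and the zero relations $\alpha f(\alpha)g(f(\alpha))=0$ and $\bar{\alpha}f(\bar{\alpha})g(f(\bar{\alpha}))=0$ (which I would derive by premultiplying part~(i) with arrows and using Lemma~\ref{lem:5.10}-style manipulations) force $X_iJ=0$. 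Hence $X_i$ spans $\soc(e_iA)=e_iJ^3$, $X_i^2=0$, and the listed six elements form a basis; in particular $e_iAe_i=K\oplus KX_i$ is of finite representation type. For part (iv), the symmetry of $A$ provides a non-degenerate pairing $e_iAe_u\times e_uAe_i\to K$ showing $\dim e_iAe_u=\dim e_uAe_i$ for all $u$, and these dimensions are read directly off the basis in (iii): one for each of $u\in\{j,k,x,y\}$, two for $u=i$, and zero otherwise.

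The main obstacle is the promotion step in (i): showing that the residues in the type~C relations, known only modulo $J^3$, can be simultaneously eliminated by rescaling and by perturbing $f(\alpha),f(\bar{\alpha}),f^2(\alpha),f^2(\bar{\alpha})$ within their cosets modulo $J^2$, in a manner compatible across all three vertices $i,j,k$. I expect this to require careful bookkeeping of the scalars $a_r,b_r$ and their analogues at $j,k$, combined with the rigidity imposed by the length-three $g$-orbits established in Lemma~\ref{lem:6.8}.
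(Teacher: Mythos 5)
Your overall outline matches the paper's, but there are two concrete gaps.

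First, in part (i) the hard point is not normalizing each relation separately but normalizing the two relations ending at $i$ \emph{simultaneously}: once you have fixed $f(\alpha),\beta,\gamma,f(\bar{\alpha})$ at the vertex $i$ and then adjusted $f^2(\alpha),f^2(\bar{\alpha})$ to turn the relation from $j$ into the exact equality $f(\alpha)f^2(\alpha)=\beta f^2(\bar{\alpha})$, every arrow occurring in the relation from $k$ is already pinned down, so there is no freedom left to also force $f(\bar{\alpha})f^2(\bar{\alpha})=\gamma f^2(\alpha)$ with coefficient $1$ and no $J^3$-residue. Your appeal to Lemma~\ref{lem:6.8} and ``finitely many constraints'' does not address this. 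The paper gets both equalities at once from Proposition~\ref{prop:4.3}(iv)--(v): after the relations at $i$ are exact, the generators of $\Omega_A^2(S_i)$ are $(f(\alpha),-\gamma)$ and $(-\beta,f(\bar{\alpha}))$, and $\Omega_A^3(S_i)\cong P_i/S_i$ is generated by a \emph{single} pair $(\delta_1,\delta_2)$ satisfying both equations $\varphi\delta_1+\psi\delta_2=0$ exactly; taking $f^2(\bar{\alpha})$ and $f^2(\alpha)$ to be (scalar multiples of) $\delta_1,\delta_2$ yields both identities with the same representatives. Without invoking the minimal projective resolution in this way, the compatibility claim is unproved.

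Second, in parts (ii)--(iii) you cite Proposition~\ref{prop:6.10} to span $e_iA$ by monomials along $g$-cycles, but that proposition assumes \emph{at most one} minimal relation of type C starts at $i$, which is exactly the opposite of the hypothesis of Lemma~\ref{lem:7.1}. The paper instead argues directly: by (i) all cyclic paths of length three from $i$ equal $X_i$; the non-cyclic length-three paths from $i$ land in $J^4$ because of the specific type Z relations $\beta f(\beta),\gamma f(\gamma)\in J^3$ (Proposition~\ref{prop:6.5}(iii)), respectively the extra type C relation in the setting of Proposition~\ref{prop:6.6}(iii); hence $e_iJ^3=X_iA$. Then $X_i\alpha,X_i\bar{\alpha}\in J^5$ (using $f^2(\alpha)\alpha,f^2(\bar{\alpha})\bar{\alpha}\in J^3$), so $e_iJ^4\subseteq e_iJ^5$ and $e_iJ^4=0$ by Nakayama, and $X_i\neq 0$ because $A$ is symmetric and not simple. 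Your proposed route via ``Lemma~\ref{lem:5.10}-style manipulations'' (a lemma specific to the Markov quiver) does not supply these steps. Part (iv) of your argument is fine.
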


\begin{proof}
(i) We assume there are two type C relations from $i$. Consider
one of them, we can write
 $\alpha f(\alpha) + a \ba\gamma \in J^3$ with $0\neq a\in K$.
Therefore
$$\alpha f(\alpha)  + a\ba\gamma = \alpha p + \ba q \in e_iAe_y$$
and then we must have that both $p$ and $q$ are in $J^2$ (since only two
arrows end at $y$).
Replace $f(\alpha)$ by $f(\alpha) - p$ and $\gamma$ by $(-a)\gamma + q$, this
gives the first identity.

The other type $C$ relation from $i$ is 
$\ba f(\ba) + a \alpha\beta \in J^3$ with $0\neq a\in K$. Similarly as
above we may assume after adjusting $f(\ba)$ and $\beta$ that
$$\ba f(\ba)  = \alpha \beta.$$
Now we apply Proposition~\ref{prop:4.8}
with these relations, and obtain directly the other two identities.
The last statement follows from these.

\medskip

(ii) All cyclic paths of length three from $i$ are equal to $X_i$
in $A$, by part (i).
This implies that $e_iAe_i$ is of finite type.

\medskip

(iii) \ In the setting of Proposition~\ref{prop:6.5} (with $w\neq t$) we know that
several paths of length two are in $J^3$, in particular
$\beta f(\beta)$ and $ \gamma f(\gamma)$, and $f^2(\alpha)\alpha$ and
$f^2(\ba)\ba$.
Any other path of length three from $i$ ends at $w$ or at $t$.
Such paths ending at $w$
are $ \ba f(\ba) f(\beta) = \alpha\beta f(\beta) \in J^4$
and ending at $t$ are
$\alpha f(\alpha)f(\gamma) = \ba \gamma f(\gamma) \in J^4$.
Hence $e_iJ^3 = X_iA$. Moreover
$X_i\alpha = \ba \gamma f^2(\alpha)\alpha \in J^5$ and
$X_i\ba = \alpha\beta f^2(\ba)\ba \in J^5$, therefore
$e_iJ^4\subseteq e_iJ^5$ and then $e_iJ^4=0$.
Clearly $X_i\neq 0$ since $A$ is symmetric and not simple.
This proves that $\soc (e_iA) = e_iJ^3$ and spanned by $X_i$.
The rest of the lemma follows in this case. 

\bigskip

Now assume the setting of Proposition~\ref{prop:6.6}. Then we have a second type
C relation from $j$, and  by the argument in (i) we may assume that
$$
  \beta f(\beta) = f(\alpha) f(\gamma),
$$
after  adjusting $f(\beta)$ and
$f(\gamma)$ if necessary.

Assume (iii) of Proposition~\ref{prop:6.6} holds, then we know that
 $\gamma f(\gamma), f^2(\ba)\ba$ and $f^2(\beta)\beta$ are
 in $J^3$:
In this case, non-cyclic paths of length three from $i$
end at $w$, they are
$$\ba f(\ba) f(\beta) = \alpha \beta f(\beta) = 
\alpha f(\alpha) f(\gamma)  = \ba\gamma f(\gamma) \in J^4.
$$
So $e_iJ^3$ is generated by $X_i$. Moreover
$X_i\ba = \alpha\beta f^2(\ba) \ba \in J^5$.

As well we have $f^2(\alpha)\alpha + c f(\gamma)f^2(\beta) \in J^3$
is a type C relation for some $0\neq c\in K$
(see the proof of Proposition~\ref{prop:6.6}). 
Hence modulo $J^4$
$$X_i\alpha = \alpha f(\alpha) f^2(\alpha)\alpha \equiv (-c) \ba \gamma f(\gamma)
f^2(\beta) \in J^5$$
and
$e_iJ^4 = X_iJ\subseteq e_iJ^5 = 0$.  The
rest follows as in the previous case.
\end{proof}

In the proposition below, we use the notation from
Proposition~\ref{prop:6.5}
and the quiver below it. 

\begin{proposition}
\label{prop:7.2}
Assume that $A$ has two minimal relations of type C
starting at $i$ but from any of the vertices $j, k, x, y$
at most one minimal relation of type C starts.
Then the following statements hold.
\begin{enumerate}[(i)]
 \item
  We may assume that we have the commutatively relations of type C
  \begin{align*}
    \alpha f(\alpha) &= \bar{\alpha} \gamma,
  &
    \bar{\alpha} f(\bar{\alpha}) &= \alpha \beta,
  &
    f(\alpha) f^2(\alpha)
   &= \beta f^2(\bar{\alpha}),
\\
  f(\bar{\alpha}) f^2(\bar{\alpha})
  &= \gamma f^2(\alpha),
  &
f(\beta) f^2(\beta)
  &= f^2(\bar{\alpha}) \alpha, 
&
    f(\gamma) f^2(\gamma)
  &= f^2(\alpha) \bar{\alpha},
  \end{align*}
  the first two starting at $i$ and the remaining ones starting
  at the vertices $j,k,x,y$.
 \item
  We have  relations
  \begin{align*}
   \beta f(\beta)
    =  f(\alpha) q
    &\in J^3 ,
   &
    f^2(\alpha) \alpha
    = q f^2(\beta)
    =  f(\gamma)  p
    &\in J^3 ,
   \\
   \gamma f(\gamma)
    =  f(\bar{\alpha}) \bar{q}
    &\in J^3 ,
   &
    f^2(\bar{\alpha}) \bar{\alpha}
    = \bar{q} f^2(\gamma)
    =  f(\beta)  \bar{p}
    &\in J^3 ,
   \\
   f^2(\beta) \beta
    &\in J^3 ,
   &
    f^2(\gamma) \gamma
    &\in J^3 ,
  \end{align*}
  where the elements $p,q,\bar{p},\bar{q}$
  are along cycles of $g$.
 \item
  The module $e_i A$ is $6$-dimensional, with basis
  $\{ e_i, \alpha, \bar{\alpha}, \alpha \beta, \bar{\alpha} \gamma, \alpha \beta f^2(\bar{\alpha})\}$.
  In particular, we have
  $e_i A e_w = 0$,
  $e_i A e_t = 0$,
  $e_w A e_i = 0$,
  $e_t A e_i = 0$.
 \item
  The paths of length two in (i) and (ii)  belong to $\soc_2(A)\setminus \soc(A)$.
\item 
  The local algebras at vertex $u$ for $u\in \{j, k, x, y\}$ have finite 
representation type.
\end{enumerate}
\end{proposition}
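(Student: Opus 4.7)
My plan is to leverage Lemma~\ref{lem:7.1} at vertex $i$ together with Proposition~\ref{prop:4.8} to propagate its conclusions, and then to invoke Proposition~\ref{prop:6.10} at the surrounding vertices $j, k, x, y$ to control the structure of their projective modules. For part~(i), the first four equalities (the two type~C relations at $i$ together with their Proposition~\ref{prop:4.8}-propagations to $j, k$) come directly from Lemma~\ref{lem:7.1}(i). For the last two equalities at $x$ and $y$, I would translate the type~C relations $\delta_1\alpha + \bar{\delta}_1\xi \in J^3$ and $\delta_2\bar{\alpha} + \bar{\delta}_2\omega \in J^3$ identified in the proof of Proposition~\ref{prop:6.5} into the $f$-notation via $\delta_1 = f^2(\bar{\alpha})$, $\bar{\delta}_1 = f(\beta)$, $\xi = f^2(\beta)$, and the symmetric identifications on the other side, then rescale the representatives of $f(\beta)$ and $f(\gamma)$ (absorbing $J^3$-terms) to obtain exact equalities.

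For part~(ii), I would apply Proposition~\ref{prop:6.10} at each of $j, k, x, y$; its hypothesis holds by the assumption of the proposition. This bounds $\dim_K e_u J^n / e_u J^{n+1} \leq 2$ and gives explicit $g$-cycle spanning monomials. The type~Z relations $\beta\bar{\delta}_1 \in J^3$ at $j$ and $\gamma\bar{\delta}_2 \in J^3$ at $k$ from Proposition~\ref{prop:6.5} become $\beta f(\beta), \gamma f(\gamma) \in J^3$, with analogous relations at $x, y$ giving the remaining two vanishings. Expanding $\beta f(\beta) \in e_j J^3$ in the $g$-cycle basis supplied by Proposition~\ref{prop:6.10}, and noting that the $g$-successor direction $\beta g(\beta) = \beta f^2(\bar{\alpha})$ is independent modulo $J^4$, forces the expansion to run along the other $g$-cycle, yielding $\beta f(\beta) = f(\alpha) q$ with $q$ a monomial along a $g$-cycle. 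The three-way equality $f^2(\alpha)\alpha = q f^2(\beta) = f(\gamma) p$ then follows from the manipulations in the proof of Lemma~\ref{lem:7.1}(i), substituting the factorizations just obtained.

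Parts~(iii)--(v) should then follow quickly. Part~(iii) is an immediate application of Lemma~\ref{lem:7.1}(iii), whose hypothesis coincides with the setting of Proposition~\ref{prop:6.5}. Part~(iv) follows because Lemma~\ref{lem:7.1}(iii) makes $\soc(e_iA) = e_iJ^3$ one-dimensional, so all length-two paths from $i$ lie in $\soc_2(e_iA) \setminus \soc(e_iA) = e_iJ^2 \setminus e_iJ^3$; the rotational symmetry of the symmetric algebra $A$ extends this to $j, k, x, y$, supported by the dimension bounds from Proposition~\ref{prop:6.10}. For~(v), at each $u \in \{j, k, x, y\}$ the local algebra $e_u A e_u$ has its radical generated (modulo higher powers) by a single rotation of $X_i$ — identified via the cyclic symmetry of $A$ with a length-three cycle at $u$ — so it is a truncated polynomial-like algebra of finite representation type, by an argument paralleling Lemma~\ref{lem:7.1}(ii).

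The step I expect to be the main obstacle is the three-way equality $q f^2(\beta) = f(\gamma) p$ in~(ii): after all the rescalings of arrows made in~(i) and in the proof of Lemma~\ref{lem:7.1}(i), one must verify that the intermediate monomials $p, q$ are genuinely equal to specific $g$-cycle segments, and that the adjustments made at $x, y$ do not conflict with those made earlier at $j, k$. This will require careful global bookkeeping of choices — possibly forcing further refinements of the representatives of $f(\beta), f(\gamma), \beta, \gamma$ — to ensure that all identifications in~(i) and~(ii) hold simultaneously and without conflict.
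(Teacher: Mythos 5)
Your plan for parts (i)--(iii) follows essentially the paper's route (Lemma~\ref{lem:7.1} at $i$, Proposition~\ref{prop:4.8} to propagate, Proposition~\ref{prop:6.10} to expand $\beta f(\beta)$ and $\gamma f(\gamma)$ along the $g$-cycle of the \emph{other} arrow, then re-factor), and the step you flag as the main obstacle is in fact benign: the middle term $q f^2(\beta)$ of the three-way equality falls out of the syzygy mechanism of Proposition~\ref{prop:4.3} applied at $j$ (the third differential yields $f^2(\alpha)\alpha' = q f^2(\beta)$ with $\alpha'-\alpha \in e_iJ^4e_j$, which vanishes by part (iii)), and $q f^2(\beta) = f(\gamma)p$ is a trivial re-factorization since $q$ runs along the $g$-cycle whose first arrow out of $y$ is $g(f(\alpha)) = f(\gamma)$. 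No delicate global bookkeeping is needed there.

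The genuine gaps are in (iv) and (v). For (iv), the reduction to $e_iJ^2\setminus e_iJ^3$ only covers the four paths starting at $i$; the remaining paths ($\beta f(\beta)$, $f^2(\alpha)\alpha$, $f(\beta)f^2(\beta)$, $f^2(\beta)\beta$, and their barred analogues) start at $j,k,x,y,w,t$, whose projectives are \emph{not} $6$-dimensional (the $g$-cycle of $f(\beta)$ through $x$ and $w$ may be arbitrarily long), so ``rotational symmetry'' does not transport the statement. To put such a path $\pi$ in $\soc_2(A)$ you must check that $\pi\theta \in \soc(A)$ for \emph{both} arrows $\theta$ leaving $t(\pi)$: one product is a rotation of a socle element (which only shows $\pi\notin\soc(A)$), and the other must be shown to vanish, which requires the specific computations $e_iAe_w=0=e_wAe_i$, $e_iAe_t=0=e_tAe_i$ and the factorizations from (ii) (e.g.\ $\gamma f^2(\alpha)\alpha = \gamma f(\gamma)p = 0$ because $\gamma f(\gamma)\in\soc_2$ and $p\in J^2$). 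Your sketch omits this case analysis entirely. For (v), you have the roles of the two generators reversed: the local algebra $e_uAe_u$ for $u\in\{j,k,x,y\}$ is a priori generated by the rotation $X'$ of $X_i$ \emph{and} by the product $Y'$ along the long $g$-cycle through $u$; the point is that $X'$, being a rotation of the socle element $X_i$, already lies in $\soc(e_uAe_u)$ and hence generates nothing, so the algebra is generated by $Y'$ alone --- it is not ``generated by a single rotation of $X_i$''. With that correction the finite-type conclusion follows as in the paper.
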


\begin{proof}
Most of part (i) and part (iii) is proved in
Lemma~\ref{lem:7.1}.
As well, by Proposition~\ref{prop:6.5}, we know that the six paths in (ii)
belong to $J^3$. 

We will now determine the details for  (ii)
and the last two commutativity relations in (i).

(a)
We may assume that
\[
   \beta f(\beta) = f(\alpha) q \in J^3 ,
\]
where
$q \in J^3$ is along the cycle of $g$ containing $f(\alpha)$.
Indeed, $\beta f(\beta) \in J^3$ by assumption. We can 
write  $\beta f(\beta) = f(\alpha) q + \beta \rho$,
and then by Remark~\ref{rem:6.12} we
know $\beta \rho \in J^3$ and we may replace 
$f(\beta)$ by $f(\beta) - \rho$. From the shape of the quiver $q \in e_yAe_w
\subset J^3$. 
Applying now
Proposition~\ref{prop:4.8},
we get
\[
 f(\beta) f^2(\beta) = f^2(\bar{\alpha}) \alpha'
 \qquad
 \qquad
   \mbox{and}
 \qquad
 \qquad
 f^2(\alpha) \alpha' =  q f^2(\beta)
 .
\]
where $\alpha' = \alpha + \zeta$ for $\zeta \in e_iJ^4e_j$ and hence 
$\zeta =0$ by (iii) and $\alpha'=\alpha$. 

We may write $q f^2(\beta) = f(\gamma) p$ for $p \in J^3$
along the cycle of $g$ containing $f(\gamma)$.

\smallskip

(b)
By assumption, $\gamma f(\gamma) \in J^3$. 
As in part (a), we may assume, after possibly adjusting $f(\gamma)$, that
\[
   \gamma f(\gamma) = f(\bar{\alpha}) \bar{q} ,
\]
where
$\bar{q} \in J^3$ is along the cycle of $g$ containing $f(\bar{\alpha})$.
Applying
Proposition~\ref{prop:4.8}
again, we obtain
\[
 f(\gamma) f^2(\gamma) = f^2(\alpha) \bar{\alpha}'
 \qquad
 \qquad
   \mbox{and}
 \qquad
 \qquad
 f^2(\bar{\alpha}) \bar{\alpha}' =  \bar{q} f^2(\gamma)
 .
\]
where $\ba' = \ba + \zeta'$ with $\zeta' \in e_iJ^4e_k$ which is zero 
by (iii). 
We may also write $\bar{q} f^2(\gamma) = f(\beta) \bar{p}$ for $\bar{p} \in J^3$
along the cycle of $g$ containing $f(\beta)$.

\smallskip

 As already mentioned $f^2(\beta)\beta$ and $f^2(\gamma)\gamma$ are in 
$J^3$. We have now proved parts (i) and (ii) completely. 

\smallskip

(iv) \ Any cyclic path of length three passing through $i$ is
a rotation of the socle element $X_i$ and hence is non-zero in the
socle. Using the relations in part (i) we get that also any rotations of
$\beta f(\beta) f^2(\beta)$ and
$\gamma f(\gamma) f^2(\gamma)$ are non-zero in the socle.

\smallskip

(1) We claim that $\beta f(\beta)$ and $\gamma f(\gamma)$ are  in $\soc_2(A)
\setminus \soc(A)$.
The first statement follows since
 $f^2(\beta)\beta f(\beta)$ is non-zero in the socle and
$\alpha \beta f(\beta) \in e_iAe_w = 0$. The second part is similar.

\smallskip

(2) We claim that $f^2(\alpha)\alpha$ and $f^2(\ba)\ba$ are in $\soc_2(A)
\setminus \soc(A)$.
First, $f(\alpha)f^2(\alpha)\alpha$ is non-zero in the socle, and
$\gamma f^2(\alpha)\alpha = \gamma f(\gamma) p = 0$ by (1) and since
$p= e_tpe_j \in J^2$. 
Hence, $f^2({\alpha}) {\alpha}$ belongs to $\soc_2(A) \setminus \soc (A)$.
Similarly, 
$f^2(\bar{\alpha}) \bar{\alpha} \in \soc_2(A) \setminus \soc (A)$
follows.

\smallskip

(3) We claim that $f^2(\beta)\beta$ and $f^2(\gamma)\gamma$ are in
$\soc_2(A)\setminus \soc(A)$.
For the first part, $f^2(\beta)\beta f(\beta)$ is non-zero in the socle, and
$f^2(\beta)\beta f^2(\ba) \in e_wAe_i =0$. The other part is similar.

\smallskip

(4) The first four paths in (i) are in $\soc_2(A)\setminus \soc(A)$
 by part (iii).
Next,  $f(\beta)f^2(\beta) \beta$ is non-zero in the socle, and
$f(\beta)f^2(\beta) f(\alpha) = f^2(\ba)\alpha f(\alpha) \in \soc(A)\cap e_xAe_y = 0$.
Hence $f(\beta)f^2(\beta) \in \soc_2(A)\setminus \soc(A)$. 
Similarly, we have $f(\gamma)f^2(\gamma) \in \soc_2(A)\setminus \soc(A)$.
This completes the proof of 
(iv). 

\medskip

(v) Consider $e_xAe_x$, this has generators $X'=f^2(\ba)\alpha \beta$ and
$Y'$ which is the product over the arrows along $\cO(f(\beta))$. Now,
$X'$ is the rotation of a socle element and hence lies in the socle.
Therefore $e_xAe_x$ is generated by $Y'$ and is therefore of finite representation type.
Similarly, the local algebras at $y, j, k$ are of finite representation type.
\end{proof}

\section{Algebras with  tetrahedral quiver}\label{sec:tetrahedralquiver}

The aim of this section is to describe the algebras
of generalized quaternion type discussed in
Proposition~\ref{prop:6.6}.
We use the notation for tetrahedral quiver presented in
Proposition~\ref{prop:6.6}.
We have to deal with two types of algebras, the following determines
the first type.

\begin{theorem}
\label{th:8.1}
Let $A = K Q/I$ be a $2$-regular algebra of generalized quaternion
type whose quiver $Q$ is not the Markov quiver.
Assume that there is an arrow $\alpha$ of $Q$
such that two minimal relations of type C start at both
$i = s(\alpha)$ and $j = t(\alpha)$
but one of minimal relations starting from
$k = t(\bar{\alpha})$  is not  of type C.
Then $A$ is isomorphic to a weighted
triangulation algebra with  tetrahedral quiver.
\end{theorem}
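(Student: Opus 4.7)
The plan is to combine Proposition \ref{prop:6.6} (which pins down the quiver and the permutation) with the local presentations of Section \ref{sec:almosttetrahedral} (which control the vertices carrying two type C relations), and then determine the remaining local structure at the vertices of the tetrahedron where a type Z relation occurs. First I would invoke Proposition~\ref{prop:6.6}(i) and (iii) to fix a bound quiver presentation $A = KQ/I$ in which $Q$ is the tetrahedral quiver on vertices $\{i,j,k,x,y,w\}$, the permutation $f$ is the product of $3$-cycles
\[
  (\alpha\, \beta_1\, \delta_2)(\bar{\delta}_1\, \xi\, \beta)(\omega\, \gamma\, \bar{\delta}_2)(\gamma_1\, \delta_1\, \bar{\alpha}),
\]
and the six distinguished paths of length two that are not involved in commutativity relations belong to $J^3$. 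A direct computation of $g(\theta) = \overline{f(\theta)}$ then gives four $g$-orbits each of length three:
\[
  \cO(\alpha)=(\alpha,\beta,\delta_1),\quad
  \cO(\bar\alpha)=(\bar\alpha,\gamma,\delta_2),\quad
  \cO(\beta_1)=(\beta_1,\bar\delta_2,\xi),\quad
  \cO(\omega)=(\omega,\gamma_1,\bar\delta_1).
\]
The first three orbits pass through at least one of the ``nice'' vertices $i,j,x,y$ where two type C relations start; the orbit $\cO(\omega)$ is the one that avoids them and will be allowed to carry a higher weight.

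Next I would apply Proposition~\ref{prop:7.2} at the vertex $i$ and, by the same proof applied to $j$, also at $j$. This provides the six commutativity relations of type C attached to the orbits $\cO(\alpha)$, $\cO(\bar\alpha)$, $\cO(\beta_1)$ in the normalized form $\alpha f(\alpha)=\bar\alpha\gamma$, $\bar\alpha f(\bar\alpha)=\alpha\beta$, $f(\alpha)f^2(\alpha)=\beta f^2(\bar\alpha)$, $f(\bar\alpha)f^2(\bar\alpha)=\gamma f^2(\alpha)$, $f(\beta)f^2(\beta)=f^2(\bar\alpha)\alpha$ and $f(\gamma)f^2(\gamma)=f^2(\alpha)\bar\alpha$, together with the vanishing of $e_iAe_w$, $e_iAe_t$, $\dim e_iA=6$, and the zero relations $\beta f(\beta)g(f(\beta))=0$ at these vertices (these are the required weighted triangulation relations with weight $1$ on the three orbits $\cO(\alpha)$, $\cO(\bar\alpha)$, $\cO(\beta_1)$, up to rescaling of the parameters $c_{\bullet}$).

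The principal remaining task, and the main obstacle, is the analysis at the vertices $k$ and $w$, whose local structure is governed by the fourth orbit $\cO(\omega)$ and is not controlled by Section~\ref{sec:almosttetrahedral}. At $k$, the only minimal relation of type C is $\gamma\delta_2+\gamma_1\delta_1\in J^3$, and $\gamma\bar\delta_2\in J^3$ while $\gamma_1\bar\delta_1$ is independent. Proposition~\ref{prop:6.10} applies at $k$, so $e_kA$ is spanned by monomials along the two $g$-cycles through $k$, and $\dim_K e_kJ^s/e_kJ^{s+1}\le 2$ for all $s$. Using that $A$ is symmetric, the socle of $e_kA$ is one-dimensional and generated by a rotation of the socle element $X_i=\alpha\beta f^2(\bar\alpha)$ living in the orbit $\cO(\bar\alpha)$; this gives a basis of $e_kA$ consisting of initial subwords of the two $g$-cycles through $k$, where the cycle $\omega\gamma_1\bar\delta_1$ must be traversed some $m\ge 1$ times before hitting the socle. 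I would then argue, following the pattern of Lemmas~\ref{lem:5.8}--\ref{lem:5.10}, that by adjusting representatives of the arrows $\omega,\gamma_1,\bar\delta_1$ and $\xi,\bar\delta_2,\beta_1$ one obtains the commutativity relations $\gamma_1 f(\gamma_1)=c_{\omega}(X_kY_k)^{m-1}X_k\,\omega\bar\alpha$ and its rotations, where $m=m_{\cO(\omega)}$ and the remaining three orbits carry weight $1$.

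Finally I would verify the zero relations $\theta f(\theta)g(f(\theta))=0$ for the remaining arrows $\theta\in\{\omega,\gamma_1,\bar\delta_1,\xi,\bar\delta_2,\beta_1\}$ by writing each such path in the basis of the relevant projective and observing that it lands in $J^{3m+1}=0$ above the socle. Collecting the commutativity and zero relations across all twelve arrows then exhibits an isomorphism $A\simeq \Lambda(Q,f,m_{\bullet},c_{\bullet})$, with $m_{\cO(\alpha)}=m_{\cO(\bar\alpha)}=m_{\cO(\beta_1)}=1$ and $m_{\cO(\omega)}=m$, proving the theorem. The hardest step is bootstrapping from the ``rigid'' local data at $i,j,x,y$ given by Proposition~\ref{prop:7.2} to the correct exponent $m$ and parameter scalars on the orbit $\cO(\omega)$, and ensuring these choices are compatible with the symmetric bilinear form forcing equality of socle generators at all six vertices.
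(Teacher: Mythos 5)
Your overall strategy (extract the quiver and the permutation $f$ from Proposition~\ref{prop:6.6}, normalize the commutativity relations at the vertices carrying two type C relations, then analyse the socle along the remaining $g$-orbit to produce the weight $m$) is the same as the paper's, and your computation of the four $g$-orbits is correct. But there are concrete gaps. First, Proposition~\ref{prop:7.2} cannot be invoked at $i$ or $j$: its hypothesis requires that at most one minimal relation of type C starts at each of $j,k,x,y$, which fails here, and its conclusion (ii) asserts $\beta f(\beta)\in J^3$, which is false in the present setting where $\beta f(\beta)$ occurs in a type C relation from $j$ to $w$. The correct tool is Lemma~\ref{lem:7.1} together with Proposition~\ref{prop:6.6}(iii) and Proposition~\ref{prop:4.8}; also note that the vertices supporting two type C relations are $i,j,y$, not $i,j,x,y$. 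Second, you record only the six commutativity relations whose right-hand sides run along $\cO(\alpha)$ and $\cO(\bar{\alpha})$; the three type C relations whose right-hand sides run along $\cO(\beta_1)$ (from $j$ to $w$, from $y$ to $j$, from $w$ to $y$) are missing, and with them the consistency argument of the paper's step (III): after the arrows have been used to normalize seven of the nine type C relations, the remaining two scalars are no longer free and must be \emph{proved} equal to $1$ by comparing socle elements. The phrase ``up to rescaling of $c_{\bullet}$'' does not dispose of this.

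The analysis at $k$, $x$, $w$ also needs repair. The displayed relation $\gamma_1 f(\gamma_1)=c_{\omega}(X_kY_k)^{m-1}X_k\,\omega\bar{\alpha}$ is not meaningful: $\gamma_1 f(\gamma_1)=\gamma_1\delta_1$ is part of a type C relation already fixed, the paths requiring the $X_k^{m-1}$-presentation are the type Z paths $\gamma\bar{\delta}_2$, $\delta_1\bar{\alpha}$, $\xi\beta$, the product $\omega\bar{\alpha}$ is not composable, and the alternating pattern $(X_kY_k)^{m-1}$ is imported from the Markov case; here the generator $Y_k$ along $\cO(\bar{\alpha})$ is already in the socle, so $e_kAe_k$ is generated by $X_k$ alone and the second-socle element is $X_k^{m-1}$ times a partial cycle of $\cO(\omega)$. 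Moreover you cannot ``adjust representatives of $\omega,\gamma_1,\bar{\delta}_1,\xi,\bar{\delta}_2,\beta_1$'' at this stage, since they were fixed when normalizing the type C relations. The equality $c_k=c_x=c_w$ of the three scalars on the heavy orbit --- without which $A$ is not a weighted triangulation algebra --- is only named as ``the hardest step'' and never argued; the paper derives it from rotation identities among socle generators. Finally, the zero relations do not follow from a degree count against $J^{3m+1}=0$: for the light orbits $\sigma f(\sigma)g(f(\sigma))$ has small degree and vanishes because $\sigma f(\sigma)\in\soc_2(A)$ while $g(f(\sigma))$ ends at the wrong vertex (equivalently, because $e_uAe_v=0$ for the relevant pairs of vertices), which is the content of the paper's step (V).
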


\begin{proof}
(I)
We know from
Proposition~\ref{prop:6.6}
that $Q$ is the  tetrahedral quiver presented there.
We  summarize the information obtained in Proposition~\ref{prop:6.6}
and Lemma~\ref{lem:7.1}.
We  have the commutativity relations
\setcounter{equation}{0}
\begin{align}
 \label{eq:8.1}
  &&
  \alpha f(\alpha) &=
  \bar{\alpha}\gamma
  &&
 \mbox{(from $i$ to $y$)},
  &&
 \\
 \label{eq:8.2}
  &&
  \bar{\alpha} f(\bar{\alpha}) &=
  \alpha\beta
  &&
 \mbox{(from $i$ to $x$)},
  &&
\\
 \label{eq:8.3}
  &&
  f(\alpha) f^2(\alpha)
   &= \beta f^2(\bar{\alpha})
  &&
 \mbox{(from $j$ to $i$)},
  &&
\\
 \label{eq:8.4}
  &&
  f(\bar{\alpha}) f^2(\bar{\alpha})
   &= \gamma f^2(\alpha)
  &&
 \mbox{(from $k$ to $i$)}.
  &&
\end{align}
It follows from
Proposition~\ref{prop:6.6}
that there are relations of type C
\begin{align}
\label{eq:8.5}
 &&
   \beta f(\beta) + a_1 f(\alpha) f(\gamma) &\in J^3
  &&
   \mbox{(from $j$ to $w$)},
 &&
 \\
 \label{eq:8.6}
  &&
   f^2(\alpha) \alpha+ a_2 f(\gamma) f^2(\beta) &\in J^3
  &&
   \mbox{(from $y$ to $j$)},
  &&
 \\
 \label{eq:8.7}
  &&
   f^2(\alpha) \bar{\alpha} + a_3 f(\gamma) f^2(\gamma) &\in J^3
  &&
   \mbox{(from $y$ to $k$)},
 &&
 \\
\label{eq:8.8}
  &&
   f^2(\gamma) \gamma+ a_4 f^2(\beta) f(\alpha) &\in J^3
  &&
   \mbox{(from $w$ to $y$)},
  &&
 \\
 \label{eq:8.9}
  &&
   f(\beta) f^2(\beta) + a_5 f^2(\bar{\alpha}) \alpha &\in J^3
  &&
   \mbox{(from $x$ to $j$)},
  &&
\end{align}
for some non-zero elements $a_1,a_2,a_3,a_4,a_5 \in K$.
Moreover, we know that up to labelling,
$\gamma f(\gamma)$,
$f^2(\bar{\alpha}) \bar{\alpha}$ and
$f^2(\beta) \beta$
      are unique paths of length two belonging to $J^3$, and
$f(\beta) f^2(\gamma)$,
$f^2(\gamma) f(\bar{\alpha})$ and 
$f(\bar{\alpha}) f(\beta)$
are paths of length $2$ which do not occur in a minimal
relation.
We can apply
Lemma~\ref{lem:7.1}
to each of vertex $i, j, y$ and get that
each of the local algebras
$e_i A e_i$,
$e_j A e_j$,
$e_y A e_y$
have finite representation type, and moreover 
the projective module $e_iA$ is 6-dimensional.

In fact, also $e_jA$ and $e_yA$ are 6-dimensional. To prove this, it
suffices to have the identities (5) to (9) rather than precise commutativity.
It then also follows that $e_jAe_k=0=e_kAe_j$
and $e_yAe_x=0=e_xAe_y$. 

\smallskip

(III)
Next we prove that the relations
(5) to (9) 
may be taken as commutativity relations.

Relation (5) involves $\beta f(\beta)$ and $f(\alpha)f(\gamma)$, they
are both non-zero in the 1-dimensional space $e_jAe_w$. We may assume they
are equal (if necessary, we replace $f(\beta)$ by a non-zero scalar
multiple) and get 
\begin{align*}
 \tag{5}
 \label{eq:8.5p}
   \beta f(\beta) = f(\alpha) f(\gamma).
\end{align*}
By the same arguments, 
by 
rescaling $f(\gamma)$ if necessary 
we obtain the equalities
\begin{align*}
 \tag{6}
 \label{eq:8.6p}
   f^2(\alpha) \alpha = f(\gamma) f^2(\beta).
\end{align*}
Similarly, 
by 
rescaling $f^2(\gamma)$ 
we obtain 
\begin{align*}
 \tag{7}
 \label{eq:8.7p}
   f(\gamma) f^2(\gamma) = f^2(\alpha) \bar{\alpha}.
\end{align*}
Now all arrows are fixed, but 
we conclude similarly that
\begin{gather*}
 \tag{8}
 \label{eq:8.8p}
   f^2(\gamma) \gamma =  a f^2(\beta) f(\alpha)
 \\
  \tag{9}
 \label{eq:8.9p}
   f(\beta) f^2(\beta) = b f^2(\bar{\alpha}) \alpha,
\end{gather*}
where $0\neq a, b \in K$. 
We claim that $a=1$ and $b=1$. 
Indeed, from
(\ref{eq:8.1}),
(\ref{eq:8.6p}),
(\ref{eq:8.7p}),
(\ref{eq:8.8p})
we have the following equalities of non-zero elements of $\soc(e_j A)$
\[
  a f^2(\alpha) \alpha f(\alpha)
   =  a f(\gamma)  f^2(\beta)  f(\alpha)
   =  f(\gamma)  f^2(\gamma)  \gamma
   =  f^2(\alpha) \bar{\alpha} \gamma
   =  f^2(\alpha) \alpha f(\alpha)
  ,
\]
and hence $a = 1$.
Similarly, using
(\ref{eq:8.3}),
(\ref{eq:8.5p}),
(\ref{eq:8.6p}),
(\ref{eq:8.9p}),
we obtain the equalities of non-zero elements $\soc(e_y A)$
\[
  b \beta f^2(\bar{\alpha}) \alpha
   =  \beta f(\beta)  f^2(\beta)
   =  f(\alpha)  f(\gamma) f^2(\beta)
   =  f(\alpha)  f^2(\alpha) \alpha
   =  \beta f^2(\bar{\alpha}) \alpha
  ,
\]
and so $b = 1$.

\smallskip

(IV)
We will show now that the elements
$\gamma f(\gamma)$,
$f^2(\bar{\alpha}) \bar{\alpha}$,
$f^2(\beta) \beta$
are in $\soc_2(A)$.

\smallskip

(a)
Consider $\gamma f(\gamma)$.
Observe that
$f(\gamma) f^2(\gamma) \gamma \in e_y A e_y \cap J^3 = \soc(e_y A)$.
Then by rotation
$\gamma f(\gamma) f^2(\gamma) \in \soc(A)$.
Further, we have $\gamma f(\gamma) \xi \in e_k J^3 e_j = 0$.
Hence $\gamma f(\gamma) J \subseteq \soc (A)$,
and consequently
$\gamma f(\gamma) \in \soc_2 (A)$.

\smallskip

(b)
Consider $f^2(\bar{\alpha}) \bar{\alpha}$.
We note that
$\bar{\alpha} f(\bar{\alpha}) f^2(\bar{\alpha}) \in e_i A e_i \cap J^3 = \soc(e_i A)$.
Then by rotation
$f^2(\bar{\alpha}) \bar{\alpha} f(\bar{\alpha}) \in \soc(A)$.
Moreover, we have $f^2(\bar{\alpha}) \bar{\alpha}\gamma \in e_x J^3 e_y = 0$.
Hence $f^2(\bar{\alpha}) \bar{\alpha} J \subseteq \soc (A)$,
and then
$f^2(\bar{\alpha}) \bar{\alpha} \in \soc_2 (A)$.

\smallskip

(c)
Consider $f^2(\beta) \beta$.
We have
$\beta f(\beta) f^2(\beta) \in e_j A e_j \cap J^3 = \soc(e_j A)$,
and then by rotation
$f^2(\beta) \beta f(\beta) \in \soc(A)$.
Further, $f^2(\beta) \beta f^2(\bar{\alpha}) \in e_w J^3 e_i = 0$.
Hence $f^2(\beta) \beta J \subseteq \soc (A)$,
and consequently
$f^2(\beta) \beta \in \soc_2 (A)$.

\smallskip

(V)
It follows from the above considerations that for any arrow
$\sigma$ in $Q$ we have $\sigma f (\sigma) \in \soc_2(A)$,
$\sigma f(\sigma) f^2(\sigma)$ generates $\soc (e_{s(\sigma)} A)$,
and consequently we have $\sigma f(\sigma) g(f(\sigma)) = 0$, because
$t(g(f(\sigma))) \neq t(f^2(\sigma)) = s(\sigma)$.
Similarly, we have $g^{-1}(\sigma) \sigma f(\sigma) = 0$ for any
arrow $\sigma$ in $Q$.

\smallskip

(VI)
We determine  now the required presentations of the elements
$\gamma f(\gamma)$,
$f^2(\bar{\alpha}) \bar{\alpha}$,
$f^2(\beta) \beta$.

\smallskip

(a) 
First consider 
the module $e_k A$.
Since there is only one minimal relation of type C starting at $k$,
it follows from
Proposition~\ref{prop:6.10}
that $e_kA$ is spanned by monomials along the $g$-cycles
$(\gamma \, f^2(\alpha) \, \bar{\alpha})$
and
$(f(\bar{\alpha}) \, f(\beta) \, f^2(\gamma) )$
of the arrows $\gamma$ and $\bar{\gamma} = \gamma_1 = f(\bar{\alpha})$
starting at $k$.

Since $\ba \gamma f^2(\alpha)$ is non-zero in the socle, it follows that
 the rotation
$\gamma f^2(\alpha)\ba$ is non-zero in the socle. Therefore the local algebra
$e_kAe_k$ is generated by $X_k:= f(\ba)f(\beta)f^2(\gamma)$, and therefore the socle of
$e_kA$ is spanned by $X_k^m$ for some $m\geq 1$.  It follows that
$\soc_2(e_kA)$ is generated by $\gamma f^2(\alpha)$ and 
$X_k^{m-1}f(\ba)f(\beta)$. Now, $\gamma f(\gamma)$ is in the second socle
and in $e_kAe_w$, and therefore 
there exists a non-zero element $c_k \in K$
such that
\begin{align}
 \label{eq:8.10}
     \gamma f(\gamma) = c_k X_k^{m - 1} f(\ba) f(beta) .
\end{align}

\smallskip

(b)
Consider the module $e_x A$.
The same argument as in (a) shows that
the local algebra $e_xAe_x$ is generated by $X_x:= f(\beta) f^2(\gamma) f(\ba)$, 
which is a rotation of $X_k$. Therefore 
the socle of $e_xA$ is spanned by the rotation of $X_k^m$, which is
$X_x^m$. The element 
$f^2(\ba)\ba$ lies in $\soc_2(e_xA)e_k$. 
As in (a) we may identify generators for the second socle, and deduce
that  there exists a non-zero element $c_x \in K$
such that
\begin{align}
 \label{eq:8.11}
     f^2(\bar{\alpha}) \bar{\alpha} = c_x X_x^{m - 1} f(\beta) f^2(\gamma).
\end{align}

\smallskip

(c)
Similarly, the socle of $e_wA$ is 
spanned by the rotation $X_w^m$ of $X_k^m$ where
$X_w = f^2(\gamma)f(\ba)f(\beta)$. 
By considering the second socle of the module $e_w A$,
we see that
there exists a non-zero element $c_w \in K$
such that
\begin{align}
 \label{eq:8.12}
     f^2(\beta)  \beta = c_w X_w^{m - 1}f^2(\gamma)f(\ba). 
\end{align}

\bigskip

To finish, we must  show that
the three scalar factors are equal.
We have the following equalities of non-zero elements of
$\soc(e_k A)$,
$\soc(e_x A)$
and
$\soc(e_w A)$,
respectively
\begin{align*}
 \tag{10'}
 \label{eq:8.10p}
     \gamma f(\gamma) f^2(\gamma) &= c_k X_k^{m} ,
\\
 \tag{11'}
 \label{eq:8.11p}
     f^2(\bar{\alpha}) \bar{\alpha} f(\bar{\alpha}) &= c_x X_x^{m} ,
\\
 \tag{12'}
 \label{eq:8.12p}
     f^2(\beta) \beta f(\beta) &= c_w X_w^{m} .
\end{align*}
Moreover, we have
$\gamma_1 X_x = X_k \gamma_1$,
$\bar{\delta}_1 X_w = X_x \bar{\delta}_1$,
$\omega X_k = X_w \omega$.
We note that $m \geq 2$.
Finally, we claim that $c_k = c_x = c_w$.

Using the identities (7) and (4) we get
$$
 \gamma f(\gamma)f^2(\gamma) = \gamma f^2(\alpha) \ba = f(\ba)f^2(\ba)\ba ,
$$
which is a rotation of $f^2(\ba)\ba f(\ba)$. It follows that $c_k=c_x$. 
By using the identites (5) and (8) we have
$$f^2(\beta) \beta f(\beta) = f^2(\beta) f(\alpha)f(\gamma)
 = f^2(\gamma)\gamma f(\gamma),$$
which is a rotation of $\gamma f(\gamma)f^2(\gamma)$. It follows that
$c_w=c_k$.
Hence we obtain $c_k = c_x = c_w$, and we denote
this scalar by $c$.

Recall that  $g$ has four orbits on  arrows in $Q$, 
\begin{align*}
  \cO(\alpha) &= \big(\alpha \, \beta \, f^2(\bar{\alpha})\big) ,
 &
  \cO\big(f(\gamma)\big) &= \big(  f(\gamma) \, f^2(\beta) \, f(\alpha) \big) ,
\\
  \cO(\gamma) &= \big(\gamma \, f^2(\alpha)  \, \bar{\alpha}\big) ,
 &
  \cO\big(f(\bar{\alpha})\big) &= \big( f(\bar{\alpha}) \, f(\beta) \, f^2(\gamma) \big) .
\end{align*}
We define the weight function
$m_{\bullet} : \cO(g) \to \bN^*$
and the parameter function
$c_{\bullet} : \cO(g) \to K^*$
as follows
\begin{align*}
  m_{\cO(\alpha)} &= 1, &
  m_{\cO(\gamma)} &= 1, &
  m_{\cO(f(\gamma))} &= 1, &
  m_{\cO(f(\bar{\alpha}))} &= m, \\
  c_{\cO(\alpha)} &= 1, &
  c_{\cO(\gamma)} &= 1, &
  c_{\cO(f(\gamma))} &= 1, &
  c_{\cO(f(\bar{\alpha}))} &= c.
\end{align*}
Then it follows from the equalities established above
that $A$ is isomorphic to the weighted triangulation algebra
$\Lambda(Q,f,m_{\bullet},c_{\bullet})$.
\end{proof}

The following theorem deals with 
the other type of algebras with
tetrahedral quiver.

\begin{theorem}
\label{th:8.2}
Let $A = K Q/I$ be a $2$-regular algebra of generalized quaternion
type whose quiver $Q$ is not the Markov quiver.
Assume that there is an arrow $\alpha$ of $Q$
such that two minimal relations of type C start at each
of the vertices
$s(\alpha) = s(\bar{\alpha})$, $t(\alpha)$, $t(\bar{\alpha})$.
Then $A$ is isomorphic to a non-singular tetrahedral algebra
or a higher non-singular tetrahedral algebra.
\end{theorem}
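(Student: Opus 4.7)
The plan is to leverage Proposition~\ref{prop:6.6}(ii) to reduce immediately to the symmetric situation where every one of the $12$ minimal relations is of type C, and then to normalize systematically, in the spirit of Step (III) of the proof of Theorem~\ref{th:8.1}, so as to match the defining relations of $\Lambda(m,\lambda)$ from Section~\ref{sec:tetralg}. More precisely, I first invoke Proposition~\ref{prop:6.6}(i) to identify $Q$ with the tetrahedral quiver, and then Proposition~\ref{prop:6.6}(ii) to conclude that at \emph{every} vertex the two minimal relations starting there are of type C. Since at each of the six vertices there are exactly two outgoing paths of length $2$ not involved in a given type C relation, and since a type C relation pairs two such paths, the $24$ paths of length $2$ organize into $12$ type C minimal relations, one for each directed $2$-face of the tetrahedron.

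Next I rescale arrows to normalize these relations to pure equalities $u_1=u_2$ modulo $J^3$, exactly as was done in Step (III) of Theorem~\ref{th:8.1}: for a relation $a\alpha_1\beta_1+b\alpha_2\beta_2\in J^3$ one rescales (say) $\beta_2$ to absorb $-a/b$. Here one must be careful because the rescalings at different vertices interact through shared arrows; as in the proof of Theorem~\ref{th:8.1}, a sequence of rescalings uses up each arrow once, leaving a residual scalar in precisely $12-12+\text{(rank defect)}$ relations. An Euler/loop count on the tetrahedral quiver (twelve arrows, twelve relations, with four independent $g$-orbits of length three) shows the rescaling group is $(K^{*})^{12}/(K^{*})^{9}\cong(K^{*})^{3}$, so nine relations can be put into pure commutativity form matching the second block of the defining relations of $\Lambda(m,\lambda)$, while three relations retain a possibly nontrivial deformation term; these three should be indexed by three distinguished $g$-orbits, matching the presentation $\gamma\delta=\beta\varepsilon+\lambda(\beta\varrho\omega)^{m-1}\beta\varepsilon$ etc.\ in Section~\ref{sec:tetralg}.

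Then I analyze the local algebra $e_iAe_i$ at any vertex: arguing as in the proof of Lemma~\ref{lem:5.4}(ii) and Proposition~\ref{prop:6.10}, using that the two outgoing arrows lie in two distinct $g$-orbits of length $3$, I show that $e_iAe_i$ is spanned by $e_i$ together with initial subwords of the two cyclic words $X_i=\alpha g(\alpha)g^2(\alpha)$ and $Y_i=\bar\alpha g(\bar\alpha)g^2(\bar\alpha)$, that $(X_iY_i)^m=(Y_iX_i)^m$ generates the socle for a unique integer $m\geq 1$, and that the residual scalar in the deformed commutativity relation must be a common $\lambda\in K$ (using that $A$ is symmetric, so equal socle elements obtained by rotating the three deformed relations must have equal coefficients, exactly as in Steps (a)--(c) of the proof of Theorem~\ref{th:8.1}). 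This identifies $A$ with $\Lambda(m,\lambda)$ once one verifies the accompanying zero relations $(\theta f(\theta)f^2(\theta))^{m-1}\theta f(\theta)g(f(\theta))=0$; these follow by the same socle-rotation argument used in Step (V) of the proof of Theorem~\ref{th:8.1}, since $g(f(\theta))$ and $f^2(\theta)$ have different targets. Finally, $\lambda\neq 0$ because otherwise $\Lambda(m,\lambda)$ would be singular tetrahedral, whose simple modules are not periodic of period $4$ by Theorem~\ref{th:tetr2}, contradicting the assumption that $A$ is of generalized quaternion type.

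The main obstacle I anticipate is the bookkeeping in the middle step: tracking which three commutativity relations are allowed to retain a deformation term, showing that the deformation is forced to be a \emph{single} parameter $\lambda$ rather than three independent parameters, and verifying that the three deformed relations sit in the correct ``cyclic triangle'' of vertices compatible with the presentation in Section~\ref{sec:tetralg}. The symmetric-algebra identity $(X_iY_i)^m=(Y_iX_i)^m$, together with the fact that rotations of the distinguished cycle through $\soc(A)$ must coincide, is what ties the three a priori independent deformation scalars to one common value of $\lambda$, and making this tying argument fully explicit without lengthy scalar-chasing will be the delicate part of the proof.
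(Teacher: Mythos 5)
Your overall strategy --- reduce via Proposition~\ref{prop:6.6} to the tetrahedral quiver with two type C relations at every vertex, normalize arrows so that most relations become pure commutativities, and identify the result with $\Lambda(m,\lambda)$ --- is the same as the paper's, but two of your intermediate claims are wrong or unsupported, and they sit exactly where the real work is.

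First, the structure of the local algebras. You model $e_iAe_i$ on the Markov case (Lemma~\ref{lem:5.4}): two generators $X_i,Y_i$, a basis of alternating words, socle spanned by $(X_iY_i)^m$. In the tetrahedral situation this is false: by Lemma~\ref{lem:7.1}(i) the commutativity relations force $\alpha\beta f^2(\bar{\alpha})=\bar{\alpha}f(\bar{\alpha})f^2(\bar{\alpha})=\alpha f(\alpha)f^2(\alpha)=\bar{\alpha}\gamma f^2(\alpha)$, i.e.\ your $X_i$ and $Y_i$ coincide in $A$; each local algebra is of finite representation type, generated by the single element $X_i$, with socle spanned by $X_i^m$. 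The socle-rotation argument you invoke to tie the residual scalars together therefore has to be rebuilt on this quite different structure, and the identity $(X_iY_i)^m=(Y_iX_i)^m$ on which you hang it has no content here.

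Second, the normalization. Your Euler-count heuristic treats the obstruction to normalizing a type C relation as a scalar in $K^*$. But once the arrows have been fixed by the earlier normalizations, the residual in the remaining relations is a \emph{unit} $c\cdot 1+\zeta$ of the local algebra $e_wAe_w$, with $\zeta$ in the radical, and this cannot be absorbed by rescaling. This is not bookkeeping: the dichotomy in the conclusion of the theorem (non-singular tetrahedral versus higher tetrahedral) is precisely the dichotomy $c\neq 1$ (which forces $X_i^2=0$ and hence the degree-one case) versus $c=1$ (which forces $\zeta\neq 0$, since $\zeta=0$ would give a singular algebra contradicting Theorem~\ref{th:tetr2}). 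Your proposal never produces this case split. Moreover, pinning $\zeta$ down as $\lambda Z^{m-1}$ rather than an arbitrary radical element requires showing that $f(\bar{\alpha})f(\beta)\zeta$ lies in the second socle, which the paper extracts from the syzygy computation $\Omega_A(\bar{\alpha}A)\subseteq\soc_2(A)$ (Lemma~\ref{lem:8.3}); the same lemma is also needed to show that the units $r,s$ arising when relations are propagated by Proposition~\ref{prop:4.3} are equal to $1$, i.e.\ that your chain of rescalings is consistent at the last two vertices. None of this appears in the proposal, and without it the identification with $\Lambda(m,\lambda)$ does not go through.
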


\begin{proof}
It follows from
Proposition~\ref{prop:6.6}
that $Q$ is the tetrahedral triangulation quiver,
and that two minimal relations of type C
start at each vertex of $Q$.
Moreover, we may choose the labeling of arrows in $Q$
as follows
\[
\begin{tikzpicture}
[scale=.85]
\coordinate (1) at (0,1.72);
\coordinate (2) at (0,-1.72);
\coordinate (3) at (2,-1.72);
\coordinate (4) at (-1,0);
\coordinate (5) at (1,0);
\coordinate (6) at (-2,-1.72);
\fill[fill=gray!20]
      (0,2.22cm) arc [start angle=90, delta angle=-360, x radius=4cm, y radius=2.8cm]
 --  (0,1.72cm) arc [start angle=90, delta angle=360, radius=2.3cm]
     -- cycle;
\fill[fill=gray!20]
    (1) -- (4) -- (5) -- cycle;
\fill[fill=gray!20]
    (2) -- (4) -- (6) -- cycle;
\fill[fill=gray!20]
    (2) -- (3) -- (5) -- cycle;

\node (1) at (0,1.72) {$k$};
\node (2) at (0,-1.72) {$j$};
\node (3) at (2,-1.72) {$w$};
\node (4) at (-1,0) {$i$};
\node (5) at (1,0) {$x$};
\node (6) at (-2,-1.72) {$y$};
\draw[->,thick] (-.23,1.7) arc [start angle=96, delta angle=108, radius=2.3cm] node[midway,right] {$\gamma$};
\draw[->,thick] (-1.87,-1.93) arc [start angle=-144, delta angle=108, radius=2.3cm] node[midway,above] {$f(\gamma)$};
\draw[->,thick] (2.11,-1.52) arc [start angle=-24, delta angle=108, radius=2.3cm] node[midway,below left] {$f^2(\gamma)$\!\!\!\!};
\draw[->,thick]
 (1) edge node [right] {$f(\bar{\alpha})$} (5)
(2) edge node [left] {$\beta$} (5)
(2) edge node [below] {$f(\alpha)$} (6)
(3) edge node [below] {$f^2(\beta)$} (2)
 (4) edge node [left] {$\bar{\alpha}$} (1)
(4) edge node [right] {$\alpha$} (2)
(5) edge node [above right] {$\!\!\!f(\beta)$} (3)
 (5) edge node [below] {$f^2(\bar{\alpha})$} (4)
(6) edge node [above left] {$f^2(\alpha)\!\!\!\!\!$} (4)
;
\end{tikzpicture}
\]
where the shaded triangles denote $f$-orbits.
Then the cycles of the permutation $g$ are
\begin{align*}
  \big(\alpha \, \beta \, f^2(\bar{\alpha}) \big) ,
   &&
  \big(\bar{\alpha} \, \gamma \, f^2(\alpha) \big) ,
   &&
  \big(f(\alpha) \, f(\gamma) \, f^2(\beta) \big) ,
   &&
  \big(f(\bar{\alpha}) \, f(\beta) \, f^2(\gamma) \big) .
\end{align*}
Further, it follows from
Lemma~\ref{lem:7.1}
that for teach vertex $t$, the local algebra
$e_tAe_t$ is of 
finite representation type.
We will chose now representatives of arrows of $Q$ in $A$
such that almost all minimal relations of type $C$ can be taken
as commutativity relations.

We start with the vertex $i$ and apply
Lemma~\ref{lem:7.1}.
This gives the commutativity relations
\setcounter{equation}{0}
\begin{align}
 \label{eq:8.1n}
  &&
  \alpha f(\alpha) &=
  \bar{\alpha}\gamma
  &&
 \mbox{(from $i$ to $y$)},
  &&
 \\
 \label{eq:8.2n}
  &&
  \bar{\alpha} f(\bar{\alpha}) &=
  \alpha\beta
  &&
 \mbox{(from $i$ to $x$)},
  &&
\\
 \label{eq:8.3n}
  &&
  f(\alpha) f^2(\alpha)
   &= \beta f^2(\bar{\alpha})
  &&
 \mbox{(from $j$ to $i$)},
  &&
\\
 \label{eq:8.4n}
  &&
  f(\bar{\alpha}) f^2(\bar{\alpha})
   &= \gamma f^2(\alpha)
  &&
 \mbox{(from $k$ to $i$)}.
  &&
\end{align}
We consider now the second minimal relation starting at $j$.
Since $e_j A e_j$ and $e_w A e_w$ are of finite representation
type, applying arguments from the proof of
Lemma~\ref{lem:7.1},
we conclude that
$\beta f(\beta) = f(\alpha) f(\gamma) v$
for a unit $v$ of $e_w A e_w$.
Then, replacing $f(\beta)$ by $f(\beta) v^{-1}$,
we obtain the commutativity relation
\begin{align}
 \label{eq:8.5n}
  &&
   \beta f(\beta) = f(\alpha) f(\gamma) 
  &&
   \mbox{(from $j$ to $w$)}.
  &&
\end{align}
Using the relations (\ref{eq:8.3n}) and (\ref{eq:8.5n}),
we obtain by
Proposition~\ref{prop:4.3}
two commutativity relations ending at $j$
\[
   f^2(\alpha) \alpha' = f(\gamma) f^2(\beta)'
 \qquad
 \quad
   \mbox{and}
 \qquad
 \quad
   f^2(\bar{\alpha}) \alpha' = f(\beta) f^2(\beta)'
 ,
\]
with $\alpha = \alpha' a$ and $f^2 (\beta)' = f^2 (\beta) b$
for some units $a, b$ in $e_j A e_j$.
Then
$ f^2(\alpha) \alpha = f(\gamma) f^2(\beta) b a^{-1}$
and
$f^2(\bar{\alpha}) \alpha = f(\beta) f^2(\beta) b a^{-1}$.
Hence, replacing $f^2(\beta)$ by $f^2(\beta) b a^{-1}$,
we obtain the equalities
\begin{align}
 \label{eq:8.6n}
  &&
   f^2(\alpha) \alpha &= f(\gamma) f^2(\beta) 
  &
   \mbox{(from $y$ to $j$)},
  &&
 \\
  \tag{9}
 \label{eq:8.9n}
  &&
   f(\beta) f^2(\beta) &= f^2(\bar{\alpha}) \alpha 
  &
   \mbox{(from $x$ to $j$)}.
  &&
\end{align}

With these, all arrows except $f^2(\gamma)$ are fixed.
We consider the relations starting from $k$.
Using that the local algebra $e_w A e_w$
is of finite representation type,
say with  generator
$Z = f^2(\gamma) f(\bar{\alpha}) f(\beta) \in e_w J^3 e_w$,
we conclude that $\gamma f(\gamma)$ can be written as
\begin{align}
 \tag{10}
 \label{eq:8.10n}
     \gamma f(\gamma) = f(\bar{\alpha}) f(\beta) (c 1 + \zeta)
\end{align}
for $c \in K^*$,
$1$ the identity of $e_w A e_w$, and $\zeta \in e_w J^3 e_w$.
Using the minimal relations (\ref{eq:8.4n}) and (\ref{eq:8.10n})
starting from $k$,
we obtain by
Proposition~\ref{prop:4.3}
two minimal relations ending at $k$
\[
   f^2(\alpha)  \bar{\alpha}' = f(\gamma) f^2(\gamma)'
 \qquad
 \quad
   \mbox{and}
 \qquad
 \quad
   f^2(\bar{\alpha}) \bar{\alpha}' = f(\beta) (c 1 + \zeta) f^2(\gamma)'
 ,
\]
with
$\bar{\alpha}' = \bar{\alpha} u$
and
$f^2 (\gamma)' = f^2 (\gamma) v$
for some units $u, v$ in $e_k A e_k$.
Then, replacing $f^2(\gamma)$ by $f^2(\gamma) v u^{-1}$,
we obtain the equalities
\begin{align}
 \label{eq:8.7n}
  &&
   f(\gamma) f^2(\gamma) &= f^2(\alpha) \bar{\alpha} 
  &
   \mbox{(from $y$ to $k$)},
  &&
 \\
  \tag{11}
 \label{eq:8.11n}
  &&
   f^2(\bar{\alpha}) \bar{\alpha} &= f(\beta) (c 1 + \zeta) f^2(\gamma)
  &
   \mbox{(from $x$ to $k$)}.
  &&
\end{align}
Further, using the minimal relations (\ref{eq:8.6n}) and (\ref{eq:8.7n})
starting from $y$,
we obtain by
Proposition~\ref{prop:4.3}
two minimal relations ending at $y$
\begin{align*}\tag{*}\label{(*)}
   \bar{\alpha} \gamma = \alpha f(\alpha)'
 \qquad
 \qquad
 \quad
   \mbox{and}
 \qquad
 \qquad
 \quad
   f^2(\gamma) \gamma = f^2(\beta) f(\alpha)'
 ,
\end{align*}
with
$f(\alpha)' = f(\alpha) r$
for some unit $r$ in $e_y A e_y$.
We claim that $f(\alpha)' = f(\alpha)$.
Indeed, by (\ref{eq:8.1n}), we have
$\alpha f(\alpha) = \bar{\alpha}\gamma = \alpha f(\alpha)'$,
and hence $\alpha(f(\alpha) -  f(\alpha)') = 0$.
This means that $f(\alpha) -  f(\alpha)'$
belongs to the submodule $\Omega_A(\alpha A)$
of $e_j A$.
By Lemma~\ref{lem:8.3} below, 
we have
$$\Omega_A(\alpha A) \subseteq \soc_2 (e_j A) \subseteq e_j A e_j + e_j A e_i + e_j A e_w,
$$
but $f(\alpha) -  f(\alpha)' \in e_j A e_y$,
and therefore $f(\alpha) -  f(\alpha)' = 0$.
In particular, we obtain the commutativity relation
\begin{align}
 \tag{8}
 \label{eq:8.8n}
  &&
   f^2(\gamma) \gamma
   = f^2(\beta) f(\alpha)
   = f^2(\beta) g\big(f^2(\beta)\big)
  &&
   \mbox{(from $w$ to $y$)}.
  &&
\end{align}
Finally, using the minimal relations (\ref{eq:8.9n}) and (\ref{eq:8.11n})
starting from $x$,
we obtain by
Proposition~\ref{prop:4.3}
two minimal relations ending at $x$
\begin{align*}\tag{**}\label{(**)}
   \alpha \beta = \bar{\alpha} f(\bar{\alpha})'
 \qquad
 \quad
   \mbox{and}
 \qquad
 \quad
   f^2(\beta) \beta = (c 1 + \zeta) f^2(\gamma)f(\bar{\alpha})'
 ,
\end{align*}
with
$f(\bar{\alpha})' = f(\bar{\alpha}) s$
for some unit $s$ in $e_x A e_x$.
Observe that
$\bar{\alpha} f(\bar{\alpha}) = \alpha \beta = \bar{\alpha} f(\bar{\alpha})'$,
by (\ref{eq:8.2n}),
and hence
$\bar{\alpha} (f(\bar{\alpha}) - f(\bar{\alpha})') = 0$.
Then using Lemma~\ref{lem:8.3} again, we get
$f(\bar{\alpha}) = f(\bar{\alpha})'$.
In particular, we obtain the
new minimal relation
\begin{align}
 \tag{12}
 \label{eq:8.12n}
  &&
   f^2(\beta) \beta = (c 1 + \zeta) f^2(\gamma)f(\bar{\alpha})
  &&
   \mbox{(from $w$ to $x$)}.
  &&
\end{align}
Using the commutativity relations
(\ref{eq:8.1n})--(\ref{eq:8.9n}),
we may define the elements
\begin{align*}
  X_i
   &= \alpha f(\alpha) f^2(\alpha)
    = \bar{\alpha} f(\bar{\alpha}) f^2(\bar{\alpha}) ,
  &
  X_j
   &= f(\alpha) f^2(\alpha) \alpha
    = \beta f(\beta) f^2(\beta) ,
  \\
  X_y
   &= f^2(\alpha) \alpha f(\alpha)
    = f(\gamma) f^2(\gamma) \gamma ,
  &
  X_k
   &= f(\bar{\alpha}) f^2(\bar{\alpha}) \bar{\alpha}
    = \gamma f(\gamma) f^2(\gamma) ,
  \\
  X_x
   &= f^2(\bar{\alpha}) \bar{\alpha} f(\bar{\alpha})
    = f(\beta) f^2(\beta) \beta ,
  &
  X_w
   &= f^2(\beta) \beta f(\beta)
    = f^2(\gamma) \gamma f(\gamma) ,
\end{align*}
which generate the local algebras
$e_i A e_i$,
$e_j A e_j$,
$e_y A e_y$,
$e_k A e_k$,
$e_x A e_x$,
$e_w A e_w$,
respectively.
Let $X_i^m$ span the socle of $e_iA$. The generators of the other local
algebras are rotations of $X_i$, and since  $A$ is symmetric, it follows that
for every vertex $t$ of $Q$, the element $X_t^m$ spans the
socle of $e_tA$. 

We have to consider two cases.

\smallskip

(I)
Assume that $c \neq 1$.
We claim that
$X_i^2$ is zero (so $m=2$ and $X_t^2=0$ for each vertex $t$ of $Q$ and 
consequently
each  indecomposable projective module
$e_t A$ is $6$-dimensional.)
We first show  that
$\alpha f(\alpha) f(\gamma) = 0$.
Namely, using
(\ref{eq:8.1n}), (\ref{eq:8.10n}), (\ref{eq:8.2n}), and (\ref{eq:8.5n}),
we obtain that
$\alpha f(\alpha) f(\gamma) = \alpha f(\alpha) f(\gamma)  (c 1 + \zeta)$.
Hence
$\alpha f(\alpha) f(\gamma) ((c-1) 1 + \zeta) = 0$,
and then
$\alpha f(\alpha) f(\gamma) = 0$.
Then we obtain the equality
\begin{align*}
  X_i^2
   &= \alpha f(\alpha) f^2(\alpha) \alpha f(\alpha) f^2(\alpha)
    =  \alpha f(\alpha) f(\gamma) f^2(\beta) f(\alpha) f^2(\alpha)
    = 0.
\end{align*}
In particular, we conclude that
$e_i A e_w = 0$,
$e_j A e_k = 0$,
$e_y A e_x = 0$,
$e_k A e_j = 0$,
$e_x A e_y = 0$,
$e_w A e_i = 0$.
But then
$\theta  f(\theta) g(f(\theta)) = 0$ for any arrow $\theta$ of $Q$.
Observe also that $J^4 = 0$, and then
the relations
(\ref{eq:8.10n}),
(\ref{eq:8.11n}),
(\ref{eq:8.12n})
reduce to the relations
\begin{align}
 \tag{10'}
 \label{eq:8.10np}
   \gamma f(\gamma)
      &= c f(\bar{\alpha}) f(\beta), 
 \\
  \tag{11'}
 \label{eq:8.11np}
   f^2(\bar{\alpha}) \bar{\alpha}
     &= c f(\beta) f^2(\gamma),
\\
 \tag{12'}
 \label{eq:8.12np}
   f^2(\beta) \beta
     &= c f^2(\gamma) f(\bar{\alpha}).
\end{align}
Summing up, in the considered case,
the algebra $A$ is isomorphic to the non-singular tetrahedral
algebra $\Lambda(Q,f,m_{\bullet},c_{\bullet})$
with the weight function
$m_{\bullet} : \cO(g) \to \bN^*$
and the parameter function
$c_{\bullet} : \cO(g) \to K^*$
as follows
\begin{align*}
  m_{\cO(\alpha)} &= 1, &
  m_{\cO(\gamma)} &= 1, &
  m_{\cO(f(\gamma))} &= 1, &
  m_{\cO(f(\bar{\alpha}))} &= 1, \\
  c_{\cO(\alpha)} &= 1, &
  c_{\cO(\gamma)} &= 1, &
  c_{\cO(f(\gamma))} &= 1, &
  c_{\cO(f(\bar{\alpha}))} &= c.
\end{align*}


(II)
Assume that $c = 1$.
We claim first that $\zeta \neq 0$.
Suppose $\zeta = 0$.
Then we have the commutativity relations
(\ref{eq:8.10np}),
(\ref{eq:8.11np}),
(\ref{eq:8.12np}),
with $c = 1$.
Observe also that for any arrow $\theta$ in $Q$
we have
$X_{s(\theta)}^{m-1} \theta f(\theta) g(f(\theta)) = 0$.
Therefore, $A$ is isomorphic either to singular
tetrahedral algebra (if $m=1$) or to a singular
higher tetrahedral algebra (if $m \geq 2$),
which contradicts to
Theorems \ref{th:2.4} and \ref{th:tetr2}.
Hence, indeed $\zeta \neq 0$.

The element
$Z^m = (f^2(\gamma)  f(\bar{\alpha})   f(\beta))^m$
also generates the socle of $e_w A$.
If $m = 1$, then
\begin{align*}
  f^2(\bar{\alpha}) f(\beta) \zeta &= 0, &
  f(\beta) \zeta f^2(\bar{\alpha}) &= 0, &
  \zeta f^2(\bar{\alpha}) f(\beta) &= 0,
\end{align*}
and we are in the situation considered above.
Hence, we may assume that $m \geq 2$.
We will show that we may take $\zeta = \lambda Z^{m-1}$
for some $\lambda \in K^*$.
Applying
(\ref{eq:8.1n}),
(\ref{eq:8.10n}),
(\ref{eq:8.2n}),
(\ref{eq:8.5n}),
we obtain that
$\alpha f(\alpha) f(\gamma) = \alpha f(\alpha) f(\gamma) (1 + \zeta)$,
and hence $\alpha f(\alpha) f(\gamma) \zeta = 0$.
Now it follows from
(\ref{eq:8.2n})
and
(\ref{eq:8.5n})
that
\[
  \bar{\alpha} f(\bar{\alpha}) f(\beta) \zeta
  = \alpha \beta f(\beta) \zeta
  = \alpha f(\alpha) f(\gamma) \zeta
  = 0.
\]
Hence $f(\bar{\alpha}) f(\beta) \zeta$ belongs to the submodule
$\Omega_A(\bar{\alpha} A)$ of $e_k A$.
Since $\Omega_A(\bar{\alpha} A)$ is $2$-dimensional by Lemma~\ref{lem:8.3},
we conclude that $f(\bar{\alpha}) f(\beta) \zeta$ belongs to
$\soc_2 (e_k A) e_w$.
But then
$f(\bar{\alpha}) f(\beta) \zeta = \lambda f(\bar{\alpha}) f(\beta) Z^{m-1}$
for some $\lambda \in K^*$.
We have then $Z \zeta = f^2(\gamma) f(\bar{\alpha}) f(\beta) \zeta = \lambda Z^m$,
and consequently $\zeta = \lambda Z^{m-1}$,
as required.
Finally, if $\theta$ is any arrow,
we have
$X_{s(\theta)}^{m-1} \theta f(\theta) g(f(\theta)) = 0$.
This proves that $A$ is isomorphic to the non-singular
higher tetrahedral algebra
$\Lambda(m,\lambda)$.
\end{proof}

We present now the lemma which was used in the above proof.

\begin{lemma}\label{lem:8.3}
Assume $Q$ has tetrahedral quiver, such that the relations (1) to  (11) (except
for (8)) hold, and also \ref{(*)} and \ref{(**)}. 
Then for  $\sigma=\alpha$ 
or $\sigma = \ba$, the syzygy $\Omega_A(\sigma A)$ is contained in the second socle of $A$.
\end{lemma}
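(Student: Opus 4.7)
I would prove Lemma~\ref{lem:8.3} by analyzing $\Omega_A(\alpha A)$ as the right annihilator of $\alpha$ in the projective $P_j = e_jA$. Since $\alpha A$ is cyclic with simple top $S_j$, the natural epimorphism $\pi\colon e_jA \to \alpha A$, $x \mapsto \alpha x$, is a projective cover (any $x$ outside $e_jJ$ would send $\alpha$ to something outside $e_iJ^2$), and so $\Omega_A(\alpha A) = \{x \in e_jA : \alpha x = 0\}$.

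The first step is to show $\Omega_A(\alpha A) \subseteq e_jJ^2$. If $x \in \Omega_A(\alpha A)$ had non-zero image in $e_jJ/e_jJ^2$, write $x \equiv a f(\alpha) + b\beta \pmod{e_jJ^2}$ with $(a,b) \neq (0,0)$; the commutativity relations (1), (2) give
\[
\alpha x \equiv a\bar{\alpha}\gamma + b\alpha\beta \pmod{e_iJ^3},
\]
which is non-zero by Corollary~\ref{cor:4.6}(i), a contradiction.

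The second step is to show that every $x \in e_jJ^2 \cap \Omega_A(\alpha A)$ lies in $\soc_2(A)$, i.e.\ $x \cdot J \subseteq \soc(A)$. The representative elements to analyse are $\beta f(\beta) = f(\alpha) f(\gamma)$ (via relation~(5)) and iterates $X_j^k$ of the local generator of $e_jAe_j$. Using~(1), (2), (5), (10), the key computation
\[
\alpha\beta f(\beta) = \bar{\alpha}\gamma f(\gamma) = \bar{\alpha}f(\bar{\alpha})f(\beta)(c 1 + \zeta) = \alpha\beta f(\beta)(c 1 + \zeta)
\]
yields $\alpha\beta f(\beta)\cdot((c-1) 1 + \zeta) = 0$; this, combined with $\alpha X_j^k = X_i^k\alpha$ and the fact that the socle of $e_iAe_i$ is spanned by $X_i^m$, forces the right multiplication of each relevant generator by any arrow of $Q$ to land in $\soc(A)$.

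The case $\sigma = \bar{\alpha}$ is handled by the symmetric argument, interchanging $\alpha \leftrightarrow \bar{\alpha}$, $\beta \leftrightarrow \gamma$, $j \leftrightarrow k$, $x \leftrightarrow y$ in the tetrahedral configuration. The main technical obstacle lies in the higher-degree case ($m \geq 2$) with $c = 1$, where the nilpotent parameter $\zeta = \lambda Z^{m-1}$ from relation~(10) is non-zero and $(c-1)1 + \zeta = \zeta$ is itself nilpotent rather than a unit; there one must invoke the socle structure of the local algebra $e_wAe_w$ (whose socle is spanned by $Z^m$) to propagate the identity $\alpha\beta f(\beta)\cdot\zeta = 0$ through the Loewy filtration and conclude $\beta f(\beta)\cdot J \subseteq \soc(A)$, while in the easier case $c \neq 1$ the element $(c-1)1 + \zeta$ is already a unit in $e_wAe_w$ and the argument is immediate.
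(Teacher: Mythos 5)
Your reduction of $\Omega_A(\alpha A)$ to the right annihilator $\{x\in e_jA:\alpha x=0\}$ and your first step (the annihilator lies in $e_jJ^2$, via Corollary~\ref{cor:4.6}) are fine, but the second step is where the proof actually lives and it is not carried out. Showing that the particular elements $\beta f(\beta)$ and the powers $X_j^k$ lie in $\soc_2(A)$ does not show that \emph{every} element of $e_jJ^2$ annihilated by $\alpha$ does: you never produce a spanning set for $e_jJ^2$, nor determine which linear combinations $\alpha$ kills, so nothing rules out an annihilated element sitting in a low Loewy layer of $e_jJ^2$. Moreover your treatment of the case $c=1$, $m\geq 2$ is circular: you take $\zeta=\lambda Z^{m-1}$ as given ``from relation (10)'', but in the paper that normal form for $\zeta$ is \emph{deduced from} Lemma~\ref{lem:8.3} (it is exactly the step ``Since $\Omega_A(\bar{\alpha} A)$ is $2$-dimensional by Lemma~\ref{lem:8.3}\dots'' in the proof of Theorem~\ref{th:8.2}). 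At the point where the lemma must be proved, all that is available is $\zeta\in e_wJ^3e_w$, and your proposal explicitly defers the resulting difficulty (``one must invoke the socle structure \dots\ to propagate the identity'') rather than resolving it.

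The paper's own proof is much shorter and avoids element computations entirely: from the given relations one first shows each indecomposable projective has dimension $6m$; since $e_iA/\alpha A$ is visibly $2$-dimensional (its radical is spanned by the image of $\bar{\alpha}$, because $\bar{\alpha}e_kJ=\bar{\alpha}\gamma A+\bar{\alpha}f(\bar{\alpha})A\subseteq\alpha A$ by relations (1) and (2)), one gets $\dim_K\alpha A=6m-2$, hence $\dim_K\Omega_A(\alpha A)=2$; and any $2$-dimensional submodule of the indecomposable projective-injective module $e_jA$ contains its simple socle and is therefore contained in $\soc_2(e_jA)$. If you want to salvage your approach you would in effect have to reprove this dimension count anyway, at which point the formal argument above finishes immediately.
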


\begin{proof}  We identify 
$\Omega_A(\alpha A)$ with the set $\{ x\in e_jA: \alpha x =0\}$. One proves from the given relations that each projective has dimension $6m$. Now, $\Omega^{-1}(\alpha A)$ is obviously 2-dimensional
and $\alpha A$ is cyclic, it follows that $\Omega(\alpha A)$ also is 2-dimensional and then it must be contained in the second socle 
of $e_jA$. Similarly, the other part holds.
\end{proof}

\section{Local presentation}\label{sec:localpresentation}

Let $A = K Q / I$ be a $2$-regular algebra of generalized
quaternion type whose quiver is not the Markov quiver.
We fixed in Section~\ref{sec:triangulation} two permutations $f$ and $g$
of arrows in $Q$,  such that $(Q,f)$ is a triangulation quiver,
$g(\alpha) = \overbar{f(\alpha)}$ for any arrow $\alpha$ of $Q$,
and with the properties:
\begin{itemize}
 \item
  For each arrow $\alpha$ of $Q$,
  $\alpha f(\alpha)$ occurs in a minimal
  relation of $I$.
 \item
  For each vertex $i$ of $Q$
  and the arrows $\alpha$ and $\bar{\alpha}$
  starting at $i$,
  the cosets $\alpha g(\alpha) + e_i J^3$ and
  $\bar{\alpha} g(\bar{\alpha}) + e_i J^3$ form a basis
  of $e_i j^2 / e_i J^3$.
\end{itemize}

The main aim of this section is to prove the following theorem
on the local presentation of $A$.

\begin{theorem}
\label{th:9.1}
Let $i$ be a vertex of $Q$ such that there is at most
one minimal relation of type C starting from $i$,
and $\alpha$, $\bar{\alpha}$ be the arrows in $Q$
starting from $i$.
Then there are choices for
$\alpha, f(\alpha), f^2(\alpha)$
and
$\bar{\alpha}, f(\bar{\alpha}), f^2(\bar{\alpha})$
such that
$\alpha f(\alpha), f(\alpha) f^2(\alpha), f^2(\alpha) \alpha$
and
$\bar{\alpha} f(\bar{\alpha}), f(\bar{\alpha}) f^2(\bar{\alpha}),
 f^2(\bar{\alpha}) \bar{\alpha}$
belong to $\soc_2(A) \setminus \soc (A)$.
\end{theorem}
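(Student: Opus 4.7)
The plan is to work locally at the vertex $i$, exploiting the structural description provided by Proposition~\ref{prop:6.10} and Lemma~\ref{lem:6.11}. Since there is at most one minimal relation of type C starting from $i$, Proposition~\ref{prop:6.10} gives that $e_iA$ is spanned by monomials along the $g$-cycles of $\alpha$ and $\bar{\alpha}$, and $\dim_K e_iJ^k/e_iJ^{k+1} \leq 2$ for every $k \geq 1$. Applying Lemma~\ref{lem:6.11} I would obtain an explicit description of $\soc_2(e_iA)/\soc(e_iA)$ in terms of the truncated $g$-monomials $\eta_{\bullet}$ and $\bar{\eta}_{\bullet}$.

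Next, I would split into cases based on whether the minimal relation from $i$ containing $\alpha f(\alpha)$ is of type Z or of type C. In the type Z case, $\alpha f(\alpha)$ already lies in $e_iJ^3$, and its expansion in the $g$-cycle monomial basis from Proposition~\ref{prop:6.10} shows it differs from an element of $\soc_2(e_iA)$ by something in $e_iJ^4$; Remark~\ref{rem:6.12} rules out undesirable cross-terms mixing both $g$-orbits. One then adjusts $f^2(\alpha)$ by adding a higher-order correction to absorb the remainder, so that $\alpha f(\alpha)$ lands precisely in $\soc_2$. In the type C case, after rescaling $f(\alpha)$ and absorbing the $J^3$-remainder, the same conclusion holds. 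The symmetry of $A$ (via the trace form) rotates the cyclic statement so that $f(\alpha)f^2(\alpha)$ and $f^2(\alpha)\alpha$ likewise lie in the corresponding second socles at the vertices $j$ and $y$. The $\bar{\alpha}$-side is entirely analogous.

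To verify that these six paths do not lie in $\soc(A)$, I would observe that $\alpha f(\alpha) f^2(\alpha) \in e_iAe_i$ is, up to scalar, the socle generator of $e_iA$: Lemma~\ref{lem:6.7} gives $g^{n_{\alpha} - 1}(\alpha) = f^2(\bar{\alpha})$, so that $\alpha f(\alpha) f^2(\alpha)$ can be matched with a rotation of the cyclic generator of $\soc(e_iAe_i)$. Hence $\alpha f(\alpha)$ itself cannot lie in $\soc(e_iA)$; an analogous argument works at the other vertices.

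The main obstacle will be ensuring that the arrow adjustments chosen to place $\alpha f(\alpha)$ in $\soc_2$ are compatible with those made for $\bar{\alpha} f(\bar{\alpha})$, and do not disturb the normalizations already used in the basis of $e_iJ^2/e_iJ^3$. The most delicate situation is case (ii) of Lemma~\ref{lem:6.11}, where the two $g$-orbits contribute second-socle generators of different lengths: here careful bookkeeping is required to adjust $f^2(\alpha)$, $f(\bar{\alpha})$ and $f^2(\bar{\alpha})$ independently without creating circular constraints, and the type C case demands additional care since $\alpha f(\alpha)$ begins only as a $J^2$-element rather than a $J^3$-element.
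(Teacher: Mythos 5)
Your overall framing (work in the monomial basis of Proposition~\ref{prop:6.10}, identify the second socle via Lemma~\ref{lem:6.11}, split on the type of the minimal relation, rotate via symmetry) matches the skeleton of the paper's argument, but there is a genuine gap at the central step. In the type Z case you assert that the expansion of $\alpha f(\alpha)$ ``differs from an element of $\soc_2(e_iA)$ by something in $e_iJ^4$.'' This is unsupported, and it is essentially the entire content of the theorem: a priori $\alpha f(\alpha)$ is only known to lie in $e_iJ^3$, whereas $\soc_2(e_iA)$ sits in degree $m_{\alpha}n_{\alpha}-1$ (respectively $mn-1$ when the two $g$-orbits coincide), which is in general far larger than $3$. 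Nothing in Proposition~\ref{prop:6.10} or Remark~\ref{rem:6.12} forces the degree of $\alpha f(\alpha)$ up to that value. The paper closes this gap by a three-stage argument you do not reproduce: first one normalizes $(\alpha,f(\alpha))$ to a pair of maximal index so that $\alpha f(\alpha)=\bar{\alpha}q_{\bar{\alpha}}+\bar{\alpha}\eta_{\bar{\alpha}}$ with lowest term ending in $g^{-1}(f^2(\alpha))$ (Lemma~\ref{lem:9.4}); second, one feeds the two lowest-form presentations into the period-four minimal projective resolution of $S_i$ to conclude that $f(\alpha)f^2(\alpha)$ has lowest term $q_{\alpha}f^2(\bar{\alpha})$ and that $\alpha f(\alpha)f^2(\alpha)=\bar{\alpha}f(\bar{\alpha})f^2(\bar{\alpha})$ (Proposition~\ref{prop:9.7}); third, one uses the indecomposability of $\Omega_A^2(S_i)$ to rule out $\alpha f(\alpha)=\bar{\alpha}f(\bar{\alpha})=0$ (Lemma~\ref{lem:9.5}) and then compares the two lowest terms of the common triple product inside the local algebra $e_iAe_i$ to force them to be nonzero multiples of the socle monomial, hence of maximal degree. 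Without the periodicity input your argument cannot get off the ground.

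Two further concrete problems. Adjusting $f^2(\alpha)$ cannot ``absorb the remainder'' in $\alpha f(\alpha)$, since $f^2(\alpha)$ does not occur in that product; the corrections available are to $\alpha$ and $f(\alpha)$ only, and the paper's Lemma~\ref{lem:9.9} shows that a separate, nontrivial correction of $f^2(\alpha)$ is needed afterwards to place $f^2(\alpha)\alpha$ in the second socle. Finally, you do not treat the case of a loop $\alpha$ with $f(\alpha)=\alpha$ at all: there $\alpha^2$ cannot be put in lowest form by the generic argument, and the paper needs a separate analysis of the local algebra $e_iAe_i$ (including ruling out that $A/\soc(A)$ becomes special biserial) in Lemma~\ref{lem:9.6} and Proposition~\ref{prop:9.11}.
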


We divide the proof of the above theorem
into several steps.

Let $i$ be a vertex of $Q$ such that  at most
one minimal relation of type C
starts from $i$.
It follows from
Proposition~\ref{prop:6.10}
that $e_i A$ has a basis consisting of monomials
along cycles of $g$.
We work with such a basis throughout.
We say that an element $\xi \in e_i A$ has \emph{degree} $r$
if $\xi \in e_i J^r \setminus e_i J^{r+1}$.
If so then the expansion of $\xi$ in terms of the chosen
basis is of the form
\[
  \xi = c_1 \xi_1 + c_2 \xi_2 \ \  (\operatorname{modulo} \ \ J^{r+1}),
\]
where $\xi_1, \xi_2$ are monomials of length $r$
which belong to the basis, and $c_1, c_2 \in K$, not both zero.
We say that \emph{the lowest term} of $\xi$ is $c_1 \xi_1 + c_2 \xi_2$
if both $c_1, c_2$ are non-zero,
or $c_1 \xi_1$ if $c_2 = 0$,
or $c_2 \xi_2$ if $c_1 = 0$.

\medskip

The aim is to choose $\alpha, f(\alpha)$ such that the product
$\alpha f(\alpha) \in J^r$ with $r$ as large as possible, and  the
lowest term to keep control. 
We note the following invariance property.

\begin{lemma}
\label{lem:9.2}
Assume $c_1\xi_1 = c_1(\alpha_1 \alpha_2 \dots \alpha_r) \in e_i A$
is the lowest term of $\xi$, or part of the lowest term. 
Assume that $\alpha'_s = \alpha_s + \eta_s$ for some
$\eta_s \in J^2$ and $s \in \{1,\dots,r\}$, and $\xi'$
is obtained from $\xi$ by replacing $\alpha_s$ by $\alpha'_s$.
Then the lowest terms of $\xi'$, or part of the lowest term, 
 is obtained from $c_1\xi_1$ 
by replacing $\alpha_s$ by $\alpha_s'$. 
\end{lemma}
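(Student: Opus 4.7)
The plan is to track how the formal substitution $\alpha_s \mapsto \alpha_s' = \alpha_s + \eta_s$ affects the expansion of $\xi$ modulo $J^{r+1}$, using that $\eta_s \in J^2$ to control the degree of the error terms.

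First I would regard $\xi$ as a $K$-linear combination of monomials in the arrows of $Q$, and observe that the passage $\xi \mapsto \xi'$ is obtained by formally substituting $\alpha_s + \eta_s$ for each occurrence of $\alpha_s$ in this expression. For a single monomial summand $\mu$ of length $\ell$ containing $k \geq 0$ factors equal to $\alpha_s$, distributing the substitution yields $\mu' = \mu + \mu''$, where $\mu''$ is a sum of $2^k - 1$ words in which at least one factor $\alpha_s$ has been replaced by $\eta_s$.

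Next I would count $J$-adic degrees. Since $\eta_s \in J^2$ while each other factor of $\mu$ lies in $J$, every single replacement of an $\alpha_s$-factor (degree $1$) by $\eta_s$ (degree $\geq 2$) raises the overall degree by at least $1$; hence every summand of $\mu''$ lies in $J^{\ell+1}$, so $\mu'-\mu \in J^{\ell+1}$. Applied to the monomials $\xi_1$ and (if present) $\xi_2$ of degree $r$ that constitute the lowest term of $\xi$, this gives $\xi'_t - \xi_t \in J^{r+1}$ for $t \in \{1,2\}$. The contributions from the monomials of $\xi$ of degree $>r$ lie in $J^{r+1}$ already, hence vanish in $J^r/J^{r+1}$.

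Combining these observations, modulo $J^{r+1}$ one obtains
\[
  \xi' \equiv c_1 \xi'_1 + c_2 \xi'_2,
\]
which is exactly the formal replacement of $\alpha_s$ by $\alpha_s'$ in the lowest term of $\xi$; moreover, since $\alpha'_s \equiv \alpha_s$ modulo $J^2$, we have $\xi'_t \equiv \xi_t$ in $J^r/J^{r+1}$, so the lowest term is preserved as a coset regardless of whether one uses the basis of Proposition~\ref{prop:6.10} built from $\alpha_s$ or from $\alpha'_s$. I do not anticipate a real obstacle: the argument is essentially bookkeeping with $J$-adic degrees. The only mild subtlety is that after substitution the resulting words need not be literal basis monomials of Proposition~\ref{prop:6.10}, but re-expressing them in that basis again only alters the answer modulo $J^{r+1}$ and so does not affect the syntactic form of the lowest term claimed in the lemma.
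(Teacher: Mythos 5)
Your proof is correct and is essentially the paper's argument: the paper disposes of this lemma in one line ("the terms of $\eta_s$ do not contribute to a lowest term"), and your degree-counting with $\eta_s \in J^2$ is exactly the bookkeeping that one sentence compresses. No gap; your version is simply more explicit.
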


\begin{proof}
This is clear because the terms of $\eta_s$ do not
contribute to a lowest term.
\end{proof}

\begin{definition}
\label{def:9.3}
Given a pair of arrows $(\alpha,f(\alpha))$ of $Q$,
we say that it has \emph{index} $r$ if $r$ is maximal
such that $\alpha' f(\alpha)'$ has degree $r$ for some pair
of arrows $(\alpha', f(\alpha)')$ with
$\alpha' + J^2 = \alpha + J^2$ and
$f(\alpha)' + J^2 = f(\alpha) + J^2$.
\end{definition}

We  will see soon that
$\alpha f(\alpha)$ cannot be zero in our setting. 
By the above invariance property, if no such product
is zero,  we may modify $\alpha$
and $f(\alpha)$ and assume that $(\alpha, f(\alpha))$
has index $r$, and if so we call this pair of arrows maximal.
We note that this affects only $\alpha$ and $f(\alpha)$
and no other arrows.

\begin{lemma}
\label{lem:9.4}
Let $\alpha$ be an arrow in $Q$ such that 
$\alpha \neq f(\alpha)$.
If $\alpha f(\alpha)$ is non-zero then we may assume that
\[
    \alpha f(\alpha)
     = \bar{\alpha} q_{\bar{\alpha}} + \bar{\alpha} \eta_{\bar{\alpha}} ,
\]
with both summands expanded along the $g$-cycle of $\bar{\alpha}$,
and where the lowest term $\bar{\alpha} q_{\bar{\alpha}}$
is a scalar multiple of a monomial ending in $g^{-1}(f^2(\alpha))$.
\end{lemma}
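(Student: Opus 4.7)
My plan is to expand $\alpha f(\alpha)$ in the basis supplied by Proposition~\ref{prop:6.10} and then exploit the maximality of the pair $(\alpha,f(\alpha))$ to force the claimed form.

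First stage. Write
\[
 \alpha f(\alpha) = u_\alpha + u_{\bar{\alpha}},
\]
where $u_\alpha$ is a linear combination of basis monomials starting with $\alpha g(\alpha)g^2(\alpha)\cdots$ and $u_{\bar{\alpha}}$ is supported on basis monomials starting with $\bar{\alpha} g(\bar{\alpha})g^2(\bar{\alpha})\cdots$. Inspecting the classification of minimal relations at $i$ established in the proof of Theorem~\ref{th:6.1} (cases (a)--(c) since at most one relation of type C starts at $i$), modulo $J^3$ the element $\alpha f(\alpha)$ is either zero or a scalar multiple of $\bar{\alpha} g(\bar{\alpha})$. Hence $u_\alpha\in J^3$; since each monomial of $u_\alpha$ begins with $\alpha$, we may factor $u_\alpha = \alpha\tilde{u}$ with $\tilde{u}\in e_jJ^2e_y$.

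Second stage. Replace $f(\alpha)$ by $f(\alpha)' := f(\alpha) - \tilde u$. This is permissible as $\tilde u\in J^2$, so $f(\alpha)'$ still represents the same arrow. After the substitution $\alpha f(\alpha)' = u_{\bar{\alpha}} = \bar{\alpha} q$, where $q$ is a linear combination of monomials $g(\bar{\alpha})g^2(\bar{\alpha})\cdots g^{l-1}(\bar{\alpha})$ along the $g$-cycle of $\bar{\alpha}$. By Definition~\ref{def:9.3} the index of the pair $(\alpha, f(\alpha)')$ is still bounded by $r$, and the sum decomposition shows $\deg u_{\bar{\alpha}}\leq r$, so in fact $\deg(\bar{\alpha} q)=r$ and the degree-$r$ part is nonzero.

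Third stage. For each length $l\geq 1$ there is exactly one monomial in $\bar{\alpha}$'s $g$-cycle of that length (namely $\bar{\alpha} g(\bar{\alpha})\cdots g^{l-1}(\bar{\alpha})$). Consequently the degree-$r$ part of $\bar{\alpha} q$ is automatically a scalar multiple of a single monomial, which we designate as $\bar{\alpha} q_{\bar{\alpha}}$; the higher-degree remainder is $\bar{\alpha}\eta_{\bar{\alpha}}$. Since $\alpha f(\alpha)'\in e_iAe_y$, the terminal arrow $g^{r-1}(\bar{\alpha})$ must end at $y$. By Lemma~\ref{lem:6.7} applied with $\bar{\alpha}$ in place of $\alpha$, we have $g^{n_{\bar{\alpha}}-1}(\bar{\alpha}) = f^2(\alpha)$, which starts at $y$; therefore $g^{-1}(f^2(\alpha)) = g^{n_{\bar{\alpha}}-2}(\bar{\alpha})$ also ends at $y$ and is the natural candidate for $g^{r-1}(\bar{\alpha})$.

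The main obstacle is ruling out the possibility that $g^{r-1}(\bar{\alpha})$ is a different arrow in the cycle that also happens to end at $y$. Since the two arrows ending at $y$ are $f(\alpha)$ and $g(\alpha)$, an alternative position $g^i(\bar{\alpha})$ with $t(g^i(\bar{\alpha}))=y$ and $i<n_{\bar{\alpha}}-2$ can occur only when $\cO(\alpha)=\cO(\bar{\alpha})$ so that both of these arrows lie in the common cycle. To exclude this, I will invoke maximality once more: if the lowest-degree monomial of $q$ ended at such an earlier position, one can produce $\zeta\in e_kAe_j$ with $k=t(\bar{\alpha})$ so that the replacement $\alpha\mapsto \alpha+\bar{\alpha}\zeta$ adjusts $\alpha f(\alpha)'$ by the element $\bar{\alpha}\zeta f(\alpha)'$ precisely killing the offending low-degree contribution, thereby pushing $\alpha f(\alpha)'$ into $J^{r+1}$ and contradicting the maximality of $r$. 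Hence $r-1\equiv n_{\bar{\alpha}}-2\pmod{n_{\bar{\alpha}}}$, so $\bar{\alpha} q_{\bar{\alpha}}$ ends in $g^{-1}(f^2(\alpha))$ as required.
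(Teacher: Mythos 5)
Your strategy is the paper's: kill the degree-two component of $\alpha f(\alpha)$ supported on the $g$-cycle of $\alpha$ (which you derive from the classification of minimal relations at $i$, where the paper invokes Remark~\ref{rem:6.12}), absorb the rest of that component into $f(\alpha)$, and then use maximality of the index to pin down the terminal arrow of the lowest term by showing that otherwise one could absorb the lowest term into $\alpha$ and raise the degree. Two steps, however, are wrong as written. First, the two arrows of $Q$ ending at $y=t(f(\alpha))$ are $f(\alpha)$ and $g^{-1}(f^2(\alpha))$, not $f(\alpha)$ and $g(\alpha)$: the arrow $g(\alpha)=\overline{f(\alpha)}$ has source $t(\alpha)$ and in general does not end at $y$. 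Consequently your claim that an ``alternative position'' can occur only when $\cO(\alpha)=\cO(\bar{\alpha})$ is false; the genuine alternative is that $f(\alpha)$ lies in $\cO(\bar{\alpha})$, which is possible even when $\cO(\alpha)\neq\cO(\bar{\alpha})$. This happens not to sink the argument, because your $\zeta$-absorption is tailored precisely to the case where the terminal arrow is $f(\alpha)$, and by the correct enumeration that is the only alternative to $g^{-1}(f^2(\alpha))$ — but the case analysis should be rewritten on the correct basis.

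Second, the replacement $\alpha\mapsto\alpha+\bar{\alpha}\zeta$ is legitimate only if $\bar{\alpha}\zeta\in J^2$, i.e.\ $\zeta\in J$, which requires the lowest term to have degree $r\geq 3$. When $r=2$ — exactly the case of a type C relation $\alpha f(\alpha)+c\,\bar{\alpha}g(\bar{\alpha})\in J^3$ at $i$ — your $\zeta$ would be a scalar and the move changes $\alpha$ modulo $J^2$, so it is not allowed. The paper splits this off as a separate case: there the lowest term is $c\,\bar{\alpha}g(\bar{\alpha})$ and Lemma~\ref{lem:6.8} gives $n_{\bar{\alpha}}=3$, so $g(\bar{\alpha})=g^{-1}(f^2(\alpha))$ automatically; alternatively, $g(\bar{\alpha})=f(\alpha)$ would force double arrows starting at $i$ together with a type C relation there, which Lemma~\ref{lem:4.7} excludes since $Q$ is not the Markov quiver. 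You need to add this case distinction for the proof to be complete.
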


In the following, we will
 refer to the expression for $\alpha f(\alpha)$ in this
lemma as its {\it lowest form presentation}.

\begin{proof}
We may assume that $(\alpha, f(\alpha))$ is a maximal pair,
say of index $r$.

{\sc Case 1}.  Assume first that
$\alpha f(\alpha) \in J^r \setminus J^{r+1}$ and $r \geq 3$.
We may write $\alpha f(\alpha) = \alpha p + \bar{\alpha} q$
with $p$ along the $g$-cycle of $\alpha$ and $q$ along the $g$-cycle
of $\bar{\alpha}$.
By Remark~\ref{rem:6.12}
we know that $p \in J^2$.
So we may assume, after replacing $f(\alpha)$ by $f(\alpha)'  = f(\alpha) - p$,
that
\[
    \alpha f(\alpha)
     = \bar{\alpha} q
     = \bar{\alpha} q_{\bar{\alpha}} + \bar{\alpha} \eta_{\bar{\alpha}} ,
\]
where $\bar{\alpha} q_{\bar{\alpha}}$ is the lowest term,
which is a monomial along the cycle of $g$ starting with $\bar{\alpha}$.
Then either $q_{\bar{\alpha}}$ ends in $g^{-1}(f^2(\alpha))$,
or it might end in $f(\alpha)$ as these are the arrows
ending in $t(f(\alpha))$.

We claim that $q_{\ba}$ ends in $g^{-1}(f^2(\alpha))$.
Note that here $\ba q_{\ba}$ is in $J^r\setminus J^{r+1}$ and $r\geq 3$.
As well by definition, it is a scalar multiple of a monomial 
of length $r$. 
If we had $\ba q_{\ba} = \ba q_1 f(\alpha)$
then  $q_1$ would be  in the radical, and we could
 replace $\alpha$ by  $\alpha' = \alpha - \ba q_1$. Then 
$\alpha'f(\alpha) = \ba \eta_{\ba} \in J^{r+1}$ which contradicts the choice of
$(\alpha, f(\alpha))$.
So the lowest term must end in $g^{-1}(f^2(\alpha))$.

\medskip

{\sc Case 2}. Assume we have a type $C$ relation
$$\alpha f(\alpha) + c\ba g(\ba) = \alpha p + \ba \bar{q} \in J^3
$$
with $0\neq c\in K$. 
Then $g(\ba)$ and $f(\alpha)$ and also $p, \bar{q}$ end at $y = t(f(\alpha))$.
We know from Lemma~\ref{lem:4.7} 
that a relation of type $C$ cannot involve double arrows, and
it follows that $p, \bar{q} \in J^2$. 
As in the first case
we can replace $f(\alpha)$ by $f(\alpha)-p$. We get
$$\alpha f(\alpha) = \ba q_{\ba} + \ba \eta_{\ba}
$$
where $q_{\ba} = cg(\ba)$ and this is already 
in the required form since $g(\ba)= 
g^{-1}(f^2(\alpha))$.
\end{proof}

We will now show that paths of length two cannot be zero in the algebra, 
in fact they cannot lie in the socle of $A$.

\begin{lemma} \label{lem:9.5}
Assume $\alpha \neq f(\alpha)$
and $\ba \neq f(\ba)$. Then the elements
$\alpha f(\alpha)$ and $\ba f(\ba)$ are both non-zero and not in the socle.
\end{lemma}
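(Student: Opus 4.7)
The plan is to argue by contradiction, exploiting the fact that since $S_i$ is periodic of period $4$, every syzygy $\Omega_A^r(S_i)$ is an indecomposable non-projective module. The argument focuses on $\alpha f(\alpha)$; the claim for $\bar\alpha f(\bar\alpha)$ follows by the same reasoning with the roles of $\alpha$ and $\bar\alpha$ swapped.

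First I would rule out $\alpha f(\alpha) = 0$. By Corollary~\ref{cor:4.6}, the cosets of $\alpha g(\alpha)$ and $\bar\alpha g(\bar\alpha)$ form a basis of $e_iJ^2/e_iJ^3$. Under the standing hypothesis that at most one minimal relation of type C starts at $i$, the vanishing $\alpha f(\alpha) = 0$ forces both minimal relations at $i$ to be of type Z: a type C relation $\alpha f(\alpha) + c\bar\alpha g(\bar\alpha) \in J^3$ with $c \neq 0$ would imply $\bar\alpha g(\bar\alpha) \in J^3$, contradicting Corollary~\ref{cor:4.6}. Hence by Proposition~\ref{prop:4.3} I may take the two generators of $\Omega_A^2(S_i) \subseteq P_j \oplus P_k$ to be $\varphi = (f(\alpha),0)$ and $\psi = (-u, f(\bar\alpha) - v)$, where $\bar\alpha f(\bar\alpha) = \alpha u + \bar\alpha v$ with $u,v \in J^2$; note $\psi_2 = f(\bar\alpha) - v \neq 0$ since $f(\bar\alpha) \notin J^2$.

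Next, I would produce an explicit direct-sum decomposition of $\Omega_A^2(S_i)$. By Proposition~\ref{prop:6.10}, the module $e_jA$ is spanned by monomials along the $g$-cycles of $f(\alpha)$ and $g(\alpha)$, so I can split $u = u' + u''$ with $u' \in f(\alpha)A$ and $u''$ supported on the $g(\alpha)$-cycle. Choose $w \in e_yA$ with $f(\alpha)w = u'$; since $\alpha f(\alpha) = 0$, replacing $\psi$ by $\psi + \varphi \cdot w$ yields a generator $\psi' = (-u'', f(\bar\alpha) - v)$ still satisfying $\alpha\psi_1' + \bar\alpha\psi_2' = 0$. The first component of $\psi' A$ now lies in the span of monomials along the $g(\alpha)$-cycle, while $(f(\alpha),0)A$ lies in the span along the $f(\alpha)$-cycle; by the basis statement of Proposition~\ref{prop:6.10} these spans are independent, so $(f(\alpha),0)A \cap \psi' A = 0$ and the sum is direct. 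This decomposition of $\Omega_A^2(S_i)$ contradicts its indecomposability, proving $\alpha f(\alpha) \neq 0$.

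For the socle assertion, suppose instead $\alpha f(\alpha) \in \operatorname{soc}(A)$ and is non-zero. Then $\alpha \cdot f(\alpha)g(f(\alpha)) = \alpha f(\alpha) \cdot g(f(\alpha)) = 0$ and similarly $\alpha \cdot f(\alpha)f^2(\alpha) = 0$, so non-zero elements of the form $(f(\alpha)g(f(\alpha)),0)$ (or $(f(\alpha)f^2(\alpha),0)$, whichever survives) lie in $\Omega_A^2(S_i)$. Running the same $g$-cycle splitting argument with the generator $\varphi$ replaced by this deeper element produces a direct summand of $\Omega_A^2(S_i)$ of smaller depth than $\psi$, once again contradicting indecomposability. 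The main obstacle throughout is ensuring the adjustment step is clean and that the intersection $\varphi A \cap \psi' A = 0$ really follows from Proposition~\ref{prop:6.10}; handling the case where the $f(\alpha)$-cycle and $g(\alpha)$-cycle coincide (so the two $g$-cycle spans can overlap in the socle) requires a finer bookkeeping at the top of the syzygy rather than deep inside it, which is why the argument is carried out at the generator level.
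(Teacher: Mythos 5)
Your argument for the case where both products vanish is fine and is exactly what the paper does: $\varphi=(f(\alpha),0)$ and $\psi=(0,f(\bar{\alpha}))$ visibly generate complementary summands of $\Omega_A^2(S_i)$. The gap is in the case $\alpha f(\alpha)=0$ but $\bar{\alpha}f(\bar{\alpha})\neq 0$, where your claimed decomposition $\Omega_A^2(S_i)=\varphi A\oplus\psi' A$ is actually false. After the normalization of Lemma~\ref{lem:9.4}, $u=q_{\alpha}+\eta_{\alpha}$ is already entirely along the $g$-cycle of $\alpha$ (it starts with $g(\alpha)$), so your splitting gives $u'=0$ and $\psi'=\psi$; but the justification ``$(f(\alpha),0)A$ lies in the span along the $f(\alpha)$-cycle'' breaks down, because $f(\alpha)A$ contains $f(\alpha)f^2(\alpha)=q_{\alpha}f^2(\bar{\alpha})+\eta_{\alpha}f^2(\bar{\alpha})$ (from the syzygy relations of Proposition~\ref{prop:9.7}), which is supported on the $g(\alpha)$-cycle. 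Concretely, in this hypothetical situation one also gets $f(\bar{\alpha})f^2(\bar{\alpha})=0$, and then
\[
  \psi\cdot\bigl(-f^2(\bar{\alpha})\bigr)=\bigl(u f^2(\bar{\alpha}),0\bigr)=\bigl(f(\alpha)f^2(\alpha),0\bigr)=\varphi\cdot f^2(\alpha),
\]
which is a \emph{non-zero} element of $\varphi A\cap\psi A$ (non-zero precisely because its lowest term $q_{\alpha}f^2(\bar{\alpha})$ is a scalar multiple of a basis monomial of strictly smaller degree than every term of $\eta_{\alpha}f^2(\bar{\alpha})$). So the sum is not direct and no contradiction with indecomposability arises along this route. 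Your closing remark about ``finer bookkeeping when the two $g$-cycles coincide'' points at a real obstruction, but the proposed fix does not resolve it.

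The paper's proof takes a different and more robust tack in this mixed case: it multiplies the identity $f(\alpha)f^2(\alpha)=q_{\alpha}f^2(\bar{\alpha})+\eta_{\alpha}f^2(\bar{\alpha})$ on the left by $\alpha$ and compares with $\alpha f(\alpha)f^2(\alpha)=\bar{\alpha}f(\bar{\alpha})f^2(\bar{\alpha})=0$; the lowest term $\alpha q_{\alpha}f^2(\bar{\alpha})$ is a non-zero scalar multiple of a basis monomial whose degree is strictly below that of every term of $\alpha\eta_{\alpha}f^2(\bar{\alpha})$, so no cancellation is possible and one gets $0\neq 0$. You should replace your direct-sum argument in the mixed case by this degree comparison. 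Finally, your treatment of the socle assertion (iterating the splitting ``deeper in the syzygy'') inherits the same flaw; the paper instead observes that if $\alpha f(\alpha)$ lies in the socle one may renormalize an arrow to make it zero (a socle element of $e_iAe_{t(f(\alpha))}$ is a multiple of the socle monomial, which factors through the other arrow), reducing to the vanishing case already handled.
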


\begin{proof}  Note first that
if $\alpha f(\alpha)$ is in the socle then we may assume
it is zero.
Namely, we
can take the appropriate version of the socle monomial and factorize it,
as $\ba f(\ba) = \ba \omega_i'$.
Then we replace $f(\ba)$ ($\neq \ba$) and obtain the identity  $\ba f(\ba)=0$.


(1) Assume for a contradiction
that both $\alpha f(\alpha)$ and $\ba f(\ba)$ are zero. 
Then
the two generators of $\Omega^2(S_i)$ can be taken as 
$$(f(\alpha), 0), \ \ (0, f(\ba)).
$$
Hence $\Omega^2(S_i)$ is a direct sum, a contradiction.
So at least one of the two elements is non-zero,  and then it does
not lie  in the socle.


(2) Suppose  one of them is zero but not the other, then 
up to labelling, and using Lemma \ref{lem:9.4}, we have identities
$$ \alpha  f(\alpha)=  0  \ \ \mbox{ and} 
\ \ \ba f(\ba) =  \alpha q_{\alpha} + \alpha \eta_{\alpha}
$$
where the  expression for
$\ba f(\ba)$ is the lowest form presentation.
Using period four, we get from this that
$$f(\ba) f^2(\ba) = 0 \ \ \mbox{ and } \ 
f(\alpha)f^2(\alpha) = 
   q_{\alpha}f^2(\ba) + \eta_{\alpha}f^2(\ba).
$$
It follows that $\alpha f(\alpha)f^2(\alpha) = \ba f(\ba)f^2(\ba)$.
Now we have
$$0 = \ba f(\ba)f^2(\ba) = \alpha q_{\alpha}f^2(\ba) + \alpha \eta_{\alpha}f^2(\ba).$$
The first summand is a non-zero scalar multiple of a monomial along
the cycle of $g$ in the basis,  and is non-zero. 
Its degree is less than the degree 
of any term of $\alpha \eta_{\alpha}f^2(\ba)$. So the terms cannot cancel,
and we have a contradiction.

Therefore we must have that both $\alpha f(\alpha)$ and
$\ba f(\ba)$ are non-zero and not in the socle.
\end{proof}

Next we consider squares of loops fixed by $f$. 

\begin{lemma}\label{lem:9.6}
Assume $\alpha$ is a loop fixed by $f$. 
Then the following statemensts hold.
\begin{enumerate}[(i)]
 \item
 We may assume $\alpha^2 \in {\rm soc}_2(e_iA)$ and we can write
$$\alpha^2 = cA_{\ba} + dB_{\ba} \ \ (c, d\in K, c\neq 0).
$$
\item $c\neq 0$, that is $\alpha^2$ is not in the socle.
\item The product $\ba f(\ba)$ is not in the socle.
\end{enumerate}
\end{lemma}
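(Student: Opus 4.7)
The proof follows the template of Lemma~\ref{lem:9.5}, adapted to the loop case $\alpha = f(\alpha)$. By Lemma~\ref{lem:4.7}, since $\alpha$ is a loop at $i$, no minimal relation of type C begins or ends at $i$, so Proposition~\ref{prop:6.10} yields a basis of $e_iA$ along the single $g$-orbit of length $n = n_{\ba} \geq 3$ containing both $\alpha$ and $\ba$ (note $g(\alpha) = \ba$ and $g^{n-1}(\ba) = \alpha$ by Lemma~\ref{lem:6.7}). Writing $m = m_{\ba}$, the basis monomials lying in the local algebra $e_iAe_i$ are $e_i$, $\alpha$, together with, for each $s = 1, \ldots, m$, the partial trip $A_{\ba}^{(s)}$ of length $sn-1$ and its one-step extension $B_{\ba}^{(s)} = A_{\ba}^{(s)}\alpha$ of length $sn$; the top pair $A_{\ba} = A_{\ba}^{(m)}$ and $B_{\ba} = B_{\ba}^{(m)}$ generate $\soc_2(e_iA)/\soc(e_iA)$ and $\soc(e_iA)$ respectively.

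For part (i), I expand $\alpha^2 \in e_iJ^2 e_i$ in this basis. The minimal relation $\alpha\psi_1 + \ba\psi_2 = 0$ from $i$ to $y = s(f(\alpha)) = i$ supplied by Proposition~\ref{prop:4.3}, with $\psi_1 \in e_iJe_i$ chosen proportional to $\alpha$ modulo $J^2$, pins down the lowest nonzero component of $\alpha^2$; iterating the Proposition~\ref{prop:4.8} type propagation along the $g$-cycle forces the coefficient of $\alpha^2$ in each $A_{\ba}^{(s)}$, $B_{\ba}^{(s)}$ for $s < m$ to vanish, leaving $\alpha^2 = cA_{\ba} + dB_{\ba}$. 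Since $A_{\ba}J \subseteq \soc(e_iA)$, this immediately gives $\alpha^2 \in \soc_2(e_iA)$.

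For parts (ii) and (iii), I argue by contradiction using the period-4 identity $\alpha^3 = \ba f(\ba) f^2(\ba)$, parallel to Corollary~\ref{cor:5.3} in the Markov case: both sides lie in the $1$-dimensional $\soc(e_iA)$ and their scalars on the socle generator match thanks to periodicity and the symmetric structure of $A$. Assume either $\alpha^2 \in \soc(A)$ or $\ba f(\ba) \in \soc(A)$; then, since $\soc(A)\cdot J = 0$, the corresponding side vanishes and, by the identity, both $\alpha^3 = 0$ and $\ba f(\ba) f^2(\ba) = 0$. Substituting $\alpha^2 = cA_{\ba} + dB_{\ba}$ from (i) into $\alpha^3 = 0$ yields $cB_\alpha = 0$, where $B_\alpha = \alpha A_{\ba}$ is a nonzero rotation of the socle generator; hence $c = 0$. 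Now apply Lemma~\ref{lem:9.4} to the non-loop arrow $\ba$ to write $\ba f(\ba) = \alpha q_\alpha + \alpha\eta_\alpha$ in lowest form, with $\alpha q_\alpha$ a non-zero basis monomial ending in $g^{-1}(f^2(\ba))$; then $0 = \ba f(\ba) f^2(\ba) = \alpha q_\alpha f^2(\ba) + \alpha \eta_\alpha f^2(\ba)$, and the unique leading monomial $\alpha q_\alpha f^2(\ba)$, itself a basis monomial along the $g$-cycle, cannot be cancelled by the strictly higher-degree remainder, exactly as in case (2) of the proof of Lemma~\ref{lem:9.5}. This contradiction simultaneously establishes (ii) and (iii). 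The main technical obstacle is establishing the period-4 identity $\alpha^3 = \ba f(\ba) f^2(\ba)$ in this non-Markov setting, which requires matching scalar coefficients on the socle generator via the symmetric structure and periodicity of $A$, since Lemma~\ref{lem:9.4} is not directly available for the loop $\alpha = f(\alpha)$.
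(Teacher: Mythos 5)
There are two genuine gaps here, one in each half of your argument.

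For part (i), the assertion that the minimal relations ``force the coefficient of $\alpha^2$ in each $A_{\ba}^{(s)}$, $B_{\ba}^{(s)}$ for $s<m$ to vanish'' is not justified and is not how this works. The relations starting at $i$ only give $\alpha^2\in J^3$ (type Z, cf.\ Lemma~\ref{lem:6.2}); for an arbitrary representative of the loop, $\alpha^2$ need not lie anywhere near $\soc_2(e_iA)$. The content of ``we may assume'' is a \emph{change of representative} of $\alpha$, and the paper obtains it from the classification of local symmetric tame algebras \cite[Theorem~III.1]{E4}: when $R=e_iAe_i$ has infinite type and $m\geq 2$, that theorem supplies generators whose squares lie in $\soc_2(R)$, and an explicit unit adjustment $X^*=X\bigl(e_i-u(YX)^{m-2}Y\bigr)$ then kills the term $u(XY)^{m-1}X$, leaving exactly $cA_{\ba}+dB_{\ba}$; the finite-type case of $R$ needs a separate (easy) computation in $K[X]$. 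None of this is replaced by your ``propagation'' step, so (i) is unproved as written.

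For parts (ii) and (iii), your argument is circular at two points. First, the identity $\alpha^3=\ba f(\ba)f^2(\ba)$ is exactly what you flag as the ``main technical obstacle,'' and it is not available here: in the paper it is obtained (in Proposition~\ref{prop:9.11}) from Proposition~\ref{prop:9.7}, which needs a lowest form presentation of $\ba f(\ba)$ — i.e.\ needs $\ba f(\ba)\neq 0$ — and it is derived \emph{after} Lemma~\ref{lem:9.6}(ii). Second, in the branch where you assume $\ba f(\ba)\in\soc(A)$, this element lies in $e_iAe_x$ with $x\neq i$, hence equals $0$, and then Lemma~\ref{lem:9.4} gives you no non-zero leading monomial $\alpha q_{\alpha}$ to contradict; the decomposition trick from case (1) of Lemma~\ref{lem:9.5} also fails, because the syzygy generator $(\alpha,-\eta_{\ba})$ has a non-zero second component. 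The paper's actual route is different: it proves two transfer statements — if $\ba f(\ba)\in\soc(A)$ then $\alpha^2\in\soc(e_iA)$, and if $\alpha^2\in\soc(e_iA)$ then $f(\ba)f^2(\ba)\in\soc(A)$ — and then runs a case analysis on whether $f$-fixed loops exist at the neighbouring vertices $k$ and $x$, importing Lemma~\ref{lem:9.5} at the first vertex without such a loop; in the worst case (loops at all three vertices) it shows $A/\soc(A)$ would be special biserial with $2$-regular quiver, which is impossible for an algebra of generalized quaternion type. You would need to supply this (or an equivalent) mechanism; the Lemma~\ref{lem:9.5} template alone does not close the loop case.
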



\begin{proof} The quiver has at least three vertices, therefore
the vertices $i, k, x$ must be distinct (if $k=x$ then $f(\ba)$ would be
a loop and $|Q_0|=2$). Hence  
$g$ has cycle $(\alpha \ \ba \ g(\ba) \ldots \ f^2(\ba))$
of length $>3$.

We use the basis of $e_i A$ along the cycles of $g$.
The local algebra $R = e_i A e_i$ is generated by
\[
  X = \alpha
 \qquad
  \mbox{ and }
 \qquad
  Y =  \bar{\alpha} g(\bar{\alpha}) \dots f^2(\bar{\alpha}) .
\]
In particular, $R$ has a basis consisting of monomials in $X,Y$
which alternate between $X$ and $Y$, and the socle of $R$
is spanned by $(X Y)^m = (Y X)^m$ for some $m \geq 1$.
We set $m = m_{\alpha} = m_{\bar{\alpha}}$ and
$n = |\cO(\alpha)| = |\cO(\bar{\alpha})|$.

\medskip

(I) \ Assume first that $R$ has finite representation type.

In this case, $R$ is generated by $X = \alpha$, and $Y$
must be a polynomial in $X$ and $X Y = Y X$.
Then in the above basis we must have $m = 1$, and
$R$ is $4$-dimensional with basis $\{ e_i, X, Y, X Y \}$.
It must be equal to $K[X]$, which has therefore the basis
$\{ e_i, X, X^2, X^3 \}$.
Hence we have $Y = a X^2 + b X^3$ for some
$a,b \in K$, and then 
$a \neq 0$ because $X Y \neq 0$
and $X^3$ spans the socle of $R$.
Then we obtain
\[
   X^2 = a^{-1} Y - a^{-1} b X^3 .
\]
We also note that $Y X = a X^3 + b X^4  = a X^3$,
and hence
\[
   X^2 = a^{-1} Y - a^{-2} b Y X .
\]
Here $A_{\bar{\alpha}} = Y$ and  $Y X = B_{\bar{\alpha}}$.
We can also write $\alpha^2 = \ba q_{\ba} + \ba \eta_{\ba}$ where
\[
    \alpha^2 = c A_{\bar{\alpha}} + d B_{\bar{\alpha}} ,
\]
for $c = a^{-1}$ and $d = -a^2 b$.
Since $c \in K^*$, we conclude that
$\alpha^2  \in {\rm soc}_2(e_i A) \setminus \soc(e_i A)$.
We have proved (i) and (ii) in this case.
\medskip

(II) Now assume that $R$ is of infinite representation type, then it is a tame local
symmetric algebra.

(i) \ 
If $m=1$ then $R$ is 4-dimensional and it is isomorphic to one of the algebras
$(b)$ or $(b')$ presented
in \cite[Theorem~III.1]{E4}.
If $m\geq 2$ then,
by \cite[Theorem~III.1]{E4},
the algebra $R$ must have two generators $X'$ and $Y'$
whose squares are in the second socle of $R$.
Then $X'$ (say) is a loop at $i$, and we may take
$\alpha = X' = X$.

The second socle of $R$ is independent on the choice of
generators of $R$, and hence is spanned by
\begin{align*}
  &&
  (X Y)^{m-1} X,
  &&
  (Y X)^{m-1} Y,
  &&
 (X Y)^{m} = (Y X)^{m}.
  &&
\end{align*}
Hence we have
\[
  X^2 =
    u (X Y)^{m-1} X
    + v (Y X)^{m-1} Y
    + w (Y X)^{m},
\]
for some $u,v,w \in K$.
We may assume that $u=0$. Namely the element
\[
  z = e_i - u (Y X)^{m-2} Y ,
\]
is a unit of $R$, and
\[
  X^* = X z = X -  u X (Y X)^{m-2} Y = X - u (X Y)^{m-1}.
\]
Then we obtain
\begin{align*}
  (X^*)^2
   &= v (Y X)^{m-1} Y z + w (Y X)^{m} z
   \\
   &= v (Y X^*)^{m-1} Y + w^* (Y X^*)^{m} ,
\end{align*}
for some $w^* \in K$.
We note that
$(Y X^*)^{m} = (Y X)^{m} = (X Y)^{m} = (X Y^*)^{m}$.
   Therefore, we may take $\alpha = X^*$, and
we obtain
\[ \alpha^2 = \ba q_{\ba} + \ba \eta_{\ba}
\]
where the first summand is $v(YX)^{m-1}Y$, and the
second summand is $w(YX)^m$.
This proves part (i).
\medskip

(ii) \ 
As a tool, we prove two preliminary statements
which we will apply repeatedly.
(Here $R$ may be of finite representation type.)
\medskip

(1)  Assume that $\ba f(\ba)$ lies in $\soc(A)$. 
Then $\alpha^2$  lies in ${\rm soc}(e_iA)$:

If  $\ba f(\ba)$ lies in the socle, then it
is zero since it is not a cyclic path. The generators of
$\Omega^2(S_i)$ can be taken as
$$\vf = (\alpha, -q_{\ba} - \eta_{\ba}), \ \psi = (0, f(\ba)).
$$
We have $\vf \alpha' + \psi f^2(\ba) = 0$ where $\alpha' = \alpha + \zeta$
for $\zeta \in J^2$. Taking the first component gives
$\alpha\alpha'=0$ and hence $\alpha^2\alpha'=0$
As well $\alpha^2\ba \in {\rm soc}(e_iA)e_k = 0$. This means that
$\alpha^2J=0$ and then $\alpha^2 \in {\rm soc}(e_iA)$.

\medskip

(2) \ Assume $\alpha^2 \in \soc (e_iA)$. Then $f(\ba)f^2(\ba)\in {\rm soc} A$:

With the assumption, we have
$\alpha^2 = \ba \eta_{\ba}$. Then generators of
$\Omega^2(S_i)$ can be taken as
$$\vf = (\alpha,  - \eta_{\ba}), \ \psi = (*, f(\ba))
$$
where $*$ is $0$ or $-q_{\alpha} - \eta_{\alpha}$. Then
$\vf \alpha' + \psi f^2(\ba)=0$ where $\alpha' = \alpha + \zeta$ for
$\zeta \in J^2$. This in particular gives that
$$f(\ba) f^2(\ba) = \eta_{\ba}\alpha'i \ \ \in \soc(A).$$

\smallskip

(3) If there is no loop at vertex $k$ then 
$\alpha^2\not\in {\rm soc} (e_iA)$: 

In this case, we can apply Lemma \ref{lem:9.5} for the arrows $\gamma, f(\ba)$
starting at $k$, in particular $f(\ba)f^2(\ba)$ is not in the socle. 
Now we deduce from (2) that $\alpha^2 \not\in {\rm soc}(e_iA)$. 

\medskip

Now assume there is a loop, $\gamma$ say, at vertex $k$, then necessarily
$f(\gamma) = \gamma$.  We apply everything done so far to $\gamma$ instead
of $\alpha$. Hence we may assume that
$\gamma^2 \in {\rm soc}_2(e_kA)$.

\medskip

(4) \ We claim that if there is no loop at vertex
$x = t(f(\ba))$ then $\alpha^2 \not\in \soc(e_iA)$:

If there is no loop at vertex $x$ then by (3), the lemma holds for
$\gamma$, and $\gamma^2 \not\in {\rm soc}(e_kA)$. Now we use (1) with
$\gamma^2$ and
obtain that $f(\ba)f^2(\ba) \not\in {\rm soc}(A)$. This gives by (2) that
$\alpha^2 \not\in {\rm soc}(e_iA)$, as stated. 

\medskip
(5) \ We are left with the case when there is also a loop, 
$\delta$ say, at vertex $x$. Again, it must be  fixed by $f$, 
and now the quiver has only three vertices and
a loop at each vertex. 
As before, we may assume
that $\delta^2 \in {\rm soc}_2 (e_xA)$.

Assume for a contradiction that $\alpha^2 \in \soc (e_iA)$. 
We aim to show that then $A/{\rm soc} A$ is special biserial.
With this assumption,
we have $f(\ba)f^2(\ba) \in {\rm soc}(A)$ by (2). We apply (1) with $f(\ba), f^2(\ba)$ and deduce that $\gamma^2 \in {\rm soc}(e_kA)$. Now by (2) applied
to $\gamma$ we get that $f^2(\ba)\ba$ is in the socle of $A$.
Applying (1) with $f^2(\ba), \ba$ gives that $\delta^2 \in {\rm soc}(e_xA)$.
Now applying (2) for $\delta$ gives that $\ba f(\ba)$ is in the socle
of $A$.
In total we have that $A/{\rm soc}A$ is special biserial with
2-regular quiver, which is not possible
for an algebra of generalized quaternion type
by Auslander-Reiten theory.
This completes the proof of (ii). 
Part (iii) is proved in  (1). 
\end{proof}


By Lemma \ref{lem:9.5} and Lemma \ref{lem:9.6} we know already that
a product $\alpha f(\alpha)$ is never in the socle (and hence is non-zero). 
This is part of the statement of Theorem \ref{th:9.1}.
Furthermore, by Lemma \ref{lem:9.4} and also by Lemma \ref{lem:9.6}, 
we always have a lowest term presentation of $\alpha f(\alpha)$. 
The following relates the lowest terms of $\alpha f(\alpha)$ and of
$\ba f(\ba)$.


\begin{proposition}
\label{prop:9.7}
Let $i$ be a vertex of $Q$ and  let
$\alpha$ and $\bar{\alpha}$ be the arrows starting at $i$.
Assume that
\[
    \alpha f(\alpha)
     = \bar{\alpha} q_{\bar{\alpha}} + \bar{\alpha} \eta_{\bar{\alpha}}
 \qquad
  \mbox{ and }
 \qquad
    \bar{\alpha} f(\bar{\alpha})
     = {\alpha} q_{{\alpha}} + {\alpha} \eta_{{\alpha}} ,
\]
are lowest form presentations.
Then the following statements hold.
\begin{enumerate}[(i)]
 \item
  $f(\alpha) f^2(\alpha)$
  has  lowest term
  $q_{\alpha} f^2(\bar{\alpha})$,
  starting with $g(\alpha)$.
 \item
  $f(\bar{\alpha}) f^2(\bar{\alpha})$
  has  lowest term
  $q_{\bar{\alpha}} f^2(\alpha)$,
  starting with $g(\bar{\alpha})$.
 \item
  $\alpha f(\alpha) f^2(\alpha)
   = \bar{\alpha} f(\bar{\alpha}) f^2(\bar{\alpha})$.
\end{enumerate}
Moreover, if one of $\alpha f(\alpha)$ or $\bar{\alpha} f(\bar{\alpha})$
belongs to $\soc_2(e_i A)$, then
$\alpha f(\alpha) f^2(\alpha)$
is non-zero and spans $\soc(e_i A)$.
\end{proposition}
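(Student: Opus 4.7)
The plan is to verify the two parts of the moreover clause separately: membership $\alpha f(\alpha)f^2(\alpha) \in \soc(e_iA)$, and its non-vanishing. Since $A$ is basic and symmetric, $\soc(e_iA)$ is one-dimensional, so together these give the spanning claim. By part (iii) we have $\alpha f(\alpha)f^2(\alpha) = \bar{\alpha}f(\bar{\alpha})f^2(\bar{\alpha})$, so the moreover hypothesis is symmetric in $\alpha$ and $\bar{\alpha}$, and I assume $\alpha f(\alpha) \in \soc_2(e_iA)$. Membership is then immediate from the definition, as $\soc_2(e_iA)\cdot J \subseteq \soc(e_iA)$ and $f^2(\alpha) \in J$.

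For the non-vanishing I would work with the $g$-cycle basis of $e_iA$ supplied by Proposition~\ref{prop:6.10}, and first invoke Lemmas~\ref{lem:9.5} and~\ref{lem:9.6}, which jointly (covering the cases $\alpha \neq f(\alpha)$ and $\alpha$ a loop fixed by $f$) imply $\alpha f(\alpha) \notin \soc(e_iA)$. Let $m = m_{\bar{\alpha}}$ be the positive integer such that $\soc(e_iA)$ is spanned by the cyclic basis monomial $(\bar{\alpha}\,g(\bar{\alpha})\cdots g^{n_{\bar{\alpha}}-1}(\bar{\alpha}))^m$ of degree $m\,n_{\bar{\alpha}}$; then the radical degree $r$ of $\alpha f(\alpha)$ satisfies $r \leq m\,n_{\bar{\alpha}} - 1$.

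The technical heart of the argument is to read off the leading term. By the lowest-form presentation of Lemma~\ref{lem:9.4}, write $\alpha f(\alpha) = \bar{\alpha}q_{\bar{\alpha}} + \bar{\alpha}\eta_{\bar{\alpha}}$, where $\bar{\alpha}q_{\bar{\alpha}}$ is a non-zero scalar multiple of a basis monomial of degree $r$ along the $g$-cycle of $\bar{\alpha}$, ending in $g^{-1}(f^2(\alpha)) = g^{n_{\bar{\alpha}}-2}(\bar{\alpha})$ (via Lemma~\ref{lem:6.7}), and $\bar{\alpha}\eta_{\bar{\alpha}}$ is a sum of strictly higher-degree monomials along the same cycle. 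The position of the terminal arrow forces $r = (k+1)n_{\bar{\alpha}} - 1$ for some $k \geq 0$, so that multiplying by $f^2(\alpha) = g^{n_{\bar{\alpha}}-1}(\bar{\alpha})$ closes the cycle: the leading term $\bar{\alpha}q_{\bar{\alpha}}f^2(\alpha)$ is a non-zero scalar multiple of $(\bar{\alpha}g(\bar{\alpha})\cdots g^{n_{\bar{\alpha}}-1}(\bar{\alpha}))^{k+1}$, of degree $(k+1)n_{\bar{\alpha}}$, while the tail $\bar{\alpha}\eta_{\bar{\alpha}}f^2(\alpha)$ has strictly higher degree. Since $\alpha f(\alpha)f^2(\alpha)$ lies in $\soc(e_iA)$ (degree at least $m\,n_{\bar{\alpha}}$) and higher-degree tails cannot cancel a distinct leading basis monomial, we obtain $k+1 \geq m$; combined with $r \leq m\,n_{\bar{\alpha}} - 1$ this forces $k+1 = m$. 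Hence $\bar{\alpha}q_{\bar{\alpha}}f^2(\alpha)$ is a non-zero scalar multiple of the socle generator, the tail vanishes, and $\alpha f(\alpha)f^2(\alpha)$ spans $\soc(e_iA)$.
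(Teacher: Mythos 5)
Your argument addresses only the final ``moreover'' sentence of the proposition. The main content of the statement is parts (i)--(iii), and these do not come for free: you invoke part (iii) as an ingredient (``By part (iii) we have \ldots''), but nothing in your write-up establishes it, nor (i) or (ii). In the paper these three assertions are exactly where the period-four hypothesis enters: one takes the minimal projective resolution of $S_i$ from Proposition~\ref{prop:4.3}, observes that
$\varphi = \bigl(f(\alpha),\, -q_{\bar{\alpha}}-\eta_{\bar{\alpha}}\bigr)$ and
$\psi = \bigl(-q_{\alpha}-\eta_{\alpha},\, f(\bar{\alpha})\bigr)$
generate $\Omega_A^2(S_i)=\Ker d_1$, and then uses the relation $\varphi f^2(\alpha)+\psi f^2(\bar{\alpha})=0$ coming from $d_2d_3=0$ to obtain the identities
$f(\alpha)f^2(\alpha)=q_{\alpha}f^2(\bar{\alpha})+\eta_{\alpha}f^2(\bar{\alpha})$ and
$f(\bar{\alpha})f^2(\bar{\alpha})=q_{\bar{\alpha}}f^2(\alpha)+\eta_{\bar{\alpha}}f^2(\alpha)$;
a degree comparison then identifies the lowest terms, and (iii) follows by substituting one identity into the other. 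Without this step there is no route from the hypotheses (which only concern $\alpha f(\alpha)$ and $\bar{\alpha}f(\bar{\alpha})$) to any statement about $f(\alpha)f^2(\alpha)$, so the gap is essential, not cosmetic.

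For the part you do treat, the argument is essentially sound and takes a slightly different route from the paper. The paper argues directly: if $\alpha f(\alpha)\in\soc_2(e_iA)$ then its lowest term $\bar{\alpha}q_{\bar{\alpha}}$ lies in $\soc_2\setminus\soc$ and the tail lies in $\soc$, so multiplying by $f^2(\alpha)$ kills the tail and sends the lowest term to the socle monomial. Your version replaces this with explicit word-length bookkeeping along the $g$-cycle of $\bar{\alpha}$ ($r=(k+1)n_{\bar{\alpha}}-1$, then $k+1=m_{\bar{\alpha}}$ by squeezing). This works, but note it silently identifies the word length of a basis monomial with its radical degree; that identification is part of the conventions set up around Proposition~\ref{prop:6.10} and is worth flagging, since the two $g$-cycles through $i$ may have different values of $m n$. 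Also be careful that your appeal to Lemmas~\ref{lem:9.5} and~\ref{lem:9.6} is legitimate only because those lemmas precede and do not depend on this proposition.
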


\begin{proof}
Since $A$ is of generalized quaternion type, there is
an exact sequence in $\mod A$
\[
  0 \to
  S_i \to
  P_i \xrightarrow{d_3}
  P_x \oplus P_y \xrightarrow{d_2}
  P_j \oplus P_k \xrightarrow{d_1}
  P_i \xrightarrow{d_0}
  S_i \to
  0 ,
\]
which gives rise to a minimal projective resolution
in $\mod A$ with the properties described in
Proposition~\ref{prop:4.3}.
In particular we have
$j = t(\alpha)$, $k = t(\bar{\alpha})$,
$x = s(f^2(\ba))$ and  $y = s(f^2(\alpha))$, and
$d_1(u,v) = \alpha u + \bar{\alpha} v$
for any $(u,v) \in P_j \oplus P_k$.
Consider the elements in $P_j \oplus P_k$ of the form
\begin{align*}
  \varphi &= \big(f(\alpha), - q_{\bar{\alpha}} - \eta_{\bar{\alpha}}\big),
  &
  \psi &= \big( - q_{\alpha} - \eta_{\alpha}, f(\bar{\alpha})\big) .
\end{align*}
Then $d_1(\varphi) = 0$ and $d_1(\psi) = 0$, and hence
$\varphi$ and $\psi$ generate $\Omega_A^2 (S_i)$.
Then there is a choice of arrows $f^2(\alpha)$ and
$f^2(\bar{\alpha})$ ending at $i$ such that
\[
  \varphi f^2(\alpha) + \psi f^2(\bar{\alpha}) = 0 .
\]
Hence we obtain the equalities
\[
  f(\alpha) f^2(\alpha)
   = q_{\alpha} f^2(\bar{\alpha}) + \eta_{\alpha} f^2(\bar{\alpha})
 \qquad
  \mbox{ and }
 \qquad
    f(\bar{\alpha}) f^2(\bar{\alpha})
     = q_{\bar{\alpha}} f^2(\alpha) + \eta_{\bar{\alpha}} f^2(\alpha) .
\]
Since $q_{\alpha}$ ends in $g^{-1}(f^2(\bar{\alpha}))$,
the first term of $f(\alpha) f^2(\alpha)$ is along the $g$-cycle
of ${\alpha}$, and has the same degree as $\alpha q_{\alpha}$.
This is smaller than the degree of ${\alpha} \eta_{{\alpha}}$,
and then all terms of $\eta_{\alpha} f^2(\bar{\alpha})$
have higher degree than the degree of $q_{\alpha} f^2(\bar{\alpha})$.
This shows that $q_{\alpha} f^2(\bar{\alpha})$ is the lowest
term of $f(\alpha) f^2(\alpha)$.
Similarly we conclude that $q_{\bar{\alpha}} f^2(\alpha)$ is the lowest
term of $f(\bar{\alpha}) f^2(\bar{\alpha})$.
Part (iii) follows directly:
\[
  \alpha f(\alpha) f^2(\alpha)
   = \bar{\alpha} q_{\bar{\alpha}} f^2(\alpha)
       + \bar{\alpha} \eta_{\bar{\alpha}} f^2(\alpha)
   = \bar{\alpha} \big( q_{\bar{\alpha}} f^2(\alpha)
                                 + \eta_{\bar{\alpha}} f^2(\alpha) \big)
   = \bar{\alpha} f(\bar{\alpha}) f^2(\bar{\alpha}).
\]

Assume now that
$\alpha f(\alpha)$ (say)  
belongs to $\soc_2(e_i A)$.
Then in the lowest form presentation of $\alpha f(\alpha)$ 
as in Lemma \ref{lem:9.4}, 
the first term is a non-zero element of ${\rm soc}_2(e_iA)$
while the second term belongs to ${\rm soc}(e_iA)$. Then
$$\alpha f(\alpha)f^2(\alpha) = \ba q_{\ba}f^2(\alpha),$$ which is
a non-zero scalar multiple of the socle monomial. 
The same argument applies when $\ba f(\ba)$ belongs to
$\soc_2(e_iA)$. 
\end{proof}


With these preparations, we can now prove Theorem \ref{th:9.1}. 
The exception in the next proposition will be dealt with 
in Proposition~\ref{prop:9.10}.

\begin{proposition}
\label{prop:9.8}
Let $i$ be a vertex of $Q$, 
$\alpha$ and $\bar{\alpha}$ the arrows
starting at $i$. In case $\cO(\alpha)\neq \cO(\ba)$
we assume that there are no double arrows
starting or ending at vertex $i$. 
Assume that
$\alpha \neq f(\alpha)$ and 
$\bar{\alpha} \neq f(\bar{\alpha})$.
Then we may assume that
$\alpha f(\alpha), f(\alpha) f^2(\alpha), f^2(\alpha) \alpha$
and
$\bar{\alpha} f(\bar{\alpha}), f(\bar{\alpha}) f^2(\bar{\alpha}),
 f^2(\bar{\alpha}) \bar{\alpha}$
belong to $\soc_2(A) \setminus \soc (A)$.
\end{proposition}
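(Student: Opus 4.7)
The plan is to combine the lowest form presentations of Lemma~\ref{lem:9.4} with the syzygy calculation in Proposition~\ref{prop:9.7}, and then exploit the projective-module basis description of Proposition~\ref{prop:6.10} together with the second-socle identification of Lemma~\ref{lem:6.11}. Write
\[
  \alpha f(\alpha) = \bar{\alpha} q_{\bar{\alpha}} + \bar{\alpha} \eta_{\bar{\alpha}},
  \qquad
  \bar{\alpha} f(\bar{\alpha}) = \alpha q_{\alpha} + \alpha \eta_{\alpha},
\]
in lowest form, where $\bar{\alpha} q_{\bar{\alpha}}$ is a scalar multiple of a monomial along $\cO(\bar{\alpha})$ ending at $g^{-1}(f^2(\alpha))$, and symmetrically for $\alpha q_{\alpha}$. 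By Lemma~\ref{lem:9.5} both products are non-zero and outside $\soc(A)$.

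First I would show that $\alpha f(\alpha) \in \soc_2(e_i A)$. Under the hypotheses on double arrows, the ``endpoint'' $g^{-1}(f^2(\alpha))$ lies at the unique vertex $y = s(f^2(\alpha))$, and similarly the endpoint of $\alpha q_\alpha$ is the unique vertex $x = s(f^2(\bar{\alpha}))$. By Proposition~\ref{prop:6.10}, $e_i A$ has a basis consisting of monomials along the $g$-cycles of $\alpha$ and $\bar{\alpha}$, and $\dim_K e_i J^k/e_i J^{k+1} \leq 2$ throughout. Applying Lemma~\ref{lem:6.11} with the notation $\eta_r, \bar{\eta}_r$ for initial subwords of $\cO(\alpha)$, $\cO(\bar{\alpha})$, the generators of the second socle of $e_i A$ are precisely the subwords of maximal length ending at $x$ and $y$. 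Because $\bar{\alpha} q_{\bar{\alpha}}$ is a monomial along $\cO(\bar{\alpha})$ ending at $y$, it must be a scalar multiple of such a second-socle generator, hence $\bar{\alpha} q_{\bar{\alpha}} \in \soc_2(e_i A)\setminus \soc(e_i A)$. Any term of $\bar{\alpha}\eta_{\bar{\alpha}}$ of strictly greater degree is either already in $\soc(e_i A)$ or can be absorbed into $f(\alpha)$ via a correction as in the proof of Lemma~\ref{lem:9.4}; after such an adjustment we may assume $\alpha f(\alpha) \in \soc_2(e_i A) \setminus \soc(e_i A)$.

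Once this is achieved, the last clause of Proposition~\ref{prop:9.7} yields that $\alpha f(\alpha) f^2(\alpha)$ is a non-zero generator of $\soc(e_i A)$, and by Proposition~\ref{prop:9.7}(iii) equals $\bar{\alpha} f(\bar{\alpha}) f^2(\bar{\alpha})$. Running the same absorption argument for $\bar{\alpha} f(\bar{\alpha})$ (using $\alpha q_{\alpha}$ as the second-socle generator ending at $x$) places $\bar{\alpha} f(\bar{\alpha}) \in \soc_2(e_i A) \setminus \soc(e_i A)$ as well. Proposition~\ref{prop:9.7}(i)(ii) then identifies the lowest terms of $f(\alpha) f^2(\alpha)$ and $f(\bar{\alpha}) f^2(\bar{\alpha})$ as $q_{\alpha} f^2(\bar{\alpha})$ and $q_{\bar{\alpha}} f^2(\alpha)$, whose product on the right with any arrow extending the $g$-cycle lies in $\soc(e_j A)$ or $\soc(e_k A)$ respectively; so $f(\alpha) f^2(\alpha)$ and $f(\bar{\alpha}) f^2(\bar{\alpha})$ lie in $\soc_2 \setminus \soc$ as well.

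For the remaining two products $f^2(\alpha)\alpha$ and $f^2(\bar{\alpha})\bar{\alpha}$, I would invoke the symmetry of $A$: the cyclic rotation $f^2(\alpha) \alpha f(\alpha)$ of the socle element $\alpha f(\alpha) f^2(\alpha)$ again lies in $\soc(e_y A)$ and generates it, while $f^2(\alpha)\alpha$ multiplied on the right by the other arrow $\overline{f(\alpha)}$ starting at $j$ ends in $e_y A e_\ast$ and lies in the socle by the lowest-form structure. Thus $f^2(\alpha)\alpha \cdot \rad \subseteq \soc(A)$, so $f^2(\alpha)\alpha \in \soc_2(A) \setminus \soc(A)$; the case of $f^2(\bar{\alpha})\bar{\alpha}$ is symmetric.

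The main obstacle is the identification step in the first paragraph of the body: one must check that the lowest term $\bar{\alpha} q_{\bar{\alpha}}$ (a monomial along $\cO(\bar{\alpha})$ ending at $y$) coincides, up to a unit and elements of the socle, with the second-socle generator $\bar{\eta}_{s+t-1}$ or $\eta_s$ provided by Lemma~\ref{lem:6.11}. This is where the hypothesis excluding double arrows at $i$ (outside the case $\cO(\alpha) = \cO(\bar{\alpha})$) is essential to avoid the ambiguity in which socle generator ends at which vertex; in the $\cO(\alpha)=\cO(\bar{\alpha})$ situation, part (iii) of Lemma~\ref{lem:6.11} forces the simpler socle configuration and the identification is immediate.
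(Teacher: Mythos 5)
Your overall architecture (lowest form presentations from Lemma~\ref{lem:9.4}, the syzygy identities of Proposition~\ref{prop:9.7}, the basis of $e_iA$ from Proposition~\ref{prop:6.10}, and Lemma~\ref{lem:6.11} for locating the second socle) matches the paper's, but there are two genuine gaps.

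The first and central one is in your identification step. You assert that because the lowest term $\bar{\alpha}q_{\bar{\alpha}}$ is a monomial along $\cO(\bar{\alpha})$ ending at $g^{-1}(f^2(\alpha))$, it \emph{must} be a scalar multiple of the second-socle generator of $e_iA$ ending at that vertex. That does not follow: the $g$-cycle of $\bar{\alpha}$ returns to that endpoint once in every pass, so there are monomials of many different lengths along $\cO(\bar{\alpha})$ ending there (namely $Y^{t-1}\cdot(\text{all but the last arrow of }Y)$ for every $t\geq 1$ in the notation of the paper), and "ending at $y$" alone does not pin down the degree. The paper closes this gap by using the equality $\alpha f(\alpha)f^2(\alpha)=\bar{\alpha}f(\bar{\alpha})f^2(\bar{\alpha})$ of Proposition~\ref{prop:9.7}(iii) in the \emph{forward} direction: the lowest terms of the two sides are $aY^t$ and $bX^s$ (or $c(YX)^t$ and $d(XY)^s$ when $\cO(\alpha)=\cO(\bar{\alpha})$), and since these are equal and are scalar multiples of basis elements of the local algebra $R=e_iAe_i$, they can only coincide if both are the socle monomial, forcing $s=m_{\alpha}$ and $t=m_{\bar{\alpha}}$. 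Only then does one conclude that $\bar{\alpha}q_{\bar{\alpha}}$ and $\alpha q_{\alpha}$ lie in $\soc_2\setminus\soc$. You instead use Proposition~\ref{prop:9.7}(iii) only \emph{after} assuming the conclusion, which inverts the logic.

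The second gap is your treatment of $f^2(\alpha)\alpha$. You claim that $f^2(\alpha)\alpha$ times the other arrow $\overline{f(\alpha)}$ starting at $j$ lies in the socle "by the lowest-form structure", and hence $f^2(\alpha)\alpha\cdot\rad\subseteq\soc(A)$. This is exactly what is \emph{not} automatic: a priori $f^2(\alpha)\alpha$ differs from a second-socle element by an error term $u$ with $f(\alpha)u=0$, and one must analyse $u$ and absorb part of it into a new representative $f^2(\alpha)'=f^2(\alpha)(e_i-u_1')$ before the claim holds; this is the content of Lemma~\ref{lem:9.9}, and it is where the "we may assume" in the statement is actually used for $f^2(\alpha)$. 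Without this adjustment the assertion that $f^2(\alpha)\alpha\,g(\alpha)\in\soc(A)$ is unsupported.
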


\begin{proof}
Let
$j = t(\alpha)$,
$k = t(\bar{\alpha})$,
$y = t(f(\alpha))$,
and
$x = t(f(\bar{\alpha}))$.
We have from
Lemma~\ref{lem:9.4} and Lemma~\ref{lem:9.5} lowest form presentations
\[
    \alpha f(\alpha)
     = \bar{\alpha} q_{\bar{\alpha}} + \bar{\alpha} \eta_{\bar{\alpha}}
 \qquad
  \mbox{ and }
 \qquad
    \bar{\alpha} f(\bar{\alpha})
     = {\alpha} q_{{\alpha}} + {\alpha} \eta_{{\alpha}}.
\]
Further, it follows from
Proposition~\ref{prop:9.7}
(and its proof) that
\[
  f(\alpha) f^2(\alpha)
   = q_{\alpha} f^2(\bar{\alpha}) + \eta_{\alpha} f^2(\bar{\alpha})
 \qquad
  \mbox{ and }
 \qquad
    f(\bar{\alpha}) f^2(\bar{\alpha})
     = q_{\bar{\alpha}} f^2(\alpha) + \eta_{\bar{\alpha}} f^2(\alpha) ,
\]
where
$q_{\alpha} f^2(\bar{\alpha})$
is the lowest term of $f(\alpha) f^2(\alpha)$
starting with $g(\alpha)$,
$q_{\bar{\alpha}} f^2(\alpha)$
is the lowest term of $f(\bar{\alpha}) f^2(\bar{\alpha})$
starting with $g(\bar{\alpha})$,
and
$\alpha f(\alpha) f^2(\alpha)
 = \bar{\alpha} f(\bar{\alpha}) f^2(\bar{\alpha})$.
We also know from
Proposition~\ref{prop:6.10}
that $e_i A$ is spanned by monomials
along the $g$-cycles of ${\alpha}$ and $\bar{\alpha}$.

We first prove that
$\alpha f(\alpha)$,
$\bar{\alpha} f(\bar{\alpha})$,
$f(\alpha) f^2(\alpha)$,
$f(\bar{\alpha}) f^2(\bar{\alpha})$
belong to $\soc_2(A) \setminus \soc (A)$.
We consider two cases.

\smallskip

(1)
Assume
$\cO(\alpha) = \cO(\bar{\alpha})$.

\smallskip

(i) We show $\alpha f(\alpha)$ and $\ba f(\ba)$ are in the second socle.
The local algebra $R = e_i A e_i$ has generators
\[
    X= {\alpha} g({\alpha}) \dots g^{a-1}({\alpha}) ,
 \qquad
  \mbox{ and }
 \qquad
    Y= \bar{\alpha} g(\bar{\alpha}) \dots g^{b-1}(\bar{\alpha}) ,
\]
with
$g^{a-1}({\alpha}) = f^2(\alpha)$,
$g^{a}({\alpha}) = \bar{\alpha}$,
$g^{b-1}(\bar{\alpha}) = f^2(\bar{\alpha})$,
$g^{b}(\bar{\alpha}) = {\alpha}$,
and $a + b = n$.
Hence $R$ has a basis consisting of monomials in $X,Y$
which alternate between $X$ and $Y$, and the socle of $R$
is spanned by $(X Y)^m = (Y X)^m$ for some $m \geq 1$.
It follows that
$\soc(e_i A) = e_iJ^{m n}$.

Since $\bar{\alpha} q_{\bar{\alpha}} f^2(\alpha)$
is a scalar multiple of a monomial which starts with
$\bar{\alpha}$ and ends in $f^2(\alpha)$
and
$\alpha q_{\alpha} f^2(\bar{\alpha})$
is a scalar multiple of a monomial which starts with
$\alpha$ and ends in $f^2(\bar{\alpha})$,
we have
\[
    \bar{\alpha} q_{\bar{\alpha}} f^2(\alpha) = c (Y X)^t
 \qquad
  \mbox{ and }
 \qquad
    \alpha q_{\alpha} f^2(\bar{\alpha}) = d (X Y)^s ,
\]
for some $c,d \in K$ and $s,t \geq 1$.
Moreover, these terms are the lowest terms of
$\alpha f(\alpha) f^2(\alpha)$
and
$\bar{\alpha} f(\bar{\alpha}) f^2(\bar{\alpha})$,
respectively.
We know also that
$\alpha f(\alpha) f^2(\alpha)
 = \bar{\alpha} f(\bar{\alpha}) f^2(\bar{\alpha})$,
and hence
$\bar{\alpha} q_{\bar{\alpha}} f^2(\alpha)
 = \alpha q_{\alpha} f^2(\bar{\alpha})$.
Then it follows from the basis of $R$ that $s = t = m$,
and $c=d \in K^*$.
Therefore
$\alpha f(\alpha) f^2(\alpha)$
and
$\bar{\alpha} f(\bar{\alpha}) f^2(\bar{\alpha})$
are non-zero scalar multiples of the socle monomial.
It follows that the lowest terms
$\bar{\alpha} q_{\bar{\alpha}}$ and ${\alpha} q_{{\alpha}}$
of $\alpha f(\alpha)$ and $\bar{\alpha} f(\bar{\alpha})$
are in $\soc_2(e_i A) \setminus \soc(e_i A)$.
But then $\alpha f(\alpha)$ and $\bar{\alpha} f(\bar{\alpha})$
also belong to $\soc_2(e_i A) \setminus \soc(e_i A)$.

\smallskip

(ii) We prove now that
$f(\alpha) f^2(\alpha)$
belongs to $\soc_2(e_j A)$.
Since $A$ is symmetric and
$\alpha f(\alpha) f^2(\alpha)$ is non-zero in $\soc (e_i A)$,
we conclude that
$f(\alpha) f^2(\alpha) \alpha$ is a non-zero element of $\soc (e_j A)$.
Similarly, since $(X Y)^m$ spans $\soc (e_i A)$,
we obtain also that
$\soc (e_j A)$ is spanned by the element
\[
 g({\alpha}) \dots g^{a-1}({\alpha})
 (Y X)^{m-1}
 \bar{\alpha} g(\bar{\alpha}) \dots g^{b-1}(\bar{\alpha}) \alpha ,
\]
of degree $mn$.
Now $f(\alpha)f^2(\alpha) \in e_jA$ with lowest term 
$q_{\alpha}f^2(\ba)$ of degree $mn - 1$, and it is the initial
submonomial of the above socle element of this degree. 
By Lemma~\ref{lem:6.11} it lies in the second socle, unless
posibly there are double arrows ending at $j$ and $g$ has two cycles
passing through $j$. 
However we are in the case $\cO(\alpha) = \cO(\ba)$, so if $t(\alpha)=j=t(\ba)$ then $g$ passes twice through $j$, so this does not occur. 

Similarly one shows that $f(\ba)f^2(\ba)$ is in the second socle.

\smallskip

(2)
Assume that
$\cO(\alpha) \neq \cO(\bar{\alpha})$.
Let $n_{\alpha} = |\cO(\alpha)|$
and $n_{\bar{\alpha}} = |\cO(\bar{\alpha})|$.
Then the local algebra $R = e_i A e_i$ has generators
\[
    X= {\alpha} g({\alpha}) \dots g^{n_{\alpha}-1}({\alpha}) ,
 \qquad
  \mbox{ and }
 \qquad
    Y= \bar{\alpha} g(\bar{\alpha}) \dots g^{n_{\bar{\alpha}}-1}(\bar{\alpha}) ,
\]
and there are positive integers
$m_{\alpha}$ and $m_{\bar{\alpha}}$
such that each of the elements
$X^{m_{\alpha}}$ and $Y^{m_{\bar{\alpha}}}$
spans the socle of $R$.

Since
$\bar{\alpha} q_{\bar{\alpha}} f^2(\alpha)$
is a scalar multiple of a monomial which starts with
$\bar{\alpha}$ and ends in $f^2(\alpha)$ and
$\alpha q_{\alpha} f^2(\bar{\alpha})$
is a scalar multiple of a monomial which starts with
$\alpha$ and ends in $f^2(\bar{\alpha})$,
we conclude that
\[
    \bar{\alpha} q_{\bar{\alpha}} f^2(\alpha) = a Y^t
 \qquad
  \mbox{ and }
 \qquad
    \alpha q_{\alpha} f^2(\bar{\alpha}) = b X^s ,
\]
for some $a,b \in K$ and $s,t \geq 1$.
Moreover,
by Proposition~\ref{prop:9.7},
$\bar{\alpha} q_{\bar{\alpha}} f^2(\alpha)$
is the lowest term of $\alpha f(\alpha) f^2(\alpha)$,
$\alpha q_{\alpha} f^2(\bar{\alpha})$
is the lowest term of $\bar{\alpha} f(\bar{\alpha}) f^2(\alpha)$,
and
$\alpha f(\alpha) f^2(\alpha)
 = \bar{\alpha} f(\bar{\alpha}) f^2(\bar{\alpha})$.
Hence
$ \bar{\alpha} q_{\bar{\alpha}} f^2(\alpha)
 = \alpha q_{\alpha} f^2(\bar{\alpha})$.

Both elements are non-zero scalar multiples of monomials along $g$-cycles
of $\alpha$ and $\ba$. It follows from Lemma~\ref{lem:6.11} that 
$s = m_{\alpha}$ and $t = m_{\bar{\alpha}}$,
$a,b \in K^*$, and the element belongs to $\soc(R) = \soc(e_i A)$.

By our assumption, there are no double arrows ending at $i$ and
 then by Lemma~\ref{lem:6.11}
we can deduce that $\bar{\alpha} q_{\bar{\alpha}}$
and $\bar{\alpha} q_{{\alpha}}$
belong to
$\soc_2(e_i A) \setminus \soc(e_i A)$,
and consequently
$\alpha f(\alpha)$ and $\bar{\alpha} f(\bar{\alpha})$
belong to $\soc_2(e_i A) \setminus \soc(e_i A)$.

\medskip

We prove now that
$f(\alpha) f^2(\alpha) \in \soc_2(e_j A) \setminus \soc (e_j A)$.

Consider $\soc(e_jA)$, it is spanned by $f(\alpha)f^2(\alpha)\alpha$ and also by
$X_j^{m_{\alpha}}$, where
$$X_j:= (g(\alpha)g^2(\alpha)\ldots f^2(\ba)\alpha).
$$
From the lowest form presentation we have
$$f(\alpha)f^2(\alpha)\alpha = q_{\alpha}f^2(\ba)\alpha + \eta_{\alpha}f^2(\ba)
\alpha
$$
and the first term is a non-zero scalar multiple of a monomial in the basis
and the second term is expanded in terms of the basis. 
Therefore
$q_{\alpha}f^2(\alpha)\alpha = c(X_j^{m_{\alpha}})$ for $0\neq c\in K$.
By Lemma~\ref{lem:6.11}, and since no double arrows end at $j$, we deduce that
$q_{\alpha}f^2(\ba) \in \soc_2(e_jA)$ and hence  
$f(\alpha)f^2(\alpha)$ is in the second socle.

Similarly one shows that $f(\ba)f^2(\ba) \in {\rm soc}_2(e_kA)$.
The proof will be completed by applying the following lemma.
\end{proof}

\begin{lemma}\label{lem:9.9}
Assume $\alpha\neq f(\alpha)$, and assume we know that
$\alpha f(\alpha)$ and $f(\alpha)f^2(\alpha)$ are in the second
socle of $A$. Assume also that if $\cO(\alpha)\neq \cO(\ba)$
then no double arrows start or end at vertex $i$. Then
there is a choice for $f^2(\alpha)$
such that
$f^2(\alpha) \alpha \in \soc_2(e_y A)$,
and also
$f(\alpha) f^2(\alpha) \in \soc_2(e_j A)$.
\end{lemma}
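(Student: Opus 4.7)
The plan is to apply the rotation principle of the symmetric algebra together with Lemma~\ref{lem:6.11} and Proposition~\ref{prop:6.10}, mirroring at vertex $y = t(f(\alpha))$ the argument used for vertex $i$ in Proposition~\ref{prop:9.8}. First I would observe that by Proposition~\ref{prop:9.7}(iii), $\alpha f(\alpha) f^2(\alpha)$ is a non-zero generator of $\soc(e_i A)$, and by the rotational invariance of the symmetrizing form on $A$ its cyclic shift $f^2(\alpha)\alpha f(\alpha)$ is a non-zero generator of $\soc(e_y A)$. Thus $f^2(\alpha)\alpha\cdot f(\alpha)$ automatically lies in $\soc(A)$. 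Since $f(\alpha)$ and $g(\alpha)$ are the two arrows starting at $j$, to obtain $f^2(\alpha)\alpha \in \soc_2(e_y A)$ it remains to arrange that $f^2(\alpha)\alpha\cdot g(\alpha) \in \soc(A)$.

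Next I would describe $f^2(\alpha)\alpha$ in the basis of $e_y A$ consisting of monomials along the two $g$-orbits through $y$ provided by Proposition~\ref{prop:6.10} (applicable because at most one type C relation starts at $y$, since otherwise Proposition~\ref{prop:6.5} or~\ref{prop:6.6} would force the tetrahedral situation already handled in Section~\ref{sec:tetrahedralquiver}). Under the standing assumption that no double arrows start or end at $i$ in the case $\mathcal{O}(\alpha)\neq\mathcal{O}(\bar{\alpha})$, Lemma~\ref{lem:6.11} identifies $\soc_2(e_y A)$ as spanned by two explicit initial submonomials of the socle generators of $e_y A$, one of which ends at $j$. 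By the lowest form presentation of Lemma~\ref{lem:9.4} together with Proposition~\ref{prop:9.7} applied with $f^2(\alpha)$ playing the role of $\alpha$ at $y$, the lowest term of $f^2(\alpha)\alpha$ is, up to a non-zero scalar, precisely this initial submonomial, and hence lies in $\soc_2(e_y A)$. Any remaining terms of $f^2(\alpha)\alpha$ have strictly higher degree, and can be absorbed by a correction $f^2(\alpha)\mapsto f^2(\alpha)+\zeta$ with $\zeta\in e_y J^2 e_i$, exactly as in the proof of Lemma~\ref{lem:9.4}.

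Finally I would verify that this correction preserves $f(\alpha)f^2(\alpha)\in\soc_2(e_j A)$: replacing $f^2(\alpha)$ by $f^2(\alpha)+\zeta$ modifies $f(\alpha)f^2(\alpha)$ only by $f(\alpha)\zeta \in e_j J^3 e_i$; since the established lowest term of $f(\alpha)f^2(\alpha)$ already lies in $\soc_2(e_j A)\setminus\soc(e_j A)$ in a degree strictly smaller than that of any correction term, membership in the second socle persists after the adjustment. The main obstacle is the bookkeeping in the case $\mathcal{O}(\alpha)=\mathcal{O}(\bar{\alpha})$, where the same long $g$-cycle governs the lowest forms both at $i$ and at $y$ and one must ensure that the correction for $f^2(\alpha)\alpha$ is compatible with the parallel correction for $f^2(\bar{\alpha})\bar{\alpha}$ (whose analysis is symmetric) without breaking the rigid identity $\alpha f(\alpha)f^2(\alpha)=\bar{\alpha}f(\bar{\alpha})f^2(\bar{\alpha})$ from Proposition~\ref{prop:9.7}(iii). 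This compatibility follows because the identity pins the socle generator uniquely up to scalar, so the two independent adjustments can be made within the complementary submodules governed by the distinct syzygy generators $\varphi$ and $\psi$ exhibited in the proof of Proposition~\ref{prop:9.7}.
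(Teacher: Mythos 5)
Your outline (rotate the socle generator to $y$, expand $f^2(\alpha)\alpha$ in the monomial basis along the $g$-cycles through $y$, identify its lowest term with an initial submonomial of the socle generator via Lemma~\ref{lem:6.11}, then correct $f^2(\alpha)$) starts in the same spirit as the paper, but the absorption step contains a genuine gap. A replacement $f^2(\alpha)\mapsto f^2(\alpha)+\zeta$ changes the product only by $\zeta\alpha$, i.e.\ by elements that end in the fixed arrow $\alpha$. This is the essential asymmetry with Lemma~\ref{lem:9.4}: there the unwanted terms $\alpha p$ are pushed into the \emph{second} factor $f(\alpha)$, whereas here the second factor is $\alpha$, which has already been pinned down by the hypothesis that $\alpha f(\alpha)$ and $f(\alpha)f^2(\alpha)$ lie in $\soc_2(A)$. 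Concretely, the error term $u=f^2(\alpha)\alpha-\lambda(\text{second-socle monomial})$ satisfies $f(\alpha)u=0$ and, after expansion, reduces to $f^2(\alpha)u_1'\alpha+f^2(\alpha)u_1''g^{-1}(f(\alpha))$, splitting according to which of the two arrows ending at $j$ a monomial terminates in. Only the summand ending in $\alpha$ can be absorbed, namely by the unit substitution $f^2(\alpha)\mapsto f^2(\alpha)(e_i-u_1')$. The summand $w=f^2(\alpha)u_1''\,g^{-1}(f(\alpha))$ cannot be removed by \emph{any} choice of $f^2(\alpha)$, and the paper must prove separately that it already lies in $\soc_2(A)$: if $w\neq 0$ then $f(\alpha)$ belongs to $\cO(f^2(\alpha))=\cO(\ba)$, so that $g$-orbit passes twice through $y$ and Lemma~\ref{lem:6.11} applies. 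Your blanket claim that ``any remaining terms \dots can be absorbed by a correction'' skips exactly this component.

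A second, related gap is the preservation of $f(\alpha)f^2(\alpha)\in\soc_2(e_jA)$ after the correction. Second-socle membership is a property of the whole element, not of its lowest term, so the observation that the correction terms have higher degree proves nothing by itself. What makes the paper's argument work is that the correction is right multiplication by the unit $e_i-u_1'$, so the change to $f(\alpha)f^2(\alpha)$ is $-f(\alpha)f^2(\alpha)u_1'\in\soc_2(A)J\subseteq\soc(A)$; for an arbitrary $\zeta\in e_yJ^2e_i$ there is no reason for $f(\alpha)\zeta$ to land in $\soc_2(A)$. Two smaller points: Proposition~\ref{prop:9.7} applied at $y$ deduces the lowest term of $\alpha f(\alpha)$ \emph{from} that of $f^2(\alpha)\alpha$, not conversely (the lowest form of $f^2(\alpha)\alpha$ comes from Lemma~\ref{lem:9.4} at $y$); and your closing remarks about ``compatibility'' of the corrections for $\alpha$ and $\ba$ in the case $\cO(\alpha)=\cO(\ba)$ do not correspond to an actual argument and do not substitute for the missing analysis of $w$.
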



\begin{proof}
We choose the generators $X_j$ and $Y_j$ of the local algebra
$e_j A e_j$ such that
$X_j$ is a monomial starting with $g(\alpha)$
and
$Y_j$ is a monomial starting with $f(\alpha)$,
depending on the two possible cases
$\cO(g(\alpha)) = \cO(f(\alpha))$
or
$\cO(g(\alpha)) \neq \cO(f(\alpha))$
(as in (1) or (2) of the proof of Proposition~\ref{prop:9.8}).
Moreover, let
$Z_j$ be the monomial starting with $g(f(\alpha))$
such that $f(\alpha) Z_j = Y_j$.
Since $f(\alpha) f^2(\alpha) \alpha$ is non-zero in $\soc (e_j A)$,
we may write
\[
   f^2(\alpha) \alpha = \lambda Z_j (X_j Y_j)^{m-1} X_j + u
   \quad
   \ \ \ \quad
   \mbox{(if $\cO(g(\alpha)) = \cO(f(\alpha))$)}
\]
or
\[
   f^2(\alpha) \alpha = \lambda Z_j Y_j^{m-1} + u
   \quad
   \mbox{(if $\cO(g(\alpha)) \neq \cO(f(\alpha))$)},
\]
for $\lambda \in K^*$
and $u \in e_y A e_j$ with $f(\alpha) u = 0$, where $m=m_{f(\alpha)}$.
We note that also $u f(\alpha)$ is in the socle of $A$.
We write
\[
  u = f^2(\alpha) u_1 + g\big(f(\alpha)\big) u_2 ,
\]
where the terms are along the basis of $e_y A$.
Note also that $u_1$ starts with $\ba$ if it is non-zero, and hence
lies in $J^2$.

\medskip

(i) \ We claim that $g(f(\alpha))u_2=0$.
We have  $0=f(\alpha)u$, and  $f(\alpha)f^2(\alpha) u_1 \in {\rm soc}_2(A)J^2 = 0$. Hence
also
$$f(\alpha)g(f(\alpha))u_2 = 0$$
Assume for a contradiction that
$g(f(\alpha))u_2\neq 0$, then it is along the cycle of $g$, and then
$f(\alpha) g(f(\alpha))u_2$ is in $e_jAe_j$ and is not zero, a contradiction.
So $g(f(\alpha))u_2=0$.

\medskip

We have now
$$f^2(\alpha)\alpha = v + f^2(\alpha) u_1$$
and $v\in {\rm soc}_2(A)$, and moreover $f^2(\alpha)u_1f(\alpha) \in {\rm soc}(A)$.
We expand the error term,
$$f^2(\alpha)u_1 = f^2(\alpha)u_1' \alpha + f^2(\alpha)u_1''g^{-1}(f(\alpha)).$$
Here one or both terms may not exist, depending on whether $\alpha, f(\alpha)$
are in the same $g$-cycle of $f^2(\alpha)$. 

\medskip

(ii) \ We claim that we can write 
$$f^2(\alpha) \alpha = v_1 + f^2(\alpha)u_1'\alpha
$$
with $v_1\in \soc_2(A)$. 
We must show that $w:= f^2(\alpha)u_1''g^{-1}(f(\alpha))$ is in the second
socle. If $w\neq 0$, then it is a linear combination of
monomials along
 $g$-cycle. Postmultiplying $f^2(\alpha)u_1$ with  
 $f(\alpha)$ gives an element
 in the socle and therefore $wf(\alpha)$ is non-zero in the socle. 
Since $w$ exists, the arrow $f(\alpha)$ belongs to $\cO(f^2(\alpha)) = \cO(\ba)$.
This means that $\cO(\ba)$ passes twice through vertex $y$. We can therefore
deduce from Lemma~\ref{lem:6.11} that
$w$ is in the second socle. 

\medskip

By construction, 
$u_1'\in J$ and therefore $e_i-u_1'$ is a unit.
We replace $f^2(\alpha)$ by $f^2(\alpha)':= f^2(\alpha)(e_i-u_1')$ and get
$$f^2(\alpha)'\alpha,  \ f(\alpha)f^2(\alpha)' \in {\rm soc}_2(A)
$$
as required.
\end{proof}


Proposition~\ref{prop:9.8} has proved Theorem~\ref{th:9.1}, excluding
a certain setting.   
The following deals with this exception.

\begin{proposition}
\label{prop:9.10}
Assume $\cO(\alpha) \neq \cO(\ba)$ and that either
$\alpha$ and $\bar{\alpha}$ are  double arrows, or 
$f^2(\alpha), f^2(\ba)$ are double arrows in   $Q$, where
$Q$ is not the Markov quiver.
Then the conclusion of Theorem \ref{th:9.1} holds.
\end{proposition}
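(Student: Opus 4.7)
The plan is to reduce Proposition~\ref{prop:9.10} to Proposition~\ref{prop:9.8} by applying the latter at a carefully chosen auxiliary vertex of $Q$. The key observation is that in the presence of double arrows at vertex $i$, the triangulation quiver identity $g(\sigma) = \overline{f(\sigma)}$ forces the two arrows starting at a certain neighboring vertex to lie in a common $g$-orbit, so the restriction ``no double arrows'' in the hypothesis of Proposition~\ref{prop:9.8} becomes vacuous there.

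First I would treat case (a), in which $\alpha, \bar{\alpha}$ are double arrows $i \to j$. Since the two arrows starting at $j$ must be $f(\alpha)$ and $f(\bar{\alpha})$, one obtains $g(\alpha) = \overline{f(\alpha)} = f(\bar{\alpha})$ and dually $g(\bar{\alpha}) = f(\alpha)$. Since moreover $g(f^2(\alpha)) = \overline{f^3(\alpha)} = \overline{\alpha} = \bar{\alpha}$, the arrows $\bar{\alpha}, f(\alpha), f^2(\alpha)$ all lie in the single $g$-orbit $\cO(\bar{\alpha})$. Now pass to the vertex $y = t(f(\alpha))$: the two arrows starting at $y$ are $f^2(\alpha)$ and $\overline{f^2(\alpha)} = g(f(\alpha))$, and both belong to $\cO(\bar{\alpha})$ by the computation above. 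Thus at $y$ we have $\cO(f^2(\alpha)) = \cO(\overline{f^2(\alpha)})$, and the double-arrow restriction in Proposition~\ref{prop:9.8} is automatically satisfied.

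After checking the non-loop condition $f^2(\alpha) \neq f(f^2(\alpha)) = \alpha$ (which reduces to $\alpha \neq f(\alpha)$ and is forced by $|Q_0|\ge 3$ together with the connectedness arguments of Section~\ref{sec:triangulation}), I would invoke Proposition~\ref{prop:9.8} at vertex $y$. Applied to the $f$-orbit $\bigl(f^2(\alpha),\, \alpha,\, f(\alpha)\bigr)$, it yields that the three products
\[
  f^2(\alpha)\cdot \alpha,\qquad \alpha\cdot f(\alpha),\qquad f(\alpha)\cdot f^2(\alpha)
\]
all lie in $\soc_2(A)\setminus \soc(A)$. The second application of Proposition~\ref{prop:9.8}, this time at vertex $x = t(f(\bar{\alpha}))$, produces the remaining three products attached to the $f$-orbit of $\bar{\alpha}$. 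Case (b), where $f^2(\alpha), f^2(\bar{\alpha})$ are the double arrows (from $x=y$ into $i$), is handled by the dual choice of auxiliary vertices: one applies Proposition~\ref{prop:9.8} at $j = t(\alpha)$ and at $k = t(\bar{\alpha})$, where the analogous $g$-orbit computation again places the two outgoing arrows in a common orbit. Finally, cases (a) and (b) cannot occur simultaneously under our hypothesis $Q \neq Q^M$: double arrows both starting and ending at $i$, together with $f^3 = \id$, force a $3$-cycle of double arrows, which is precisely the Markov quiver.

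The main obstacle will be the careful bookkeeping of $g$-orbits and $f$-orbits at the auxiliary vertex, especially in low-symmetry degeneracies where the auxiliary vertex $y$ (or $x$) might coincide with $i$, $j$, or $k$. These coincidences must be ruled out (or handled separately) to guarantee that the invocation of Proposition~\ref{prop:9.8} does not circle back to the original vertex with its forbidden double-arrow configuration; however, each such coincidence is easily seen to produce a subquiver small enough that $|Q_0|<3$ or else that $Q = Q^M$, both excluded.
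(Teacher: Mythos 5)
Your proposal is correct and follows essentially the same route as the paper: both arguments reduce to Proposition~\ref{prop:9.8} applied at the auxiliary vertices $y=t(f(\alpha))$ and $x=t(f(\bar{\alpha}))$ in the first case, and at $j=t(\alpha)$ and $k=t(\bar{\alpha})$ in the second. The only (harmless) divergence is in how the disjunctive hypothesis of Proposition~\ref{prop:9.8} is checked at the auxiliary vertex: the paper rules out double arrows there by a Markov-quiver exclusion, while you use $g(\sigma)=\overline{f(\sigma)}$ to show the two arrows leaving it lie in a single $g$-orbit — both verifications are valid.
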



\begin{proof}
Suppose $\alpha, \ba$ are double arrows and belong
to different $g$-cycles. Let $y=t(f(\alpha)$. Then no double
arrows start or end at vertex $y$ since otherwise $t(f(\ba))$ would
also be equal to $y$ and $Q$ would be the Markov quiver.
So we can apply Proposition~\ref{prop:9.8} with vertex $y$ and arrows
$f^2(\alpha), \overline{f^2(\alpha)}$ instead of $i, \alpha, \ba$. We get
the claim for the products along the $f$-cycle of
$\alpha$. 
Similarly, using vertex $x=t(f(\ba))$ gives the answer
for the products along the $f$-cycle of $\ba$.

\medskip

Suppose $f^2(\alpha), f^2(\ba)$ are double arrows. They are in different
cycles of $g$ as well. Then no double arrows start or end at vertex
$j$. We apply Proposition~\ref{prop:9.8} with 
vertex $j$, and arrows $f(\alpha)$ and $\overline{f(\alpha)}$ and get
the answer for the $f$-cycle of $\alpha$. 
Similarly, the result follows for the $f$-cycle of $\ba$. 
\end{proof}


It remains to prove Theorem \ref{th:9.1} in the case when
there is a loop at vertex $i$ fixed by $f$. Most of the information
has already been obtained in Lemma \ref{lem:9.6}. 

\begin{proposition}
\label{prop:9.11}
Let $\alpha$ be a loop in $Q$ with $\alpha = f(\alpha)$,
$i = s(\alpha)$, and $\bar{\alpha}$ the other arrow in $Q$
with $s(\bar{\alpha}) = i$.
Then there is a choice for $\alpha$ such that
the following statements hold.
\begin{enumerate}[(i)]
 \item
  $\alpha^2 \in \soc_2(e_i A) \setminus \soc(e_i A)$.
 \item
  There exist elements $c_{\bar{\alpha}} \in K^*$, $b_i \in K$,
  and a positive integer $m_{\bar{\alpha}}$ such that
  \[
    \alpha^2 = c_{\bar{\alpha}} A_{\bar{\alpha}} + b_i B_{\bar{\alpha}} ,
  \]
 where
  \begin{align*}
    A_{\bar{\alpha}}
      &= \big(\bar{\alpha} g(\bar{\alpha}) \dots f^2(\bar{\alpha}) \alpha
            \big)^{m_{\bar{\alpha}} - 1}
            \bar{\alpha} g(\bar{\alpha}) \dots f^2(\bar{\alpha}) ,
    \\
    B_{\bar{\alpha}}
      &= \big(\bar{\alpha} g(\bar{\alpha}) \dots f^2(\bar{\alpha}) \alpha
           \big)^{m_{\bar{\alpha}}},
 \end{align*}
  and these elements are independent on choices of
  $\bar{\alpha}, g(\bar{\alpha}), \dots, f^2(\bar{\alpha})$.
 \item
  We may assume that
  $\bar{\alpha} f(\bar{\alpha})  \in \soc_2(e_i A) \setminus \soc(e_i A)$
  and is of the form
  \[
    \bar{\alpha} f(\bar{\alpha}) = c_{\alpha} A_{\alpha} ,
  \]
  with $c_{\alpha} = c_{\bar{\alpha}}$
  and
  $A_{\alpha} = (\alpha \bar{\alpha} g(\bar{\alpha}) \dots f^2(\bar{\alpha})
          )^{m_{\bar{\alpha}}-1}
          \alpha \bar{\alpha} g(\bar{\alpha}) \dots g^{-1}(f^2(\bar{\alpha}))
          $.
 \item
   We may assume that
   $f(\bar{\alpha}) f^2(\bar{\alpha})$
   and
   $f^2(\bar{\alpha})\bar{\alpha}$
   belong to $\soc_2(A) \setminus \soc (A)$.
\end{enumerate}
\end{proposition}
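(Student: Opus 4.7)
\medskip

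The plan is to bootstrap Proposition~\ref{prop:9.11} from Lemma~\ref{lem:9.6}, Proposition~\ref{prop:9.7} and Lemma~\ref{lem:9.9}, specialised to the loop situation $\alpha = f(\alpha)$. Parts (i) and (ii) are essentially immediate: Lemma~\ref{lem:9.6}(i)--(ii) already furnishes the representation $\alpha^2 = c A_{\bar\alpha} + d B_{\bar\alpha}$ with $c \neq 0$ and $\alpha^2 \in \soc_2(e_iA) \setminus \soc(e_iA)$. I would simply rename $c_{\bar\alpha} := c$, $b_i := d$, and define $m_{\bar\alpha}$ as the unique positive integer making $B_{\bar\alpha}$ a non-zero socle generator of $e_iA$. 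The invariance claim in (ii) follows from Lemma~\ref{lem:9.2}: any admissible change in $\bar\alpha, g(\bar\alpha), \ldots, f^2(\bar\alpha)$ is modulo $J^2$ and can only perturb terms of strictly higher degree than $A_{\bar\alpha}$, so neither $c_{\bar\alpha}$ (the lowest-term coefficient) nor $b_i$ (the coefficient on the unique socle monomial of that shape) is affected.

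For (iii) I would apply Proposition~\ref{prop:9.7} with the loop-fixed-by-$f$ reading of the hypotheses. The lowest form presentation of $\alpha f(\alpha) = \alpha^2$ is $\bar\alpha q_{\bar\alpha} + \bar\alpha\eta_{\bar\alpha}$ with $\bar\alpha q_{\bar\alpha} = c_{\bar\alpha} A_{\bar\alpha}$, and so $q_{\bar\alpha}$ ends in $g^{-1}(f^2(\alpha)) = g^{-1}(\alpha) = f^2(\bar\alpha)$, as required. Since $\bar\alpha$ cannot itself be a loop (otherwise $|Q_0| < 3$), Lemma~\ref{lem:9.4} supplies a lowest form presentation $\bar\alpha f(\bar\alpha) = \alpha q_\alpha + \alpha \eta_\alpha$. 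Proposition~\ref{prop:9.7}(i) identifies the lowest term of $f(\alpha) f^2(\alpha) = \alpha^2$ as $q_\alpha f^2(\bar\alpha)$; comparing with the known presentation of $\alpha^2$ forces $q_\alpha f^2(\bar\alpha) = c_{\bar\alpha} A_{\bar\alpha}$, hence $\alpha q_\alpha = c_{\bar\alpha} A_\alpha$. The residual term $\alpha\eta_\alpha$ lies in $\soc(e_iA) \cap e_iAe_x$; I would eliminate it by replacing $f(\bar\alpha)$ by $f(\bar\alpha) - \xi$ for a suitable $\xi \in J^2 e_x$ chosen so that $\bar\alpha\xi = \alpha\eta_\alpha$, which exists because $P_i/S_i \cong (\delta,\delta^*)A$ as in Lemma~\ref{lem:4.1}, so every element of $\alpha A \cap \soc(e_iA)$ lies in $\bar\alpha A$.

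For (iv) I would first use Proposition~\ref{prop:9.7}(ii), applied via (iii), to identify the lowest term of $f(\bar\alpha) f^2(\bar\alpha)$ as $q_{\bar\alpha} f^2(\alpha) = q_{\bar\alpha} \alpha$, which is a non-socle monomial along the $g$-cycle whose product with $\bar\alpha$ lands in $\soc(e_iA)$; the argument of Lemma~\ref{lem:6.11} applied at vertex $k$ (where there are no double arrows and, by the same $|Q_0|\geq 3$ reasoning, no loop) places $f(\bar\alpha) f^2(\bar\alpha)$ in $\soc_2(e_kA) \setminus \soc(e_kA)$. The claim for $f^2(\bar\alpha) \bar\alpha$ then falls out of Lemma~\ref{lem:9.9} applied with $(\alpha,f(\alpha),f^2(\alpha))$ replaced by $(\bar\alpha,f(\bar\alpha),f^2(\bar\alpha))$: the hypotheses $\bar\alpha \neq f(\bar\alpha)$ and $\bar\alpha f(\bar\alpha), f(\bar\alpha) f^2(\bar\alpha) \in \soc_2$ have just been verified, and the non-double-arrow assumption of the lemma is satisfied at $i$ precisely because $i$ carries a loop. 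Applying the lemma provides a unit adjustment of $f^2(\bar\alpha)$ placing $f^2(\bar\alpha)\bar\alpha$ in the second socle, which is compatible with the previous adjustments since it leaves the products involving $\alpha$ unaffected.

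The main obstacle is the identification $c_\alpha = c_{\bar\alpha}$ in (iii). This is not purely formal: it requires tracing through the comparison of two independent lowest form presentations of the same element $\alpha^3 = \alpha f(\alpha) f^2(\alpha) = \bar\alpha f(\bar\alpha) f^2(\bar\alpha)$ (Proposition~\ref{prop:9.7}(iii)) and using that $\alpha$ and $\bar\alpha$ share a single $g$-orbit, so both $A_\alpha$ and $A_{\bar\alpha}$ arise as cyclic rotations of the same word in $R = e_iAe_i$. A secondary delicate point is verifying that the error-term absorption in (iii) does not disturb the lowest form presentation of $\alpha^2$ fixed in (ii), which is guaranteed only because the absorbing correction lies in $J^2$ and the socle monomial $B_{\bar\alpha}$ has a canonical expansion independent of these arrow choices.
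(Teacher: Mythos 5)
Your proposal is correct and follows essentially the same route as the paper: parts (i)--(ii) are quoted from Lemma~\ref{lem:9.6}, part (iii) comes from comparing the lowest form presentation of $\bar{\alpha}f(\bar{\alpha})$ with that of $\alpha^2$ via Proposition~\ref{prop:9.7} and Lemma~\ref{lem:6.11}, and part (iv) combines Proposition~\ref{prop:9.7}, Lemma~\ref{lem:6.11} (the paper uses its dual version at $i$, you work at $k$ -- both legitimate since no double arrows are involved) and Lemma~\ref{lem:9.9}. Two harmless slips: your parenthetical claim that there is no loop at $k$ is false in general (cf.\ the triangle of Example~\ref{ex:2.8}), but Lemma~\ref{lem:6.11} only excepts double arrows, so nothing is lost; and the error term $\alpha\eta_{\alpha}$ in (iii) needs no absorption via Lemma~\ref{lem:4.1} -- it lies in $\soc(e_iA)\cap e_iAe_x$ with $x\neq t(f(\alpha)f^2(\alpha))=i$, hence vanishes outright because $\soc(e_iA)\subseteq e_iAe_i$.
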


\begin{proof} For
parts (i) and (ii) see Lemma \ref{lem:9.6}. 
To prove (iii),  
we take the lowest form presentation 
\[
   \bar{\alpha} f(\bar{\alpha})
     = {\alpha} q_{{\alpha}} + {\alpha} \eta_{{\alpha}} .
\]
Then it follows from
Proposition~\ref{prop:9.7}
and (ii) that
\[
  \bar{\alpha} f(\bar{\alpha}) f^2(\bar{\alpha})
   = \alpha f(\alpha) f^2(\alpha)
   = \alpha^3 ,
\]
which is non-zero in the socle.
We deduce that
$\alpha q_{\alpha}f^2(\bar{\alpha})$ is non-zero in the socle.
We have the expansion along cycles of $g$, and by Lemma~\ref{lem:6.11} we
deduce that
$\alpha q_{\alpha}$ is in the second socle (there are no double arrows ending
at vertex $i$). 
This is the lowest term of $\ba f(\ba)$ and hence $\ba f(\ba)$ is in the
second socle. The details in (iii) follow. 

\medskip

(iv)
Let $k = t(\bar{\alpha})$ and $x = t(f(\bar{\alpha}))$.
Since
$\alpha^2
 = \bar{\alpha} q_{\bar{\alpha}} + \bar{\alpha} \eta_{\bar{\alpha}}$
and
$\bar{\alpha} f(\bar{\alpha}) = {\alpha} q_{{\alpha}}$,
it follows from
Proposition~\ref{prop:9.7}
that $q_{\bar{\alpha}} f^2({\alpha}) = q_{\bar{\alpha}} \alpha$
is the lowest term of $f(\bar{\alpha}) f^2(\bar{\alpha})$
and starts with $g(\bar{\alpha})$.

Further, because $A$ is symmetric and
$\bar{\alpha} f(\bar{\alpha}) f^2(\bar{\alpha})$
is non-zero in $\soc(e_k A)$,
we conclude that
$\ba q_{\ba}\alpha$ is a non-zero monomial in the socle of
$Ae_i$. This is also the socle of $Ae_i$.
We use the dual version of Lemma~\ref{lem:6.11}, noting that no double
arrows start at $i$. We deduce that $q_{\ba}\alpha \in \soc_2(Ae_i) \subseteq
\soc_2(A)$ and hence
the element lies in $\soc_2(e_kA)$. 
By the Lemma~\ref{lem:9.9} it follows that we may assume that also
$f^2(\ba)\ba$ belongs to the second socle of $A$. This completes the
proof.
\end{proof}

We have proved Theorem \ref{th:9.1}. The following  gives some more detail
in the case when we have one type C relation starting at a vertex.


\begin{proposition}
\label{prop:9.12}
Let $i$ be a vertex of $Q$,
$\alpha,\bar{\alpha}$ the arrows
with source $i$,
and assume that
the minimal relations starting from $i$
are of the form
${\alpha} f({\alpha}) + \bar{\alpha} \gamma \in J^3$
of type C and
$\bar{\alpha} f(\bar{\alpha}) \in J^3$.
Moreover, assume that each of the vertices
$k = t(\bar{\alpha})$
and
$y = t(f(\alpha))$
is the starting vertex of at most one
minimal relation of type C.
Then the following statements hold.
\begin{enumerate}[(i)]
 \item
  We may assume that we have the commutativity relations
  of type C
  \begin{align*}
    &&
    \alpha f(\alpha) &= \bar{\alpha} \gamma, &
    f(\bar{\alpha}) f^2(\bar{\alpha}) &= \gamma f^2(\alpha), &
    f(\gamma) f^2(\gamma) &= f^2(\alpha) \gamma
    &&
  \end{align*}
  starting at the vertices $i,k,y$.
 \item
  We have the relations
  \begin{align*}
    f(\alpha) f^2(\alpha) &= p f^2(\bar{\alpha}) \in J^3,
  &
    f^2(\alpha) \alpha &= f(\gamma) v,
  \\
    \bar{\alpha} f(\bar{\alpha}) &= \alpha p \in J^3,
  &
    f^2(\bar{\alpha}) \bar{\alpha} &= q f^2(\gamma),
  \\
    \gamma f(\gamma) &= f(\bar{\alpha}) q \in J^3,
  &
    f^2(\gamma) \gamma &= v f(\alpha) \in J^3,
  \end{align*}
  where the elements on the right sides are expanded
  along cycles of $g$.
 \item
  The local algebras $e_i A e_i$, $e_k A e_k$, $e_y A e_y$
  have finite type.
 \item
  The paths of length two in \emph{(i)} and \emph{(ii)} belong to
  $\soc_2(A) \setminus \soc(A)$.
\end{enumerate}
\end{proposition}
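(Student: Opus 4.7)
The plan is to mirror the arguments from Proposition~\ref{prop:7.2} and Theorem~\ref{th:8.1} in the present asymmetric situation, where vertex $i$ admits exactly one type C relation and one type Z relation. The starting point is to invoke Theorem~\ref{th:9.1} at $i$, which supplies lowest form presentations with $\alpha f(\alpha)$ and $\bar{\alpha} f(\bar{\alpha})$ lying in $\soc_2(A) \setminus \soc(A)$. Because $\alpha f(\alpha)$ appears in a type C relation with $\bar{\alpha}\gamma$, its lowest term is a non-zero scalar multiple of $\bar{\alpha}\gamma$, and after absorbing the remaining contribution into $f(\alpha)$ (the correction lies in $J^2 \cap \soc_2(A)$, so does not alter data fixed elsewhere), one obtains $\alpha f(\alpha) = \bar{\alpha}\gamma$. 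The type Z relation at $i$ then takes the form $\bar{\alpha} f(\bar{\alpha}) = \alpha p$ with $p \in J^2$ along the $g$-cycle of $f(\alpha)$, as in Remark~\ref{rem:6.12}.

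Next I would apply Proposition~\ref{prop:9.7}, which yields $f(\alpha) f^2(\alpha) = p f^2(\bar{\alpha}) + \eta_\alpha f^2(\bar{\alpha})$ and $f(\bar{\alpha}) f^2(\bar{\alpha}) = \gamma f^2(\alpha) + \eta_{\bar{\alpha}} f^2(\alpha)$, together with the identity $\alpha f(\alpha) f^2(\alpha) = \bar{\alpha} f(\bar{\alpha}) f^2(\bar{\alpha})$. I would then move to the vertices $k = t(\bar{\alpha})$ and $y = t(f(\alpha))$. Proposition~\ref{prop:4.8} determines the shape of the minimal relations there: the hypothesis that at most one type C starts at $k$ or $y$ forces the second relation at each of these vertices to be of type Z, so (after using Lemma~\ref{lem:4.9}) we get $\gamma f(\gamma) \in J^3$ and $f^2(\gamma)\gamma \in J^3$. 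By iterating the trick of Theorem~\ref{th:9.1} (absorb error terms into $f^2(\alpha), f^2(\bar{\alpha}), f(\gamma), f^2(\gamma)$ via units in local algebras), the type C relations at $k$ and $y$ can each be brought into the commutativity form $f(\bar{\alpha}) f^2(\bar{\alpha}) = \gamma f^2(\alpha)$ and $f(\gamma) f^2(\gamma) = f^2(\alpha) \bar{\alpha}$, which proves part (i). The type Z relations can be rewritten along $g$-cycles as in part (ii), with the elements $p, q, v$ arising as the natural ``continuation'' factors produced by Proposition~\ref{prop:9.7}.

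For part (iii), part (iv) of Theorem~\ref{th:9.1} shows that every cyclic path of length three based at $i$, $k$, or $y$ is a scalar multiple of the common socle-generating element $\alpha f(\alpha) f^2(\alpha)$ (or its appropriate rotation), so each of $e_i A e_i$, $e_k A e_k$, $e_y A e_y$ is a local algebra generated by a single nilpotent cycle, hence of finite representation type. Part (iv) then follows by the usual rotation argument: each path of length two appearing in (i) or (ii) is an initial subword of a rotation of $\alpha f(\alpha) f^2(\alpha)$, so multiplying it by the appropriate arrow lands in the socle and the non-zero completion shows it lies in $\soc_2$, while multiplying by the other arrow issuing from its terminal vertex lands in $e_u A e_v$ with $e_u J^3 e_v = 0$, by the quiver geometry established above (paralleling steps (IV)(a)--(c) of the proof of Theorem~\ref{th:8.2} and step (iv) of Proposition~\ref{prop:7.2}).

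The main obstacle is the bookkeeping of successive rescalings: after normalizing the three type C relations to be commutativity relations by adjusting $f(\alpha), f(\gamma), f^2(\gamma)$, and then further arrows, one must verify that each subsequent adjustment alters only higher-order terms of relations already fixed, so that the three normalized identities remain simultaneously valid. A subsidiary technical point is that the scalars appearing in the commutativity relations at $i$, $k$, $y$ must be chained through the common socle element $\alpha f(\alpha) f^2(\alpha) = \bar{\alpha} f(\bar{\alpha}) f^2(\bar{\alpha})$ and its rotations; this chaining (in the spirit of the computation $c_k = c_x = c_w$ in the proof of Theorem~\ref{th:8.1}) forces all scalars to be $1$, completing the reduction to the stated presentation.
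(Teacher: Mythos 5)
Your treatment of parts (i) and (ii) follows the paper's own route: normalize the type C relation at $i$ to $\alpha f(\alpha) = \bar{\alpha}\gamma$ via the argument of Lemma~\ref{lem:7.1}, write the type Z relation as $\bar{\alpha} f(\bar{\alpha}) = \alpha p$ with $p$ along the $g$-cycle of $\alpha$, and propagate through $k$ and $y$ using Proposition~\ref{prop:4.8}. That part is sound and matches the paper.

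The gap is in part (iii). You appeal to ``part (iv) of Theorem~\ref{th:9.1}'' --- that theorem has no such part, and it does not assert that cyclic paths of length three span the socle. What is actually needed is that the length-three cycle $X_i = \bar{\alpha}\gamma f^2(\alpha) = \alpha f(\alpha) f^2(\alpha)$ lies in $\soc(e_iA)$: only then is the second generator of $e_iAe_i$ redundant (the other generator is the product along the $g$-orbit of $\alpha$, which in this situation has length $n_\alpha \geq 4$), making the local algebra uniserial. The paper proves this by the explicit computation $X_i\alpha = \bar{\alpha}\gamma f(\gamma)v = \bar{\alpha} f(\bar{\alpha})qv = \alpha p q v$, which vanishes for degree reasons because each of $p,q,v$ has length at least two, pushing the product beyond the monomial basis of $e_iA$ along the $g$-cycle of $\alpha$. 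Your alternative --- deducing socle membership from $\alpha f(\alpha) \in \soc_2(e_iA)$ via the final statement of Proposition~\ref{prop:9.7} --- would require knowing that the second-socle property supplied by Theorem~\ref{th:9.1} survives the renormalizations of $f(\alpha)$, $\gamma$, $f^2(\alpha)$, $f^2(\bar{\alpha})$, $f(\gamma)$, $f^2(\gamma)$ performed in (i)--(ii); but that survival is precisely statement (iv) of the proposition, which you in turn derive from (iii). You name this compatibility as ``the main obstacle'' but do not resolve it, so as written the argument for (iii) is circular, or at best incomplete.

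Part (iv) has a related defect: your rotation argument (``multiplying by the other arrow lands in $e_uAe_v$ with $e_uJ^3e_v=0$'') imports the tetrahedral-quiver geometry of Theorem~\ref{th:8.2} and Proposition~\ref{prop:7.2}, where the quiver has exactly six vertices and many spaces $e_uAe_v$ vanish. In the setting of Proposition~\ref{prop:9.12} the $g$-orbits of $\alpha$, $f(\bar{\alpha})$ and $f(\gamma)$ may be arbitrarily long and the quiver arbitrarily large, so those vanishing statements are not available. The paper instead obtains (iv) as a special case of Proposition~\ref{prop:9.8}, which was established before and independently of this proposition; that is the reference you should use here.
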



\begin{proof}
(i) The argument in the proof of Lemma~\ref{lem:7.1} (part (i))
shows that
$$    \alpha f(\alpha)
     = \bar{\alpha} \gamma.
$$
By the imposed assumption, we have
$\bar{\alpha} f(\bar{\alpha}) \in J^3$.
Take the lowest term presentation
\[
    \bar{\alpha} f(\bar{\alpha})
     = {\alpha} q_{{\alpha}} + {\alpha} \eta_{{\alpha}} . 
\]
We have
$\cO(\alpha) = ( {\alpha} g({\alpha}) \dots g^{n_{\alpha}-1}({\alpha}))$
with $g^{n_{\alpha}-1}(\alpha) = f^2(\bar{\alpha})$,
by Lemma~\ref{lem:6.7},
and hence
$g^{-1}(f^2(\bar{\alpha})) = g^{n_{\alpha}-2}(\alpha)$.
Since ${\alpha} f({\alpha}) \in J^3$,
we conclude that
${\alpha} q_{{\alpha}}$ is of length $\geq 3$,
and then $n_{\alpha} \geq 4$.
We set $p = q_{{\alpha}} + \eta_{{\alpha}}$,
so we have
$\bar{\alpha} f(\bar{\alpha}) = {\alpha} p$.
Applying now
Proposition~\ref{prop:4.8}
to the vertex $i$, we get
\[
    f(\bar{\alpha}) f^2(\bar{\alpha}) = \gamma f^2(\alpha)
 \qquad
  \mbox{ and }
 \qquad
    f(\alpha) f^2(\alpha) = p f^2(\bar{\alpha}).
\]
Observe that
$f(\bar{\alpha}) f^2(\bar{\alpha}) = \gamma f^2(\alpha)$
is a commutativity relation of type C starting from $k$.
Then it follows from the assumption that
$\gamma f(\gamma) \in J^3$. Take the lowest form presentation
\[
    \gamma f(\gamma)
     = f(\bar{\alpha}) q_{f(\bar{\alpha})}
        + f(\bar{\alpha}) \eta_{f(\bar{\alpha})} .
\]
We set $q = q_{f(\bar{\alpha})} + \eta_{f(\bar{\alpha})}$,
so we have
$\gamma f(\gamma)  = f(\bar{\alpha}) q$.
We infer as above that
$f(\bar{\alpha}) q_{f(\bar{\alpha})}$ is of length $\geq 3$,
and hence $|\cO(f(\bar{\alpha}))| \geq 4$.
Applying now
Proposition~\ref{prop:4.8}
to the vertex $k$, we obtain
\[
    f(\gamma) f^2(\gamma) = f^2(\bar{\alpha}) \bar{\alpha}
 \qquad
  \mbox{ and }
 \qquad
    f^2(\bar{\alpha}) \bar{\alpha} = q f^2(\gamma).
\]
We note that $q$ is a monomial starting from $g(f(\bar{\alpha}))$.
Since
$f(\gamma) f^2(\gamma) = f^2(\bar{\alpha}) \bar{\alpha}$
is a commutativity relation of type C starting from $y$,
it follows from the assumption that
$f^2(\alpha)\alpha \in J^3$.
Take the lowest form presentation
\[
    f^2(\alpha) \alpha
     = f(\gamma) q_{f(\gamma)}
        + f(\gamma) \eta_{f(\gamma)}. 
\]
Then we infer as above that
$f(\gamma) q_{f(\gamma)}$ is of length $\geq 3$,
and hence $|\cO(f(\gamma))| \geq 4$.
We set $v = q_{f(\gamma)} + \eta_{f(\gamma)}$,
so we have
$f^2(\alpha) \alpha = f(\gamma) v$.
Finally, applying now
Proposition~\ref{prop:4.8}
to the vertex $y$, we obtain
the already known commutativity relation
$\alpha f(\alpha) = \bar{\alpha} \gamma$,
and the new relation
\[
    f^2(\gamma) \gamma = v f(\alpha) .
\]
This proves (i) and (ii).

\medskip

(iii)
Consider the local algebras
$R_i = e_i A e_i$,
$R_k = e_k A e_k$,
$R_y = e_y A e_y$,
and the elements of these algebras
\begin{align*}
  &&
    X_i &= \bar{\alpha} \gamma f^2(\alpha), &
    X_k &= \gamma f^2(\alpha) \bar{\alpha}, &
    X_y &= f^2(\alpha) \bar{\alpha} \gamma .
  &&
\end{align*}
By the relations established above, we have the equalities
\begin{align*}
  \alpha f(\alpha) f^2(\alpha)
   &= \bar{\alpha} \gamma f^2(\alpha)
     = \bar{\alpha} f(\bar{\alpha}) f^2(\bar{\alpha})  ,
\\
  f(\bar{\alpha}) f^2(\bar{\alpha}) \bar{\alpha}
   &= f(\bar{\alpha}) q f^2(\gamma)
     = \gamma f(\gamma) f^2(\gamma)
     = \gamma f^2(\alpha) \bar{\alpha} ,
\\
  f^2(\alpha) \alpha f(\alpha)
   &= f^2(\gamma) v f(\alpha)
     = f(\gamma) f^2(\gamma) \gamma
     = f^2(\alpha) \bar{\alpha} \gamma .
\end{align*}
We observe also that $X_i$ is the composition of all arrows
along the  $g$-orbit of ${\alpha}$,
$X_k$ is the composition of all arrows along
the  $g$-orbit of $f(\bar{\alpha})$,
and $X_y$ is the composition of all arrows along
the  $g$-orbit of $f(\gamma)$.
Therefore we obtain that
$R_i$ is generated by $X_i$,
$R_k$ is generated by $X_k$,
and
$R_y$ is generated by $X_y$.
We claim that
$X_i^2 = 0$,
$X_k^2 = 0$,
and
$X_y^2 = 0$.
By the symmetricity of $A$ and rotation argument,
it is enough to show that
$X_i^2 = 0$.
We have the equalities
\begin{align*}
  X_i \alpha
   &= \bar{\alpha} \gamma f^2(\alpha) \alpha
    = \bar{\alpha} \gamma f(\gamma) v
    = \bar{\alpha} f(\bar{\alpha}) q v
    = \alpha p q v
 ,
\\
  X_i \alpha
   &=  \alpha g(\alpha) \dots g^{n_{\alpha}-1} (\alpha)  \alpha
    = \alpha p  f^2(\bar{\alpha}) \alpha
 .
\end{align*}
Moreover, by the properties of $q$ and $v$ established above,
$\alpha p q v$ is a combination of monomials along
the $g$-orbit of ${\alpha}$ of degrees $\geq n_{\alpha} + 3$.
Therefore, $X_i \alpha = 0$.
But then $X_i^2 = X_i  \alpha f(\alpha) f^2(\alpha) = 0$.
Summing up, we proved that
$X_i$ spans $\soc(R_i) = \soc(e_i A)$,
$X_k$ spans $\soc(R_k) = \soc(e_k A)$,
and
$X_y$ spans $\soc(R_y) = \soc(e_y A)$.
In particular,
$R_i$, $R_k$, $R_y$
are of finite type.

\medskip

(iv) is a special case of Proposition~\ref{prop:9.8}.  
\end{proof}

We may visualize the situation described in the above
proposition as follows
\[
\begin{tikzpicture}
[scale=1.05]
\coordinate (1) at (0,1.72);
\coordinate (2) at (0,-1.72);
\coordinate (3) at (2,-1.72);
\coordinate (4) at (-1,0);
\coordinate (5) at (1,0);
\coordinate (6) at (-2,-1.72);

\coordinate (1a) at (-0.5,2.58);
\coordinate (1b) at (0.5,2.58);

\coordinate (6a) at (-2.5,-1.12);
\coordinate (6b) at (-2.5,-2.32);

\coordinate (3a) at (2.5,-1.12);
\coordinate (3b) at (2.5,-2.32);

\fill[fill=gray!20]
    (1) -- (4) -- (5) -- cycle;
\fill[fill=gray!20]
    (2) -- (4) -- (6) -- cycle;
\fill[fill=gray!20]
    (2) -- (3) -- (5) -- cycle;

\fill[fill=gray!20]
    (1a) -- (1) -- (1b) -- cycle;
\fill[fill=gray!20]
    (6a) -- (6) -- (6b) -- cycle;
\fill[fill=gray!20]
    (3a) -- (3) -- (3b) -- cycle;

\node (1) at (0,1.72) {$x$};
\node (2) at (0,-1.72) {$y$};
\node (3) at (2,-1.72) {$w$};
\node (4) at (-1,0) {$i$};
\node (5) at (1,0) {$k$};
\node (6) at (-2,-1.72) {$j$};

\node (1a) at (-0.5,2.58) {};
\node (1b) at (0.5,2.58) {};

\node (6a) at (-2.5,-1.12) {};
\node (6b) at (-2.5,-2.32) {};

\node (3a) at (2.5,-1.12) {};
\node (3b) at (2.5,-2.32) {};

\draw[->,thick]
 (3a) edge (3)
 (3) edge (3b)
 (6b) edge (6)
 (6) edge (6a)
 (1a) edge (1)
 (1) edge (1b)
;
\draw[->,thick]
 (1) edge node [left] {$f^2(\bar{\alpha})$} (4)
 (4) edge node [below right] {$\bar{\alpha}$} (5)
 (5) edge node [right] {$f(\bar{\alpha})$} (1)
(2) edge node [below] {$f(\gamma)$} (3)
(3) edge node [above right] {$\!\!\!f^2(\gamma)$} (5)
(5) edge node [left] {$\gamma$} (2)
(2) edge node [above right] {\!\!\!$f^2(\alpha)$} (4)
(4) edge node [above left] {$\alpha$} (6)
(6) edge node [below] {$f(\alpha)$} (2)
;
\end{tikzpicture}
\]
where the shaded triangles denote $f$-orbits.
We note that the situation when there is one minimal
relation of type C starting from $i$ but two minimal
relations starting in $k$ or $y$ (or both $k$ and $y$)
is covered by
Proposition~\ref{prop:7.2}.

We end this section with the following lemma.

\begin{lemma}
\label{lem:9.13}
Let $i$ be a vertex of $Q$,
$\alpha$ and $\bar{\alpha}$ the arrows starting at $i$,
and
${\alpha} \neq f({\alpha})$. Assume 
that each product of length two along
the $f$-cycle of $\alpha$ belongs to
the second socle. Assume we have the  lowest form presentation 
$$\alpha f(\alpha) = \ba q_{\ba} + \ba \eta_{\ba} \in \soc_2(e_iA).
$$
Then $\alpha f(\alpha) = \ba q_{\ba}$. 
\end{lemma}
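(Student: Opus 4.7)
The plan is to work in the basis of $e_iA$ supplied by Proposition~\ref{prop:6.10}, consisting of monomials along the $g$-cycles of $\alpha$ and $\ba$, and to exploit the fact that $\alpha f(\alpha)$, written in this basis, has support only on basis monomials of $\soc_2(e_iA)$. The first step is to pin down which basis monomials along the $\ba$-cycle can end at $y = t(f(\alpha))$. By Lemma~\ref{lem:6.7} we have $g^{n_{\ba}-1}(\ba) = f^2(\alpha)$, so $g^{n_{\ba}-2}(\ba) = g^{-1}(f^2(\alpha))$ ends at $y$, and the $\ba$-cycle visits $y$ exactly once per full round. Hence the basis monomials along the $\ba$-cycle ending at $y$ have lengths $n_{\ba}-1,\ 2n_{\ba}-1,\ \dots,\ m_{\ba}n_{\ba}-1$; anything longer exceeds the socle depth $m_{\ba}n_{\ba}$, and since the one-dimensional $\soc(e_iA)$ is generated by a cyclic element ending at $i \neq y$, longer monomials ending at $y$ vanish. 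By Lemma~\ref{lem:9.4}, $\ba q_{\ba}$ is a non-zero scalar multiple of the shortest such monomial, of length $n_{\ba}-1$.

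The second step is to use the membership criterion $v \in \soc_2(e_iA) \iff vJ \subseteq \soc(e_iA)$. Multiplying the basis monomial of length $n_{\ba}-1$ by an arrow at $y$ produces an element of degree $n_{\ba}$ (expressed, via the relations absorbed in the basis construction of Proposition~\ref{prop:6.10}, as a combination of $\bar{\eta}_{n_{\ba}}$ and possibly $\eta_{n_{\ba}}$), which lies in $\soc(e_iA)$ precisely when $n_{\ba} \geq m_{\ba}n_{\ba}$, i.e.\ when $m_{\ba} = 1$. Now since basis monomials are linearly independent and $\alpha f(\alpha) \in \soc_2(e_iA)$, every basis element appearing with non-zero coefficient in the expansion $\ba q_{\ba} + \ba \eta_{\ba}$ must itself belong to $\soc_2(e_iA)$. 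Applied to the lowest-degree basis monomial appearing in $\ba q_{\ba}$ (with non-zero coefficient), this forces $m_{\ba} = 1$.

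The third step is the conclusion: under $m_{\ba} = 1$, the unique basis monomial along the $\ba$-cycle ending at $y$ is the length-$(n_{\ba}-1)$ one, namely $\ba q_{\ba}$ itself. Any potential higher-degree term in $\ba \eta_{\ba}$ would have length $2n_{\ba}-1 > n_{\ba}$ = socle depth, and therefore vanishes. Hence $\ba \eta_{\ba} = 0$ and $\alpha f(\alpha) = \ba q_{\ba}$.

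The main obstacle will be treating the case $\cO(\alpha) = \cO(\ba)$, where the two cycles coincide and the basis structure combines both, so the analysis of basis monomials ending at $y$ must be redone using the local algebra description from the proof of Proposition~\ref{prop:9.8}(1) together with Lemma~\ref{lem:6.11}(iii); the same counting argument should go through, but it requires being careful about which ``round'' along the unique $g$-cycle contains the visit to $y$. A minor technicality is the degenerate possibility $y = i$ (that is, $f^2(\alpha)$ a loop at $i$): combined with $\alpha \neq f(\alpha)$ and the standing hypothesis that $Q$ is not the Markov quiver, this creates a very rigid local configuration already handled by Lemma~\ref{lem:6.2}, Lemma~\ref{lem:6.3}, and Proposition~\ref{prop:9.11}, and it can be excluded or reduced to those cases.
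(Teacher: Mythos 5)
There is a genuine gap. Your Step 2 deduces $m_{\bar{\alpha}}=1$, and this is false in general; the error comes from misreading Lemma~\ref{lem:9.4}. That lemma says the lowest term $\bar{\alpha} q_{\bar{\alpha}}$ ends in $g^{-1}(f^2(\alpha))$, which pins down its length only modulo $n_{\bar{\alpha}}$ (and only under your unproved assumption that the $g$-cycle of $\bar{\alpha}$ meets $y$ once per round); it does \emph{not} say the lowest term is the shortest monomial of length $n_{\bar{\alpha}}-1$. Generically $\alpha f(\alpha)$ has degree $m_{\bar{\alpha}} n_{\bar{\alpha}}-1$ (this is exactly what happens in the weighted triangulation algebras, where $\alpha f(\alpha)=c_{\bar{\alpha}}A_{\bar{\alpha}}$ with arbitrary weight $m_{\bar{\alpha}}\geq 1$), so forcing $m_{\bar{\alpha}}=1$ would contradict the target classification. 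Your Step 3 then only kills the tail $\bar{\alpha}\eta_{\bar{\alpha}}$ under this false premise. A second, smaller problem is the degenerate case $y=i$: it is not "already handled" by Lemmas~\ref{lem:6.2}, \ref{lem:6.3} and Proposition~\ref{prop:9.11} (those concern $f$-fixed loops $\alpha$ at $i$); here $y=i$ forces $\bar{\alpha}$ to be a loop fixed by $g$ but moved by $f$, and the tail genuinely need not vanish -- one must absorb it by replacing $f(\alpha)$ with $f(\alpha)(1-d\bar{\alpha})^{-1}$.

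The paper's own proof is a two-line vertex-support argument that bypasses all the counting: since the lowest term already lies in $\soc_2(e_iA)\setminus\soc(e_iA)$, the tail $\bar{\alpha}\eta_{\bar{\alpha}}$ lies in $\soc(e_iA)\subseteq e_iAe_i$, hence satisfies $\bar{\alpha}\eta_{\bar{\alpha}}=\bar{\alpha}\eta_{\bar{\alpha}}e_i$; but it also lies in $e_iAe_y$ with $y=t(f(\alpha))$, so it vanishes whenever $y\neq i$, and the case $y=i$ is the $g$-fixed-loop configuration dealt with by the unit adjustment above. If you want to salvage your degree-counting route, the correct statement is that every term of $\bar{\alpha}\eta_{\bar{\alpha}}$ has degree strictly larger than the second-socle degree and ends at $y$, hence is annihilated because $\soc(e_iA)$ is one-dimensional and supported at $i$ -- which is really the paper's argument in disguise, and still requires the separate treatment of $y=i$.
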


\begin{proof}
Observe that the lowest term $\bar{\alpha} q_{\bar{\alpha}}$
of $\alpha f(\alpha)$ belongs to
$\soc_2(e_i A) \setminus \soc(e_i A)$. Then $\ba \eta_{\ba}$ belongs
to $\soc(e_iA)$ and so 
$\bar{\alpha} \eta_{\bar{\alpha}} = \bar{\alpha} \eta_{\bar{\alpha}} e_i$.
On the other hand,
$\bar{\alpha} \eta_{\bar{\alpha}}$ belongs to $e_i A e_j$
with $j = t(f(\alpha))$.
If $i\neq j$ then 
$\bar{\alpha} \eta_{\bar{\alpha}} = 0$.
So assume now $i=j$. This means that $\ba$ is fixed by the permutation
$g$, and then the lowest form presentation is
$$\alpha f(\alpha) = c\ba^{m-1} + c'\ba^m \ \ (c, c\in K).
$$
This can be written as $c\ba^{m-1} (1- d\ba)$ and we may replace
$f(\alpha)$ by $f(\alpha)' = f(\alpha)(1-d\ba)^{-1}$. Note also that
$\ba = f^2(\ba)$ and therefore $f(\alpha)\ba$ is in the second socle.
Therefore, in any monomial $\mu$ along a cycle of $g$ in which $f(\alpha)$ occurs,
if we substitute $f(\alpha) = f(\alpha)'- f(\alpha)p$ then the error term
$f(\alpha)p$ is at least in the second socle and the relevant
summand of $\mu$ is zero. 
With this, we get the claim also in this case.
\end{proof}


\section{Proof of the Main Theorem}\label{sec:proofth}

The implication
(ii) $\Rightarrow$ (i)
follows from the general theory
because $A$ is re\-pre\-sen\-ta\-tion-infinite
(see \cite[Theorem~IV.11.19]{SY}).
The implication
(iii) $\Rightarrow$ (ii)
follows from
Theorems \ref{th:2.4}, \ref{th:2.6}, \ref{th:2.7}, \ref{th:tetr2}.
Hence it remains to prove that (i) implies (iii).

Let $A = K Q/I$ be a $2$-regular algebra of generalized quaternion type.
We may assume that $A$ is not an algebra considered in
Theorems \ref{th:5.1}, \ref{th:8.1}, \ref{th:8.2}.
In particular, $Q$ is not the Markov quiver.
Moreover, let $f$ and $g$ be the permutations of arrows in $Q$
fixed in the proof of
Theorem~\ref{th:6.1},
with the properies listed at the beginning of
Section~\ref{sec:localpresentation}.
We will prove that $A$ is isomorphic to a socle deformed
weighted triangulation algebra
$\Lambda(Q,f,m_{\bullet},c_{\bullet},b_{\bullet})$
of the triangulation quiver $(Q,f)$.

We will prove first the following theorem on the global presentation
of $A$.

\begin{theorem}
\label{th:10.1}
There is a choice of representatives of arrows of $Q$
in $A$ such that the following statements hold.
\begin{enumerate}[(i)]
 \item
  For each arrow $\alpha$
  with $f(\alpha) \neq \alpha$,
  we have
  \[
  \alpha f(\alpha)
     = \bar{\alpha} q_{\bar{\alpha}}
     \in \soc_2(A) \setminus \soc(A)
   ,
  \]
  where
  $\bar{\alpha} q_{\bar{\alpha}}$ is
  a non-zero scalar multiple of a monomial
  along the $g$-orbit of $\bar{\alpha}$,
  ending in $g^{-1}(f^2({\alpha}))$.
 \item
  For each loop $\alpha$
  with $f(\alpha) = \alpha$,
  we have
  \[
  \alpha^2
     = \bar{\alpha} q_{\bar{\alpha}} + \bar{\alpha} \eta_{\bar{\alpha}}
     \in \soc_2(A) \setminus \soc(A)
   ,
  \]
  where
  $\bar{\alpha} q_{\bar{\alpha}}$ is
  a non-zero scalar multiple of a monomial
  along the $g$-orbit of $\bar{\alpha}$,
  ending in $g^{-1}(\alpha)$,
  and
  $\bar{\alpha} \eta_{\bar{\alpha}}$ is
  a scalar multiple of a monomial
  along the $g$-orbit of $\bar{\alpha}$,
  ending in $\alpha$
  and lying in $\soc(A)$.
 \item
 For each arrow $\alpha$,
 $\alpha f(\alpha) f^2(\alpha)
  = \bar{\alpha} f(\bar{\alpha}) f^2(\bar{\alpha})$
 and
 is a non-zero element of $\soc(A)$.
\end{enumerate}
\end{theorem}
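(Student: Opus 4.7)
\textbf{Proof plan for Theorem \ref{th:10.1}.} The strategy is to assemble the local presentations established in Sections \ref{sec:almosttetrahedral}, \ref{sec:tetrahedralquiver}, \ref{sec:localpresentation} into a coherent global presentation, and then to show that the local choices of arrow representatives can be made compatibly. Since we have excluded the algebras treated in Theorems \ref{th:5.1}, \ref{th:8.1}, \ref{th:8.2}, at every vertex $i$ of $Q$ we are in one of the following situations: either at most one minimal relation of type C starts at $i$, or the quiver has almost tetrahedral structure with two type C relations from $i$ as handled by Proposition~\ref{prop:7.2}. In the first (generic) case, Theorem~\ref{th:9.1} guarantees that $\alpha f(\alpha)$ and $\bar{\alpha} f(\bar{\alpha})$ lie in $\soc_2(A)\setminus\soc(A)$, so that Lemma~\ref{lem:9.4} yields a lowest form presentation $\alpha f(\alpha) = \bar{\alpha} q_{\bar{\alpha}} + \bar{\alpha}\eta_{\bar{\alpha}}$ with the stated position of the lowest term.

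The first step is to eliminate the error term $\bar{\alpha}\eta_{\bar{\alpha}}$ whenever $\alpha \neq f(\alpha)$, which is precisely the content of Lemma~\ref{lem:9.13}: since by Theorem~\ref{th:9.1} each product of length two along the $f$-cycle of $\alpha$ already lies in $\soc_2(A)$, the error term belongs to $\soc(A) \cap e_i A e_{t(f(\alpha))}$, and is therefore killed unless $i = t(f(\alpha))$; in the remaining exceptional configuration the proof of Lemma~\ref{lem:9.13} shows one can rescale $f(\alpha)$ by a unit to absorb the error. This delivers (i). For loops $\alpha$ with $f(\alpha)=\alpha$, parts (i)--(iii) of Proposition~\ref{prop:9.11} supply exactly the required equality $\alpha^2 = c_{\bar{\alpha}} A_{\bar{\alpha}} + b_i B_{\bar{\alpha}}$ of statement (ii), with the second summand lying in the socle. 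The almost tetrahedral vertices are covered directly by the commutativity relations established in Proposition~\ref{prop:7.2}(i)--(ii), which are already in the form required by (i).

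For (iii), Proposition~\ref{prop:9.7}(iii) gives the equality $\alpha f(\alpha) f^2(\alpha) = \bar{\alpha} f(\bar{\alpha}) f^2(\bar{\alpha})$ at once, and the last assertion of that proposition, combined with Lemmas \ref{lem:9.5} and \ref{lem:9.6} which ensure that at least one of $\alpha f(\alpha)$ and $\bar{\alpha} f(\bar{\alpha})$ lies in $\soc_2(A)\setminus \soc(A)$, shows that this common value spans $\soc(e_i A)$, hence is a non-zero socle element. In the almost tetrahedral case the non-vanishing is recorded in Proposition~\ref{prop:7.2}(iv) (the length-three cyclic paths through $i$ generate $\soc(e_i A)$).

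The main obstacle, and where care is required, is the global compatibility of the local choices of representatives. The adjustments in Lemma~\ref{lem:9.4} only modify $f(\alpha)$ (by adding an element of $e_j J^2 e_y$), those in Lemma~\ref{lem:9.9} only modify $f^2(\alpha)$ by a unit on the right, and the adjustments in Propositions~\ref{prop:9.11} and \ref{prop:9.12} likewise affect only arrows inside a single $f$-orbit. Since every arrow belongs to a unique $f$-orbit, the adjustments across different $f$-orbits are independent. The only potentially delicate point is that modifying $f(\alpha)$ changes $f(\alpha)$ both as the second arrow of the $f$-orbit of $\alpha$ and as the first arrow of the $f$-orbit starting at $t(\alpha)$; however, the error added lies in $J^2$, so by the invariance Lemma~\ref{lem:9.2} the lowest term of $f(\alpha) f^2(\alpha)$ is unaffected, and the subsequent rescaling of $f^2(\alpha)$ in Lemma~\ref{lem:9.9} restores the desired form at the next vertex. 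Processing the $f$-orbits one at a time in this manner yields a globally consistent choice of arrows realizing (i)--(iii).
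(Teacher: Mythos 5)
Your assembly of the local ingredients is essentially the paper's: parts (i) and (ii) come from Theorem~\ref{th:9.1}, Lemma~\ref{lem:9.4}, Proposition~\ref{prop:9.11} and Lemma~\ref{lem:9.13}; the type C and tetrahedral-like vertices are covered by Lemma~\ref{lem:7.1}, Proposition~\ref{prop:7.2} and Proposition~\ref{prop:9.12}; and part (iii) follows from Proposition~\ref{prop:9.7}(iii) together with Lemmas~\ref{lem:9.5} and \ref{lem:9.6}. That much is fine and matches the paper.

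The gap is in the globalization step. Your justification --- that ``the adjustments across different $f$-orbits are independent'' --- is not correct, and the delicate point you single out (that $f(\alpha)$ also belongs to ``the $f$-orbit starting at $t(\alpha)$'') is vacuous, since the $f$-orbit of $f(\alpha)$ \emph{is} the $f$-orbit of $\alpha$. The real coupling is through the vertex, not through $f$: Proposition~\ref{prop:9.8} and Lemma~\ref{lem:9.9} are applied at a vertex $i$ and simultaneously involve \emph{both} arrows $\alpha,\bar{\alpha}$ there, because the lowest form presentation of $\alpha f(\alpha)$ is an expansion along the $g$-orbit of $\bar{\alpha}$, and the correct choice of $f^2(\alpha)$ in Proposition~\ref{prop:9.7} is determined jointly with $f^2(\bar{\alpha})$ by the minimal projective resolution of $S_i$. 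So normalizing one $f$-cycle at $i$ presupposes, or re-adjusts, the other $f$-cycle through $i$, and one cannot simply treat the $f$-orbits in isolation. The paper resolves this with an explicit induction over the set $\cF$ of $f$-cycles: a base set $\cF_0$ consisting of the cycles forced by the special local situations (loops fixed by $f$, cycles carrying a type C relation, cycles meeting double arrows --- which are pairwise disjoint by Lemma~\ref{lem:4.7} --- or, failing all of these, the two $f$-cycles through one arbitrarily chosen vertex), and then an extension step that uses the \emph{connectedness} of $Q$ to find a vertex $i$ at which exactly one of the two $f$-cycles is already normalized, so that Proposition~\ref{prop:9.8} and Lemma~\ref{lem:9.9} can be applied to adjust only the arrows of the new cycle against the already fixed ones. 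Your ``process the $f$-orbits one at a time'' is the right instinct, but without specifying the order, the anchor, and the connectivity argument that guarantees a next admissible vertex always exists, the proof is incomplete at precisely the point where the theorem goes beyond its local inputs.
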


\begin{proof}
(a) \ First we show that there is a choice of arrows such that we have
globally that for each arrow $\alpha$, the product $\alpha f(\alpha)$ is in
the second socle. 
By Lemmas \ref{lem:9.5} and \ref{lem:9.6}, 
it is never in the socle and we will not comment on this any further.

(1) For each loop fixed by $f$, we choose $\alpha$ such that
$\alpha^2 \in \soc_2(A)$, using Proposition~\ref{prop:9.8}. 

(2) We fix arrows along all cycles of $f$ at which at least one relation
of type C starts, by using  Lemma~\ref{lem:7.1}    
and Proposition~\ref{prop:9.12}. 

Note that the arrows in (1) and (2) are disjoint, by Lemma~\ref{lem:4.7}. As well, the arrows
fixed in (2) are not parts of double arrows. Hence the following choices
are also disjoint from (1) and (2)

(3) We fix arrows along cycles of $f$ where one of them is part of a
double arrow, according to Lemma \ref{lem:9.13}. 

\medskip

Let $\cF$ be the set of all cycles of $f$.
We define $\cF_0$ to be the subset of $\cF$
containing the cycles fixed in (1), (2) and (3). 
If there is no cycle of $f$ in $Q$ satisfying (1), (2), (3),
then we define $\cF_0$ consisting of two $f$-cycles through
the some randomly chosen vertex of $Q$, which satsify Proposition~\ref{prop:9.8}.

Assume $\cF_0\subseteq \cF_1 \subseteq \cF$ and assume for all
arrows $\alpha$ along $f$-cycles belonging to $\cF_1$ we have
that $\alpha f(\alpha)$ is in $\soc_2(A)$. Since $Q$
is connected, 
there exist arrows $\alpha$
and $\bar{\alpha}$ starting from a vertex $i$ such
that the $f$-cycle of $\bar{\alpha}$ belongs to $\cF_1$
but the $f$-cycle of $\alpha$ does not belong to $\cF_1$.
By construction, they are not loops fixed by $f$, and the $f$-cycles of $\alpha$
and $\bar{\alpha}$ do not contain parts of  double arrows.

Applying now
Proposition~\ref{prop:9.8} (and Lemma~\ref{lem:9.9})
we chose representatives of arrows
$\alpha$, $f(\alpha)$, $f^2(\alpha)$
such that the three products along the cycle
of $f$ lie in the second socle.
Now we take $\cF_2$ as the union of $\cF_1$
and the $f$-cycle of $\alpha$.
If this is $\cF$ then we are done.
Otherwise we repeat the process.
It must stop since $\cF$ is finite.

\medskip

(b) We have now fixed all arrows satisfying (a). This fixes a basis
of $e_iA$ for all $i$ as described in Proposition~\ref{prop:6.10}. With this
we write down the element $\alpha f(\alpha)$ in the second
socle, and by using Lemma \ref{lem:9.13} we get that it has the form as stated.
\end{proof}

We work now with the presentation of $A$
established in
Theorem~\ref{th:10.1}.
For each arrow $\beta$ of $Q$,
we set
$n_{\beta} = |\cO(\beta)|$,
so $\cO(\beta)$ is the cycle
$(\beta \ g(\beta) \ \dots \ g^{n_{\beta} - 1}(\beta))$.

Let $\alpha$ be an arrows of $Q$.
Then it follows from
Theorem~\ref{th:10.1}
that there is a positive integer $m_{\bar{\alpha}}$
such that
\[
    {\alpha} f({\alpha})
     = \bar{\alpha} q_{\bar{\alpha}}
     = c_{\bar{\alpha}} A_{\bar{\alpha}}
\]
for some $c_{\bar{\alpha}} \in K^*$,
if $f(\alpha) \neq \alpha$, and
\[
    \alpha^2 = c_{\bar{\alpha}} A_{\bar{\alpha}} + b_i B_{\bar{\alpha}} ,
\]
with $c_{\bar{\alpha}} \in K^*$ and $b_i \in K$,
if $f(\alpha) = \alpha$,
where
\begin{align*}
    A_{\bar{\alpha}}
      &= \big(\bar{\alpha} g(\bar{\alpha}) \dots g^{n_{\bar{\alpha}} - 1}(\bar{\alpha})
            \big)^{m_{\bar{\alpha}} - 1}
            \bar{\alpha} g(\bar{\alpha}) \dots g^{n_{\bar{\alpha}} - 2}(\bar{\alpha}) ,
    \\
    B_{\bar{\alpha}}
      &= \big(\bar{\alpha} g(\bar{\alpha}) \dots g^{n_{\bar{\alpha}} - 1}(\bar{\alpha})
            \big)^{m_{\bar{\alpha}}}.
\end{align*}
We note that $m_{\bar{\alpha}} n_{\bar{\alpha}} \geq 3$
because $Q$ is the Gabriel quiver of $A$.

We have also the following lemma.

\begin{lemma}
\label{lem:10.2}
For each arrow $\alpha$, we have $c_{\alpha} = c_{g(\alpha)}$.
\end{lemma}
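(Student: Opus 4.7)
The plan is to compute the non-zero socle element $\sigma := \alpha f(\alpha) f^2(\alpha) = \bar{\alpha} f(\bar{\alpha}) f^2(\bar{\alpha})$ (from Theorem~\ref{th:10.1}(iii)) in two ways, extracting $c_\alpha$ from one and $c_{g(\alpha)}$ from the other, then use the one-dimensionality of $\soc(e_i A)$ (which holds for any symmetric algebra) to force the scalars to agree. Throughout set $\beta = g(\alpha) = \overbar{f(\alpha)}$ and $i = s(\alpha)$, and write $w = \alpha g(\alpha) \cdots g^{n_\alpha - 1}(\alpha)$ for the cyclic word around $\cO(\alpha)$ based at $i$.

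In the generic case---when neither $\alpha$ nor $\bar{\alpha}$ is a loop fixed by $f$, which forces the same for $\bar{\beta} = f(\alpha)$ since $f^3 = \id$---Theorem~\ref{th:10.1}(i) supplies both $\bar{\alpha} f(\bar{\alpha}) = c_\alpha A_\alpha$ and $f(\alpha) f^2(\alpha) = c_\beta A_\beta$. Using $f^2(\bar{\alpha}) = g^{n_\alpha - 1}(\alpha)$ from Lemma~\ref{lem:6.7}, one has $\sigma = c_\alpha A_\alpha f^2(\bar{\alpha}) = c_\alpha w^{m_\alpha}$ on the one hand, and $\sigma = c_\beta \alpha A_\beta = c_\beta w^{m_\beta}$ after regrouping on the other. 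Since both right-hand sides lie in the one-dimensional socle $\soc(e_i A)$ and are non-zero, and since $w^k$ vanishes beyond the socle exponent and is non-zero below it, we conclude $m_\alpha = m_\beta$ and so $c_\alpha = c_\beta$ after cancelling $w^{m_\alpha}$.

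The loop cases, when exactly one of $\alpha, \bar{\alpha}$ is fixed by $f$ (both being so forces $|Q_0| = 1$), are handled by the same pattern. If $f(\alpha) = \alpha$ then $g(\alpha) = \bar{\alpha}$, and Theorem~\ref{th:10.1}(ii) gives $\alpha^2 = c_{\bar{\alpha}} A_{\bar{\alpha}} + b_i B_{\bar{\alpha}}$; multiplication by $\alpha$ kills the error term (since $B_{\bar{\alpha}} \in \soc(A)$ and $J \cdot \soc(A) = 0$ in a symmetric algebra), yielding $\sigma = \alpha^3 = c_{\bar{\alpha}} \alpha A_{\bar{\alpha}} = c_{\bar{\alpha}} B_\alpha$, to be compared with $\sigma = c_\alpha B_\alpha$ from the generic side. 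The mirror case $f(\bar{\alpha}) = \bar{\alpha}$ is analogous; the only subtlety is that the two computations a priori produce $B_\alpha$ and $B_{\bar{\alpha}}$ respectively, but since $\cO(\alpha) = \cO(\bar{\alpha})$ in this situation, these are non-zero socle elements of $e_i A e_i$ differing by cyclic rotation, and hence coincide because the symmetrising form $\varphi$ is invariant under cyclic rotation and $\soc(e_i A e_i)$ is one-dimensional. The central obstacle throughout is this identification of the socle generators and the matching of the exponents $m_\alpha = m_{g(\alpha)}$, both of which reduce, after the algebraic manipulations above, to the one-dimensionality of the socle of $e_i A$.
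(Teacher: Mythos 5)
Your proof is correct and follows essentially the same route as the paper: both arguments equate the two factorizations of the non-zero socle element $\alpha f(\alpha)f^2(\alpha)=\bar{\alpha}f(\bar{\alpha})f^2(\bar{\alpha})$, namely $c_{g(\alpha)}\,\alpha A_{g(\alpha)}$ and $c_{\alpha}\,A_{\alpha}f^2(\bar{\alpha})$, and cancel the common monomial. The only cosmetic difference is that for the loop case the paper simply cites Proposition~\ref{prop:9.11}, whereas you re-derive that identity by the same multiplication-into-the-socle computation.
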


\begin{proof}
Assume first that $f(\alpha) = \alpha$.
Then $g(\alpha) = \bar{\alpha}$,
and it follows from
Proposition~\ref{prop:9.11}
that
$c_{\alpha} = c_{\bar{\alpha}} = c_{g(\alpha)}$.
Assume now that
$f(\alpha) \neq \alpha$
and
$f(\bar{\alpha}) \neq \bar{\alpha}$.
We have
\begin{align*}
  \alpha f(\alpha) f^2(\alpha)
   &= \alpha c_{g(\alpha)} A_{g(\alpha)}
    = c_{g(\alpha)} \alpha A_{g(\alpha)} ,
\\
  \bar{\alpha} f(\bar{\alpha}) f^2(\bar{\alpha})
   &= c_{\alpha} A_{\alpha} f^2(\bar{\alpha})
    = c_{\alpha} \alpha  A_{g(\alpha)}  .
\end{align*}
Since
$\alpha f(\alpha) f^2(\alpha)
  = \bar{\alpha} f(\bar{\alpha}) f^2(\bar{\alpha})$
and non-zero,
we conclude that
 $c_{\alpha} = c_{g(\alpha)}$.
\end{proof}

Therefore, we obtain a parameter function
\[
  c_{\bullet} : \cO(g) \to \bN^* .
\]

Recall also that the border $\partial(Q,f)$ of $(Q,f)$
is the set of vertices $i$ of $Q$ such that there is a loop $\alpha$
at $i$ fixed by $f$.
Hence we have also a border function
\[
  b_{\bullet} : \partial(Q,f) \to K .
\]

Summing up, we proved that $A$ is isomorphic
to the socle deformed weighted triangulation algebra
$\Lambda(Q,f,m_{\bullet},c_{\bullet},b_{\bullet})$.
We note that if $b_{i} = 0$ for any
$i \in \partial(Q,f)$, then $A$ is isomorphic to the
weighted triangulation algebra
$\Lambda(Q,f,m_{\bullet},c_{\bullet})$.
Moreover, by Theorem \ref{th:2.1},
$(Q,f)$ = $(Q(S,\vec{T}),f)$
for a directed triangulated surface $(S,\vec{T})$.
This finishes the proof of implication
(i) $\Rightarrow$ (iii)
of Main Theorem.

\section{Proof of Corollary~ \ref{cor:main2}}\label{sec:proofcor}

Let $n \geq 4$ be a natural number,
$S = K[[x,y]]$ the algebra of formal power series
in two variables over $K$,
and $\mathfrak{m} = (x,y)$ the unique
maximal ideal of $R$.
We choose irreducible power series
$f_1,\dots,f_{n+1} \in \mathfrak{m} \setminus \mathfrak{m}^2$
satisfying the following conditions:
\begin{enumerate}[(1)]
 \item
  $(f_i) \neq (f_j)$ for any $i \neq j$ in $\{1,\dots,n+1\}$;
 \item
  $(f_1,f_2) \neq \mathfrak{m}$,
  $(f_n,f_{n+1}) \neq \mathfrak{m}$,
  and
  $(f_i,f_{i+1}) = \mathfrak{m}$
  for any $i \in \{2,\dots,n-1\}$.
\end{enumerate}

For example, we may take
\begin{align*}
  f_1 &= x,&
  f_2 &= x - \lambda_2 y^2,&
  f_3 &= x - \lambda_3 y,&
  \dots,
\\
  f_{n-1} &= x - \lambda_{n-1} y,&
  f_{n} &= x^2 - \lambda_{n} y,&
  f_{n+1} &= y,
\end{align*}
with
$ \lambda_2, \lambda_3, \dots, \lambda_{n-1}, \lambda_{n}$
pairwise distinct elements of $K^*$.

Let
$R = S/(f_1 \dots f_{n+1})$
and
$T = \oplus_{i=1}^{n+1} S /(f_1 \dots f_i)$.
Then, by general theory \cite{Y},
$R$ is a one-dimensional isolated hypersurface singularity,
and let $\CM(R)$ be the category of maximal
Cohen-Macaulay modules over $R$.
It follows from Eisenbud's matrix factorization theorem
\cite{Ei}
that $\Omega_R^2 = \id$ on the stable category $\underline{\CM(R)}$
of $\CM(R)$.
Consequently, $\underline{\CM(R)}$ is a $2$-CY category
with $\tau_R = \Omega_R$.
Moreover, it follows from
\cite[Theorem~ 4.1]{BIKR}
that $T$ is a cluster tilting object in $\CM(R)$.
Consider the stable endomorphism algebra
\[
  B = \uEnd_R(T) .
\]
Then $B$ is a basic indecomposable finite-dimensional symmetric
algebra, with the Grothendieck group $K_0(B)$ of rank $n$
(see \cite[Lemma~ 2.2]{BIKR}).
Further, it follows from
\cite[Poposition~ 4.10]{BIKR}
and the condition (2) imposed on $f_1,\dots, f_{n+1}$
that the Gabriel quiver $Q_B$ of $B$ is the
$2$-regular quiver of the form
  \[
    \xymatrix{
      1 \ar@(dl,ul)[] \ar@<.5ex>[r] &
      2 \ar@<.5ex>[l] \ar@<.5ex>[r] &
      \cdots \ar@<.5ex>[l] \ar@<.5ex>[r] &
      n-1 \ar@<.5ex>[l] \ar@<.5ex>[r] &
      n \ar@<.5ex>[l] \ar@(ur,dr)[]
    }
    \ \ \, \quad
    .
  \]
Since $\Omega_R^2 = \id$ on $\underline{\CM(R)}$,
applying \cite[Theorem~ 3.2]{Du2},
we obtain that $B$ is a periodic algebra of period $4$.
In particular, we conclude that the stable Auslander-Reiten
quiver $\Gamma_B^s$ of $B$ consists of stable tubes of ranks
$1$ and $2$, because clearly $B$ is representation-infinite.

We claim now that $B$ is a wild algebra.
Suppose (for the contrary) that $B$ is tame.
Then $B$ is a $2$-regular algebra of generalized quaternion type.
Applying
Theorem~\ref{th:6.1},
we obtain that there is a permutation $f$ of arrows of $Q_B$
such that $(Q_B, f)$ is a triangulation quiver.
But it is impossible because $n \geq 4$.
Therefore, indeed $B$ is a wild algebra.
This finishes the proof of Corollary~ \ref{cor:main2}.

\section*{Acknowledgements}

The research was initiated during the visit of the second named author
at the Mathematical Institute in Oxford (March 2014) and
completed during the visit of the first named author at the Faculty of Mathematics and Computer Science
in Toru\'n (June 2017).
The main theorem of the paper was presented
during the conference
``Advances of Representation Theory of Algebras: Geometry and Homology''
(CIRM Marseille-Luminy, September 2017).

\end{document}